\documentclass[11pt, reqno]{amsart}

\usepackage{amsmath}
\usepackage{amsthm}
\usepackage{amssymb}
\usepackage{autobreak}
\usepackage{color}
\usepackage[top=30mm, bottom=30mm, left=30mm, right=30mm]{geometry}
\usepackage[hidelinks]{hyperref}
\usepackage{comment}
\usepackage[shortlabels]{enumitem}

\numberwithin{equation}{section}

\newtheorem{thm}{Theorem}[section]
\newtheorem{prop}[thm]{Proposition}
\newtheorem{lem}[thm]{Lemma}
\newtheorem{cor}[thm]{Corollary}
\newtheorem{defi}[thm]{Definition}

\newtheorem{rem}[thm]{Remark}

\newtheorem*{prop*}{Proposition}

\newcommand{\nc}{\newcommand}

\nc{\ntg}{\notag}
\nc{\fr}[2]{\frac{#1}{#2}}

\nc{\br}{\mathbb{R}}
\nc{\bc}{\mathbb{C}}
\nc{\bz}{\mathbb{Z}}
\nc{\bn}{\mathbb{N}}
\nc{\bs}{\mathbb{S}}

\nc{\Hs}{H^s(\br)}

\nc{\abs}[1]{\left| #1 \right|}
\nc{\jb}[1]{\left\langle #1 \right\rangle }
\nc{\nrm}[1]{\left\| #1 \right\|}
\nc{\snrm}[1]{\| #1 \|}
\nc{\st}{_{S_T}}
\nc{\tx}[2]{_{L_T^{#1}L_x^{#2}}}
\nc{\txx}[1]{_{L_{T, x}^{#1}}}
\nc{\thx}[2]{_{L_T^{#1}H_x^{#2}}}
\nc{\txs}{_{L_T^{\infty}X^s}}
\nc{\lx}[1]{_{L_{x}^{#1}}}
\nc{\lt}[1]{_{L_{T}^{#1}}}
\nc{\cth}[1]{C\left([0, T]; H^{#1}(\br)\right)}
\nc{\ctx}[1]{C\left([0, T]; X^{#1}(\br)\right)}
\nc{\ct}[1]{C\left([0, T]; {#1}\right)}

\nc{\vep}{\varepsilon}
\nc{\del}{\delta}

\nc{\limm}[2]{\lim_{{#1}\rightarrow{#2}}}
\nc{\ra}{\rightarrow}
\nc{\minfty}{{-\infty}}

\nc{\mt}{\mapsto}

\nc{\cl}{\mathcal{L}}
\nc{\cf}{\mathcal{F}}
\nc{\cs}{\mathcal{S}}

\nc{\pt}{\partial_t}
\nc{\px}{\partial_x}

\nc{\hh}[1]{_{H^{#1}}}
\nc{\xx}[1]{_{X^{#1}}}

\nc{\re}{\operatorname{Re}}
\nc{\im}{\operatorname{Im}}

\nc{\I}{\mathrm{I}}
\nc{\II}{\mathrm{I\hspace{-1pt}I}}
\nc{\III}{\mathrm{I\hspace{-1pt}I\hspace{-1pt}I}}
\nc{\IV}{\mathrm{I\hspace{-1pt}V}}
\nc{\V}{\mathrm{V}}
\nc{\VI}{\mathrm{V\hspace{-1pt}I}}
\nc{\VII}{\mathrm{V\hspace{-1pt}I\hspace{-1pt}I}}
\nc{\VIII}{\mathrm{V\hspace{-1pt}I\hspace{-1pt}I\hspace{-1pt}I}}
\nc{\IX}{\mathrm{I\hspace{-1pt}X}}
\nc{\X}{\mathrm{X}}

\nc{\red}[1]{\textcolor{red}{#1}}

\title[WP for QDNLS]{Well-posedness for a nonlinear Schr\"odinger equation with quadratic derivative nonlinearities for bounded primitive initial data}


\keywords{Schr\"odinger equation;
derivative nonlinearity;
Cauchy problem;
well-posedness;
gauge transformation}
\allowdisplaybreaks[4]
\begin{document}
\subjclass[2020]{35Q55, 35B30}
\maketitle
\begin{center}
\smallskip{Kohei Akase$^1$} \\
\smallskip{\footnotesize ${}^1$Department of Mathematics, Graduate School of Science, Osaka University \\ Toyonaka, Osaka, 560-0043, Japan} \\
\smallskip{\footnotesize Email: u306026b@ecs.osaka-u.ac.jp} 

\end{center}
\begin{abstract}
We consider the Cauchy problem for a quadratic derivative nonlinear Schr\"odinger equation whose nonlinearity is a linear combination of  $\px(u^2)$  and $\px(|u|^2)$. We prove the local well-posedness in the $L^2$-based Sobolev space $H^s(\br)$ for $s\ge 0$ with bounded primitives.
Moreover, we prove the global well-posedness in $H^s(\br)$ for $s\ge 1$ and a special case of the coefficients of nonlinearities.
\end{abstract} 
\setcounter{tocdepth}{3}
\section{Introduction}
We consider the Cauchy problem for the following quadratic derivative nonlinear Schr\"{o}dinger equation
\begin{equation}\label{nls}
\left\{
\begin{aligned}
i\pt u + \fr{1}{2}\px^2 u &=\px \left(\lambda u^2 + \mu\abs{u}^2\right),\quad t>0,\ x\in\br,\\
u(0)&=\phi,
\end{aligned}
\right.
\end{equation}
where $u=u(t, x)$ is $\bc$-valued unknown function and $\lambda, \mu\in\bc$.

When $\mu =0$, Christ \cite{Christ} proved that \eqref{nls} is ill-posed in any $L^2$-based Sobolev spaces $H^s(\br)$. 
Thus, some additional condition for the initial data is needed to show the well-posedness in $H^s(\br)$.
In \cite{Ozawa1998}, Ozawa proved the local well-posedness in $X^s(\br)$ with $s\ge 1$ for the following nonlinear Schr\"odinger equation
\begin{equation}\label{Ozawa eq}
i\pt v + \fr{1}{2}\px^2 v =\lambda v\px v + \mu \bar v\px  v + P(v ,\bar v),
\end{equation}
where $\lambda, \mu \in \bc$, and  $P$ is a polynomial that does not have either constant or linear term. 
Here, $X^s(\br)$ is defined by
\begin{equation}\label{Xs}
\begin{gathered}
X^s(\br) :=\left\{f\in H^s(\br)\Biggm|\sup_{x\in\br}\abs{\int_{\minfty}^x f(y)\,dy}<\infty\right\}, \\
\nrm{f}_{X^s}:=\nrm{f}_{H^s} + \sup_{x\in\br}\abs{\int_\minfty^xf(y)\,dy},
\end{gathered}
\end{equation}
for $s>\frac{1}{2}$, where the integral is defined by the improper integral $\lim_{a\to\minfty}\int_a^xf(y)\,dy$.
He used the gauge transformation.
See \cite{Chihara, HO1994} for the gauge transformation.

In this paper, we prove the following theorem by using the gauge transformation and modifying the definition of $X^s(\br)$.
The modified definition of $X^s(\br)$ is given in Definition \ref{def of $X^s$}.

\begin{thm}\label{WP in $X^s$}
Let $s\ge 0$.
Then, the Cauchy problem \eqref{nls} is locally well-posed in $X^s(\br)$.
More precisely, we obtain the following:
\begin{enumerate}[(i)]
\item \it{ Let $s>\frac{1}{2}$ and $R>0$. 
Then, for any $\phi\in X^s(\br)$ with $\|\phi\|_{X^s}\le R$, there exist $T=T(s, R)>0$ and a solution $u\in C([0, T]; H^s(\br))$ to \eqref{nls}.
Also, such a solution is unique in $C([0, T]; X^s(\br))$.
Moreover, the flow map from $\{\phi\in X^s(\br) \mid\nrm{\phi}_{X^s}\le R\}$ to $C([0, T]; X^s(\br))$ is Lipschitz continuous.
}
\item \it{ When $0\le s\le \frac{1}{2}$, for any $R>0$, there exists $T=T(s, R)>0$ such that the flow map $S_T^1: X^{1}(\br)\cap \{\phi\in X^s(\br) \mid\nrm{\phi}_{X^s}\le R\}\to C\left([0, T]; X^{1}(\br)\right)$ extends to the Lipschitz continuous map $S_T^s: \{\phi\in X^s(\br) \mid\nrm{\phi}_{X^s}\le R\}\to C\left([0, T]; X^s(\br)\right)\cap L^4([0, T]; L^\infty(\br))$.
Moreover, the function $S_T^s(\phi)$ solves \eqref{nls} for any $\phi\in X^s(\br)$ with $\|\phi\|_{X^s}\le R$.
}
\end{enumerate}
\end{thm}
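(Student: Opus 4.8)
The plan is to remove the derivative from the nonlinearity by a gauge transformation, reducing \eqref{nls} to a cubic Schr\"odinger equation with bounded, solution-dependent coefficients, and then to solve that equation: directly in the algebra regime $s>\fr12$, and by a Strichartz argument realized as the $X^s$-limit of the $X^1$-flow when $0\le s\le\fr12$. Given a solution $u$ of \eqref{nls}, put $A(t,x):=\int_\minfty^x u(t,y)\,dy$; by definition $u(t)\in X^s(\br)$ means exactly that $A(t)$ is bounded, so $X^s(\br)$ is precisely the class that makes $g:=e^{2\lambda A+\mu\bar A}$ a bounded, nonvanishing multiplier (both $\abs g$ and $\abs{g^{-1}}$ pinched between positive constants depending only on $\|A\|_{L^\infty}$). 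Integrating \eqref{nls} in $x$ absorbs the outer derivative and yields the clean Duhamel identity
\[
A(t)=e^{\frac{it}{2}\px^2}A(0)-i\int_0^t e^{\frac{i(t-s)}{2}\px^2}\bigl(\lambda u^2+\mu\abs u^2\bigr)(s)\,ds ,
\]
whose $L^\infty$-norm is controlled by $\|\phi\|_{X^s}$ and $T^{1/2}\|u\|_{L^\infty_TL^2_x}^2$, using the dispersive bound $\|e^{\frac{i\tau}{2}\px^2}h\|_{L^\infty}\lesssim\abs\tau^{-1/2}\|h\|_{L^1}$ on the Duhamel term and the action of $e^{\frac{it}{2}\px^2}$ on $\{F\in L^\infty:\px F\in L^2\}$ on the linear term. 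Next, from $\pt A=\fr i2\px u-i(\lambda u^2+\mu\abs u^2)$ a direct computation shows that $v:=g^{-1}u$ solves
\[
i\pt v+\fr12\px^2 v=\abs\mu^2\abs g^2\abs v^2 v+\Bigl(\mu\bar\lambda+\fr{\mu^2}{2}\Bigr)\bar g^2\abs v^2\bar v ,
\]
the constants $2\lambda,\mu$ in the exponent having been chosen precisely so that every quadratic term carrying a derivative cancels, and so does the cubic term $u^3$. (For $\mu=0$ this is the free equation, consistent with Ozawa's result and with Christ's ill-posedness in the absence of the primitive condition.) Conversely $u=gv$, so \eqref{nls} in $X^s(\br)$ is equivalent to solving this cubic equation for $v\in H^s(\br)$ while keeping $A$ bounded.

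For $s>\fr12$, where $\Hs$ is a Banach algebra embedded in $L^\infty(\br)$, I would run a contraction for the pair $(v,A)$ in a ball of $\cth s\times Z$, with $Z$ recording the bound $\|A\|_{L^\infty_{T,x}}+\|\px A\|_{L^\infty_TH^s}<\infty$ forced by $u=gv\in\cth s$. The cubic nonlinearity is estimated by the algebra property and gains a factor of $T$ in Duhamel; the map $A\mapsto g=e^{2\lambda A+\mu\bar A}$ is Lipschitz on balls by the two-sided bound on $g$; and the bound on $\|A\|_{L^\infty_{T,x}}$ closes via the Duhamel identity above (note $\|\int_\minfty^\cdot f\|_{L^\infty}\le\|f\|_{X^s}$). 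Choosing $T=T(s,R)$ small closes the fixed point; undoing the gauge gives $u\in\cth s$ with bounded primitive, i.e.\ $u\in\ctx s$, and applying the same estimates to differences yields uniqueness in $\ctx s$ and Lipschitz dependence on $\phi$. This is (i).

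For $0\le s\le\fr12$, $\Hs$ is no longer an algebra and $\int_\minfty^x u$ must be read through the modified definition of $X^s(\br)$ (Definition~\ref{def of $X^s$}); the point is to realize $S_T^s$ as the $X^s$-continuous extension of the $X^1$-flow of (i). First I would establish Strichartz estimates for the cubic equation using the $1$-d admissible pairs $(6,6)$, $(4,\infty)$, $(\infty,2)$: with $\|g\|_{L^\infty_{T,x}}\lesssim_R 1$ one gets $\|v\|_{L^6_{T,x}}+\|v\|_{L^4_TL^\infty_x}+\|v\|_{L^\infty_TL^2_x}\lesssim\|v(0)\|_{L^2}+T^{1/2}\|v\|_{L^6_{T,x}}^3$, together with the analogue after applying $\jb{\px}^s$ (fractional Leibniz; the multiplication by $g$ is absorbed using $\abs g\lesssim_R 1$ and the structure $\px g=g(2\lambda u+\mu\bar u)$). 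Combined with the $L^\infty$-bound on $A$ from the first paragraph, this yields — on a time $T=T(s,R)$ that depends on $\phi$ only through $R=\|\phi\|_{X^s}$ — uniform bounds $\|v\|_{\cth s}+\|v\|_{L^6_{T,x}}+\|v\|_{L^4_TL^\infty_x}\le C(R)$ and matching Lipschitz estimates for differences in the $H^s$- (hence $X^s$-) metric; in particular the $X^1$-solution persists on such a $T$. Since $X^1(\br)\cap\{\|\phi\|_{X^s}\le R\}$ is dense in $\{\|\phi\|_{X^s}\le R\}$ for the $X^s$-metric (truncation and mollification, using the modified definition), the uniformly Lipschitz map $S_T^1$ extends to a Lipschitz map $S_T^s$ into $\ctx s\cap L^4([0,T];L^\infty(\br))$, and passing to the limit in the Duhamel formula — the linear part trivially, the nonlinear part using the Strichartz bounds and the $X^s$-continuity of $\phi\mapsto(u,A,g)$ — shows that $S_T^s(\phi)$ solves \eqref{nls}. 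This is (ii).

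The gauge computation is the engine but is elementary; the substance is in (ii), where I expect three main difficulties. First, giving $\int_\minfty^x u$ — and hence $g$ — a meaning and adequate continuity when $u$ lies only in $H^s$ with $s\le\fr12$: this is exactly what forces the modified definition of $X^s(\br)$, and it must be set up so that the low frequencies of the primitive are tamed (its high-frequency part lying in $H^{s+1}\hookrightarrow L^\infty$ for free). Second, producing an existence time that depends on $\phi$ only through $\|\phi\|_{X^s}$ and not through the higher norm used to approximate — this is where the Strichartz a priori bounds, closing at the regularity level $s$ alone, are indispensable. Third, verifying that the $X^s$-limit genuinely solves \eqref{nls}, which requires the whole nonlinear map, gauge factor included, to be continuous in the $X^s$-topology. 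The split between (i) and (ii) is precisely the algebra threshold $s=\fr12$.
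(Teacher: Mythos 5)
Your gauge computation, the two--regime structure (algebra for $s>\fr12$, Strichartz plus density for $0\le s\le\fr12$), and the estimates you list (the $L^\infty$ bound on the primitive via the dispersive estimate, the Lipschitz difference bounds, the extension of the $X^1$-flow) all match the paper. The gap is in how you \emph{construct} the solution. You propose a contraction for the pair $(v,A)$ via two decoupled Duhamel identities, but a fixed point of that system is not obviously a solution of \eqref{nls}: nothing in the fixed point forces the consistency relation $\px A=e^{2\lambda A+\mu\bar A}v$, and without it the gauge cancellation fails. If you set $w:=\px A-u$ with $u:=e^{2\lambda A+\mu\bar A}v$, the residual equation for $w$ contains terms of the form $(2\lambda w+\mu\bar w)\px v$, so closing the Gronwall estimate for $w$ requires $\px v\in L^1_TL^\infty_x$ --- which is not controlled by $\|v\|_{L^\infty_TH^s}$ for $\fr12<s\le 1$, i.e.\ exactly in part of the range where you claim the contraction works directly. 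The alternative of defining $A$ from $v$ (or from $u$) and gauging back runs into the same circularity: the identity $\pt\Lambda=2\lambda(\fr i2\px u-i(\lambda u^2+\mu|u|^2))+\cdots$, which is what makes $e^{-\Lambda}u$ solve the cubic equation, is itself a consequence of $u$ solving \eqref{nls} (this is Lemma \ref{diff}/Proposition \ref{prop gauge} in the paper, and it needs $s>\fr12$ and an already-existing solution). This is precisely why the paper does \emph{not} iterate on the gauged system: it first solves the parabolic regularization \eqref{ep-nls} of the \emph{ungauged} equation by contraction for $s>\fr32$ (the smoothing $(\vep t)^{-1/2}$ absorbs the derivative in the nonlinearity), uses the gauge transformation only a posteriori on these genuine solutions to get $\vep$-uniform bounds, and then passes to the limit with a Bona--Smith approximation. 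Your proposal is missing this construction step (or a repaired substitute for it), and for $0\le s\le\fr12$ it also glosses over the fact that the gauged equation cannot even be rigorously derived at that regularity, which is why the theorem is phrased there as an extension of the $X^1$-flow rather than as direct solvability.

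A secondary, smaller point: in (ii) you need the existence time of the $X^1$-solutions to depend only on $\|\phi\|_{X^s}$ (indeed only on $\|\phi\|_{X^0}$), not on $\|\phi\|_{X^1}$; you assert this follows from Strichartz bounds closing at level $s$, which is correct in spirit, but the paper devotes Proposition \ref{existence time} and Lemmas \ref{apriori-1}--\ref{apriori1} to a careful bootstrapping between the $X^0$, $X^1$ and $X^{s'}$ levels to make it rigorous; that ladder is needed because the a priori estimate at level $s$ is only available for solutions that are already smooth enough to justify the gauge identity.
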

The definition of the solution is given in Definition \ref{defi sol}.
In the case of $s$ being small, it is hard to justify the differentiation of the gauge transformation.
Thus, we proved the uniqueness in a weaker sense.
The justification for the differentiation of the gauge transformation is discussed in Section 3.

When we have $2\lambda + \bar\mu = 0$, the $L^2$-norm of the solution to \eqref{nls} is conserved. 
As we see in Section 3, the $L^\infty$-norm of the primitive of the solution is controlled by the $X^0$-norm of the initial data and the $L^2$-norm of $u$. Therefore, we obtain the following:

\begin{thm}\label{special case $X^s$}
Let $s\ge 0$. 
If $2\lambda + \bar\mu =0$, then \eqref{nls} is globally well-posed in $X^s(\br)$.
\end{thm}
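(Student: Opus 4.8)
The plan is to promote the local theory of Theorem~\ref{WP in $X^s$} to a global one by producing, for each $\phi\in X^s(\br)$, an a priori bound on $\|u(t)\|_{X^s}$ that is finite on every bounded time interval. Together with the standard continuation criterion implicit in Theorem~\ref{WP in $X^s$} --- since the local existence time there depends only on the size of the data in $X^s(\br)$, a solution that cannot be continued past a finite $T^\ast$ must have $\|u(t)\|_{X^s}\to\infty$ as $t\uparrow T^\ast$ --- such a bound forces $T^\ast=\infty$. Because $\|\cdot\|_{X^s}$ is the sum of $\|\cdot\|_{H^s}$ and the $L^\infty$-norm of the primitive, it suffices to control these two pieces separately and uniformly in finite time.

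First, the hypothesis $2\lambda+\bar\mu=0$ is used to conserve mass. Pairing \eqref{nls} with $u$ in $L^2(\br)$ and taking real parts, the linear term contributes nothing and, after an integration by parts symmetrizing the cubic expression, $\tfrac{d}{dt}\|u(t)\|_{L^2}^2$ becomes a constant multiple of $\im\big((2\lambda+\bar\mu)\int_{\br}|u|^2\px u\,dx\big)$; under $2\lambda+\bar\mu=0$ this vanishes, so $\|u(t)\|_{L^2}=\|\phi\|_{L^2}$ for all $t$. Next, the primitive $U(t,x)=\int_{-\infty}^{x}u(t,y)\,dy$ solves $i\pt U+\tfrac12\px^2U=\lambda(\px U)^2+\mu|\px U|^2$ (obtained by integrating \eqref{nls} in $x$), and the a priori estimate for the primitive established in Section~3 yields $\|U(t)\|_{L^\infty}\le C\big(\|\phi\|_{X^0},\|u(t)\|_{L^2}\big)$, which by the conservation just shown is $\le C'(\|\phi\|_{X^0})$. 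Hence $\sup_{t}\|u(t)\|_{X^0}$ is bounded by a constant depending only on $\|\phi\|_{X^0}$, throughout the existence interval.

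It remains to bound $\|u(t)\|_{H^s}$, and here the gauge transformation of Section~3 enters. It replaces $u$ by a function $v$ whose relation to $u$ involves an exponential factor built from the primitive $U$ --- hence, by the bound just obtained, a factor bounded above and below in $L^\infty$ uniformly in time with constants depending only on $\|u(t)\|_{X^0}$ --- and such that $v$ solves a Schr\"odinger equation whose quadratic nonlinearity no longer loses a derivative. Thus $v\mapsto u$ is an isomorphism of $H^s(\br)$ with norm controlled by $\|u(t)\|_{X^0}$, and an energy estimate for the $v$-equation gives $\tfrac{d}{dt}\|v(t)\|_{H^s}^2\lesssim C\big(\|u(t)\|_{X^0}\big)\|v(t)\|_{H^s}^2$; by Gronwall and the uniform $X^0$-bound, $\|u(t)\|_{H^s}\lesssim e^{C't}\|\phi\|_{H^s}$ with $C'$ depending only on $\|\phi\|_{X^0}$. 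Therefore $\|u(t)\|_{X^s}$ is finite on every bounded interval and $T^\ast=\infty$. In the range $0\le s\le\tfrac12$ the same two bounds apply to the solution $S_T^s(\phi)$ furnished by Theorem~\ref{WP in $X^s$}(ii), the mass identity and the gauged energy estimate being justified by approximating $\phi$ with $X^1(\br)$-data and passing to the limit via the Lipschitz dependence of the flow.

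The main obstacle is exactly this last step: pushing the gauge transformation through so that both the residual nonlinearity of the $v$-equation and the growth of $\|u(t)\|_{X^s}$ are genuinely governed by $\|u(t)\|_{X^0}$ alone, for every $s\ge0$. This requires using the precise algebraic form of $\px(\lambda u^2+\mu|u|^2)$ to see that every derivative-losing term is cancelled, and --- at low regularity $0\le s\le\tfrac12$, where $H^s(\br)$ is not an algebra --- controlling multiplication by the gauge factor as an operator on $H^s(\br)$ although that factor need not be smooth there. By contrast, the conservation of mass under $2\lambda+\bar\mu=0$ and the reduction to an a priori bound via the continuation criterion are routine.
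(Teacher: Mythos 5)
Your overall architecture is the paper's: conserve the $L^2$-norm using $2\lambda+\bar\mu=0$ (the paper gets \eqref{conservation law1} from $|e^{-\Lambda}|=1$ and the $L^2$-conservation of the gauged cubic NLS, but your direct computation of $\tfrac{d}{dt}\|u\|_{L^2}^2=c\,\im\bigl((2\lambda+\bar\mu)\int|u|^2\px u\bigr)$ is equivalent); control the primitive by Lemma \ref{smooth apriori step4} to get a bound on $\|u(t)\|_{X^0}$ depending only on $t$ and $\|\phi\|_{X^0}$; and then iterate the local theory, whose existence time depends only on $\|\phi\|_{X^0}$ (Propositions \ref{existence time} and \ref{existence time2}). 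Up to that point you are reproducing the paper's proof.

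The gap is in your third paragraph. The gauged equation \eqref{gauge special case} is the \emph{cubic} NLS, and the energy inequality it yields is $\tfrac{d}{dt}\|v(t)\|_{H^s}^2\lesssim\|v(t)\|_{L^\infty}^2\|v(t)\|_{H^s}^2$, not $\tfrac{d}{dt}\|v(t)\|_{H^s}^2\lesssim C(\|u(t)\|_{X^0})\|v(t)\|_{H^s}^2$: since $|e^{-\Lambda}|=1$ one has $\|v(t)\|_{L^\infty}=\|u(t)\|_{L^\infty}$, and this is \emph{not} controlled by $\|u(t)\|_{X^0}=\|u(t)\|_{L^2}+\|\int_{-\infty}^x u\|_{L^\infty}$ (a tall narrow bump defeats any such bound), so your Gronwall step does not close. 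Likewise, for $s\ge 1$ multiplication by $e^{\pm\Lambda}$ is not an isomorphism of $H^s(\br)$ with norm controlled by $\|u(t)\|_{X^0}$ alone: already $\|e^{\Lambda}f\|_{H^1}$ produces the term $\|(2\lambda u+\mu\bar u)f\|_{L^2}$, and the paper's bound \eqref{est of exp f2} correspondingly involves $\|u(t)\|_{H^{[s]}}^{[s]+1}$. The paper repairs exactly these two points by working on time intervals of length $T=T(\|\phi\|_{X^0})$ with the Strichartz norm $\|u\|_{L_T^4L_x^\infty}$ (Lemma \ref{strest}, Lemmas \ref{apriori-1}--\ref{apriori1}), which is controlled by $\|\phi\|_{L^2}$ on such intervals via a continuity argument, and by an induction on the regularity ($H^{k-1}\Rightarrow H^k\Rightarrow H^s$) rather than a single $H^s$-isomorphism claim; the local bounds are then iterated using the polynomially growing $X^0$-bound \eqref{pr theorem2}. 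Your conclusion is correct and your plan is the right one, but as written the key differential inequality is false and must be replaced by this Strichartz-based local a priori estimate.
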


Moreover, we can prove the global well-posedness in $H^s(\br)$ under this coefficient case.

\begin{thm}\label{special case $H^s$}
Let $s\ge 1$ and $2\lambda + \bar\mu =0$. 
Then, the Cauchy problem \eqref{nls} is globally well-posed in $H^s(\br)$.
More precisely, for any $\phi\in H^s(\br)$ and $T>0$, there exists a solution to \eqref{nls} which belongs to $C([0, T]; H^s(\br))$.
Also, such a solution is unique in $\{u\in C([0, T]; H^s(\br))\mid\px u\in L^{1}([0, T]; L^{\infty}(\br))\}$. 
Moreover, the flow map from $H^s(\br)$ to $C([0, T]; H^s(\br))$ is continuous.
\end{thm}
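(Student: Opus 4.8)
The plan is to realize the solution as a limit of the global $X^s$-solutions supplied by Theorem~\ref{special case $X^s$}, the whole difficulty being to produce a priori $H^s$-bounds that are insensitive to the primitives of the approximating data. Two features of the hypothesis $2\lambda+\bar\mu=0$ are used repeatedly: it forces conservation of $\|u\|_{L^2}$, and it makes the relevant gauge transformation have a modulus-one symbol.

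Given $\phi\in H^s(\br)$ with $s\ge 1$, pick $\phi_n\in X^s(\br)$ with $\phi_n\to\phi$ in $H^s(\br)$ — for instance $\phi_n=\chi(\cdot/n)\phi$ with $\chi\in C_c^\infty(\br)$ equal to $1$ near the origin, which have bounded primitives, using that $C_c^\infty(\br)\subset X^s(\br)$ is dense in $H^s(\br)$. By Theorem~\ref{special case $X^s$} each $\phi_n$ launches a global solution $u_n\in C([0,\infty);X^s(\br))\subset C([0,\infty);H^s(\br))$. The core step is to show that for every $T>0$ the quantity $\sup_n\sup_{0\le t\le T}\|u_n(t)\|_{H^s}$ is bounded by some $C(\|\phi\|_{H^s},T)$ that does not depend on $\sup_x|\int_{\minfty}^x\phi_n|$. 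For the $L^2$-part this is the conservation law, uniformly bounded since $\phi_n\to\phi$ in $H^s\subset L^2$. For the higher derivatives I would apply, to each $u_n$ — legitimately, since $\int_{\minfty}^x u_n$ is bounded — the gauge transformation $v_n=e^{i\theta_n}u_n$ with $\px\theta_n=-4\im(\lambda u_n)$; when $2\lambda+\bar\mu=0$ one checks that the derivative-losing part $(2\lambda u_n+\mu\bar u_n)\px u_n$ of the nonlinearity is cancelled exactly, while $\theta_n$ is real-valued, so $|e^{i\theta_n}|\equiv1$ and $\|u_n(t)\|_{H^s}$ and $\|v_n(t)\|_{H^s}$ are comparable with constants controlled by lower-order norms of $u_n$. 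One then runs the energy method of Section~3 on the transformed equation: the $H^1$-level closes using only the conserved $L^2$-norm (together with the energy that is conserved in this coefficient case), and for $s>1$ the surviving terms give $\tfrac{d}{dt}\|\px^s v_n(t)\|_{L^2}^2\le C(\|u_n(t)\|_{H^1})\|\px^s v_n(t)\|_{L^2}^2$, so Gronwall propagates the bound over all of $[0,T]$.

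With these bounds, $u_n$ is bounded in $C([0,T];H^s)$ and, from \eqref{nls}, $\pt u_n$ is bounded in $C([0,T];H^{s-2})$; an Aubin--Lions / Bona--Smith argument then shows that $u_n$ is Cauchy in $C([0,T];H^{s'})$ for every $s'<s$, its limit $u$ solves \eqref{nls}, belongs to $C([0,T];H^s)$ (upgrading weak to strong continuity via the energy identity and the $H^{s'}$-convergence), and is global because $T$ is arbitrary. By the one-dimensional Schr\"odinger Strichartz estimate one has $\px u\in L^4([0,T];L^\infty(\br))\subset L^1([0,T];L^\infty(\br))$, so $u$ lies in the stated uniqueness class. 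Uniqueness there follows from an $L^2$ difference estimate: for two such solutions the difference $w$ solves a linear Schr\"odinger equation whose coefficients involve $u,\tilde u$ and $\px u,\px\tilde u$, and integration by parts plus Gronwall closes precisely because $\px u,\px\tilde u\in L^1_TL^\infty_x$. Continuity of the flow map $H^s(\br)\to C([0,T];H^s(\br))$ then follows from the same difference estimates combined with the Bona--Smith scheme: mollify both data, and use the uniform $H^s$-bounds and the $H^{s'}$-continuity to control the error.

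The main obstacle I expect is the uniform a priori $H^s$-bound: making precise that the unimodularity of $e^{i\theta_n}$ really does make the transformation invisible to the $X^s$-norm, and, inside the energy estimate, handling the time derivative $\pt\theta_n$ of the phase, which is a \emph{local} expression in $u_n$ but still contains $\px u_n$; arranging the $H^1$-closure so that the Gronwall coefficient depends only on the conserved $L^2$-norm, rather than on a quantity that might grow in time, is the delicate point. A secondary difficulty is that, lacking Lipschitz dependence of the flow, its continuity must be obtained through the Bona--Smith double limit rather than by a direct contraction.
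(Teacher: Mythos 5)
Your skeleton is right where it overlaps with the paper — the gauge $e^{-\Lambda}u$ with unimodular phase, the reduction to cubic NLS, $L^2$ conservation for globality, and the $L^2$ Gronwall difference estimate (the paper's Proposition \ref{diff energy}) for uniqueness in the class $\px u\in L^1_TL^\infty_x$. But there is a genuine gap at the central step: passing from ``$u_n$ bounded in $C([0,T];H^s)$ and Cauchy in $C([0,T];H^{s'})$ for $s'<s$'' to convergence and continuity \emph{in $H^s$ itself}. Compactness of this kind only yields weak continuity of the limit in $H^s$; the ``energy identity'' you invoke to upgrade it does not exist at the $H^s$ level for $s>1$ (the only conserved quantities here are the $L^2$ norm and the $H^1$-level energy of the gauged cubic NLS). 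Nor can you transfer the strong $H^s$ well-posedness of the cubic NLS back through the gauge: the transformed data $e^{-\Lambda_n(0)}\phi_n$ involve the primitives $\int_\minfty^x\phi_n$, which need not converge to anything when $\phi\notin X^s(\br)$, so the gauged problems have no well-defined limit. Finally, the Bona--Smith scheme you sketch for flow-map continuity requires a quantitative difference estimate in $H^s$ exploiting extra smoothness of mollified data, and — as the paper stresses — the gauge transformation is unavailable for differences of two solutions, so no such estimate is at hand from your ingredients.

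The paper closes exactly this gap with Tao's frequency envelope method: given a $\delta$-frequency envelope $\{Rc_k\}$ of $\phi$, it proves the a priori bounds $\nrm{P_k u_n}_{S_{T^*}}+\nrm{P_k(e^{\Lambda_n}u_n)}_{S_{T^*}}\lesssim 2^{-sk}c_k\,(R+R^{4(s+1)(s+2)})$ uniformly in $n$ (Propositions \ref{apriori gauge fr}--\ref{apriori gauge fr2.5}, via commutator estimates for $[P_k,e^{\pm\Lambda_n}]$), hence $\nrm{P_{\ge k}u_n}_{L^\infty_{T^*}H^s_x\cap L^1_{T^*}\dot W^{1,\infty}_x}\lesssim(\sum_{l\ge k}c_l^2)^{1/2}+2^{-k}C(R)\to0$ as $k\to\infty$ \emph{uniformly in $n$}. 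Splitting $u_n-u_m$ into $P_{\le k}$ (controlled by the $L^2$ difference estimate times $2^{(s+\frac12)k}$) and $P_{\ge k+1}$ (controlled by the envelope tails) then gives the Cauchy property in $C([0,T^*];H^s)\cap L^1_{T^*}\dot W^{1,\infty}_x$ and, by the same splitting, continuity of the flow map. If you want to complete your argument you must either reproduce this frequency-envelope tail control or supply an equivalent uniform high-frequency smallness mechanism; neither Aubin--Lions nor a naive Bona--Smith limit provides it.
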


There are many works about the Cauchy problem for the following Schr\"odinger equation
\begin{equation}\label{g-nls}
i\pt u + \Delta u = P(u, \bar u, \nabla u, \nabla \bar u), \quad t>0,\  x\in\br^n,
\end{equation}
where $P$ is a polynomial that does not have either constant or linear terms.
When the nonlinearity $P(u, \bar u, \nabla u, \nabla \bar u)$ consists of cubic or more order terms, Kenig, Ponce, and Vega \cite{KPV1993NLS} proved the small data local well-posedness of \eqref{g-nls} in smooth Sobolev spaces with general dimensions. 
Also, Pornnopparath \cite{Pornnopparath} improved their result in the one-dimensional case.
They use the contraction mapping argument.

As \cite{KPV1993NLS, Pornnopparath}, we can uniformly treat the nonlinearity whose order is greater than or equal to 3 in the Sobolev spaces.
On the other hand, when $P(u, \bar u, \nabla u, \nabla \bar u)$ consists of quadratic nonlinearity with derivative, the situation largely differs.
When $P(u, \bar u, \nabla u, \nabla \bar u)=\bar u \partial_{x_k}\bar u$ or $u \partial_{x_k}\bar u$, the Fourier restriction norm method works well to prove the well-posedness in the Sobolev spaces.
See \cite{Grunrock, Hirayama2014} for $\bar u \partial_{x_k}\bar u$ and \cite{IO} for $u \partial_{x_k}\bar u$, respectively.
As these results, we can apply an iteration argument if the quadratic term has a derivative in $\bar u$.
However, the situation becomes more complicated when the quadratic term has a derivative in $u$.
When we consider the case $P(u, \bar u, \nabla u, \nabla \bar u)= \bar u\partial_{x_k} u$, the flow map fails to be twice differentiable in $H^s(\br^n)$ for any $s\in \br$ (see, for example, \cite{IO}), we cannot use an iteration argument in $H^s(\br^n)$.
When $P(u, \bar u, \px u, \px \bar u)= u\px u$, Molinet, Saut, and Tzvetkov \cite{MST} showed that the flow map fails to be twice differentiable in $H^s(\br)$ for any $s\in \br$. Moreover, as mentioned above, Christ \cite{Christ} proved the ill-posedness in $H^s(\br)$ for any $s\in\br$.
We note that we can utilize an iteration argument in weighted Sobolev spaces.
See \cite{KPV1993NLS}.

In \cite{Ozawa1998}, Ozawa transformed \eqref{Ozawa eq} to the system of equations of $v$ and 
\[
\tilde v:=\exp\left(\lambda\int_\minfty^x v(t, y)\,dy + \mu\int_\minfty^x \bar v(t, y)\,dy\right)\px v.
\]
We note that we can use such a transformation since solutions are considered in $X^s(\br)$.
Using this transformation, he obtained a system of equations of $v, \tilde v$ which has no derivative in the nonlinearity.
Thus, an iteration argument works well.

We explain the idea of the proof of our result.
The standard iteration argument in $H^s(\br)$ does not work well since \eqref{nls} has the nonlinearities $u\px u$ and $\px(|u|^2)$.
Also, \eqref{nls} does not have an energy structure.
On the other hand, the gauge transformation works well.
In this paper, we consider the transformation 
\[
\tilde u:=\exp\left(\lambda\int_\minfty^x u(t, y)\,dy + \mu\int_\minfty^x \bar u(t, y)\,dy\right)u.
\] 
Then, the transformed equation has no derivative in the nonlinearity.
Thus, we can prove the well-posedness in $X^s(\br)$ for $s\ge0$.
We note that we cannot apply the argument like \cite{Ozawa1998}.
This is because $\tilde u$ contains no derivative, and thus the system of equations of $u, \tilde u$ has a derivative in the nonlinearity.
Therefore, we first prove the existence of a solution to \eqref{nls} in a smooth space.
To construct a solution to \eqref{nls}, we consider the regularized equation such as \eqref{ep-nls}, and we use the gauge transformation in this equation.
We note that we can justify the gauge transformation since the regularized equation has a solution.
Combining the gauge transformation in the regularized equation and a Bona-Smith type approximation in $X^s(\br)$, we can obtain a solution to \eqref{nls}.
Once we prove the existence of a solution to \eqref{nls} in $X^s(\br)$, then we can justify the gauge transformation for the solution to \eqref{nls} and prove the uniqueness and the Lipschitz continuity of the flow map.

We note that we used the function space $X^s(\br)$ since the gauge transformation is not bounded in $L^2(\br)$.
In the case of the Benjamin-Ono equation
\[
\pt u + \mathcal{H}\px^2 u = u\px u,
\]
where $\mathcal{H}$ denotes the Hilbert transform,
Tao \cite{Tao2004} and Ifrim and Tataru \cite{IT2019} used the gauge transformation that has a form of $\exp(i\int_\minfty^x u(t, y)\,dy) u$.
In the Benjamin-Ono equation, the unknown function is real-valued. 
Thus, there is no problem with the boundedness of the gauge transformation.
On the other hand, the exponent is not necessarily pure imaginary in our case since the unknown function is complex-valued.
Hence, we need to use $X^s(\br)$.
However, as in Theorem \ref{special case $H^s$}, the special coefficient condition makes the exponent pure imaginary.
This enables us to obtain the well-posedness in Sobolev spaces.

The difficulty in proving the well-posedness in the Sobolev space is that we cannot utilize the gauge transformation when considering the difference.
The gauge transformation works well to obtain the a priori estimate.
On the other hand, it does not work when we consider the estimate for the difference in $H^s(\br)$.
Fortunately, the special condition has good structures.
In particular, the $L^2$-norm of the difference between two solutions is controlled by the $L^2$-norm of the difference between two initial data.
Hence, we can use the frequency envelope method that is made by Tao \cite{Tao2001}.

The merit of using the frequency envelope is that once we obtain the estimate for the difference in some weaker space, we can prove the well-posedness by using an a priori estimate which uses the frequency envelope.
Thus, this method works well when $2\lambda + \bar\mu=0$.
We need to use the $L^1([0, t]; L_x^\infty(\br))$-norm of the derivative of the solutions when obtaining the estimate for the $L^2$-norm of the difference between two solutions.
Therefore, we proved the well-posedness in $H^s(\br)$ for $s\ge 1$.

The remainder of this paper is organized as follows. In Section 2, we introduce the notation and some lemmas.
In Section 3, we show the well-posedness for regularized equations and the existence of a solution to \eqref{nls} in $X^s(\br)$ for sufficiently large $s$. 
In Section 4, we prove Theorem \ref{WP in $X^s$}.
In Section 5, we show Theorem \ref{special case $X^s$} and \ref{special case $H^s$}.

\section{Preliminaries}
Throughout this paper, we define the solution to \eqref{nls} by the following definition. 
\begin{defi}\label{defi sol}
Let $s\ge0$, $T>0$.
We say $u\in C([0, T]; H^{s}(\br))$ is a solution to \eqref{nls} when $u$ satisfies
\begin{equation}\label{sol}
u(t) = \phi + \fr i 2 \px^2\left(\int_0^tu(t')\,dt'\right) -i\px\left(\int_0^t (\lambda u^2 + \mu \abs{u}^2)(t')\,dt'\right)
\end{equation}
in $H^{s-2}(\br)$ for all $t\in [0, T]$.
\end{defi}
\noindent Here, we note that if $s\ge 0$ and $f\in H^s(\br)$, then we have $f^2, \abs{f}^2\in H^{s-1}(\br)$. Therefore, the third part on the right-hand side of \eqref{sol} is well-defined.

\subsection{Notation}
Let $\cf \phi (\xi):= \fr{1}{\sqrt{2\pi}}\int_{\br}\phi(x)e^{-ix\xi}\,dx$ be the Fourier transform of $\phi$. We also use the notation $\hat \phi$ to express the Fourier transform of $\phi$. We write $A\lesssim B$ to denote $A\le CB$ for some $C>0$.
Also, we write $A\sim B$ when we have $A\lesssim B$ and $B\lesssim A$.
Let $p$ be an even bump function such that
\[
p(\xi)=1\ \mathrm{on}\ [0, 1],\quad  p(\xi)=0\ \mathrm{if}\ \xi\ge 2.
\]
Let $\bn_0 := \bn\cup\{0\}$. For $k\in \bn_0$, we denote $p_{\le k}(\xi)= p(\xi/2^{-k})$.
Also, we define the inhomogeneous Littlewood-Paley operators $P_{\le k}$ for $k\ge0$ by
\[
\cf(P_{\le k}\phi)(\xi) = p_{\le k}(\xi)\hat \phi (\xi).
\]
If $k\le -1$, we define $P_{\le k}=0$. For $k\in \bz$, let $P_k := P_{\le k} - P_{\le k-1}$. We also define $p_k$ by the symbol of $P_{k}$.
In order to simplify the notation, we denote
\[
P_{[j, k]} := \sum_{\ell = j}^{k}P_{\ell}
\]
for $j, k\in \bz$ with $j\le k$. 
We write $P_{\ge j} := I - P_{\le j-1}$.

For $p, q\in[1, \infty]$ and $T>0$, we denote
\[
L^q_TL^p_x := L^q_t([0, T];L^p_x(\br)).
\]
We also define $L_T^qH_x^s$ and so on in the same manner.
\subsection{Some lemmas}
In this paper, we use the Strichartz estimate. See, for example,  \cite[Lemma 2.1]{GV1992}. 
In the following, we denote $e^{\frac{i}{2}t\px^2}$ by the Schr\"{o}dinger propagator.
\begin{lem}\label{strest}
Let $\phi\in L^2(\br)$ and $f\in L_T^1L_x^2$. 
If $u$ satisfies
\[
u(t)=e^{\frac{i}{2}t\px^2}\phi -i\int_0^te^{\frac{i}{2}(t-t')\px^2}f(t')\,dt' \ \mathrm{in}\ L^2(\br),
\]
then we have
\[
\nrm{u}_{L_T^\infty L_x^2\cap L_T^4L_x^\infty}\lesssim \nrm{\phi}_{L^2} + \nrm{f}_{L_T^1L_x^2}.
\]
\end{lem}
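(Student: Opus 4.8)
The plan is to reduce the claimed inequality to the two classical estimates for the free Schrödinger group on $\br$: the homogeneous estimate
\[
\nrm{e^{\frac{i}{2}t\px^2}\phi}_{L_T^\infty L_x^2\cap L_T^4 L_x^\infty}\lesssim \nrm{\phi}_{L^2},
\]
and the dual/inhomogeneous estimate obtained from it by the standard $TT^*$ argument together with the Christ--Kiselev lemma. First I would recall that $(q,r)=(\infty,2)$ is trivially admissible by unitarity of $e^{\frac{i}{2}t\px^2}$ on $L^2$, and that $(q,r)=(4,\infty)$ is the endpoint one-dimensional admissible pair, for which the dispersive bound $\nrm{e^{\frac{i}{2}t\px^2}}_{L^1_x\to L^\infty_x}\lesssim |t|^{-1/2}$ yields, via the $TT^*$ method and the Hardy--Littlewood--Sobolev inequality, the homogeneous Strichartz bound above. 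This is exactly the content of \cite[Lemma 2.1]{GV1992}, so I would simply cite it rather than reprove it.

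Next I would treat the Duhamel term. Write $u(t)=e^{\frac{i}{2}t\px^2}\phi - i\int_0^t e^{\frac{i}{2}(t-t')\px^2}f(t')\,dt'$ and estimate the two summands separately. The first summand is handled by the homogeneous estimate just recalled, giving the bound $\lesssim\nrm{\phi}_{L^2}$. For the second, I would first bound the "untruncated" operator: by Minkowski's inequality in the $L^\infty_x$ (resp.\ $L^2_x$) norm followed by the homogeneous estimate applied to each fixed-time slice $f(t')$,
\[
\Bnrm{\int_0^T e^{\frac{i}{2}(t-t')\px^2}f(t')\,dt'}_{L_T^\infty L_x^2\cap L_T^4 L_x^\infty}
\lesssim \int_0^T \nrm{e^{\frac{i}{2}(t-t')\px^2}f(t')}_{L_T^\infty L_x^2\cap L_T^4 L_x^\infty}\,dt'
\lesssim \nrm{f}_{L_T^1 L_x^2}.
\]
Since the time cutoff $0<t'<t$ breaks the clean group structure, I would then invoke the Christ--Kiselev lemma — valid here because the relevant exponents satisfy $1<4$, i.e.\ the "input" time exponent $1$ is strictly smaller than the "output" time exponent $4$ — to pass from the retarded-in-nothing estimate to the genuinely retarded one $\int_0^t$. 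For the $L_T^\infty L_x^2$ component one does not even need Christ--Kiselev: unitarity gives $\nrm{\int_0^t e^{\frac{i}{2}(t-t')\px^2}f(t')\,dt'}_{L_x^2}\le\int_0^t\nrm{f(t')}_{L_x^2}\,dt'\le\nrm{f}_{L_T^1 L_x^2}$ directly.

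The only mildly delicate point — and the step I would flag as the main obstacle, though it is standard — is the application of the Christ--Kiselev lemma for the $L_T^4 L_x^\infty$ piece: one must verify the strict inequality between the time-integrability exponents ($1 < 4$) so that the lemma applies, and one must be slightly careful that the endpoint $L_x^\infty$ does not cause trouble (it does not, since Christ--Kiselev is insensitive to the spatial exponent, acting only on the time variable). Everything else is a routine combination of Minkowski's inequality, the homogeneous Strichartz estimate, and the triangle inequality, so I would present the argument compactly and lean on \cite{GV1992} for the nontrivial analytic input.
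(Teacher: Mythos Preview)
Your proposal is correct, and in fact goes well beyond what the paper does: the paper does not prove this lemma at all but simply cites \cite[Lemma 2.1]{GV1992} as its source. Your sketch of the underlying argument (unitarity for $L_T^\infty L_x^2$, dispersive estimate plus $TT^*$/Hardy--Littlewood--Sobolev for $L_T^4 L_x^\infty$, then Minkowski and Christ--Kiselev for the Duhamel term) is accurate and standard, so there is no gap.
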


In \cite{Ozawa1998}, the function space $X^s(\br)$ is defined until $s>\fr 1 2$ by \eqref{Xs}.
We extend this space until $s>-\fr 1 2$.  
For $a, x\in \br$ and smooth $\phi$, it holds that
\[
\int_a^x \phi(y)\, dy 
= \cf[\mathbf{1}_{[a, x]}\phi](0)
= \int_\br\fr{e^{ix\eta}-e^{ia\eta}}{i\eta}\hat\phi(\eta)\, d\eta,
\]
where $\mathbf{1}_{[a, x]}$ denotes the characteristic function of the interval $[a, x]$.
Since $\eta^{-1-s}\in L^2([1, \infty))$ if $s>-\fr{1}{2}$, we define $X^s(\br)$ as follows:
\begin{defi}\label{def of $X^s$}
For $s>-\fr 1 2$, we define $X^s(\br)$ by
\begin{gather*}
X^s(\br):= \left\{\phi\in H^s(\br) \Biggm|  \sup_{x\in\br}\abs{ \lim_{a\to\minfty} \int_\br\fr{e^{ix\eta}-e^{ia\eta}}{i\eta}\hat\phi(\eta)\, d\eta}<\infty\right\}, \\
\nrm{\phi}_{X^s} := \nrm{\phi}_{H^s} + \sup_{x\in\br}\abs{ \lim_{a\to\minfty} \int_\br\fr{e^{ix\eta}-e^{ia\eta}}{i\eta}\hat\phi(\eta)\, d\eta}.
\end{gather*}
\end{defi}
\begin{lem}\label{Xs is Banach}
When $s>-\fr 1 2$, $X^s(\br)$ is a Banach space.
\end{lem}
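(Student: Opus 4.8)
The plan is to verify that $\nrm{\cdot}_{X^s}$ is a norm and that $X^s(\br)$ is complete with respect to it. First I would observe that the second term in the definition of $\nrm{\cdot}_{X^s}$ is well-defined on $X^s(\br)$: for $\phi\in H^s(\br)$ with $s>-\tfrac12$, we split the frequency integral as $\int_{\abs{\eta}\le 1} + \int_{\abs{\eta}>1}$; on the low-frequency piece, $(e^{ix\eta}-e^{ia\eta})/(i\eta)$ is bounded uniformly in $x,a$ (by the mean value theorem it is $O(\abs{x-a})$, but more usefully $\abs{(e^{ix\eta}-e^{ia\eta})/\eta}\le \min(\abs{x-a},2/\abs{\eta})$, and since $\hat\phi\in L^2_{\mathrm{loc}}$ this piece converges absolutely to a continuous function of $x$, but its uniform boundedness is exactly what membership in $X^s$ encodes); on the high-frequency piece $\abs{\eta}>1$ we write $\fr{e^{ix\eta}}{i\eta}\hat\phi(\eta)$ and note $\eta^{-1}\jb{\eta}^{-s}\in L^2(\{\abs\eta>1\})$ because $s>-\tfrac12$, so by Cauchy--Schwarz the integral $\int_{\abs\eta>1}\fr{e^{ix\eta}}{i\eta}\hat\phi(\eta)\,d\eta$ converges absolutely and is bounded by $C\nrm{\phi}_{H^s}$; moreover the limit $a\to-\infty$ of $\int_{\abs\eta>1}\fr{e^{ia\eta}}{i\eta}\hat\phi(\eta)\,d\eta$ exists (it is a bounded continuous function of $a$ tending to $0$, e.g. by Riemann--Lebesgue after writing it as a Fourier transform of an $L^1$ function, or by dominated convergence along sequences). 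Hence the defining supremum is finite for every element and the limit in $a$ exists pointwise in $x$.

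Next I would check the norm axioms. Positivity, homogeneity, and the triangle inequality for $\nrm{\cdot}_{X^s}$ follow immediately from the corresponding properties of $\nrm{\cdot}_{H^s}$ and of the supremum of $\abs{\,\cdot\,}$ applied to the linear (in $\phi$) functional $\phi\mapsto \lim_{a\to-\infty}\int_\br \fr{e^{ix\eta}-e^{ia\eta}}{i\eta}\hat\phi(\eta)\,d\eta$; definiteness ($\nrm{\phi}_{X^s}=0\Rightarrow\phi=0$) is inherited from $\nrm{\phi}_{H^s}=0$. So $X^s(\br)$ is a normed space, clearly a subspace of $H^s(\br)$.

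For completeness, let $(\phi_n)$ be Cauchy in $X^s(\br)$. Then $(\phi_n)$ is Cauchy in $H^s(\br)$, so $\phi_n\to\phi$ in $H^s(\br)$ for some $\phi\in H^s(\br)$. It remains to show $\phi\in X^s(\br)$ and $\phi_n\to\phi$ in the $X^s$-norm. Write $I_a[\psi](x):=\int_\br\fr{e^{ix\eta}-e^{ia\eta}}{i\eta}\hat\psi(\eta)\,d\eta$ and $J[\psi](x):=\lim_{a\to-\infty}I_a[\psi](x)$. The key point is that for the high-frequency part the functional $\psi\mapsto \sup_x\abs{\int_{\abs\eta>1}\fr{e^{ix\eta}-e^{ia\eta}}{i\eta}\hat\psi(\eta)\,d\eta}$ is controlled by $C\nrm{\psi}_{H^s}$ uniformly in $a$, so since $\phi_n\to\phi$ in $H^s$, on the high-frequency part $J$ behaves continuously. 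For the low-frequency part, $\widehat{\phi_n}\to\widehat\phi$ in $L^2(\{\abs\eta\le1\})$, hence $\int_{\abs\eta\le1}\fr{e^{ix\eta}-e^{ia\eta}}{i\eta}\widehat{\phi_n}(\eta)\,d\eta \to \int_{\abs\eta\le1}\fr{e^{ix\eta}-e^{ia\eta}}{i\eta}\widehat\phi(\eta)\,d\eta$ uniformly in $x$ and $a$, again by Cauchy--Schwarz with the bounded kernel. Combining, $\sup_x\abs{J[\phi_n-\phi_m](x)}\le \nrm{\phi_n-\phi_m}_{X^s}\to 0$, so $(J[\phi_n])$ is Cauchy in $C_b(\br)$ (bounded continuous functions with sup norm), hence converges uniformly to some $g\in C_b(\br)$; and the pointwise estimates show $g=J[\phi]$, so $\sup_x\abs{J[\phi](x)}=\nrm{g}_{L^\infty}<\infty$, i.e.\ $\phi\in X^s(\br)$, and $\nrm{\phi_n-\phi}_{X^s}=\nrm{\phi_n-\phi}_{H^s}+\sup_x\abs{J[\phi_n-\phi](x)}\to 0$.

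The main obstacle I anticipate is the careful bookkeeping around the improper integral defining $J$: one must show the limit $a\to-\infty$ exists (not merely that $\sup_{x,a}\abs{I_a}<\infty$) and that the convergence $\phi_n\to\phi$ can be pushed through this limit, which is exactly why the split into $\abs\eta\le1$ and $\abs\eta>1$ — where the kernel is bounded on the first piece and $\eta^{-1}\jb\eta^{-s}$ is square-integrable on the second — is essential, and where the hypothesis $s>-\tfrac12$ is used in an irremovable way.
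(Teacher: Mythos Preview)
Your overall strategy is sound and close to the paper's, but there is a genuine gap in the step where you claim the low-frequency convergence is uniform in $x$ and $a$. You write that on $\{\abs{\eta}\le 1\}$ the kernel $(e^{ix\eta}-e^{ia\eta})/(i\eta)$ is ``bounded'' and invoke Cauchy--Schwarz. The kernel is indeed bounded for \emph{fixed} $x,a$ (by $\abs{x-a}$, since the numerator vanishes linearly at $\eta=0$), but this bound blows up as $a\to-\infty$; its $L^2(\{\abs{\eta}\le 1\})$ norm grows like $\abs{x-a}^{1/2}$. So Cauchy--Schwarz does not give convergence uniform in $a$, and your ``pointwise estimates'' do not establish that the limit $J[\phi](x)=\lim_{a\to-\infty}I_a[\phi](x)$ exists for the candidate $\phi$. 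You correctly flag this as the main obstacle in your final paragraph, but the argument offered does not overcome it.

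The paper sidesteps this by avoiding any direct interchange of the $a\to-\infty$ limit with the $n\to\infty$ limit. Writing $\Phi_n:=J[\phi_n]$, one first observes (via the decomposition $e^{ix\eta}-e^{ia\eta}=(e^{ix\eta}-1)+(1-e^{ia\eta})$) that $\lim_{x\to-\infty}\Phi_n(x)=0$ for every $n$, hence the uniform limit $\Phi\in C_b(\br)$ also satisfies $\lim_{x\to-\infty}\Phi(x)=0$. Then for \emph{finite} $b$ one computes $\Phi(x)-\Phi(b)=\lim_n(\Phi_n(x)-\Phi_n(b))=\lim_n I_b[\phi_n](x)=I_b[\phi](x)$, where the last equality is legitimate because with $b$ fixed the kernel \emph{is} in $L^2(\br)$ (bounded by $\abs{x-b}$ on low frequencies, by $2/\abs{\eta}$ on high frequencies where $\eta^{-1}\jb{\eta}^{-s}\in L^2$). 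Finally letting $b\to-\infty$ and using $\Phi(b)\to 0$ shows that the defining limit $J[\phi](x)$ exists and equals $\Phi(x)$. This two-step maneuver---first a finite endpoint, then $b\to-\infty$ using the vanishing of $\Phi$ at $-\infty$---is precisely what circumvents the lack of uniform control in $a$ that your argument ran into.
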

\begin{proof}
Let $\{\phi_n\}\subset X^s(\br)$ be a Cauchy sequence in $X^s(\br)$.
Then, for all $n\in \bn$ and $x\in\br$, there exists $\lim_{a\to\minfty} \int_\br\fr{e^{ix\eta}-e^{ia\eta}}{i\eta}\hat\phi_n(\eta)\, d\eta$. We define it by $\Phi_n(x)$. 
From $|e^{ix\eta}-e^{ix_0\eta}|\le \min(|(x-x_0)\eta|, 2)$ and Lebesgue's dominated convergence theorem, $\Phi_n$ is continuous. 
Hence, there exist $\phi\in H^s(\br)$ and $\Phi\in C(\br)\cap L^\infty(\br)$ such that 
$\phi_n \to \phi$ in $H^s(\br)$ and $\Phi_n \to \Phi$ in $C(\br)\cap L^\infty(\br)$.
Considering $e^{ix\eta}-e^{ia\eta}=e^{ix\eta}-1 +1-e^{ia\eta}$, we obtain $\lim_{x\to \minfty}\Phi_n(x)=0$ for any $n$. 
Therefore, we have $\lim_{x\to\minfty}\Phi(x)=0$.
For any $b, x\in\br$, it holds that
\begin{align*}
\Phi(x) - \Phi(b)
&= \lim_{n\to\infty}\left(\Phi_n(x) - \Phi_n(b)\right) \\
&=  \lim_{n\to\infty} \int_\br\fr{e^{ix\eta}-e^{ib\eta}}{i\eta}\hat\phi_n(\eta)\, d\eta \\
&=  \int_\br\fr{e^{ix\eta}-e^{ib\eta}}{i\eta}\hat\phi(\eta)\, d\eta.
\end{align*}
Hence, by taking $b\to\minfty$, we obtain $\phi\in X^s(\br)$ and $\phi_n \to \phi$ in $X^s(\br)$.
\end{proof}
To simplify the notation, for $s>-\fr{1}{2}$ and $\phi\in X^s(\br)$, we define
\begin{equation}\label{def of primitive}
\int_\minfty^x\phi(y)\, dy := \lim_{a\to\minfty}\int_\br\fr{e^{ix\eta}-e^{ia\eta}}{i\eta}\hat\phi(\eta)\, d\eta.
\end{equation}
By the proof of Lemma \ref{Xs is Banach}, $\int_\minfty^x\phi(y)\,dy$ is a bounded continuous function. 
We note that this notation is consistent with the primitive if $s>\fr{1}{2}$. 
Also, we have
\[
\int_\minfty^x\left(a\phi(y)+b\psi(y)\right)\, dy =a\int_\minfty^x\phi(y)\, dy + b\int_\minfty^x\psi(y)\, dy.
\]
for $a, b\in\bc$ and $\phi, \psi\in X^s(\br)$. 
When $s>\fr{1}{2}$ and $\phi\in H^s(\br)$, the Riemann-Lebesgue lemma yields $\px\phi\in X^{s-1}(\br)$ and
\begin{equation}\label{primitive of derivative}
\int_\minfty^x\partial_y\phi(y)\, dy = \phi(x)
\end{equation}
for all $x\in\br$.
\section{Existence of the solution in smooth spaces}

In this section, we prove the existence of the solution to \eqref{nls} in $X^s(\br)$ with sufficiently large $s$.
\begin{prop}\label{existence of a smooth solution}
Let $s>\fr{3}{2}$. 
Then, for any $\phi\in X^s(\br)$, there exist $T=T(s, \nrm{\phi}_{X^s})\in (0 ,1)$ and $u\in C([0, T]; X^s(\br))$ is a solution to \eqref{nls}.
\end{prop}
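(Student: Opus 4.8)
The plan is to construct the solution by a parabolic regularization combined with the gauge transformation of Section~1, and then to pass to the limit $\vep\to0$. For $\vep\in(0,1]$ I would first consider the regularized Cauchy problem
\[
i\pt u_\vep + \Big(\fr12 - i\vep\Big)\px^2 u_\vep = \px\big(\lambda u_\vep^2 + \mu\abs{u_\vep}^2\big),\qquad u_\vep(0)=\phi,
\]
for which the linear evolution $e^{t(\fr i2+\vep)\px^2}$ is parabolic and gains one derivative with a constant of size $(\vep t)^{-1/2}$ for $t>0$. Since this smoothing absorbs the derivative falling on the quadratic nonlinearity, a standard contraction-mapping argument in $C([0,T_\vep];X^s(\br))$ produces a solution $u_\vep$ on a (possibly $\vep$-dependent) interval $[0,T_\vep]$; here one uses that $e^{t(\fr i2+\vep)\px^2}$ commutes with the primitive operator $\int_\minfty^x$, so that the $X^s$-structure is preserved. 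Moreover, $u_\vep$ is smooth in $x$ for $t>0$ and $\px u_\vep(t,x)\to0$ as $x\to-\infty$, because $\px u_\vep(t)\in H^{s-1}(\br)$ with $s-1>\fr12$.

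\emph{Gauge transformation of the regularized equation.} Because $u_\vep$ is smooth, the gauge transformation of Section~1 can now be carried out rigorously: with
\[
G_\vep := \lambda\int_\minfty^x u_\vep(t,y)\,dy + \mu\int_\minfty^x\overline{u_\vep}(t,y)\,dy,\qquad w_\vep := e^{G_\vep}u_\vep,
\]
I would differentiate in $t$ and $x$ and substitute the regularized equation (the decay of $\px u_\vep$ at $-\infty$ lets one evaluate $\int_\minfty^x\pt u_\vep$ directly from the equation). The second-order terms cancel and the derivative-losing terms of the nonlinearity cancel, leaving an equation of the schematic form
\[
i\pt w_\vep + \Big(\fr12 - i\vep\Big)\px^2 w_\vep = N\big(w_\vep,\overline{w_\vep},u_\vep,\overline{u_\vep}\big) + \vep\,R_\vep,
\]
where $N$ is a polynomial with no derivatives and $R_\vep$, the residue of conjugating the parabolic part $\vep\px^2 u_\vep$ by $e^{G_\vep}$, contains at most one derivative of $w_\vep$ or $u_\vep$ but carries the extra factor $\vep$.

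\emph{Uniform estimate and continuation.} I would then run an $H^s$ energy/Strichartz estimate for $w_\vep$ (Lemma~\ref{strest} and product estimates in $H^s(\br)$, $s>\fr32$): the term $N$ is controlled by $\nrm{w_\vep}_{H^s}$ and $\nrm{u_\vep}_{X^s}$, while $\vep R_\vep$ is handled by Young's inequality, its $\vep\nrm{\px w_\vep}_{H^s}^2$-type part being absorbed by the dissipation $\vep\nrm{\px w_\vep}_{H^s}^2$ present in the energy identity of the parabolic regularization, and the rest being of Gronwall type (using $\vep\le1$). Transferring this back through $u_\vep=e^{-G_\vep}w_\vep$, using $\nrm{G_\vep}_{L^\infty}\lesssim\nrm{u_\vep}_{X^0}\le\nrm{u_\vep}_{X^s}$ together with the control of $\sup_x\abs{\int_\minfty^x u_\vep}$ obtained from the equation, a continuity (bootstrap) argument gives a time $T=T(s,\nrm{\phi}_{X^s})\in(0,1)$, independent of $\vep$, with $\sup_{0<\vep\le1}\nrm{u_\vep}_{C([0,T];X^s(\br))}\le C(s,\nrm{\phi}_{X^s})$, and a continuation argument extends each $u_\vep$ to $[0,T]$.

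\emph{Passage to the limit, and the main obstacle.} Since the regularized solutions are smooth, the gauge transformation applies to two of them at once; together with the uniform $X^s$-bound this yields a difference estimate for $u_\vep-u_{\vep'}$ in the weaker space $X^{s'}(\br)$ with $\fr32<s'<s$, with a forcing term of size $O(\vep+\vep')$ from the regularization mismatch. Hence $\{u_\vep\}$ is Cauchy in $C([0,T];X^{s'}(\br))$; passing to the limit in \eqref{sol} shows the limit $u$ solves \eqref{nls}, the uniform bound gives $u\in L^\infty([0,T];X^s(\br))$, and the persistence $u\in C([0,T];X^s(\br))$ follows by a Bona--Smith type argument. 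I expect the main difficulty to be the uniform estimate: one must check that the single derivative carried by the commutator residue $\vep R_\vep$ is genuinely compensated, uniformly in $\vep$, by the factor $\vep$ together with the parabolic dissipation, so that the a priori bound closes on a $\vep$-independent interval; a secondary technical point is the persistence $u\in C([0,T];X^s(\br))$, since the Schr\"odinger group is not strongly continuous in the supremum-of-primitive part of the $X^s$-norm.
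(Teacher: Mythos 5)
Your overall architecture coincides with the paper's: parabolic regularization $\cl_\vep$ (Proposition \ref{wp of ep-nls}), rigorous gauge transformation for the regularized solutions (Proposition \ref{prop gauge}), a uniform-in-$\vep$ a priori bound in which the single-derivative residue $\vep N^{(2)}(u)$ is compensated by its prefactor $\vep$, and a limit $\vep\to 0$. For the uniform bound you absorb the residue into the parabolic dissipation in an energy identity, whereas the paper stays with the Duhamel formula and the smoothing estimate $\nrm{U_\vep(t)f}_{H^{s_2}}\lesssim(\vep t)^{(s_1-s_2)/2}\nrm{f}_{H^{s_1}}$ of Lemma \ref{schauder}, so that the residue contributes $\vep\cdot\vep^{-1/2}T^{1/2}=\vep^{1/2}T^{1/2}$; these are equivalent mechanisms and either closes. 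The genuine divergence is in the limit passage. You keep the datum fixed, accept the one-derivative loss in the propagator-difference term (which costs $\abs{\vep-\vep'}^{a/2}\nrm{\phi}_{H^{s'+a}}$, cf.\ Lemma \ref{est of diff free sol}), conclude that $\{u_\vep\}$ is Cauchy only in $X^{s'}$ with $\fr{3}{2}<s'<s$, and defer the recovery of $C([0,T];X^s)$ to a subsequent Bona--Smith argument. The paper instead mollifies the datum and sends $\vep\to0$ simultaneously, coupling $\vep=\eta^3$ with $\phi_\eta=J_{\eta,s}\phi$, so that the loss $\nrm{\phi_\eta}_{H^{s+1}}\lesssim\eta^{-1/s}\nrm{\phi}_{H^s}$ in the difference estimate (Proposition \ref{smooth est of diff}, Lemma \ref{BS}) is beaten by the factor $\abs{\eta_1^3-\eta_2^3}^{1/2}$, and convergence is obtained directly in $C([0,T];X^s)$ in one step.

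The one real gap is in your endgame. To upgrade the limit $u\in C([0,T];X^{s'})\cap L^\infty([0,T];X^s)$ to $C([0,T];X^s)$ by Bona--Smith, you need, for each mollified datum $\phi_\eta\in X^{s+1}(\br)$, a solution of \eqref{nls} belonging to $C([0,T];X^s)$ on a time interval $T$ controlled by $\nrm{\phi}_{X^s}$ alone; but applying your own construction at regularity $s+1$ yields a lifespan depending on $\nrm{\phi_\eta}_{X^{s+1}}$, which degenerates as $\eta\to0$. Closing this requires a persistence-of-regularity statement — an a priori bound in $X^{s+1}$ on a time determined by $\nrm{\phi}_{X^s}$, exploiting that the top-order estimate is linear in $\nrm{u}_{X^{s+1}}$ (this is the paper's Proposition \ref{smooth apriori2}) — together with a Lipschitz difference estimate in $X^s$ for the unregularized equation (the analogue of Lemma \ref{diff2}). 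Both are within reach of your tools, but neither is supplied, and without them the continuity upgrade you flag as a ``secondary technical point'' does not yet close.
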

To prove this proposition, we first consider the following regularized equation
\begin{equation}\label{ep-nls}
\left\{
\begin{aligned}
i\pt u + \fr{1}{2}(1-i\vep)\px^2 u &= \px \left(\lambda u^2 + \mu\abs{u}^2\right),\quad t>0,\ x\in\br,\\
u(0)&=\phi,
\end{aligned}
\right.
\end{equation}
where $\vep\in (0,1)$.

We define the solution to \eqref{ep-nls} by the mild solution. 
Namely, we say $u\in C([0, T]; X^s(\br))$ is a solution to \eqref{ep-nls} when $u$ satisfies the following equation
\begin{equation}\label{mild-sol}
u(t) = U_\vep(t)\phi - i\int_0^tU_\vep(t-t')\px\left(\lambda u^2 + \mu \abs{u}^2\right)(t')\,dt'
\end{equation}
in $X^{s}(\br)$ for all $t\in[0, T]$, where $U_\vep(t) = e^{\fr{i}{2}(1-i\vep)t\px^2}$.

\subsection{Local well-posedness of regularized equations}
We show the local well-posedness for \eqref{ep-nls} by the contraction mapping argument in $X^s(\br)$
\begin{prop}\label{wp of ep-nls}
Let $\lambda, \mu\in\bc$ and $\vep\in(0, 1)$. Then, the Cauchy problem \eqref{ep-nls} is locally well-posed in $X^s(\br)$ for $s>\frac{3}{2}$. 
Also, if $u\in C([0, T]; X^s(\br))$ is a solution to \eqref{ep-nls}, then we have
\begin{equation}\label{ep-sol}
u(t) = \phi + \fr i 2(1-i\vep) \px^2\left(\int_0^tu(t')\,dt'\right) -i\px\left(\int_0^t \left(\lambda u^2 + \mu \abs{u}^2\right)(t')\,dt'\right)
\end{equation}
in $H^{s-2}(\br)$.
\end{prop}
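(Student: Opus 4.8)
The plan is to run a contraction-mapping argument in $C([0,T];X^s)$, where the only nontrivial input is the parabolic smoothing of $U_\vep(t)=e^{\fr i2(1-i\vep)t\px^2}$. Its Fourier symbol $e^{-\fr i2(1-i\vep)t\xi^2}$ has modulus $e^{-\fr\vep2 t\xi^2}\le1$ for $t\ge0$, so $\nrm{U_\vep(t)f}_{H^s}\le\nrm{f}_{H^s}$; and since $\abs\xi\,e^{-\fr\vep2 t\xi^2}\lesssim(\vep t)^{-1/2}$ it gains a derivative,
\[
\nrm{\px U_\vep(t)f}_{H^s}\lesssim(\vep t)^{-1/2}\nrm{f}_{H^s},\qquad t>0 ,
\]
which is exactly what compensates the derivative in $\px(\lambda u^2+\mu\abs u^2)$; this nonlinearity maps $H^s$ to $H^{s-1}$ because $H^s$ is an algebra for $s>3/2$. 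Accordingly I would set
\[
\Phi(u)(t):=U_\vep(t)\phi-i\int_0^tU_\vep(t-t')\px\bigl(\lambda u^2+\mu\abs u^2\bigr)(t')\,dt'
\]
and look for a fixed point in $B_{R,T}:=\{u\in C([0,T];X^s):\sup_{t\in[0,T]}\nrm{u(t)}_{X^s}\le R\}$, which is complete by Lemma \ref{Xs is Banach}, with $R\sim\nrm\phi_{X^s}$ and $T=T(\vep,R)>0$ small.

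For the $H^s$-component of the $X^s$-norm, the linear part of $\Phi(u)$ is bounded by $\nrm\phi_{H^s}$, and combining the smoothing bound above with the algebra/Leibniz estimate $\nrm{u^2-v^2}_{H^s}\lesssim(\nrm u_{H^s}+\nrm v_{H^s})\nrm{u-v}_{H^s}$ (and likewise for $\abs u^2$), the Duhamel part obeys
\[
\Bigl\|\int_0^tU_\vep(t-t')\px\bigl(\lambda u^2+\mu\abs u^2\bigr)(t')\,dt'\Bigr\|_{H^s}\lesssim\int_0^t\bigl(\vep(t-t')\bigr)^{-1/2}\nrm{u(t')}_{H^s}^2\,dt'\lesssim\vep^{-1/2}T^{1/2}R^2 ,
\]
with the analogous difference bound carrying a factor $R\,\nrm{u-v}_{C([0,T];H^s)}$.

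For the primitive component, write $J\phi(x):=\int_\minfty^x\phi(y)\,dy$. Since $u(t')\in H^s\subset L^2$ we have $\lambda u^2+\mu\abs u^2\in L^1(\br)$, hence it lies in $X^s$ and, by \eqref{primitive of derivative}, $J\bigl(\px(\lambda u^2+\mu\abs u^2)(t')\bigr)=(\lambda u^2+\mu\abs u^2)(t')$. Now $U_\vep(t)$ is convolution with the kernel $K_t^\vep(x)=c\,((\vep+i)t)^{-1/2}\exp\!\bigl(-x^2/(2(\vep+i)t)\bigr)$, whose modulus is a genuine Gaussian with $\nrm{K_t^\vep}_{L^1}=C_\vep$ \emph{independent of }$t$; moreover, if $g\in C_b(\br)$ and $g(x)\to0$ as $x\to\minfty$, then $K_t^\vep*g$ has the same two properties. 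Using these facts one checks that $J$ commutes with $U_\vep(t)$, and after a Fubini interchange in the $dt'$-integral the primitive of $\Phi(u)(t)$ equals $U_\vep(t)(J\phi)-i\int_0^tU_\vep(t-t')(\lambda u^2+\mu\abs u^2)(t')\,dt'$; by $H^s\hookrightarrow L^\infty$ this is bounded in $L^\infty$ by $C_\vep\bigl(\nrm\phi_{X^s}+T\sup_{t'}\nrm{(\lambda u^2+\mu\abs u^2)(t')}_{L^\infty}\bigr)\lesssim C_\vep(\nrm\phi_{X^s}+TR^2)$, again with a difference bound of size $R\,\nrm{u-v}_{C([0,T];X^s)}$. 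Putting the two components together, for $R=C_\vep'\nrm\phi_{X^s}$ with $C_\vep'$ large and $T$ small depending only on $(\vep,R)$, the map $\Phi$ sends $B_{R,T}$ into itself and is a $1/2$-contraction there. Strong continuity of $t\mapsto U_\vep(t)\phi$ into $X^s$ (standard in $H^s$, and for the primitive part because $J\phi$ is Lipschitz, hence uniformly continuous, so $K_t^\vep*(J\phi)\to J\phi$ uniformly as $t\to0^+$), together with the above estimates, gives $\Phi(u)\in C([0,T];X^s)$, so the fixed point $u$ solves \eqref{ep-nls}. Uniqueness in all of $C([0,T];X^s)$ follows by the usual continuation argument, and the same difference estimates yield Lipschitz — in particular continuous — dependence of $u$ on $\phi$.

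It remains to establish \eqref{ep-sol}. The family $U_\vep(t)$ is a $C_0$-semigroup on $H^{s-2}$ with generator $A=\fr i2(1-i\vep)\px^2$ of domain $H^s$, and $-i\px(\lambda u^2+\mu\abs u^2)\in C([0,T];H^{s-2})\subset L^1([0,T];H^{s-2})$; integrating the Duhamel identity \eqref{mild-sol} in time, commuting $A$ through the integrals (legitimate since $A$ is closed) and applying Fubini, one gets $\int_0^tu(t')\,dt'\in H^s$ and $u(t)=\phi+A\int_0^tu(t')\,dt'-i\int_0^t\px(\lambda u^2+\mu\abs u^2)(t')\,dt'$ in $H^{s-2}$, which is \eqref{ep-sol}. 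The only delicate points in the whole argument are the $X^s$-bookkeeping of the third paragraph — the $t$-uniform $L^1$-bound on $K_t^\vep$, the commutation of $J$ with $U_\vep(t)$, the Fubini interchange, and continuity into $X^s$ rather than merely into $H^s$ — since the nonlinear analysis itself is made routine by the parabolic smoothing.
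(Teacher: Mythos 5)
Your argument is correct, and its engine is the same as the paper's: a contraction mapping in $C([0,T];X^s(\br))$ in which the derivative in the nonlinearity is absorbed by the parabolic smoothing $\nrm{\px U_\vep(t)f}_{H^s}\lesssim(\vep t)^{-1/2}\nrm{f}_{H^s}$, at the price of an $\vep^{-1/2}T^{1/2}$ factor, together with the algebra property of $H^s$ for $s>\fr{3}{2}$. Two sub-steps, however, are handled by genuinely different tools. For the primitive component of the $X^s$-norm you use that, for $\vep>0$, the kernel $K_t^\vep$ is an honest complex Gaussian with $\nrm{K_t^\vep}_{L^1}\sim\vep^{-1/2}$ uniformly in $t$, commute $J=\int_\minfty^x$ with $U_\vep(t)$, and bound $\nrm{U_\vep(t-t')(\lambda u^2+\mu\abs{u}^2)}_{L^\infty}$ by $C_\vep\nrm{u}_{H^s}^2$; the paper (Lemmas \ref{est of free sol} and \ref{est of Duhamel}) instead works on the Fourier side and uses the dispersive estimate $\nrm{U_\vep(t-t')g}_{L^\infty}\lesssim(t-t')^{-1/2}\nrm{g}_{L^1}$, whose constants are uniform in $\vep\in[0,1]$ and which is therefore reused verbatim for the limit equation \eqref{nls}; your constants degenerate as $\vep\to0$, which is harmless for this proposition but would not survive the later passage to $\vep=0$. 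For \eqref{ep-sol} you invoke the standard semigroup identity for mild solutions (closedness of the generator $A=\fr i2(1-i\vep)\px^2$ with domain $H^s$ in $H^{s-2}$, plus Fubini), whereas the paper reproves this identity by hand, differentiating $U_\vep(\tau-t')$ in $\tau$ and interchanging integrals via the explicit bound \eqref{pr wp of ep-nls2}; your route is shorter and its hypotheses are verified since $\px(\lambda u^2+\mu\abs{u}^2)\in C([0,T];H^{s-1})$. The one point you should write out in full is the identification of $J(U_\vep(t)\phi)$, defined through the Fourier-side limit of Definition \ref{def of $X^s$}, with $K_t^\vep*(J\phi)$: for $s>\fr{1}{2}$ both are bounded continuous antiderivatives of $U_\vep(t)\phi$ vanishing at $\minfty$, hence equal, and this is precisely where the existence of the limit in Definition \ref{def of $X^s$} is actually used.
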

We prepare some fundamental properties of $U_{\vep}(t)$.
We omit the proof of the following lemma since the proof is easy.
\begin{lem}\label{schauder}
Let $\vep>0$. For $0\le s_1\le s_2$ and $t>0$, we have
\[
\nrm{U_{\vep}(t)f}_{\dot H^{s_2}}
\lesssim (\vep t)^{\fr{s_1 - s_2}{2}}\nrm{f}_{\dot H^{s_1}},
\]
where the implicit constant depends only on $s_2 - s_1$.
In particular, we obatin
\[
\nrm{U_{\vep}(t)f}_{H^{s_2}}
\lesssim \left(1+\left(\vep t\right)^{\fr{s_1 - s_2}{2}}\right)\nrm{f}_{H^{s_1}}.
\]
\end{lem}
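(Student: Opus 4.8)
The plan is to pass to the Fourier side, where $U_\vep(t)$ becomes a pointwise multiplier. Since $\cf[U_\vep(t)f](\xi) = e^{-\frac{i}{2}(1-i\vep)t\xi^2}\hat f(\xi)$, we have $|\cf[U_\vep(t)f](\xi)| = e^{-\frac{1}{2}\vep t\xi^2}|\hat f(\xi)|$ for every $\xi\in\br$. Hence, by Plancherel's theorem,
\[
\nrm{U_\vep(t)f}_{\dot H^{s_2}}^2 = \int_\br |\xi|^{2s_2} e^{-\vep t\xi^2}\abs{\hat f(\xi)}^2\, d\xi = \int_\br \left(|\xi|^{2(s_2-s_1)}e^{-\vep t\xi^2}\right)|\xi|^{2s_1}\abs{\hat f(\xi)}^2\, d\xi,
\]
so it is enough to bound the factor $|\xi|^{2(s_2-s_1)}e^{-\vep t\xi^2}$ uniformly in $\xi$.

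Next I would carry out the elementary optimization. Setting $\sigma := s_2 - s_1 \ge 0$ and substituting $y = \vep t\xi^2$, one gets
\[
\sup_{\xi\in\br}|\xi|^{2(s_2-s_1)}e^{-\vep t\xi^2} = (\vep t)^{-\sigma}\sup_{y\ge 0} y^{\sigma}e^{-y} = (\vep t)^{-\sigma}\,\sigma^{\sigma}e^{-\sigma},
\]
with the usual convention that the right-hand side equals $1$ when $\sigma = 0$; this constant depends only on $\sigma = s_2 - s_1$. Substituting this bound into the integral above and taking square roots gives the homogeneous estimate $\nrm{U_\vep(t)f}_{\dot H^{s_2}} \lesssim (\vep t)^{\frac{s_1-s_2}{2}}\nrm{f}_{\dot H^{s_1}}$ with implicit constant depending on $s_2-s_1$ alone.

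For the inhomogeneous statement I would write $\jb{\xi}^{2s_2}\lesssim 1 + |\xi|^{2s_2}$, so that $\nrm{U_\vep(t)f}_{H^{s_2}}\lesssim \nrm{U_\vep(t)f}_{L^2} + \nrm{U_\vep(t)f}_{\dot H^{s_2}}$. The first term is bounded by $\nrm{f}_{L^2}\le\nrm{f}_{H^{s_1}}$ because $e^{-\frac{1}{2}\vep t\xi^2}\le 1$, and the second term is bounded by $(\vep t)^{\frac{s_1-s_2}{2}}\nrm{f}_{\dot H^{s_1}}\le (\vep t)^{\frac{s_1-s_2}{2}}\nrm{f}_{H^{s_1}}$ by the homogeneous estimate; adding the two contributions yields the claim.

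There is no genuine obstacle in this lemma: the whole argument is a one-line Fourier computation together with the maximization of $y^\sigma e^{-y}$. The only point worth a moment's attention is tracking the dependence of the implicit constant, which is precisely where the identity $\sup_{y\ge0}y^\sigma e^{-y} = \sigma^\sigma e^{-\sigma}$ is used to see that it depends on $s_2 - s_1$ only.
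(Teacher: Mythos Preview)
Your proof is correct and is exactly the standard argument the paper has in mind; indeed, the paper omits the proof entirely, stating only that it is easy. There is nothing to add.
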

The second lemma is the estimate for free solutions in $X^s(\br$). 
In the following, we define $U_0(t)=U(t)=e^{\fr{i}{2}t\px^2}$ for $t\in\br$.
We note that the following two lemmas admit the case $\vep=0$.
\begin{lem}\label{est of free sol}
Let $0\le\vep \le 1$ and $s> -\fr{1}{2}$.
If $\phi\in X^s(\br)$, then we have $U_\vep(t)\phi\in X^s(\br)$ for $t>0$.
Also, it holds that
\begin{align*}
\sup_{x\in\br}\abs{\int_\minfty^xU_\vep(t)\phi(y)\, dy}
\lesssim \sup_{x\in\br}\abs{\int_\minfty^x\phi(y)\, dy} + (1+t)\nrm{\phi}_{H^{s}},
\end{align*}
where the implicit constant depends only on $s$.
In particular, we obtain
\[
\nrm{U_{\vep}(t)\phi}_{X^s}
\lesssim (1+t)\nrm{\phi}_{X^s}.
\]
Moreover, $U_\vep(t)\phi\in C([0,\infty);X^s(\br))$.
When $\vep=0$,  we have $U(t)\phi\in C(\br ;X^s(\br))$ and
\[
\nrm{U(t)\phi}_{X^s}\lesssim (1+\abs{t})\nrm{\phi}_{X^s}
\]
for $t\in\br$.
\end{lem}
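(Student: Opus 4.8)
I would reduce everything to the Fourier formula \eqref{def of primitive} and isolate the correction caused by the propagator. Write $m_{\vep, t}(\eta):= e^{-\fr i2(1-i\vep)t\eta^2}$, so that $\cf(U_\vep(t)\phi)(\eta) = m_{\vep, t}(\eta)\hat\phi(\eta)$. Since $\re\left(-\fr i2(1-i\vep)t\eta^2\right) = -\fr\vep2 t\eta^2\le 0$ for $\vep, t\ge 0$, we have $|m_{\vep, t}(\eta)|\le 1$ (which already gives $\nrm{U_\vep(t)\phi}_{H^s}\le\nrm{\phi}_{H^s}$, or one may invoke Lemma \ref{schauder}) together with the elementary bound $|m_{\vep, t}(\eta) - 1|\lesssim \min(t\eta^2, 1)$. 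The whole argument hinges on the splitting $m_{\vep, t}(\eta)\hat\phi(\eta) = \hat\phi(\eta) + (m_{\vep, t}(\eta) - 1)\hat\phi(\eta)$, where the first summand reproduces $\int_\minfty^x\phi$ and the second is an absolutely convergent correction.

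\textbf{Step 1 (the estimate).} First I would show $\fr{m_{\vep, t}(\eta) - 1}{i\eta}\hat\phi(\eta)\in L^1(\br)$ with norm $\lesssim (1+t)\nrm{\phi}_{H^s}$: on $\{|\eta|\le 1\}$ bound $|m_{\vep, t}(\eta) - 1|\lesssim t\eta^2$, so the integrand is $\lesssim t|\eta|\,|\hat\phi(\eta)|\le t|\hat\phi(\eta)|$ and $\hat\phi\in L^2(\{|\eta|\le 1\})\subset L^1(\{|\eta|\le 1\})$ with $\nrm{\hat\phi}_{L^2(|\eta|\le 1)}\lesssim\nrm{\phi}_{H^s}$; on $\{|\eta|\ge 1\}$ bound $|m_{\vep, t}(\eta) - 1|\le 2$, so the integrand is $\lesssim |\eta|^{-1-s}\cdot |\eta|^s|\hat\phi(\eta)|$, a product of two $L^2(\{|\eta|\ge 1\})$ functions precisely because $s>-\fr12$, with norm $\lesssim \nrm{\phi}_{H^s}$. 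By the Riemann--Lebesgue lemma, $\limm{a}{\minfty}\int_\br\fr{e^{ia\eta}}{i\eta}(m_{\vep, t}(\eta) - 1)\hat\phi(\eta)\, d\eta = 0$; since the "$\hat\phi$" part of the split has a limit as $a\to\minfty$ by the hypothesis $\phi\in X^s(\br)$, the limit defining $\int_\minfty^x U_\vep(t)\phi(y)\, dy$ exists and
\[
\int_\minfty^x U_\vep(t)\phi(y)\, dy = \int_\minfty^x\phi(y)\, dy + \int_\br\fr{e^{ix\eta}}{i\eta}(m_{\vep, t}(\eta) - 1)\hat\phi(\eta)\, d\eta .
\]
Taking $\sup_{x\in\br}$ and using the $L^1$-bound above gives the displayed inequality, hence $U_\vep(t)\phi\in X^s(\br)$ and $\nrm{U_\vep(t)\phi}_{X^s}\lesssim (1+t)\nrm{\phi}_{X^s}$.

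\textbf{Step 2 (continuity).} In $H^s(\br)$ this is the standard dominated-convergence argument for the Fourier multiplier $m_{\vep, t}$, including at $t\to 0^+$ (where $m_{\vep, t}(\eta)\to 1$ pointwise and $|m_{\vep, t}(\eta) - 1|^2\jb\eta^{2s}|\hat\phi(\eta)|^2\le 4\jb\eta^{2s}|\hat\phi(\eta)|^2\in L^1$). For the primitive part, the identity of Step 1 gives, for $t_0\ge 0$,
\[
\sup_{x\in\br}\abs{\int_\minfty^x U_\vep(t)\phi(y)\, dy - \int_\minfty^x U_\vep(t_0)\phi(y)\, dy}\le\nrm{\fr{(m_{\vep, t} - m_{\vep, t_0})\hat\phi}{i\eta}}_{L^1},
\]
and I would make the right-hand side tend to $0$ by dominated convergence after factoring $m_{\vep, t}(\eta) - m_{\vep, t_0}(\eta) = m_{\vep, t_0}(\eta)\bigl(e^{-\fr i2(1-i\vep)(t-t_0)\eta^2} - 1\bigr)$: on $\{|\eta|\le 1\}$ this is $\lesssim |t-t_0|\,|\hat\phi(\eta)|$, and on $\{|\eta|\ge 1\}$ it is $\le 2|\eta|^{-1-s}\cdot |\eta|^s|\hat\phi(\eta)|\in L^1$ uniformly in $t$; both tend to $0$ pointwise. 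This yields $U_\vep(t)\phi\in C([0, \infty); X^s(\br))$. The case $\vep = 0$ is identical with $t$ replaced by $|t|$ (using $|m_{0, t}(\eta) - 1|\lesssim\min(|t|\eta^2, 1)$ and $|m_{0, t}(\eta)| = 1$ for all $t\in\br$), giving $U(t)\phi\in C(\br; X^s(\br))$ and $\nrm{U(t)\phi}_{X^s}\lesssim (1+|t|)\nrm{\phi}_{X^s}$.

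\textbf{Main obstacle.} Because $\int_\minfty^x(\cdot)$ is defined only through the improper Fourier integral \eqref{def of primitive}, with the constant of integration pinned by the behaviour at $-\infty$, the one genuinely non-formal point is to verify that this limit exists for $U_\vep(t)\phi$ and that subtracting it commutes with applying the multiplier $m_{\vep, t}$; this is exactly what the split $m\hat\phi = \hat\phi + (m-1)\hat\phi$ together with Riemann--Lebesgue achieves, and it is the only place where the hypothesis $\phi\in X^s(\br)$ (rather than merely $\phi\in H^s(\br)$) is used. The rest is bookkeeping, the sole quantitative input being $\eta^{-1-s}\in L^2([1, \infty))$ for $s>-\fr12$, i.e.\ exactly the range for which $X^s(\br)$ was defined.
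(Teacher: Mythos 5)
Your proof is correct and follows essentially the same route as the paper: the same splitting $e^{-\frac{1}{2}(i+\vep)t\eta^2}=(e^{-\frac{1}{2}(i+\vep)t\eta^2}-1)+1$, the Riemann--Lebesgue lemma to kill the $e^{ia\eta}$ contribution of the correction term, H\"older/Cauchy--Schwarz with $\eta^{-1-s}\in L^2([1,\infty))$ for the quantitative bound, and dominated convergence for continuity in $t$. Your write-up is merely more explicit about the dominating functions near $\eta=0$ and $|\eta|\ge 1$ than the paper's proof.
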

\begin{proof}
First, by decomposing $e^{-\fr{1}{2}(i+\vep)t\eta^2}=(e^{-\fr{1}{2}(i+\vep)t\eta^2}-1) +1$, we obtain
\begin{align*}
&\lim_{a\to\minfty}\int_\br\fr{e^{ix\eta} - e^{ia\eta}}{i\eta}e^{-\fr{1}{2}(i+\vep)t\eta^2}\hat\phi(\eta)\,d\eta \\
&=\lim_{a\to\minfty}\int_\br\fr{e^{ix\eta} - e^{ia\eta}}{i\eta}\left(e^{-\fr{1}{2}(i+\vep)t\eta^2} - 1\right)\hat\phi(\eta)\,d\eta
+ \int_\minfty^x\phi(y)\, dy.
\end{align*}
Therefore, the Riemann-Lebesgue lemma yields $U_\vep(t)\phi\in X^s(\br)$ for any $t>0$.
From the H\"older inequality, we obtain 
the desired estimate.

Next, we prove $U_\vep(t)\phi\in C([0,\infty);X^s(\br))$.
Let $t_0, t\in [0, \infty)$. 
Then, we have from \eqref{def of primitive} that
\begin{align*}
&\sup_{x\in\br}\abs{\int_\minfty^x\left(U_{\vep}(t)-U_{\vep}(t_0)\right)\phi(y)\,dy} \\
&=\sup_{x\in\br}\abs{\lim_{a\to\minfty} \int_\br\fr{e^{ix\eta} - e^{ia\eta}}{i\eta}(e^{-\fr{1}{2}(i+\vep)t\eta^2}- e^{-\fr{1}{2}(i+\vep)t_0\eta^2}) \hat\phi(\eta)\,d\eta} \\
&\le \int_\br\fr{1}{\abs{\eta}}\abs{e^{-\fr{1}{2}(i+\vep)t\eta^2} - e^{-\fr{1}{2}(i+\vep)t_0\eta^2}}\abs{\hat \phi(\eta)}\,d\eta.
\end{align*}
From Lebesgue's dominated convergence theorem, we have $U_\vep(t)\phi\in C([0,\infty); X^s(\br))$.
The proof of the case $\vep=0$ is similar.
\end{proof}
The third lemma is the estimate for the Duhamel term.
\begin{lem}\label{est of Duhamel}
Let $0\le\vep \le 1$ and $s> \fr{1}{2}$.
If $u\in C([0, T]; H^s(\br))$, then we have 
\[
\int_\minfty^x\left(\int_0^t U_{\vep}(t-t') \partial_y\left(\lambda u^2 +\mu\abs{u}^2\right)(t')\,dt'\right)(y)\, dy \in C([0, T]; L_x^\infty(\br))
\]
and
\[
\nrm{\int_\minfty^x\left(\int_0^t U_{\vep}(t-t') \partial_y\left(\lambda u^2 +\mu\abs{u}^2\right)(t')\,dt'\right)(y)\, dy}\txx{\infty}
\lesssim T^{\fr{1}{2}}\nrm{u}\tx{\infty}{2}^2,
\]
where the implicit constant is universal.
\end{lem}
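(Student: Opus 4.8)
The plan is to exploit the cancellation between the outer primitive $\int_\minfty^x$ and the derivative $\partial_y$ inside the Duhamel term, which reduces the claim to an $L_x^\infty$-estimate for a derivative-free Duhamel term, and then to invoke the dispersive estimate for $U_\vep$ \emph{uniformly} in $\vep\in[0,1]$. Write $F(t'):=\lambda u^2+\mu\abs{u}^2$. Since $s>\fr{1}{2}$, $H^s(\br)$ is a Banach algebra, so $u\in C([0,T];H^s(\br))$ gives $F\in C([0,T];H^s(\br))$. As $U_\vep(\tau)$ is the Fourier multiplier $e^{-\frac12(i+\vep)\tau\eta^2}$, we have $\|U_\vep(\tau)\|_{L^2\to L^2}\le1$ for $\tau\ge0$ and hence $\|U_\vep(\tau)\|_{H^s\to H^s}\le1$; therefore $\Theta(t):=\int_0^t U_\vep(t-t')F(t')\,dt'$ is a well-defined element of $C([0,T];H^s(\br))$ — the usual Duhamel term, and the argument below covers $\vep=0$ as well. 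Because $U_\vep(\tau)$ commutes with $\partial_x$, the time integral in the statement equals $\partial_x\Theta(t)$ in $H^{s-1}(\br)$; since $\Theta(t)\in H^s(\br)$ with $s>\fr{1}{2}$, the observation preceding \eqref{primitive of derivative} gives $\partial_x\Theta(t)\in X^{s-1}(\br)$, and \eqref{primitive of derivative} applied with $\phi=\Theta(t)$ yields
\[
\int_\minfty^x\Big(\int_0^t U_\vep(t-t')\partial_y F(t')\,dt'\Big)(y)\,dy=\int_\minfty^x\partial_y\Theta(t)(y)\,dy=\Theta(t)(x).
\]
Thus the left-hand side of the assertion equals $\Theta$, which lies in $C([0,T];H^s(\br))\hookrightarrow C([0,T];L_x^\infty(\br))$ by Sobolev embedding ($s>\fr{1}{2}$); this settles the continuity claim.

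For the quantitative bound I would use that the convolution kernel of $U_\vep(\tau)$ is a constant multiple of $((i+\vep)\tau)^{-1/2}\exp\!\big(-x^2/(2(i+\vep)\tau)\big)$, whose $L_x^\infty$-norm is $\lesssim\tau^{-1/2}$ uniformly in $\vep\in[0,1]$ since $\abs{i+\vep}\ge1$; hence $\|U_\vep(\tau)g\|_{L_x^\infty}\lesssim\tau^{-1/2}\|g\|_{L_x^1}$. Combining this with $\|F(t')\|_{L_x^1}\le(\abs{\lambda}+\abs{\mu})\|u(t')\|_{L_x^2}^2\le(\abs{\lambda}+\abs{\mu})\|u\|_{L_T^\infty L_x^2}^2$ yields, for every $t\in[0,T]$,
\[
\|\Theta(t)\|_{L_x^\infty}\le\int_0^t\|U_\vep(t-t')F(t')\|_{L_x^\infty}\,dt'\lesssim\|u\|_{L_T^\infty L_x^2}^2\int_0^t(t-t')^{-1/2}\,dt'=2\sqrt{t}\,\|u\|_{L_T^\infty L_x^2}^2,
\]
and taking the supremum over $t\in[0,T]$ gives the desired bound $\lesssim T^{1/2}\|u\|_{L_T^\infty L_x^2}^2$.

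The one point requiring care is that one must not invoke Lemma \ref{schauder} to gain the integrability in $t'$, since its constant degenerates as $\vep\to0$ and the statement must also include the Schr\"odinger group $\vep=0$; it is precisely the uniform-in-$\vep$ dispersive estimate above that treats the regularized equations and the limiting equation simultaneously. Everything else — the algebra property of $H^s(\br)$, the mapping properties and strong continuity of $U_\vep$, and the identity \eqref{primitive of derivative} — is routine.
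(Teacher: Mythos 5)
Your proof is correct and follows essentially the same route as the paper: cancel the outer primitive against $\partial_y$ via \eqref{primitive of derivative} to reduce to the derivative-free Duhamel term, then apply the $L^1\to L^\infty$ dispersive bound $\|U_\vep(\tau)g\|_{L_x^\infty}\lesssim \tau^{-1/2}\|g\|_{L_x^1}$ together with $\|u^2(t')\|_{L^1}\le\|u\|_{L_T^\infty L_x^2}^2$. The only cosmetic difference is that the paper reduces to $\vep=0$ while you verify the dispersive estimate uniformly in $\vep$ by inspecting the complex Gaussian kernel directly; both are valid.
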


\begin{proof}
From $s>\frac{1}{2}$ and $\int_0^t U_\vep(t-t') (\lambda u^2 +\mu\abs{u}^2)(t')\, dt'\in C([0, T]; H^s(\br))$, 
we obtain the first property by \eqref{primitive of derivative}. 
We consider the estimate for the $L_{T, x}^\infty$-norm.
It suffices to show the case $\vep=0$. From the dispersive estimate, we have
\begin{align*}
&\abs{\int_\minfty^x\left(\int_0^t U(t-t') \partial_y\left(\lambda u^2 +\mu\abs{u}^2\right)(t')\,dt'\right)(y)\, dy} \\
&= \abs{\int_0^t \left(U(t-t') \left(\lambda u^2 +\mu\abs{u}^2\right)(t')\right)(x)\,dt'} \\
&\lesssim \int_0^t(t-t')^{-\fr{1}{2}}\nrm{u^2(t')}_{L^1}\, dt' \\
&\lesssim T^{\fr{1}{2}}\nrm{u}\tx{\infty}{2}^2
\end{align*}
for all $t\in [0, T]$ and $x\in \br$. 
Thus, we obtain the desired estimate.
\end{proof}

\begin{proof}[Proof of Proposition \ref{wp of ep-nls}]
For $\phi\in X^s(\br)$ and $u\in \ctx{s}$, we define
\[
\Phi(u)(t) := U_{\vep}(t)\phi -i\int_0^t U_{\vep}(t-t')\px\left(\lambda u^2 + \mu \abs{u}^2\right)(t')\,dt'.
\]
From Lemmas \ref{est of free sol} and \ref{est of Duhamel}, $\Phi$ is a map on $\ctx{s}$.
We prove that there exists $K=K(s)>0$ such that if $\nrm{\phi}_{X^s}\le R$, then $\Phi$ is a contraction map on $B_{KR, T}^s:= \{f\in C\left([0, T]; X^s(\br)\right)\mid\nrm{f}_{L_T^{\infty}X^{s}}\le KR\}$ for some $T\in (0 ,1)$.

From Lemma \ref{schauder}, we have
\begin{equation}\label{pr wp of ep-nls1}
\begin{aligned}
&\nrm{\int_0^t U_{\vep}(t-t')\px\left(\lambda u^2 + \mu \abs{u}^2\right)(t')\,dt'}_{H^s} \\
&\lesssim\int_0^t\left(1+\left(\vep (t-t')\right)^{-\fr{1}{2}}\right)\nrm{\lambda u^2(t') + \mu \abs{u}^2(t')}_{H^s}\,dt' \\
&\lesssim \left(T + \vep^{-\fr{1}{2}} T^{\fr{1}{2}}\right)\nrm{u}\thx{\infty}{s}^2
\end{aligned}
\end{equation}
for all $t\in[0, T]$ .
This estimate and Lemmas \ref{est of free sol} and \ref{est of Duhamel} yield that there exists $\tilde K = \tilde K (s)>0$ such that
\[
\nrm{\Phi(u)}\txs
\le \tilde K\left( \nrm{\phi}_{X^s} + T^{\fr1 2}\vep^{-\fr{1}{2}}\nrm{u}\thx{\infty}{s}^2\right).
\]
Taking $K=\tilde 2K$ and $T\le \frac{\vep}{K^4 R^2}$, $\Phi$ is a map on $B_{KR, T}$.
Also, similar calculations as Lemma \ref{est of Duhamel} and \eqref{pr wp of ep-nls1} yield
\[
\nrm{\Phi(u)-\Phi(u)}\txs
\le \tilde K T^{\fr1 2}\vep^{-\fr{1}{2}}\left(\nrm{u}\thx{\infty}{s} + \nrm{v}\thx{\infty}{s}\right)\nrm{u-v}\thx{\infty}{s}.
\]
Taking $T$ as $T\le \frac{\vep}{4 K^4 R^2}$, $\Phi$ is a contraction map on $B_{KR, T}$.
Thus, there exists a solution to \eqref{ep-nls}.

If $u, v\in C([0, T]; X^s(\br))$ are solutions to \eqref{ep-nls}, then we have
\[
\nrm{u-v}_{L_{T'}^\infty X^s}
\le \tilde K {T'}^{\fr1 2}\vep^{-\fr{1}{2}}\left(\nrm{u}\thx{\infty}{s} + \nrm{v}\thx{\infty}{s}\right)\nrm{u-v}_{L_{T'}^\infty X^s}.
\]
for any $T'\in (0, T]$.
Therefore, taking $T'$ as $T'=\vep(2\tilde K(\nrm{u}\thx{\infty}{s}+\nrm{v}\thx{\infty}{s}))^{-2}$, we obtain $u=v$ in $C([0, T']; X^s(\br))$.
Then, the same argument yields $u(\cdot +T')=v(\cdot +T')$ in $C([0, T']; X^s(\br))$, especially, $u=v$ in $C([0, 2T']; X^s(\br))$.
Repeating this argument, we obatin $u=v$ in $C([0, T]; X^s(\br))$.

We show the continuity of the flow map.
Let $\phi, \phi_n\in X^s(\br)$ and $\phi_n\to\phi$ in $X^s(\br)$.
Then, it holds for any large $n$ that $\nrm{\phi_n}_{X^s}\le 2\nrm{\phi}_{X^s}=:\tilde R$.
Thus, taking $T>0$ as $T\le \vep(2K^2 \tilde R)^{-2}$, we may assume $u, u_n\in B_{K\tilde R, T}$ is a solution to \eqref{nls} with initial data $\phi, \phi_n$, respectively. 
Then, we have
\begin{align*}
\nrm{u-u_n}\txs
&\le \tilde K\left( \nrm{\phi-\phi_n}_{X^s} + T^{\fr1 2}\vep^{-\fr{1}{2}}\left(\nrm{u}\thx{\infty}{s} + \nrm{v}\thx{\infty}{s}\right)\nrm{u-v}\thx{\infty}{s}\right) \\
&\le \tilde K\left( \nrm{\phi-\phi_n}_{X^s} + 2T^{\fr1 2}\vep^{-\fr{1}{2}}K\tilde R\nrm{u-v}\thx{\infty}{s}\right).
\end{align*}
From $T\le \vep(2K^2 \tilde R)^{-2}$, we obatin
\[
\nrm{u-u_n}\txs
\le K\nrm{\phi-\phi_n}_{X^s}.
\]
Thus, the flow map is Lipschitz continuous.

Next, let $u\in C([0, T]; X^s(\br))$ be a solution to \eqref{ep-nls}.
To prove that $u$ satisfies \eqref{ep-sol}, we have to show
\begin{align*}
&\int_0^t U_\vep(t-t')\px(\lambda u^2 + \mu |u|^2)(t')\,dt' \\
&= \frac{i}{2}(1-i\vep)\px^2\int_0^t\left(\int_0^{t'} U_\vep(t'-\tau)\px(\lambda u^2 + \mu |u|^2)(\tau)\,d\tau\right)\,dt' + \int_0^t \px(\lambda u^2 + \mu |u|^2)(t')\,dt'
\end{align*}
in $H^{s-2}(\br)$.
From $u\in C([0, T]; H^s(\br))$, Lebesgue's dominated convergence theorem yields
\[
\partial_\tau U_\vep(\tau-t')\px(\lambda u^2 + \mu |u|^2)(t') = \frac{i}{2}(1-i\vep)\px^2U_\vep(\tau-t')\px(\lambda u^2 + \mu |u|^2)(t')
\]
in $H^{s-2}(\br)$ for $t'<\tau \le t$.
Since $u\in C([0, T]; H^s(\br))$, Lemma \ref{schauder} yields
\begin{equation}\label{pr wp of ep-nls2}
\|\px^2U_\vep(\tau-t')\px(\lambda u^2 + \mu |u|^2)(t')\|_{L_\tau^1((t', t]; H_x^{s-2})}
\le (\vep(t-t'))^{-\frac{1}{2}}\nrm{u}_{L_T^\infty H_x^s}^2.
\end{equation}
Thus, we have
\begin{align*}
&U_\vep(t-t')\px(\lambda u^2 + \mu |u|^2)(t') \\
&=\int_{t'}^t\frac{i}{2}(1-i\vep)\px^2U_\vep(\tau-t')\px(\lambda u^2 + \mu |u|^2)(t')\,d\tau + \px(\lambda u^2 + \mu |u|^2)(t')
\end{align*}
in $H^{s-2}(\br)$.
Integrating on $t'\in [0, t)$, we obtain
\begin{equation}\label{pr wp of ep-nls3}
\begin{aligned}
&\int_0^tU_\vep(t-t')\px(\lambda u^2 + \mu |u|^2)(t')\,dt' \\
&=\frac{i}{2}(1-i\vep)\px^2\int_0^t\int_0^\tau U_\vep(\tau-t')\px(\lambda u^2 + \mu |u|^2)(t')\,dt'\,d\tau + \int_0^t\px(\lambda u^2 + \mu |u|^2)(t')\,dt'.
\end{aligned}
\end{equation}
Here, we obtain from \eqref{pr wp of ep-nls2} that $\|\px^2U_\vep(\tau-t')\px(\lambda u^2 + \mu |u|^2)(t')\|_{L_\tau^1((t', t]; H_x^{s-2})}\in L_{t'}^1([0, t))$, and thus we changed the order of the integration.
Swapping $t'$ and $\tau$ in the first term of the right-hand side of \eqref{pr wp of ep-nls3}, we have the desired equality.
\end{proof}

\subsection{Justification for the gauge transformation}
In this subsection, we justify the gauge transformation.
Throughout this paper,  we denote
$\cl_\vep = i\pt + \fr{1}{2}(1-i\vep)\px^2$ for $0\le \vep\le 1$.
Also, for the solution $u$ to \eqref{ep-nls} or \eqref{nls}, we denote 
\begin{equation}\label{def of gauge}
\Lambda(t, x):=2\lambda\int_\minfty^xu(t, y)\,dy + \mu\int_\minfty^x\bar u(t, y)\, dy.
\end{equation}

Noting that $\cl_\vep \bar u = -\overline{\cl_\vep u} +\px^2 \bar u$, we formally obtain
\begin{align}\label{ep-gauge trans}
&e^{\Lambda}\cl_\vep\left(e^{-\Lambda}u\right) \ntg\\
&=\cl_\vep u +\left(-\cl_\vep\Lambda+\fr{1}{2}\left(1-i\vep\right)\left(\px\Lambda\right)^2\right)u
-\left(1-i\vep\right)(\px\Lambda)\px u \ntg\\
&=\px \left(\lambda u^2 + \mu\abs{u}^2\right) \ntg\\
&\quad +\left(-2\lambda\int_\minfty^x\cl_\vep u\,dy +\mu\int_\minfty^x\overline{\cl_\vep u}\,dy -\mu\px\bar u +\fr{1}{2}(1-i\vep)\left(2\lambda u+\mu\bar u\right)^2\right)u \ntg\\
&\quad -\left(1-i\vep\right)\left(2\lambda u+\mu \bar u\right)\px u \ntg\\
&=\left(-2\lambda\left(\lambda u^2 + \mu \abs{u}^2\right)+ \mu\left(\bar\lambda \bar u^2 + \bar\mu \abs{u}^2\right)
+\fr{1}{2}(1-i\vep)\left(2\lambda u+\mu\bar u\right)^2\right)u \ntg\\
&\quad +i\vep\left(2\lambda u +\mu\bar u\right)\px u \ntg\\
&=-2i\lambda^2\vep u^3 + \left(\abs\mu^2 - 2i\lambda\mu\vep\right)\abs u^2 u + (\mu\bar\lambda + \fr{1}{2}\left(1-i\vep\right)\mu^2)\abs u^2 \bar u +i\vep\left(2\lambda u + \mu \bar u\right)\px u.
\end{align}
In particular, we have
\begin{equation}\label{gauge trans}
e^{\Lambda}\cl\left(e^{-\Lambda}u\right)
= \abs{u}^2\left(\abs{\mu}^2u + \fr{1}{2}\mu(\mu+\bar \lambda)\bar u\right)
\end{equation}
if $\vep=0$.
We denote
\begin{gather}\label{N3}
N^{(3)}_\vep(u):= -2i\lambda^2\vep u^3 + \left(\abs\mu^2 - 2i\lambda\mu\vep\right)\abs u^2 u + \left(\mu\bar\lambda + \fr{1}{2}\left(1-i\vep\right)\mu^2\right)\abs u^2 \bar u , \\
\label{N2}
N^{(2)}(u):= i\left(2\lambda u + \mu \bar u\right)\px u.
\end{gather}

In order to obtain some estimates to prove the well-posedness, we need to justify the calculation \eqref{ep-gauge trans} and \eqref{gauge trans}.
If $s> \fr{1}{2}$, we can justify these calculation in the Sobolev space $H^{s-2}(\br)$.

\begin{prop}\label{prop gauge}
Let $s> \fr{1}{2}$, $0< \vep\le 1$, and $u\in \ctx{s}$ satisfy \eqref{ep-sol}. 
Then, we have \eqref{ep-gauge trans} in $H^{s-2}(\br)$.
When $\vep=0$, the same argument holds by substituting \eqref{ep-sol} and \eqref{ep-gauge trans} for \eqref{nls} and \eqref{gauge trans}, respectively.
\end{prop}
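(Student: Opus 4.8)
The statement asks us to justify, in $H^{s-2}(\br)$, every step of the formal computation displayed in \eqref{ep-gauge trans}. The plan is to do this in three parts: (I) the $X^\sigma$-calculus of the phase $\Lambda$, culminating in a usable formula for $\cl_\vep\Lambda$; (II) the Leibniz expansion of $\cl_\vep(e^{-\Lambda}u)$, i.e.\ the first equality of \eqref{ep-gauge trans}; and (III) the substitution of the equation and the polynomial bookkeeping that yields the last line of \eqref{ep-gauge trans} and the definitions \eqref{N3}--\eqref{N2}.

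For part (I), differentiating \eqref{def of primitive} under the integral and the limit --- legitimate since $u(t),\bar u(t)\in H^s(\br)$ with $s>\fr12$ --- gives $\px\int_\minfty^x u\,dy=u$ and $\px\int_\minfty^x\bar u\,dy=\bar u$, hence $\px\Lambda=2\lambda u+\mu\bar u\in H^s(\br)$ and $\px^2\Lambda=2\lambda\px u+\mu\px\bar u\in H^{s-1}(\br)$; one also checks $\int_\minfty^x\bar f\,dy=\overline{\int_\minfty^x f\,dy}$ directly from \eqref{def of primitive}. The one delicate point is $\pt\Lambda$: $\pt u$ lies only in $H^{s-2}(\br)$, which for $s$ close to $\fr12$ is below the range $\sigma>-\fr12$ in which the primitive \eqref{def of primitive} is even defined. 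The remedy is to integrate the integral equation \eqref{ep-sol} over $(-\infty,x]$: there the $\px^2$ and $\px$ sitting outside the $t$-integrals are each lowered by one $x$-derivative, and since $\int_0^t u\,dt'\in C([0,T];X^s(\br))$ and $u^2,|u|^2\in C([0,T];H^s(\br))$ (because $H^s(\br)$ is an algebra for $s>\fr12$), the resulting expressions stay within the $X^\sigma$-calculus. One obtains
\[
\int_\minfty^x u(t)\,dy=\int_\minfty^x\phi\,dy+\fr{i}{2}(1-i\vep)\,\px\!\Bigl(\int_0^t u(t')\,dt'\Bigr)-i\int_0^t(\lambda u^2+\mu|u|^2)(t')\,dt',
\]
valid for all $x\in\br$ and $t\in[0,T]$, whose right-hand side is $C^1$ in $t$; so $\int_\minfty^x u(t)\,dy$ is as well, one reads off $\pt\Lambda$, and combining this with $\px^2\Lambda$ and with $\cl_\vep\bar u=-\overline{\cl_\vep u}+\px^2\bar u$ gives exactly the identity $\cl_\vep\Lambda=2\lambda\int_\minfty^x\cl_\vep u\,dy-\mu\int_\minfty^x\overline{\cl_\vep u}\,dy+\mu\px\bar u$ used in \eqref{ep-gauge trans}.

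For parts (II)--(III): since $s>\fr12$ the function $e^{\pm\Lambda}$ is bounded and $C^1$ with $\px e^{\pm\Lambda}=\pm(2\lambda u+\mu\bar u)e^{\pm\Lambda}\in H^s(\br)$ (bootstrapping the algebra and multiplier properties of $H^s(\br)$), so multiplication by $e^{\pm\Lambda}$ is bounded on $H^{s-2}(\br)$; with this the product rules $\px(e^{-\Lambda}u)=e^{-\Lambda}(\px u-(\px\Lambda)u)$, $\px^2(e^{-\Lambda}u)=e^{-\Lambda}(\px^2 u-2(\px\Lambda)\px u+((\px\Lambda)^2-\px^2\Lambda)u)$ and $\pt(e^{-\Lambda}u)=e^{-\Lambda}(\pt u-(\pt\Lambda)u)$ hold in $H^{s-2}(\br)$, and multiplying through by $e^{\Lambda}$ and regrouping gives the first line of \eqref{ep-gauge trans}. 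Substituting then $\cl_\vep u=\px(\lambda u^2+\mu|u|^2)$ (which is \eqref{ep-sol} differentiated in $t$), the formula for $\cl_\vep\Lambda$ from part (I), and $\px\Lambda=2\lambda u+\mu\bar u$, and using $\int_\minfty^x\px(\lambda u^2+\mu|u|^2)\,dy=\lambda u^2+\mu|u|^2$ together with its conjugate, produces the third line; the passage to the last line of \eqref{ep-gauge trans}, hence to \eqref{N3}--\eqref{N2}, is purely algebraic and is done in $H^{s-2}(\br)$ using $u^2,|u|^2,(2\lambda u+\mu\bar u)\px u\in C([0,T];H^{s-1}(\br))$. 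The case $\vep=0$ requires no separate treatment, since nothing above uses $\vep>0$: the same argument with \eqref{nls} and \eqref{gauge trans} in place of \eqref{ep-sol} and \eqref{ep-gauge trans} applies verbatim.

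The main obstacle is the one isolated in part (I): the time derivative of the gauge phase --- equivalently, the interchange of $\pt$ and $\px^2$ with the $X^\sigma$-primitive --- when the naive regularity $H^{s-2}(\br)$ is too low for \eqref{def of primitive} to apply. It is circumvented by working with the integrated equation \eqref{ep-sol} rather than with the differential equation, so that one $x$-derivative is absorbed and everything stays in an admissible space. For $\vep>0$ one could alternatively use the parabolic smoothing of $U_\vep$, which places $u(t)$ in $H^\infty(\br)$ for $t\in(0,T]$, perform the computation classically there, and pass to $t=0$ by continuity in $H^{s-2}(\br)$; the integrated-equation argument has the advantage of covering $\vep=0$ uniformly. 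A secondary, routine point is the boundedness of multiplication by $e^{\pm\Lambda}$ on $H^{s-2}(\br)$ and the attendant product rules of part (II).
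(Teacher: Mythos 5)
Your proposal is correct and follows essentially the same route as the paper: the time derivative of $\Lambda$ is extracted from the integrated equation (this is the paper's Lemma \ref{diff}, proved by pairing with test functions and splitting into low and high frequencies), multiplication by $e^{\pm\Lambda}$ is shown to be bounded on $H^{s-2}(\br)$, and the Leibniz expansion plus substitution of the equation yields \eqref{ep-gauge trans}. Be aware, though, that the two steps you label routine --- the convergence of $h^{-1}\bigl(e^{-(\Lambda(t+h)-\Lambda(t))}-1\bigr)$ to $-\pt\Lambda(t)$ in $H^{s-1}(\br)$, and the bound $\nrm{e^{-\Lambda}f}_{H^{s-2}}\lesssim \nrm{f}_{H^{s-2}}$ --- are exactly where the paper's proof spends its effort, since for $\tfrac12<s<2$ these take place in negative-order Sobolev spaces and require a power-series expansion of the exponential together with duality and bilinear estimates of the form $\nrm{FG}_{H^{-(s-1)}}\lesssim \nrm{F}_{H^{s}}\nrm{G}_{H^{-(s-2)}}$.
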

To prove this proposition, we prepare some lemmas.
\begin{lem}\label{diff}
Let $s\ge 0$, $0\le \vep\le 1$, and $u\in \ctx{s}$ satisfy \eqref{ep-sol}. Then, for $t\in[0, T]$, we have
\begin{align*}
&\int_\minfty^x u(t, y)\,dy \\
&= \int_\minfty^x \phi(y)\, dy + \fr{i}{2}(1-i\vep)\px\left(\int_0^tu(t')\, dt'\right)(x) -i \left(\int_0^t\left(\lambda u^2 + \mu\abs{u}^2\right)(t')\, dt'\right)(x)
\end{align*}
in the sense of the tempered distribution with respect to $x$. 
In particular, for all $t\in[0, T]$, we have $\int_\minfty^x u(t + h, y)\,dy - \int_\minfty^x u(t, y)\,dy\in H_x^{s-1}(\br)$ and
\[
\lim_{h\to 0}\fr{\int_\minfty^x u(t + h, y)\,dy - \int_\minfty^x u(t, y)\,dy}{h} = \fr{i}{2}(1-i\vep)\px u(t,x) -i \left(\lambda u^2 +\mu \abs{u}^2\right)(t, x)
\]
in $H_x^{s-1}(\br)$.
\end{lem}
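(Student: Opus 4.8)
The plan is to obtain the identity by ``integrating \eqref{ep-sol} from $-\infty$ to $x$'': both sides will be seen to have the same distributional $x$-derivative, namely $u(t)$, and to agree as $x\to-\infty$. Abbreviate $U_1(t):=\int_0^t u(t')\,dt'\in H^s(\br)$ and $U_2(t):=\int_0^t(\lambda u^2+\mu\abs{u}^2)(t')\,dt'\in H^{s-1}(\br)$; these are Bochner integrals, and $U_2(t)\in H^{s-1}(\br)$ because $u\in C([0,T];H^s(\br))$ and multiplication maps $H^s(\br)\times H^s(\br)$ continuously into $H^{s-1}(\br)$ for $s\ge 0$. Put $g(t):=\fr{i}{2}(1-i\vep)\px U_1(t)-iU_2(t)\in H^{s-1}(\br)$; this is exactly the ``extra'' part of the right-hand side of the claimed identity, so \eqref{ep-sol} becomes $u(t)=\phi+\px g(t)$ in $H^{s-2}(\br)$ and the assertion is that $\int_\minfty^x u(t,y)\,dy=\int_\minfty^x\phi(y)\,dy+g(t,x)$ in $\cs'(\br)$.

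The first ingredient I would need is the elementary fact that $\px\int_\minfty^x f(y)\,dy=f$ in $\cs'(\br)$ for every $f\in X^s(\br)$ (the companion of \eqref{primitive of derivative}); this follows by testing against $\psi\in\cs(\br)$, using $\int_\br\psi'=0$ to replace $\int_\minfty^x f$ by the absolutely convergent integral $\int_\minfty^x f-\int_\minfty^{x_0}f=\int_\br\frac{e^{ix\eta}-e^{ix_0\eta}}{i\eta}\hat f(\eta)\,d\eta$ (as in the proof of Lemma \ref{Xs is Banach}), then Fubini and one integration by parts in $x$. Applying this with $f=u(t)-\phi\in X^s(\br)$ together with \eqref{ep-sol} in the form $\px g(t)=u(t)-\phi$ gives
\[
\px\!\left(\int_\minfty^x u(t,y)\,dy-\int_\minfty^x\phi(y)\,dy-g(t,x)\right)=0\quad\text{in }\cs'(\br),
\]
so the bracketed distribution equals a constant $c(t)$ in $x$.

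It remains to prove $c(t)=0$, and this is the step I expect to be the main obstacle, since for $s<1$ the term $g(t)$ is only a distribution, so the limit $x\to-\infty$ must be taken after a regularization. I would apply the Littlewood--Paley projection $P_{\le k}$ (fixed $k\ge 0$) to the constant identity above. Then $P_{\le k}(c(t)\mathbf 1)=c(t)\mathbf 1$ since $p_{\le k}(0)=1$; the distribution $P_{\le k}g(t)=\fr{i}{2}(1-i\vep)\px P_{\le k}U_1(t)-iP_{\le k}U_2(t)$ is band-limited with $L^2(\br)$ Fourier transform, hence lies in $H^1(\br)\subset C_0(\br)$ and vanishes as $x\to-\infty$; and $P_{\le k}\big(\int_\minfty^\cdot(u(t)-\phi)\big)=\cf^{-1}[p_{\le k}]\ast\big(\int_\minfty^\cdot(u(t)-\phi)\big)$ vanishes as $x\to-\infty$ by dominated convergence, since $\int_\minfty^\cdot(u(t)-\phi)$ is bounded, continuous and tends to $0$ at $-\infty$ (proof of Lemma \ref{Xs is Banach}) while $\cf^{-1}[p_{\le k}]\in L^1(\br)$. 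Letting $x\to-\infty$ in the truncated identity forces $c(t)=0$, which establishes the asserted identity.

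Finally, for the ``in particular'' statement I would subtract the identity at time $t$ from the one at time $t+h$ (for $|h|$ small with $t+h\in[0,T]$): the $\int_\minfty^x\phi$ terms cancel and, using $U_1(t+h)-U_1(t)=\int_t^{t+h}u(t')\,dt'$ and the analogous formula for $U_2$, one gets
\[
\int_\minfty^x u(t+h,y)\,dy-\int_\minfty^x u(t,y)\,dy=\fr{i}{2}(1-i\vep)\px\!\int_t^{t+h}\! u(t')\,dt'-i\!\int_t^{t+h}\!(\lambda u^2+\mu\abs{u}^2)(t')\,dt',
\]
which belongs to $H^{s-1}(\br)$ because $\int_t^{t+h}u(t')\,dt'\in H^s(\br)$ and $\int_t^{t+h}(\lambda u^2+\mu\abs{u}^2)(t')\,dt'\in H^{s-1}(\br)$. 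Dividing by $h$ and letting $h\to0$, continuity of $u$ in $C([0,T];H^s(\br))$ gives $\fr 1 h\int_t^{t+h}u(t')\,dt'\to u(t)$ in $H^s(\br)$, hence $\px$ of it $\to\px u(t)$ in $H^{s-1}(\br)$, and continuity of $t'\mapsto(\lambda u^2+\mu\abs{u}^2)(t')$ in $H^{s-1}(\br)$ gives $\fr 1 h\int_t^{t+h}(\lambda u^2+\mu\abs{u}^2)(t')\,dt'\to(\lambda u^2+\mu\abs{u}^2)(t)$ in $H^{s-1}(\br)$; this yields the stated limit.
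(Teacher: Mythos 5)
Your proposal is correct, but it reaches the identity by a genuinely different route than the paper. The paper computes the pairing $\langle\int_\minfty^x u(t,y)\,dy,\psi\rangle$ directly on the Fourier side: it splits $\hat u$ into a low-frequency piece $p_0\hat u$ and a high-frequency piece $p_{\ge1}\hat u$, substitutes the Fourier transform of \eqref{ep-sol} into each (so that the factors $(i\eta)$ and $(i\eta)^2$ coming from $\px$ and $\px^2$ cancel the $1/(i\eta)$ singularity), invokes the Riemann--Lebesgue lemma to kill the $e^{ia\eta}\check\psi(0)$ term at high frequency, and then undoes the pairing ``in the reverse procedure.'' You instead first isolate the fundamental-theorem-of-calculus fact $\px\int_\minfty^{\cdot}f=f$ in $\cs'(\br)$ for $f\in X^s(\br)$, conclude that the two sides of the claimed identity have equal distributional derivatives and hence differ by a constant, and then determine the constant to be zero by mollifying with $P_{\le k}$ and sending $x\to\minfty$, using that the primitive of $u(t)-\phi$ vanishes at $\minfty$ (as in the proof of Lemma \ref{Xs is Banach}) and that band-limited $L^2$ functions lie in $C_0(\br)$. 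Both arguments rest on the same two analytic inputs --- integrability of $\eta^{-1}\hat f(\eta)$ away from the origin for $s>-\fr12$, and the decay of the primitive at $\minfty$ --- and your frequency truncation plays a different role from the paper's (regularizing the distribution $g(t)$ so that pointwise evaluation at $\minfty$ makes sense, rather than separating the treatment of the $1/(i\eta)$ singularity). Your route has the merit of isolating a reusable lemma and avoiding the paper's somewhat terse ``transform back in the reverse procedure'' step; the paper's is a single self-contained computation. The ``in particular'' part of your argument coincides with the paper's.
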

\begin{proof}
We only prove the case $\vep=0$ since the proof of the case $0< \vep\le 1$ is similar.
Let $\psi$ be a test function. Then, Lebesgue's dominated convergence theorem yields
\begin{align*}
\left\langle\int_\minfty^xu(t, y)\,dy, \psi(x)\right\rangle
&= \lim_{a\to\minfty}\int_{\br}\left(\int_\br\fr{e^{ix\eta}-e^{ia\eta}}{i\eta}\hat u(t, \eta)\, d\eta\right)\psi(x)\,dx \\
&= \lim_{a\to\minfty}\int_\br\fr{\check\psi(\eta) - e^{ia\eta}\check\psi(0)}{i\eta}\left(p_0(\eta)\hat u(t, \eta) + p_{\ge 1}(\eta)\hat u(t, \eta)\right)\,d\eta,
\end{align*}
where $p_{\ge 1}$ is the symbol of $P_{\ge 1}$.
From \eqref{sol}, we have
\begin{align*}
&\lim_{a\to\minfty}\int_\br\fr{\check\psi(\eta) - e^{ia\eta}\check\psi(0)}{i\eta}p_0(\eta)\hat u(t, \eta)\,d\eta \\
&=\lim_{a\to\minfty}\int_\br\fr{\check\psi(\eta) - e^{ia\eta}\check\psi(0)}{i\eta}p_0(\eta) \\
&\quad\quad\quad\quad \times\left(\hat \phi(\eta) +\fr{i}{2}(i\eta)^2\int_0^t\hat u(t', \eta)\,dt' -i(i\eta)\int_0^t\cf\left(\lambda u^2 +\mu\abs{u}^2\right)(t', \eta)\,dt'\right)\,d\eta \\
&= \left\langle\int_\minfty^xP_0\phi(y)\,dy, \psi(x)\right\rangle 
+ \fr{i}{2}\left\langle \px P_0\left(\int_0^tu(t', x)\,dt'\right), \psi(x)\right\rangle \\
&\quad-i \left\langle P_0\left(\int_0^t\left(\lambda u^2+\mu\abs{u}^2\right)(t', x)\,dt'\right), \psi(x)\right\rangle,
\end{align*}
where the last equality is obtained by transforming in the same way as the first equality of this proof, but in the reverse procedure.
The high frequency part is expressed by the Riemann-Lebesgue lemma and \eqref{sol} as follows: 
\begin{align*}
&\lim_{a\to\minfty}\int_\br\fr{\check\psi(\eta) - e^{ia\eta}\check\psi(0)}{i\eta}p_{\ge 1}(\eta)\hat u(t, \eta)\,d\eta \\
&=\lim_{a\to\minfty}\int_\br\fr{\check\psi(\eta)}{i\eta}p_{\ge 1}(\eta)\hat u(t, \eta)\,d\eta \\
&=\lim_{a\to\minfty}\int_\br\fr{\check\psi(\eta)}{i\eta}p_{\ge 1}(\eta) \\
&\quad\quad\quad\quad \times\left(\hat \phi(\eta) +\fr{i}{2}(i\eta)^2\int_0^t\hat u(t', \eta)\,dt' -i(i\eta)\int_0^t\cf\left(\lambda u^2 +\mu\abs{u}^2\right)(t', \eta)\,dt'\right)\,d\eta \\
&= \left\langle\int_\minfty^xP_{\ge 1}\phi(y)\,dy, \psi(x)\right\rangle 
+ \fr{i}{2}\left\langle \px P_{\ge 1}\left(\int_0^tu(t', x)\,dt'\right), \psi(x)\right\rangle \\
&\quad-i \left\langle P_{\ge 1}\left(\int_0^t\left(\lambda u^2+\mu\abs{u}^2\right)(t', x)\,dt'\right), \psi(x)\right\rangle.
\end{align*}
Therefore, we obtain the desired equality.
\end{proof}

The following lemma is useful to consider the product of the function in $L^\infty(\br)\cap \dot H^{s}(\br)$ and Sobolev spaces. 
See \cite[Lemma 5]{HKO2020}.
\begin{lem}\label{seki}
For $s\ge 0$, we have
\begin{equation*}\label{est of sobolev}
\nrm{fg}_{H^s}
\lesssim \nrm{f}_{H^s}\nrm{g}_{L^\infty}+ \nrm{f}_{H^{s-[s]}}\nrm{g}_{\dot{H}^{[s]+1}},
\end{equation*}
where $[s]$ means the largest integer less than or equal to $s$.
In particular, we have
\[
\nrm{fg}_{H^s}
\lesssim \nrm{f}_{H^s}\nrm{g}_{L^\infty\cap \dot H^{[s]+1}}.
\]
\end{lem}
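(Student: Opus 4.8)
The plan is to prove the inequality by Littlewood–Paley decomposition and paraproduct splitting, so I first write $f = \sum_j P_j f$ and $g = \sum_k P_k g$ and organize the product $fg$ into the three standard regions: the ``high-low'' piece where $f$ carries the high frequency, the ``low-high'' piece where $g$ carries the high frequency, and the ``high-high'' piece where both are comparably large. For the high-low piece $\sum_j P_j f \cdot P_{\le j-2} g$, the output frequency is $\sim 2^j$, so I bound the $H^s$-norm by $\bigl(\sum_j 2^{2js}\nrm{P_j f}_{L^2}^2 \nrm{P_{\le j-2} g}_{L^\infty}^2\bigr)^{1/2}$ and then estimate $\nrm{P_{\le j-2} g}_{L^\infty} \le \nrm{g}_{L^\infty}$ uniformly, which yields the term $\nrm{f}_{H^s}\nrm{g}_{L^\infty}$ after summing in $j$. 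This is the only place the full $L^\infty$-norm of $g$ (rather than a homogeneous Sobolev norm) is needed, and it is precisely why the estimate is phrased with $\nrm{g}_{L^\infty}$.

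For the low-high piece $\sum_k P_{\le k-2} f \cdot P_k g$, the output frequency is $\sim 2^k$, so I need $\bigl(\sum_k 2^{2ks}\nrm{P_{\le k-2}f}_{L^\infty}^2\nrm{P_k g}_{L^2}^2\bigr)^{1/2}$. Here I cannot afford $\nrm{f}_{L^\infty}$ on the right, so instead I use Bernstein to write $\nrm{P_{\le k-2} f}_{L^\infty} \lesssim \sum_{j \le k-2} 2^{j/2}\nrm{P_j f}_{L^2}$; splitting $2^{js} = 2^{j(s-[s])}\cdot 2^{j[s]}$ and using $2^{j([s]+1)} \le 2^{k([s]+1)}$ when $j\le k$, I can trade derivatives from $g$ onto this sum. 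After a Schur-type / Cauchy–Schwarz argument in the convolution over $j\le k$ (the geometric weights $2^{(j-k)(\,[s]+1-s\,)}$ or similar are summable once one checks $s - [s] < 1$, which always holds), this piece is dominated by $\nrm{f}_{H^{s-[s]}}\nrm{g}_{\dot H^{[s]+1}}$. The high-high piece $\sum_{|j-k|\le 2} P_j f\cdot P_k g$ has output frequency $\lesssim 2^j$, so for $s \ge 0$ one simply puts $2^{js}$ on the $f$-factor, takes $P_k g$ in $L^\infty$ via Bernstein $\lesssim 2^{k/2}\nrm{P_k g}_{L^2} \lesssim 2^{-k([s]+1/2)}\cdot 2^{k([s]+1)}\nrm{P_k g}_{L^2}$, and sums; this again lands inside $\nrm{f}_{H^s}\nrm{g}_{\dot H^{[s]+1}}$ (or even inside $\nrm{f}_{H^s}\nrm{g}_{L^\infty}$), so it is harmless.

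The main obstacle is the low-high piece: one must be careful that the derivative count $[s]+1$ placed on $g$ is an \emph{integer} (so that $\dot H^{[s]+1}$ is a genuine homogeneous Sobolev space controlled by $\nrm{\px^{[s]+1} g}_{L^2}$) while what is left over on $f$ is exactly the fractional remainder $s-[s] \in [0,1)$, and that the resulting sum over low frequencies $j \le k$ of $g$ converges — this needs the exponent gap $[s]+1 - (s-[s]) = 2([s]+1) - s - [s] \ge [s]+1 > 0$, together with an application of Young's or Schur's inequality for the discrete convolution, after which a final Cauchy–Schwarz in $k$ closes the estimate. The last assertion, $\nrm{fg}_{H^s}\lesssim \nrm{f}_{H^s}\nrm{g}_{L^\infty\cap\dot H^{[s]+1}}$, is then immediate by bounding $\nrm{f}_{H^{s-[s]}} \le \nrm{f}_{H^s}$. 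Since this lemma is quoted from \cite{HKO2020}, it would be reasonable to give only this sketch and refer there for the routine details.
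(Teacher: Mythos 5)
Your argument is correct in substance, but note that the paper does not actually prove this lemma: it is quoted verbatim from \cite[Lemma 5]{HKO2020}, so there is no in-text proof to compare against. Your self-contained paraproduct argument is a standard and valid route, and the key mechanism is exactly right: in the low--high piece the Bernstein bound $\nrm{P_{\le k-2}f}_{L^\infty}\lesssim\sum_{j\le k-2}2^{j/2}\nrm{P_jf}_{L^2}$ costs at most a factor $2^{k(1-(s-[s]))}$ (up to a harmless $\sqrt{k}$ when $s-[s]=\tfrac12$), which is absorbed because $2^{ks}=2^{k([s]+1)}2^{k((s-[s])-1)}$ and $s-[s]<1$; this is precisely where the integer regularity $[s]+1$ on $g$ and the fractional remainder $s-[s]$ on $f$ are reconciled.

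Three small points to tighten. First, your exponent identity is miscomputed: $[s]+1-(s-[s])=2[s]+1-s=[s]+1-(s-[s])$, which equals $[s]+(1-(s-[s]))$, not $2([s]+1)-s-[s]$, and it is not $\ge [s]+1$ in general (only $>[s]\ge 0$); the positivity you actually need, $1-(s-[s])>0$, does hold, so the conclusion stands. Second, for the high--high (resonant) piece you should commit to the parenthetical bound $\nrm{f}_{H^s}\nrm{g}_{L^\infty}$ rather than $\nrm{f}_{H^s}\nrm{g}_{\dot H^{[s]+1}}$: the latter is \emph{not} one of the two terms on the right-hand side of the lemma (the $\dot H^{[s]+1}$-norm of $g$ is only paired with $\nrm{f}_{H^{s-[s]}}$), so if your Bernstein trade on $P_kg$ left the full $H^s$-weight on $f$ the resonant piece would fall outside the claimed bound. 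Placing $\nrm{P_kg}_{L^\infty}\lesssim\nrm{g}_{L^\infty}$ and running the usual Young-type summation (valid for $s>0$) lands it in the first term. Third, the case $s=0$ deserves a one-line separate treatment, since the resonant summation degenerates logarithmically there while the estimate itself reduces to the trivial $\nrm{fg}_{L^2}\le\nrm{f}_{L^2}\nrm{g}_{L^\infty}$. With these adjustments the proof is complete; alternatively, citing \cite{HKO2020} as the paper does is of course sufficient.
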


From \eqref{def of gauge} and a direct calculation, we have
\[
\nrm{e^{\pm\Lambda(t)}}_{\dot H^k}
\lesssim \nrm{e^{\pm\Lambda(t)}}\lx{\infty}\nrm{u(t)}_{H^{k-1}}\left(1+\nrm{u(t)}_{L^2\cap H^{k-2}}^{k-1}\right).
\]
Thus, Lemma \ref{seki} yields
\begin{equation}\label{est of exp f}
\nrm{e^{\pm\Lambda(t)} f}_{H^s}
\lesssim  \nrm{e^{\pm\Lambda(t)}}\lx{\infty}\left(\nrm{f}_{H^s} + \nrm{u(t)}_{H^{[s]}}\left(1+\nrm{u(t)}_{L^2\cap H^{[s]-1}}^{[s]}\right)\nrm{f}_{H^{s-[s]}}\right),
\end{equation}
especially, 
\begin{equation}\label{est of exp f2}
\nrm{e^{\pm\Lambda(t)} f}_{H^s}
\lesssim \nrm{e^{\pm\Lambda(t)}}\lx{\infty}\left(1 + \nrm{u(t)}_{H^{[s]}}^{[s]+1}\right)\nrm{f}_{H^s}
\end{equation}
for $s\ge 0$.
\begin{proof}[Proof of Proposition \ref{prop gauge}]
We only prove the case $\vep=0$ since the proof of the case $0<\vep<1$ is similar. 
We note that Lemma \ref{diff} and \eqref{def of gauge} yields  $\Lambda(t+h)-\Lambda(t)\in H^{s-1}(\br)$ and
\[
\lim_{h\to 0}\frac{\Lambda(t+h)-\Lambda(t)}{h}
= 2\lambda\left(\fr{i}{2}\px u -i \left(\lambda u^2 +\mu \abs{u}^2\right)\right)
+ \mu \left(-\fr{i}{2}\px \bar u +i \left(\bar\lambda \bar u^2 +\bar\mu \abs{u}^2\right)\right)
\]
in $H^{s-1}(\br)$.
First, we show that 
\begin{equation}\label{pr prop gauge1}
\lim_{h\to0}\frac{e^{-(\Lambda(t+h)-\Lambda(t))}-1}{h} = -\pt\Lambda(t)\ \mathrm{in}\ H^{s-1}(\br).
\end{equation}
From $e^{-(\Lambda(t+h)-\Lambda(t))}-1=\sum_{k=1}^\infty \frac{(-1)^k}{k!}(\Lambda(t+h)-\Lambda(t))^k$, it suffices to show that for $s>\frac{1}{2}$, there exists $C>0$ such that
\[
\nrm{\left(\Lambda(t+h)-\Lambda(t)\right)^k}_{H^{s-1}}
\le C^k\nrm{\Lambda(t+h)-\Lambda(t)}_{L^\infty\cap\dot H^{[s]+1}}^k\nrm{\Lambda(t+h)-\Lambda(t)}_{H^{s-1}}
\]
for all $k\in\bn$.
We note that from \eqref{def of gauge} and $s> \frac{1}{2}$, we have $\px\Lambda(t)=2\lambda u(t)+\mu\bar u(t)\in H^{s}(\br)$, especially, $\Lambda(t)\in C([0, T]; L^\infty(\br)\cap\dot H^{s+1}(\br))$.
From Lemma \ref{seki}, it is easy to show the case $s\ge 1$. Thus, we prove the case $\fr{1}{2}< s<1$.
For any $g\in H^{-(s-1)}(\br)$, it holds from Lemma \ref{seki} that
\[
\nrm{\left(\Lambda(t+h)-\Lambda(t)\right)g}_{H^{-(s-1)}}
\le C\nrm{\Lambda(t+h)-\Lambda(t)}_{L^\infty\cap\dot H^{1}}\nrm{g}_{H^{-(s-1)}}.
\]
Thus, the duality argument yields
\[
\nrm{\left(\Lambda(t+h)-\Lambda(t)\right)^2}_{H^{s-1}}
\le C\nrm{\Lambda(t+h)-\Lambda(t)}_{L^\infty\cap\dot H^{1}}\nrm{\Lambda(t+h)-\Lambda(t)}_{H^{s-1}}.
\]
We can repeat this argument when $k\ge 3$.
Hence, we obtain \eqref{pr prop gauge1}.

Next, we prove
\begin{equation}\label{pr prop gauge2}
\nrm{e^{-\Lambda(t)}f}_{H^{s-2}}
\lesssim C(s, \nrm{u(t)}_{X^s})\nrm{f}_{H^{s-2}}
\end{equation}
for all $f\in H^{s-2}(\br)$.
It is easy when $s\ge 2$. 
Therefore, we only prove the case $\fr{1}{2}< s< 2$. 

When $1<s<2$, \eqref{est of exp f2} yields
\begin{align*}
\nrm{e^{-\Lambda(t)}g}_{H^{-(s-2)}}
&\lesssim \nrm{e^{-\Lambda(t)}}\lx{\infty}\left(1 + \nrm{u(t)}_{H^{-(s-2)}}^{[-(s-2)]+1}\right)\nrm{g}_{H^{-(s-2)}} \\
&\lesssim \nrm{e^{-\Lambda(t)}}\lx{\infty}\left(1 + \nrm{u(t)}_{H^{s}}^{[3-s]}\right)\nrm{g}_{H^{-(s-2)}}
\end{align*}
for any $g\in H^{-(s-2)}(\br)$.
Therefore, the duality argument yields \eqref{pr prop gauge2}.
We consider the case $\frac{1}{2}< s\le 1$.
Then, we have
\[
\nrm{e^{-\Lambda(t)}g}_{H^{-(s-2)}}
\sim \nrm{e^{-\Lambda(t)}g}_{L^2} + \nrm{e^{-\Lambda(t)}\px g}_{H^{-(s-1)}}
+ \nrm{e^{-\Lambda}\px \Lambda(t) \cdot g}_{H^{-(s-1)}}.
\]
The inequality \eqref{est of exp f2} and $s>\fr{1}{2}$ yields
\begin{align*}
\nrm{e^{-\Lambda(t)}\px \Lambda(t) \cdot g}_{H^{-(s-1)}}
&\lesssim \nrm{e^{-\Lambda(t)}}\lx{\infty}\left(1 + \nrm{u(t)}_{H^{-(s-1)}}^{[2-s]}\right)\nrm{\px \Lambda(t) \cdot g}_{H^{-(s-1)}} \\
&\lesssim \nrm{e^{-\Lambda(t)}}\lx{\infty}\left(1 + \nrm{u(t)}_{H^{s}}^{[2-s]}\right)\nrm{\px \Lambda(t) \cdot g}_{H^{-(s-1)}} \\
&\lesssim \nrm{e^{-\Lambda(t)}}\lx{\infty}\left(1 + \nrm{u(t)}_{H^{s}}^{[2-s]}\right)\nrm{u(t)}_{H^{s}}\nrm{g}_{H^{-(s-2)}},
\end{align*}
where we used the bilinear estimate $\|FG\|_{H^{-(s-1)}}\lesssim \|F\|_{H^s}\|G\|_{H^{-(s-2)}}$ for $s> \frac{1}{2}$.
We also obtain the estimate for $e^{-\Lambda(t)}\px g$ in the same way as $e^{-\Lambda}\px \Lambda(t) \cdot g$.
Thus, we have \eqref{pr prop gauge2}.

From \eqref{pr prop gauge1}, \eqref{pr prop gauge2}, and the equality
\[
\frac{e^{-\Lambda(t+h)}u(t+h)-e^{-\Lambda(t)}u(t)}{h}
= \frac{e^{-(\Lambda(t+h)-\Lambda(t))}-1}{h}\cdot e^{-\Lambda(t)}u(t+h) -e^{-\Lambda(t)}\cdot\frac{u(t+h)-u(t)}{h},
\]
we obtain
\[
\pt \left(e^{-\Lambda}u\right)(t)
= -\pt\Lambda(t)\cdot e^{-\Lambda(t)}u(t) +e^{-\Lambda(t)}\pt u(t)
\ \mathrm{in}\ H^{s-2}(\br).
\]
This completes the proof.
\end{proof}

\subsection{Uniform a priori estimate for solutions of regularized problems}
In this subsection, we obtain the a priori estimate. 
The main purpose of this section is to prove the existence of a solution to \eqref{nls} in a smooth space.
If we find a solution to \eqref{nls} in a smooth space, we can justify the transformation as \eqref{gauge trans}.
Hence, it suffeces to consider \eqref{ep-nls} for $s>\fr{3}{2}$.

\begin{prop}\label{smooth apriori}
Let $s>\fr{3}{2}$.
Then, there exists $C>0$ as follows.
For any $\phi\in X^s(\br)$, there exists $T^* = T^*(s, \nrm{\phi}_{X^s})\in(0, 1)$ such that for any $\vep\in(0, 1)$, \eqref{ep-nls} has a solution $u\in C([0, T^*]; X^s(\br))$ with
\[
\nrm{u}_{L_{T^*}^\infty X^s}
\le Ce^{C\nrm{\phi}_{X^0}}\left(\nrm{\phi}_{X^s} + \nrm{\phi}_{X^s}^{3[s]+2}\right).
\]
\end{prop}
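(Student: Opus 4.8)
The plan is to run the gauge transformation inside the regularized equation \eqref{ep-nls}, derive an $\vep$-uniform a priori bound for the gauged variable $v_\vep := e^{-\Lambda}u$ in $X^s(\br)$, and then transfer it back to $u$. By Proposition~\ref{wp of ep-nls} we already have, for each fixed $\vep\in(0,1)$, a maximal solution $u=u_\vep\in C([0,T_{\vep});X^s(\br))$ of \eqref{ep-nls}; the only issue is that the existence time $T_\vep$ furnished there degenerates like $\vep/\nrm{\phi}_{X^s}^2$. So the heart of the matter is a continuation/bootstrap argument producing a time $T^*$ depending only on $s$ and $\nrm{\phi}_{X^s}$ on which $\nrm{u_\vep}_{L_{T^*}^\infty X^s}$ stays bounded by the claimed quantity; standard ODE continuation in $X^s$ then upgrades $T_\vep$ past $T^*$.

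First I would record the two scalar conservation-type bounds that control the exponent. Testing \eqref{ep-nls} against $\bar u$ and taking real parts, the dissipative term $-\tfrac{\vep}{2}\nrm{\px u}_{L^2}^2$ has a favorable sign, so $\nrm{u_\vep(t)}_{L^2}$ is nonincreasing; combined with Lemma~\ref{est of free sol} and Lemma~\ref{est of Duhamel} applied to \eqref{mild-sol} this gives $\nrm{u_\vep}_{L_T^\infty X^0}\lesssim \nrm{\phi}_{X^0}+T^{1/2}\nrm{u_\vep}_{L_T^\infty L_x^2}^2\lesssim \nrm{\phi}_{X^0}(1+\nrm{\phi}_{X^0})$ for $T\le 1$, hence via \eqref{def of gauge} a bound $\nrm{\Lambda(t)}_{L_x^\infty}\le C(1+\nrm{\phi}_{X^0}^2)$ and therefore $\nrm{e^{\pm\Lambda(t)}}_{L_x^\infty}\le e^{C(1+\nrm{\phi}_{X^0}^2)}$ uniformly in $\vep$ and $t\in[0,T]$; this is the source of the $e^{C\nrm{\phi}_{X^0}}$ factor (after absorbing lower-order terms). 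Next, by Proposition~\ref{prop gauge}, $v_\vep=e^{-\Lambda}u_\vep$ satisfies $\cl_\vep v_\vep = e^{-\Lambda}\bigl(N_\vep^{(3)}(u_\vep)+N^{(2)}(u_\vep)\bigr)$ with the crucial feature that $N^{(2)}$ is the \emph{only} term carrying a derivative and it is linear in $\px u_\vep = \px(e^{\Lambda}v_\vep)$, which by the product rule equals $e^{\Lambda}\px v_\vep + (2\lambda u_\vep+\mu\bar u_\vep)e^{\Lambda}v_\vep$; after multiplying by $e^{-\Lambda}$ the genuinely-derivative part contributes only $i(2\lambda u_\vep+\mu\bar u_\vep)\px v_\vep$ and \emph{this should be cancelled} — in fact, comparing, $e^{\Lambda}\cl_\vep(e^{-\Lambda}u)$ was computed in \eqref{ep-gauge trans} precisely so that no $\px u$ survives when $\vep=0$, and for $\vep>0$ the residual $i\vep(2\lambda u+\mu\bar u)\px u$ is already multiplied by $\vep$. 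I would then split $\cl_\vep v_\vep$ into a derivative-free polynomial part $R_\vep^{(3)}(v_\vep,u_\vep)$ (cubic, with $L^\infty$-bounded $e^{\pm\Lambda}$ coefficients) plus the small piece $i\vep\,e^{-\Lambda}(2\lambda u_\vep+\mu\bar u_\vep)\px u_\vep$.

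Now I would run the energy estimate for $v_\vep$ in $H^s$ using $U_\vep$. Applying $\jb{\px}^s$ to the Duhamel form of $\cl_\vep v_\vep=F_\vep$, pairing with $\jb{\px}^s v_\vep$ and taking real parts, the semigroup $U_\vep$ again produces $-\vep\nrm{\px\jb{\px}^s v_\vep}_{L^2}^2\le 0$, which I would keep and use to absorb the dangerous term: $\bigl|\re\langle \jb{\px}^s(i\vep e^{-\Lambda}(2\lambda u_\vep+\mu\bar u_\vep)\px u_\vep),\jb{\px}^s v_\vep\rangle\bigr|$ is bounded, via Lemma~\ref{seki}/\eqref{est of exp f2} and Cauchy–Schwarz in the $\vep$-weight, by $\tfrac{\vep}{2}\nrm{\px\jb{\px}^s v_\vep}_{L^2}^2 + C\vep\nrm{u_\vep}_{H^s}^2\bigl(1+\nrm{u_\vep}_{H^{[s]}}^{2[s]}\bigr)\nrm{v_\vep}_{H^s}^2$-type terms, the first absorbed by dissipation and the rest controlled by $\nrm{v_\vep}_{X^s}$ after using $\nrm{u_\vep}_{H^s}\lesssim e^{C\nrm{\phi}_{X^0}}(1+\nrm{u_\vep}_{H^{[s]}}^{[s]+1})\nrm{v_\vep}_{H^s}$ from \eqref{est of exp f2}. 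The polynomial part $R_\vep^{(3)}$ contributes $\lesssim e^{C\nrm{\phi}_{X^0}}(1+\nrm{v_\vep}_{H^s}^{2})^{O([s])}\nrm{v_\vep}_{H^s}^2$. Together with the $X^s$-primitive bound for $v_\vep$ obtained exactly as in Lemma~\ref{est of Duhamel} (the nonlinearity of $v_\vep$ has no derivative, so its primitive is estimated by $T^{1/2}$ times an $L_T^\infty L_x^2$-type norm), Grönwall and a continuity argument give $\nrm{v_\vep}_{L_{T^*}^\infty X^s}\le 2C e^{C\nrm{\phi}_{X^0}}\nrm{\phi}_{X^s}$ (after noting $v_\vep(0)=e^{-\Lambda(0)}\phi$ has $X^s$-norm $\lesssim e^{C\nrm{\phi}_{X^0}}(1+\nrm{\phi}_{H^{[s]}}^{[s]+1})\nrm{\phi}_{X^s}$) for some $T^*=T^*(s,\nrm{\phi}_{X^s})\in(0,1)$ independent of $\vep$. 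Multiplying back by $e^{\Lambda}$ and applying \eqref{est of exp f2} once more, one loses at most $[s]+1$ further powers of $\nrm{\phi}_{X^s}$ (and $[s]$ powers from translating $\nrm{u_\vep}_{H^{[s]}}$ in \eqref{est of exp f2}), landing at the exponent $3[s]+2$ in the statement.

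The main obstacle I anticipate is the low-regularity range $\tfrac32<s\le 2$ (equivalently $[s]=1$): there the products in Lemma~\ref{seki} and the bilinear estimate $\nrm{FG}_{H^{-(s-1)}}\lesssim\nrm{F}_{H^s}\nrm{G}_{H^{-(s-2)}}$ are exactly at the threshold, so one must be careful that the cubic nonlinearity and the coefficient $e^{-\Lambda}$ multiplying $\px u_\vep$ are handled by duality (as in the proof of Proposition~\ref{prop gauge}) rather than by a naive algebra property, and one must check that the $\vep$-weighted absorption step does not secretly require $\px v_\vep\in L_x^2$ uniformly — it does not, because the dissipation supplies exactly that norm with a free $\vep$. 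A secondary bookkeeping nuisance is making the continuation rigorous: one shows the set of $T\le T^*$ for which $\nrm{v_\vep}_{L_T^\infty X^s}\le 2Ce^{C\nrm{\phi}_{X^0}}\nrm{\phi}_{X^s}$ is nonempty, closed, and open, the openness coming from the local theory of Proposition~\ref{wp of ep-nls} together with the strict improvement $\le Ce^{C\cdots}\nrm{\phi}_{X^s}$ of the Grönwall bound; none of this is deep, but it must be stated.
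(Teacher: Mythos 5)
Your overall strategy --- gauge-transform inside the regularized equation, prove an $\vep$-uniform bound for $e^{-\Lambda}u$, transfer back, and run a continuity argument --- is the paper's strategy, and your energy-plus-dissipation treatment of the residual term $i\vep(2\lambda u+\mu\bar u)\px u$ is a legitimate variant of the paper's Duhamel argument (Lemma \ref{smooth apriori step1} uses the parabolic smoothing $\nrm{U_\vep(t)f}_{H^s}\lesssim(1+(\vep t)^{-1/2})\nrm{f}_{H^{s-1}}$ of Lemma \ref{schauder}, so the prefactor $\vep$ beats the $\vep^{-1/2}$ loss; that is the same cancellation you absorb into the dissipation). However, two of your steps are wrong as stated. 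First, $\nrm{u_\vep(t)}_{L^2}$ is \emph{not} nonincreasing: testing \eqref{ep-nls} against $\bar u$, the contribution $\im\int_\br \px(\lambda u^2+\mu\abs{u}^2)\bar u\,dx$ does not vanish for general $\lambda,\mu\in\bc$ --- the $L^2$-norm is conserved only in the special case $2\lambda+\bar\mu=0$ (which is exactly why Theorems \ref{special case $X^s$} and \ref{special case $H^s$} are stated separately). Consequently you cannot conclude $\nrm{\Lambda}_{L^\infty_{T,x}}\le C(1+\nrm{\phi}_{X^0}^2)$ a priori; you must, as in Lemma \ref{smooth apriori step4}, keep the term $T^{1/2}\nrm{u}_{L_T^\infty L_x^2}^2$ in the exponent and close it inside the continuity argument by choosing $T^*$ small relative to the bootstrap bound.

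Second, and more seriously, your transfer back from $v=e^{-\Lambda}u$ to $u$ does not close as described. The estimate \eqref{est of exp f2} gives $\nrm{u}_{H^s}=\nrm{e^{\Lambda}v}_{H^s}\lesssim\nrm{e^{\Lambda}}_{L^\infty}(1+\nrm{u}_{H^{[s]}}^{[s]+1})\nrm{v}_{H^s}$, and the only a priori control you have on $\nrm{u}_{H^{[s]}}$ is the bootstrap quantity $M$ itself (note $[s]>s-1$, so $H^{s-1}$ does not control $H^{[s]}$). That puts a factor $M^{[s]+1}$ in front of the \emph{initial-data} term of the bootstrap inequality, so the conclusion reads $\nrm{u}_{H^s}\lesssim M^{[s]+1}\nrm{\phi}_{X^s}(\cdots)+\cdots$, which does not improve on $M$ for large data. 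The paper's Lemma \ref{smooth apriori step3} avoids this by decomposing $\nrm{u}_{H^s}\lesssim\nrm{u}_{L^2}+\nrm{u\px\Lambda}_{H^{s-1}}+\nrm{e^{\Lambda}\px(e^{-\Lambda}u)}_{H^{s-1}}$ and applying the product estimate at level $s-1$, where the needed norm $\nrm{u}_{H^{[s]-1}}\le\nrm{u}_{H^{s-1}}$ is separately bounded by $\nrm{\phi}_{H^{s-1}}+T\nrm{u}_{L_T^\infty H_x^s}^2$ from the Duhamel formula, so that every occurrence of $M$ in a coefficient comes with a positive power of $T$ and can be made small. Without this (or an equivalent device) the continuity argument fails; relatedly, your claimed bound $\nrm{v_\vep}_{L_{T^*}^\infty X^s}\le 2Ce^{C\nrm{\phi}_{X^0}}\nrm{\phi}_{X^s}$ is already inconsistent with your own observation that $\nrm{v_\vep(0)}_{X^s}$ carries a factor $(1+\nrm{\phi}_{H^{[s]}}^{[s]+1})$.
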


To prove Proposition \ref{smooth apriori}, we first consider the estimate for  $e^{-\Lambda}u$.
\begin{lem}{\label{smooth apriori step1}}
Let $s>\fr{3}{2}$.
Then, there exists $C_1>0$ such that
if $\vep\in(0, 1)$,  $T\in(0, 1)$, and $u\in C\left([0, T]; X^s(\br)\right)$ satisfies \eqref{ep-sol}, then we have
\begin{equation}\label{ineq smooth apriori step1}
\begin{aligned}
\nrm{e^{-\Lambda}u}\thx{\infty}{s}
&\le C_1\nrm{e^{-\Lambda(0)}}_{L^{ \infty}}\left(\nrm{\phi}_{H^{s-1}}+ \nrm{\phi}_{H^{s-1}}^{[s]+1}\right)\nrm{\phi}_{H^s} \\
&\quad+C_1T^{\frac{1}{2}}\nrm{e^{-\Lambda}}\txx{\infty}\left(\nrm{u}_{L_T^\infty H_x^{s-1}}+ \nrm{u}_{L_T^\infty H_x^{s-1}}^{[s]+3}\right)\nrm{u}\thx{\infty}{s}.
\end{aligned}
\end{equation}
In particular, we have
\begin{align*}
\nrm{e^{-\Lambda}u}\thx{\infty}{s}
&\le C_1\nrm{e^{-\Lambda(0)}}_{L^{ \infty}}\left(\nrm{\phi}_{H^s}^2 + \nrm{\phi}_{H^s}^{[s]+2}\right) \\
&\quad +C_1T^{\frac{1}{2}}\nrm{e^{-\Lambda}}\txx{\infty}\left(\nrm{u}\thx{\infty}{s}^2 + \nrm{u}\thx{\infty}{s}^{[s]+4}\right).
\end{align*}
\end{lem}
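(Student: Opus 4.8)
The plan is to gauge away the derivative nonlinearity and then read off the bound from the Duhamel formula for $v:=e^{-\Lambda}u$ together with the smoothing of $U_\vep$. Since $s>\fr{3}{2}>\fr{1}{2}$, Proposition~\ref{prop gauge} applies to $u$ and gives, in $H^{s-2}(\br)$,
\[
\cl_\vep\bigl(e^{-\Lambda}u\bigr)=e^{-\Lambda}\bigl(N^{(3)}_\vep(u)+\vep\,N^{(2)}(u)\bigr),
\]
with $N^{(3)}_\vep$, $N^{(2)}$ as in \eqref{N3}, \eqref{N2}; the structural point is that $N^{(3)}_\vep$ is cubic with no derivative while the derivative term $N^{(2)}(u)=i(2\lambda u+\mu\bar u)\px u$ carries the dissipation weight $\vep$. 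First I would record that $v=e^{-\Lambda}u\in C([0,T];H^s(\br))$, which follows from $u\in C([0,T];X^s(\br))$, from the fact (established inside the proof of Proposition~\ref{prop gauge}) that $\Lambda\in C([0,T];L^\infty(\br)\cap\dot H^{s+1}(\br))$ and hence $e^{-\Lambda}\in C([0,T];L^\infty(\br)\cap\dot H^{[s]+1}(\br))$, and from \eqref{est of exp f2}. With this regularity, the displayed identity and the time-derivative formula of Proposition~\ref{prop gauge} show that $v$ solves, in $H^s(\br)$,
\[
v(t)=U_\vep(t)\bigl(e^{-\Lambda(0)}\phi\bigr)-i\int_0^tU_\vep(t-t')\bigl[e^{-\Lambda}\bigl(N^{(3)}_\vep(u)+\vep\,N^{(2)}(u)\bigr)\bigr](t')\,dt',
\]
the passage between the mild and the differential formulations being routine, exactly as in the proof of Proposition~\ref{wp of ep-nls}.

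Then I would estimate the three contributions in $H^s(\br)$. For the linear term, the Fourier multiplier of $U_\vep(t)$ has modulus $e^{-\frac{1}{2}\vep t\eta^2}\le1$, so $\|U_\vep(t)(e^{-\Lambda(0)}\phi)\|_{H^s}\le\|e^{-\Lambda(0)}\phi\|_{H^s}$, and \eqref{est of exp f} (with $f=\phi$, $u(0)=\phi$) yields the first summand of \eqref{ineq smooth apriori step1} after bounding the low-order Sobolev norms it contains by $\|\phi\|_{H^{s-1}}$ and $\|\phi\|_{H^s}$. For the cubic Duhamel term, again the $H^s$-norm is non-increasing under $U_\vep(t-t')$; then \eqref{est of exp f}, the product estimate $\|N^{(3)}_\vep(u)\|_{H^s}\lesssim\|u\|_{L^\infty}^2\|u\|_{H^s}$, and the embedding $H^{s-1}(\br)\hookrightarrow L^\infty(\br)$ (valid since $s>\fr{3}{2}$) bound the integrand by $\|e^{-\Lambda}\|_{L^\infty}\bigl(\|u\|_{H^{s-1}}+\|u\|_{H^{s-1}}^{[s]+3}\bigr)\|u\|_{H^s}$ pointwise in $t'$, and integration over $[0,T]$ with $T<1$ contributes a factor $T\le T^{1/2}$. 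For the derivative Duhamel term I would instead use Lemma~\ref{schauder} with $(s_1,s_2)=(s-1,s)$, giving $\|U_\vep(t-t')g\|_{H^s}\lesssim\bigl(1+(\vep(t-t'))^{-1/2}\bigr)\|g\|_{H^{s-1}}$; the key observation is that the accompanying $\vep$ makes $\vep\int_0^t\bigl(1+(\vep(t-t'))^{-1/2}\bigr)\,dt'\lesssim\vep T+\vep^{1/2}T^{1/2}\lesssim T^{1/2}$ \emph{uniformly in} $\vep\in(0,1)$, while $\|e^{-\Lambda}N^{(2)}(u)\|_{H^{s-1}}$ is controlled via \eqref{est of exp f} at regularity $s-1$ and the algebra property of $H^{s-1}(\br)$, which gives $\|N^{(2)}(u)\|_{H^{s-1}}\lesssim\|2\lambda u+\mu\bar u\|_{H^{s-1}}\|\px u\|_{H^{s-1}}\lesssim\|u\|_{H^{s-1}}\|u\|_{H^s}$ --- this is precisely where $s>\fr{3}{2}$ is used --- with a resulting exponent no worse than $[s]+3$.

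Summing the three bounds and taking the supremum in $t\in[0,T]$ gives \eqref{ineq smooth apriori step1}, the Duhamel part carrying $\|e^{-\Lambda}\|_{L_{T,x}^\infty}$ in front; the ``In particular'' form then follows by replacing $\|\phi\|_{H^{s-1}}$, $\|u\|_{H^{s-1}}$, $\|u\|_{H^{[s]}}$ by $\|\phi\|_{H^s}$, $\|u\|_{L_T^\infty H_x^s}$ and collapsing lower powers into the stated ones. I expect the main obstacle to be the bookkeeping of the polynomial-in-Sobolev-norm factors produced by the exponential through \eqref{est of exp f}--\eqref{est of exp f2} (so that the exponents come out exactly as $[s]+1$, $[s]+3$, $[s]+4$), together with checking that the $\vep$-weight cancels the $(\vep t)^{-1/2}$ loss uniformly in $\vep$; the nonlinear estimates themselves are routine products in $H^s(\br)$ and $H^{s-1}(\br)$.
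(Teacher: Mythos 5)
Your proposal is correct and follows essentially the same route as the paper: gauge the equation via Proposition \ref{prop gauge}, write the Duhamel formula for $e^{-\Lambda}u$, estimate the free and cubic parts with \eqref{est of exp f} (using $s-1>\fr{1}{2}$), and handle the $\vep N^{(2)}$ term with Lemma \ref{schauder} so that $\vep(T+\vep^{-1/2}T^{1/2})\lesssim T^{1/2}$ uniformly in $\vep$. The only differences are cosmetic (e.g.\ invoking the embedding $H^{s-1}(\br)\hookrightarrow L^\infty(\br)$ rather than the product bound $\nrm{u^3}_{H^s}\lesssim\nrm{u}_{H^{s-1}}^2\nrm{u}_{H^s}$ directly).
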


\begin{proof}
From Proposition \ref{prop gauge}, we obtain
\[
e^{-\Lambda(t)}u(t) 
= U_{\vep}(t)\left(e^{-\Lambda(0)}\phi\right) -i\int_0^tU_{\vep}(t-t')\left(e^{-\Lambda}\left(N^{(3)}_\vep(u)+ \vep N^{(2)}(u)\right)\right)(t')\,dt'
\]
in $H^s(\br)$, where $N_{\vep}^{(3)}$ and $N^{(2)}$ are defined by \eqref{N3} and \eqref{N2}, respectively.
The inequality \eqref{est of exp f} yields
\begin{align*}
\nrm{U_{\vep}(t)\left(e^{-\Lambda(0)} \phi\right)}_{H^s} 
&\lesssim \nrm{e^{-\Lambda(0)}}_{L^{ \infty}}\left(\nrm{\phi}_{H^s} + \nrm{\phi}_{H^{[s]}}\left(1+\nrm{\phi}_{L^2\cap H^{[s]-1}}^{[s]}\right)\nrm{\phi}_{H^{s-[s]}}\right) \\
&\lesssim \nrm{e^{-\Lambda(0)}}_{L^{ \infty}}\left(\nrm{\phi}_{H^{s-1}}+ \nrm{\phi}_{H^{s-1}}^{[s]+1}\right)\nrm{\phi}_{H^s}.
\end{align*}

Next, we consider the term $N^{(3)}_\vep(u)$. It suffices to consider the term $e^{-\Lambda}u^3$. 
This term is estimated by \eqref{est of exp f} as follows:

\begin{align*}
&\nrm{\int_0^tU_{\vep}(t-t')e^{-\Lambda(t')}u^3(t')\,dt'}\thx{\infty}{s} \\
&\lesssim T\nrm{e^{-\Lambda}u^3}\thx{\infty}{s} \\
&\lesssim T\nrm{e^{-\Lambda}}\txx{\infty}\left(\nrm{u^3}_{L_T^\infty H_x^s} + \nrm{u}_{L_T^\infty H_x^{[s]}}\left(1+\nrm{u}_{L_T^\infty L_x^2\cap L_T^\infty H_x^{[s]-1}}^{[s]}\right)\nrm{u^3}_{L_T^\infty H_x^{s-[s]}}\right) \\
&\lesssim T\nrm{e^{-\Lambda}}\txx{\infty}\left(\nrm{u}_{ L_T^\infty H_x^{s-1}}^2 + \nrm{u}_{L_T^\infty H_x^{s-1}}^{[s] + 3}\right)\nrm{u}\thx{\infty}{s}.
\end{align*}
Here, we used $s-1>\fr{1}{2}$ and $\nrm{u^3}_{L_T^\infty H_x^s}\lesssim \nrm{u}_{L_T^\infty H_x^{s-1}} ^2\nrm{u}_{L_T^\infty H_x^s}$.

On the term $N^{(2)}(u)$, it suffices to consider the term $e^{-\Lambda}\bar u\px u$. Lemma \ref{schauder} and \eqref{est of exp f} yield
\begin{align*}
&\vep\nrm{\int_0^tU_{\vep}(t-t')e^{-\Lambda(t')}\bar u\px u(t')\,dt'}\thx{\infty}{s} \\
&\lesssim \vep\left(T+ \vep^{-\fr{1}{2}}T^{\fr{1}{2}}\right)\nrm{e^{-\Lambda}\bar u\px u}\thx{\infty}{s-1} \\
&\lesssim T^{\frac{1}{2}}\nrm{e^{-\Lambda}}\txx{\infty} \left(\nrm{\bar u\px u}\thx{\infty}{s-1}
+  \nrm{u}_{L_T^\infty H_x^{[s]-1}}\left(1+\nrm{u}_{L_T^\infty L_x^2\cap L_T^\infty H_x^{[s]-2}}^{[s]-1}\right)\nrm{\bar u\px u}_{L_T^\infty H_x^{s-[s]}}\right) \\
&\lesssim T^{\frac{1}{2}}\nrm{e^{-\Lambda}}\txx{\infty}\left(\nrm{u}_{ L_T^\infty H_x^{s-1}}+ \nrm{u}_{ L_T^\infty H_x^{s-1}}^{[s]+1}\right)\nrm{u}\thx{\infty}{s}. 
\end{align*}
Therefore, we obtain the desired estimate.
\end{proof}

We can obtain the estimate for the $H^s$-norm of the solution from Lemma \ref{smooth apriori step1}.

\begin{lem}{\label{smooth apriori step3}}
Let $s>\fr{3}{2}$.
Then, there exists $C_2>0$ such that
if $\vep\in(0, 1)$,  $T\in(0, 1)$, and $u\in C\left([0, T]; X^s(\br)\right)$ is a solution to \eqref{ep-sol}, then we have
\begin{align*}
\nrm{e^{-\Lambda}}\txx{\infty}^{-1}\nrm{u}\thx{\infty}{s}
&\le C_2\nrm{e^{-\Lambda(0)}}_{L^{ \infty}}\left(\nrm{\phi}_{H^s}+\nrm{\phi}_{H^s}^{3[s]+2}\right) \\
&\quad + C_2T^{\frac{1}{2}}\nrm{e^{-\Lambda}}\txx{\infty}\left(\nrm{u}_{L_T^{ \infty }H_x^{ s }}^2 + \nrm{u}_{L_T^{ \infty }H_x^{ s }}^{4[s]+4}\right).
\end{align*}
\end{lem}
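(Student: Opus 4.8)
The plan is to run the gauge transformation backwards on the output of Lemma~\ref{smooth apriori step1}. Since $s>\fr{3}{2}$, Proposition~\ref{prop gauge} applies and, as in its proof, $\Lambda(t)\in L^\infty(\br)\cap\dot H^{s+1}(\br)$ for each $t\in[0,T]$, so $e^{\pm\Lambda(t)}$ are genuine functions and the identity $u(t)=e^{\Lambda(t)}\bigl(e^{-\Lambda(t)}u(t)\bigr)$ holds pointwise. Applying \eqref{est of exp f} (taking the sign $+$) to $f=e^{-\Lambda(t)}u(t)$ gives
\[
\nrm{u(t)}_{H^s}
\lesssim\nrm{e^{\Lambda(t)}}_{L^\infty}\Bigl(\nrm{e^{-\Lambda(t)}u(t)}_{H^s}
+\nrm{u(t)}_{H^{[s]}}\bigl(1+\nrm{u(t)}_{L^2\cap H^{[s]-1}}^{[s]}\bigr)\nrm{e^{-\Lambda(t)}u(t)}_{H^{s-[s]}}\Bigr).
\]
Since $s-[s]<1\le s$ one bounds $\nrm{e^{-\Lambda(t)}u(t)}_{H^{s-[s]}}\le\nrm{e^{-\Lambda(t)}u(t)}_{H^s}$, and since $[s],[s]-1\le s$ one bounds $\nrm{u(t)}_{H^{[s]}}\bigl(1+\nrm{u(t)}_{L^2\cap H^{[s]-1}}^{[s]}\bigr)\lesssim 1+\nrm{u(t)}_{H^s}^{[s]+1}$, whence
\[
\nrm{u(t)}_{H^s}\lesssim\nrm{e^{\Lambda(t)}}_{L^\infty}\bigl(1+\nrm{u(t)}_{H^s}^{[s]+1}\bigr)\nrm{e^{-\Lambda(t)}u(t)}_{H^s}.
\]

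Next I would take the supremum over $t\in[0,T]$ and substitute the ``in particular'' bound of Lemma~\ref{smooth apriori step1} for $\nrm{e^{-\Lambda}u}\thx{\infty}{s}$. Multiplying the resulting inequality by $\nrm{e^{-\Lambda}}\txx{\infty}^{-1}$ and expanding produces an estimate of the claimed shape, the two points needing care being: (i) controlling the residual weight $\nrm{e^{-\Lambda}}\txx{\infty}^{-1}\nrm{e^{\Lambda(t)}}_{L^\infty}$; and (ii) the elementary power-counting that turns products such as $(1+\nrm{u}\thx{\infty}{s}^{[s]+1})(\nrm{\phi}_{H^s}^2+\nrm{\phi}_{H^s}^{[s]+2})$ and $(1+\nrm{u}\thx{\infty}{s}^{[s]+1})(\nrm{u}\thx{\infty}{s}^2+\nrm{u}\thx{\infty}{s}^{[s]+4})$ into sums of homogeneous terms of degrees $3[s]+2$ and $4[s]+4$ respectively. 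For (ii) one uses $T<1$ to keep the factor $T^{\fr{1}{2}}$, the embeddings $\nrm{u}_{H^{s-1}}\le\nrm{u}_{H^s}$ wherever a lower-order norm appears, and Young's inequality $ab\le a^p+b^q$ with conjugate exponents to dominate the mixed $\phi$--$u$ terms by pure powers.

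For (i), the key observation is that by (the proof of) Lemma~\ref{Xs is Banach} the primitive $\int_\minfty^x u(t,y)\,dy$, hence $\re\Lambda(t,x)$, tends to $0$ as $x\to\minfty$, so $\inf_x\re\Lambda(t,x)\le 0\le\sup_x\re\Lambda(t,x)$. Writing $\nrm{e^{\Lambda(t)}}_{L^\infty}=e^{\sup_x\re\Lambda(t,x)}$ and $\nrm{e^{-\Lambda}}\txx{\infty}^{-1}=e^{\inf_{(t',x)}\re\Lambda(t',x)}$, and using that the last infimum runs over all times, so it is $\le\inf_x\re\Lambda(t,x)\le 0$, one gets
\[
\nrm{e^{-\Lambda}}\txx{\infty}^{-1}\,\nrm{e^{\Lambda(t)}}_{L^\infty}
= e^{\,\inf_{(t',x)}\re\Lambda(t',x)+\sup_x\re\Lambda(t,x)}
\le e^{\sup_x\re\Lambda(t,x)}
\le \exp\bigl((2\abs{\lambda}+\abs{\mu})\,\nrm{u(t)}_{X^0}\bigr),
\]
because $\abs{\re\Lambda(t,x)}\le(2\abs{\lambda}+\abs{\mu})\sup_y\abs{\int_\minfty^y u(t,z)\,dz}\le(2\abs{\lambda}+\abs{\mu})\nrm{u(t)}_{X^0}$. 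This is exactly the channel through which the exponential weight $e^{C\nrm{\phi}_{X^0}}$ of Proposition~\ref{smooth apriori} eventually enters, once the $X^0$-level a priori control of $u$ (in terms of $\nrm{\phi}_{X^0}$) is invoked. I expect step (i) to be the main obstacle: one must arrange the estimate so that no uncompensated positive power of $\nrm{e^{\Lambda}}_{L^\infty}$ survives, and it is precisely for this reason that $\nrm{e^{-\Lambda}}\txx{\infty}^{-1}$ has to be kept on the left-hand side of the asserted inequality rather than moved to the right.
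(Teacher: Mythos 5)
Your first step already creates a gap that the claimed inequality cannot survive. Applying \eqref{est of exp f} directly to $u=e^{\Lambda}\bigl(e^{-\Lambda}u\bigr)$ at the level $H^s$ produces the prefactor $\nrm{u(t)}_{H^{[s]}}\bigl(1+\nrm{u(t)}_{L^2\cap H^{[s]-1}}^{[s]}\bigr)$, which you then bound by $1+\nrm{u}\thx{\infty}{s}^{[s]+1}$ (indeed for integer $s$ one has $[s]=s$, so nothing better is available). When this factor multiplies the initial-data part of Lemma \ref{smooth apriori step1}, you are left with a term of the shape $\nrm{u}\thx{\infty}{s}^{[s]+1}\,\nrm{e^{-\Lambda(0)}}_{L^\infty}\nrm{\phi}_{H^s}^{[s]+2}$ carrying \emph{no} power of $T$. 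This cannot be absorbed into either term of the asserted right-hand side: the first term there depends only on $\phi$, and the second comes with $T^{\frac12}$. Young's inequality does not rescue you — it converts the mixed term into a pure power of $\nrm{u}\thx{\infty}{s}$ with an $O(1)$ coefficient, and an inequality of the form $x\le A(1+x^{[s]+1})(B+T^{1/2}D)$ with $AB$ not small does not imply the stated bound (nor would it close the continuity argument in Proposition \ref{smooth apriori}, which is the entire purpose of keeping every $u$-dependent term behind a factor $T^{\frac12}$).

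The paper avoids this by first splitting off the top derivative: $\nrm{u}_{H^s}\sim\nrm{u}_{L^2}+\nrm{u\,\px\Lambda}_{H^{s-1}}+\nrm{e^{\Lambda}\px(e^{-\Lambda}u)}_{H^{s-1}}$, and only the last piece sees the exponential weight, now measured in $H^{s-1}$. By \eqref{est of exp f2} at level $s-1$ the commutator prefactor becomes $1+\nrm{u}\thx{\infty}{s-1}^{[s]}$, and the crucial input is the Duhamel/Strichartz bound $\nrm{u}_{L_T^\infty H_x^{s-1}}\lesssim\nrm{\phi}_{H^{s-1}}+T\nrm{u}_{L_T^\infty H_x^{s}}^2$ (this also disposes of the terms $\nrm{u}_{L^2}$ and $\nrm{u\px\Lambda}_{H^{s-1}}$, using $s-1>\frac12$). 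This turns the prefactor into $1+\nrm{\phi}_{H^s}^{[s]}+T^{[s]}\nrm{u}\thx{\infty}{s}^{2[s]}$, so every cross term either is a pure power of $\nrm{\phi}_{H^s}$ or carries $T^{[s]}\le T^{\frac12}$ and can be absorbed. That reduction to $H^{s-1}$, with its gain of a positive power of $T$, is the missing idea in your argument. Your point (i) about comparing $\nrm{e^{\Lambda(t)}}_{L^\infty}$ with $\nrm{e^{-\Lambda}}\txx{\infty}^{-1}$ via $\re\Lambda\to0$ as $x\to-\infty$ is a sensible observation, but it is not where the difficulty lies; the weights are in any case all absorbed into $e^{C\nrm{\phi}_{X^0}}$ via Lemma \ref{smooth apriori step4} at the next stage.
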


\begin{proof}
Since $u$ satisfies \eqref{ep-sol}, Lemma \ref{strest} yields
\begin{equation}\label{pr apriori step3 ineq1}
\nrm{u}_{L_T^\infty H_x^{s-1}}
\lesssim \nrm{\phi}_{H^{s-1}} + T\nrm{u}_{L_T^\infty H_x^{s}}^2.
\end{equation}
We have
\begin{equation}\label{decomp}
\begin{aligned}
\nrm{ u }_{L_T^{ \infty }H_x^{ s }}
&= \nrm{e^{\Lambda}e^{-\Lambda}u }_{L_T^{ \infty }H_x^{ s }} \\
&\lesssim \nrm{u}_{L_T^{ \infty }L_x^{ 2 }} + \nrm{u\px\Lambda}_{L_T^{ \infty }H_x^{ s-1 }} + \nrm{e^{\Lambda}\px\left(e^{-\Lambda}u\right)}_{L_T^{ \infty }H_x^{ s-1 }} \\
&:= \I + \II + \III.
\end{aligned}
\end{equation}
The estimate for $\I$ and $\II$ is obtained by \eqref{pr apriori step3 ineq1} and $s-1>\fr{1}{2}$. Also, $\III$ is estimated by using \eqref{est of exp f2}, \eqref{pr apriori step3 ineq1}, and Lemma \ref{smooth apriori step1} :
\begin{align*}
&\nrm{e^{\Lambda}}\txx{\infty}^{-1}\times\III \\
&\lesssim \left(1 + \nrm{u}\thx{\infty}{s-1}^{[s]}\right)\nrm{e^{-\Lambda}u}\thx{\infty}{s}  \\
&\lesssim \left(1 + \nrm{\phi}_{H^s}^{[s]} + T^{[s]}\nrm{u}\thx{\infty}{s}^{2[s]}\right)\\
&\quad \times \left(\nrm{e^{-\Lambda(0)}}_{L^{\infty}}\left(\nrm{\phi}_{H^s} + \nrm{\phi}_{H^s}^{2[s]+2}\right)+ T^{\fr{1}{2}} \nrm{e^{-\Lambda}}\txx{\infty}\left(\nrm{u}\thx{\infty}{s}^2 + \nrm{u}\thx{\infty}{s}^{[s]+4}\right)\right) \\
&\lesssim \nrm{e^{-\Lambda(0)}}_{L^{\infty}}\left(\nrm{\phi}_{H^s}+\nrm{\phi}_{H^s}^{3[s]+2}\right) 
+ T^{\fr{1}{2}} \nrm{e^{-\Lambda}}\txx{\infty}\left(\nrm{u}_{L_T^{ \infty }H_x^{ s }}^2 + \nrm{u}_{L_T^{ \infty }H_x^{ s }}^{4[s]+4}\right),
\end{align*}
where we used $\max(4[s]+2, 3[s]+4)\le 4[s]+4$.
Therefore, we obtain the desired estimate.
\end{proof}
If $s>\frac{1}{2}$, then the $L_{T, x}^\infty$-norm of the primitive of the solution to \eqref{nls} and \eqref{ep-nls} is bounded by Lemmas \ref{est of free sol} and \ref{est of Duhamel}. 
\begin{lem}\label{smooth apriori step4}
There exists $C_3>0$ such that
if $s>\fr{1}{2}$, $\vep\in[0, 1)$,  $T\in(0, 1)$, and $u\in C\left([0, T]; X^s(\br)\right)$ satisfies \eqref{ep-sol}, then we have
\begin{equation*}
\nrm{\int_\minfty^xu(t, y)\,dy}\txx{\infty}
\le C_3\left(\nrm{\phi}_{X^0} + T^\fr{1}{2}\nrm{u}_{L_T^{\infty}L_x^2}^2\right).
\end{equation*}
In particular, the same estimate holds by replacing $\int_\minfty^xu(t, y)\,dy$ with $\Lambda$ which is defined by \eqref{def of gauge}.
\end{lem}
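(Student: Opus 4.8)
The plan is to take the primitive $\int_\minfty^x\,dy$ of the Duhamel formula satisfied by $u$ and then invoke the two primitive estimates already at hand: Lemma \ref{est of free sol} for the free evolution and Lemma \ref{est of Duhamel} for the inhomogeneous term.

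First I would record that $u$ also satisfies the mild formulation \eqref{mild-sol}. Since $s>\frac12$, $H^s(\br)$ is an algebra, so $u^2,\abs{u}^2\in C([0,T];H^s(\br))$; differentiating \eqref{ep-sol} in $t$ then shows that $u$ solves \eqref{ep-nls} in $C^1([0,T];H^{s-2}(\br))$, and the usual Duhamel argument (reversing the computation in the proof of Proposition \ref{wp of ep-nls}) yields \eqref{mild-sol}. Next, by Lemma \ref{est of free sol} we have $U_\vep(t)\phi\in X^s(\br)$, and by Lemma \ref{est of Duhamel} the primitive of the Duhamel term is a well-defined bounded continuous function; hence, using the linearity of $\int_\minfty^x\,dy$ established after Lemma \ref{Xs is Banach},
\[
\int_\minfty^x u(t,y)\,dy
= \int_\minfty^x\bigl(U_\vep(t)\phi\bigr)(y)\,dy
- i\int_\minfty^x\left(\int_0^t U_\vep(t-t')\px\bigl(\lambda u^2+\mu\abs{u}^2\bigr)(t')\,dt'\right)(y)\,dy .
\]
For the first term I would apply Lemma \ref{est of free sol} at regularity $0$ (legitimate since $\phi\in H^s(\br)\subset L^2(\br)$, and the primitive computed at level $X^s$ and level $X^0$ agree, so $\phi\in X^0(\br)$), which together with $t\le T<1$ bounds it by $\nrm{\phi}_{X^0}$ up to a universal constant. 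For the second term, Lemma \ref{est of Duhamel} gives a bound by $T^{\frac12}\nrm{u}\tx{\infty}{2}^2$ with a universal constant. Taking the supremum over $t\in[0,T]$ and combining yields the asserted inequality with a universal $C_3$.

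Finally, for the statement about $\Lambda$: from \eqref{def of primitive} together with $\widehat{\bar u}(\eta)=\overline{\hat u(-\eta)}$ and the change of variables $\eta\mapsto-\eta$ one has $\int_\minfty^x\bar u(t,y)\,dy=\overline{\int_\minfty^x u(t,y)\,dy}$, so by \eqref{def of gauge}
\[
\sup_{x\in\br}\abs{\Lambda(t,x)}\le\bigl(2\abs{\lambda}+\abs{\mu}\bigr)\sup_{x\in\br}\abs{\int_\minfty^x u(t,y)\,dy},
\]
and the claim follows from the bound just proved, enlarging $C_3$ by the factor $2\abs{\lambda}+\abs{\mu}$. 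There is no serious obstacle in this lemma — it is essentially a bookkeeping combination of Lemmas \ref{est of free sol} and \ref{est of Duhamel}; the only points requiring a little care are the reduction from \eqref{ep-sol} to the Duhamel form \eqref{mild-sol} (so that those lemmas apply verbatim) and the use of Lemma \ref{est of free sol} at $L^2$-regularity rather than at $H^s$, which is harmless since $X^s(\br)\subset X^0(\br)$ for $s\ge0$.
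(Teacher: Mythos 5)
Your proof is correct and follows essentially the same route as the paper: reduce \eqref{ep-sol} to the mild form \eqref{mild-sol}, take the primitive term by term, and apply Lemma \ref{est of free sol} (at the $X^0$ level) to the free part and Lemma \ref{est of Duhamel} to the Duhamel part; the conjugation identity $\int_\minfty^x\bar u\,dy=\overline{\int_\minfty^x u\,dy}$ handles the passage to $\Lambda$. No issues.
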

\begin{proof}
From $s>\fr{1}{2}$ and $u\in \ctx{s}$, we have $\px(\lambda u^2 + \mu |u|^2)\in\ctx{s-1}$.
Thus, $u$ satisfies \eqref{mild-sol} in $H^{s-1}(\br)$.
Since $s-1>-\frac{1}{2}$, we obtain 
\[
\int_\minfty^xu(t, y)\,dy
=\int_\minfty^xU_\vep(t)\phi(y)\,dy -i\int_\minfty^x\left(\int_0^t U_\vep(t-t') \partial_y\left(\lambda u^2 +\mu\abs{u}^2\right)(t')\,dt'\right)(y)\, dy.
\]
Thus, Lemmas \ref{est of free sol} and \ref{est of Duhamel} yields the desired estimate.
\end{proof}

\begin{proof}[Proof of Proposition \ref{smooth apriori}]
From Proposition \ref{wp of ep-nls}, there exist $T=T(\vep, s, \|\phi\|_{X^s})\in(0, 1)$ and a solution $u\in C([0, T]; X^s(\br))$ to \eqref{ep-nls}.
Let $C_1, C_2, C_3$ be constants taken in Lemmas \ref{smooth apriori step1} 
--\ref{smooth apriori step4} and $C=\max(C_1, C_2, C_3)$.
We take 
\begin{equation}\label{smooth T^*}
M=2Ce^{2C\nrm{\phi}_{X^0}}(\nrm{\phi}_{X^s}+\nrm{\phi}_{X^s}^{3[s]+2}), \quad
T^*=\min\left(\frac{\nrm{\phi}_{X^s}^2}{M^4}, \frac{M^2}{\left(M+M^{4[s]+4}\right)^2}\right).
\end{equation}
We note $M$ and $T^*$ depend only on $s$ and $\nrm{\phi}_{X^s}$.
We define ${T_0^*}=\min(T, T^*)$.
Then, Lemmas \ref{smooth apriori step3} and \ref{smooth apriori step4} yield
\begin{align*}
\nrm{u}_{L_{T_0^*}^\infty X^s}
&\le Ce^{C(\nrm{\phi}_{X^0} + {T_0^*}^{\fr{1}{2}}\nrm{u}_{L_{T_0^*}^{\infty}L_x^2}^2)}\left(\nrm{\phi}_{X^s} + \nrm{\phi}_{H^s}^{3[s]+2} +{T_0^*}^{\fr{1}{2}} \left(\nrm{u}_{L_{T_0^*}^{\infty}H_x^s}^2 + \nrm{u}_{L_{T_0^*}^{\infty}H_x^s}^{4[s]+4}\right)\right).
\end{align*}
Therefore, a continuity argument yields
\[
\nrm{u}_{L_{T_0^*}^\infty X^s}
\le M.
\]
Thus, Proposition \ref{wp of ep-nls} yields that there exist $\delta=\delta(\vep, s, M)\in (0, 1)$ and a solution $u\in C([0, T + \delta]; X^s(\br))$ to \eqref{ep-nls}.
Then, Lemmas \ref{smooth apriori step3} and \ref{smooth apriori step4} yield
\begin{align*}
\nrm{u}_{L_{T_1^*}^\infty X^s}
&\le Ce^{C(\nrm{\phi}_{X^0} + {T_1^*}^{\fr{1}{2}}\nrm{u}_{L_{T_1^*}^{\infty}L_x^2}^2)}\left(\nrm{\phi}_{X^s} + \nrm{\phi}_{H^s}^{3[s]+2} +{T_1^*}^{\fr{1}{2}} \left(\nrm{u}_{L_{T_1^*}^{\infty}H_x^s}^2 + \nrm{u}_{L_{T_1^*}^{\infty}H_x^s}^{4[s]+4}\right)\right),
\end{align*}
where ${T_1^*} = \min(T_\vep + \delta, T^*)$.
Therefore, a continuity argument yields
\[
\nrm{u}_{L_{T_1^*}^\infty X^s}
\le M.
\]
Repeating this argument $k$ times, we obtain a solution $u\in C([0, T_k^*]; X^s(\br))$ with
\[
\nrm{u}_{L_{T_k^*}^\infty X^s}
\le M,
\]
where ${T_k^*} = \min(T_\vep + k\delta, T^*)$.
Taking $k$ sufficiently large, we finish the proof.
\end{proof}
Since we obtain an a priori estimate in $X^s(\br)$, we can obtain an a priori estimate in $X^{s+1}(\br)$ whose time depends only on $s$ and the $X^s$-norm of the initial data.
\begin{prop}\label{smooth apriori2}
Let $s>\fr{3}{2}$.
Then, for any $\phi\in X^{s+1}(\br)$, there exist $C=C(s, \nrm{\phi}_{X^s})>0$ and $T^{**}=T^{**}(s, \nrm{\phi}_{X^s})\in (0, 1)$ such that for any $\vep\in(0, 1)$, \eqref{ep-nls} has a solution $u\in C([0, T^{**}]; X^{s+1}(\br))$ with 
\begin{equation*}
\nrm{u}_{L_{T^{**}}^\infty X^{s+1}}
\le C\left(1+\nrm{\phi}_{H^{s+1}}\right).
\end{equation*}
\end{prop}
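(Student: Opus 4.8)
The plan is to propagate one extra derivative of the solution produced by Proposition~\ref{smooth apriori}, by re-running the gauge-transform a priori estimate at the level $s+1$ and exploiting that the lower-order norm is already controlled, uniformly in $\vep$, by the constant $M=M(s,\nrm{\phi}_{X^s})$ appearing in Proposition~\ref{smooth apriori}. Fix $\phi\in X^{s+1}(\br)\subset X^s(\br)$ and $\vep\in(0,1)$. By Proposition~\ref{wp of ep-nls} at regularity $s+1$ there is a solution $u\in C([0,T_\vep);X^{s+1}(\br))$ on a maximal interval, and by the uniqueness part of Proposition~\ref{wp of ep-nls} it is also the $X^s$-solution of Proposition~\ref{smooth apriori}; in particular, for $t<\min(T_\vep,T^*)$ one has $\nrm{u(t)}_{X^s}\le M$ and, by Lemma~\ref{smooth apriori step4}, both $\nrm{e^{-\Lambda}}\txx{\infty}$ and $\nrm{e^{\pm\Lambda(0)}}\lx{\infty}$ are bounded by a constant depending only on $s$ and $\nrm{\phi}_{X^s}$, where $T^*$ and $M$ are the quantities from Proposition~\ref{smooth apriori}.

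Next I would rerun the proofs of Lemmas~\ref{smooth apriori step1} and~\ref{smooth apriori step3} with $s$ replaced by $s+1$. Starting from $e^{-\Lambda(t)}u(t)=U_\vep(t)(e^{-\Lambda(0)}\phi)-i\int_0^tU_\vep(t-t')\bigl(e^{-\Lambda}(N^{(3)}_\vep(u)+\vep N^{(2)}(u))\bigr)(t')\,dt'$ and applying~\eqref{est of exp f} together with Lemmas~\ref{schauder} and~\ref{est of Duhamel}, one observes that, apart from the single top-order factor $\nrm{u}\thx{\infty}{s+1}$ (or $\nrm{\phi}_{H^{s+1}}$), every remaining factor involves only norms $\nrm{u(t)}_{H^\sigma}$ with $\sigma\le s$: indeed $[s+1]-1=[s]\le s$, and the product $\nrm{\cdot}_{H^{[s+1]}}\nrm{\cdot}_{H^{s+1-[s+1]}}$ is controlled by $\nrm{\cdot}_{H^{s+1}}\nrm{\cdot}_{L^2}$ through log-convexity of Sobolev norms, exactly as the bound $\nrm{\cdot}_{H^{[s]}}\nrm{\cdot}_{H^{s-[s]}}\lesssim\nrm{\cdot}_{H^s}\nrm{\cdot}_{L^2}$ used inside Lemma~\ref{smooth apriori step1}. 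Consequently those factors are bounded in terms of $M$ alone, the cubic contributions in the Duhamel term are $\lesssim T\,C(s,\nrm{\phi}_{X^s})(1+\nrm{u}\thx{\infty}{s+1})$, and the $\vep N^{(2)}(u)$ contribution is $\lesssim T^{\fr{1}{2}}C(s,\nrm{\phi}_{X^s})\nrm{u}\thx{\infty}{s+1}$ (the factor $\vep$ being absorbed by $\vep^{-1/2}$ from Lemma~\ref{schauder}, as in Lemma~\ref{smooth apriori step1}), so that
\[
\nrm{e^{-\Lambda}u}\thx{\infty}{s+1}\le C(s,\nrm{\phi}_{X^s})\bigl(1+\nrm{\phi}_{H^{s+1}}\bigr)+C(s,\nrm{\phi}_{X^s})\,T^{\fr{1}{2}}\bigl(1+\nrm{u}\thx{\infty}{s+1}\bigr).
\]
Plugging this into the $s+1$ version of~\eqref{decomp}, where $\I,\II$ are again bounded by $\nrm{u}_{H^s}^2\le M^2$ and $\III\lesssim\nrm{e^{\Lambda}}\txx{\infty}(1+\nrm{u}_{H^{[s]}}^{[s]+1})\nrm{e^{-\Lambda}u}_{H^{s+1}}$ by~\eqref{est of exp f2}, and adding the $X^{s+1}$-seminorm estimate from Lemma~\ref{smooth apriori step4}, we get, uniformly in $\vep\in(0,1)$ and on any interval $[0,T]\subset[0,\min(T_\vep,T^*))$,
\[
\nrm{u}_{L_T^\infty X^{s+1}}\le C(s,\nrm{\phi}_{X^s})\bigl(1+\nrm{\phi}_{H^{s+1}}\bigr)+C(s,\nrm{\phi}_{X^s})\,T^{\fr{1}{2}}\nrm{u}_{L_T^\infty X^{s+1}}.
\]

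It then remains to close the estimate and upgrade it to genuine persistence. Choosing $T^{**}=T^{**}(s,\nrm{\phi}_{X^s})\in(0,1)$ with $T^{**}\le T^*$ and $C(s,\nrm{\phi}_{X^s})\,(T^{**})^{\fr{1}{2}}\le\fr{1}{2}$, the last term is absorbed into the left-hand side and $\nrm{u}_{L_T^\infty X^{s+1}}\le 2C(s,\nrm{\phi}_{X^s})(1+\nrm{\phi}_{H^{s+1}})$ on $[0,\min(T_\vep,T^{**}))$. A continuation argument identical to the iteration in the proof of Proposition~\ref{smooth apriori} — using that the local existence time of Proposition~\ref{wp of ep-nls} at level $s+1$ depends only on $\vep$ and an upper bound for the $X^{s+1}$-norm, which here stays $\le 2C(s,\nrm{\phi}_{X^s})(1+\nrm{\phi}_{H^{s+1}})$ — then forces $T_\vep\ge T^{**}$, and the conclusion follows with $C=2C(s,\nrm{\phi}_{X^s})$.

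The main obstacle is the bookkeeping in the $H^{s+1}$ a priori estimate: one has to make sure that raising the regularity by one derivative does not generate a genuinely super-linear dependence on $\nrm{u}\thx{\infty}{s+1}$, i.e.\ that in~\eqref{est of exp f},~\eqref{est of exp f2} and in the terms $\nrm{u\px\Lambda}_{H^s}$, $\nrm{e^{-\Lambda}\bar u\px u}$, $\nrm{e^{-\Lambda}u^3}$, and so on, the extra derivative can always be distributed so that the single top-order factor is multiplied only by $u$-norms of order $\le s$, hence by $M$. The only slightly delicate point is the fractional case $s\notin\bz$, where the integer index $[s+1]$ intervenes; it is handled by the same log-convexity and duality devices already used in the proofs of Lemmas~\ref{smooth apriori step1} and~\ref{smooth apriori step3}.
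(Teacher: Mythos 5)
Your proposal is correct and follows essentially the same route as the paper: the linear-in-top-norm a priori estimate you derive by rerunning Lemmas \ref{smooth apriori step1} and \ref{smooth apriori step3} at level $s+1$ (with all lower-order factors controlled by the uniform $X^s$-bound $M$ from Proposition \ref{smooth apriori}) is exactly the content of the paper's Lemma \ref{smooth apriori3}, and the absorption plus continuation argument is the same as the paper's proof of Proposition \ref{smooth apriori2}.
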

To prove Proposition \ref{smooth apriori2}, we use the following lemma.
\begin{lem}\label{smooth apriori3}
Let $s>\fr{3}{2}$, $\phi\in X^{s+1}(\br)$, $\vep\in(0, 1)$, and $u\in C([0, T]; X^{s+1}(\br))$ be a solution to \eqref{ep-nls}.
Then, there exist $\tilde C=\tilde C(s, \nrm{u}_{L_T^\infty X^s})$ which is independent of $\vep$ such that
\begin{equation*}
\nrm{u}_{L_{T}^\infty X^{s+1}}
\le \tilde C\left(1+\nrm{\phi}_{H^{s+1}} + T^{\frac{1}{2}}\nrm{u}_{L_{T}^\infty X^{s+1}}\right).
\end{equation*}
\end{lem}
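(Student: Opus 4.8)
The plan is to re‑run the gauge‑transform argument of Lemmas \ref{smooth apriori step1} and \ref{smooth apriori step3} one derivative higher, exploiting the fact that $A:=\nrm{u}_{L_T^\infty X^s}$ is already a fixed finite quantity (this is exactly what Proposition \ref{smooth apriori} buys us): every factor of a norm of $u$ \emph{at regularity $\le s$} will be absorbed into the constant $\tilde C=\tilde C(s,A)$, and only the single top‑order quantity $\nrm{u}_{L_T^\infty H^{s+1}}$ will be allowed to survive, multiplied by a power of $T$. Note first that $\nrm{\Lambda(t)}_{L^\infty}\lesssim\nrm{u(t)}_{X^0}\le A$ by \eqref{def of gauge}, so $\nrm{e^{\pm\Lambda}}\txx{\infty}\le e^{CA}$, and that $\px\Lambda=2\lambda u+\mu\bar u$ has all $H^\sigma$‑norms with $\sigma\le s$ bounded by $A$. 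Since $s+1>\tfrac12$ and $u$ solves \eqref{ep-nls} (hence \eqref{ep-sol}), the first step is to invoke Proposition \ref{prop gauge} exactly as in the proof of Lemma \ref{smooth apriori step1}, but at regularity $s+1$, to obtain
\[
e^{-\Lambda(t)}u(t)=U_\vep(t)\big(e^{-\Lambda(0)}\phi\big)-i\int_0^tU_\vep(t-t')\Big(e^{-\Lambda}\big(N^{(3)}_\vep(u)+\vep N^{(2)}(u)\big)\Big)(t')\,dt'
\]
in $H^{s+1}(\br)$, with $N^{(3)}_\vep$ and $N^{(2)}$ as in \eqref{N3}, \eqref{N2}.

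Next I would estimate $\nrm{e^{-\Lambda}u}\thx{\infty}{s+1}$. For the linear term, \eqref{est of exp f} together with $\nrm{\phi}_{H^{s-[s]}}\le A$, $\nrm{\phi}_{L^2\cap H^{[s]-1}}\le A$ (legitimate because $[s]<s$), and the boundedness of $U_\vep$ on $H^{s+1}$ (Lemma \ref{schauder} with $s_1=s_2$) gives a bound $\tilde C(s,A)\nrm{\phi}_{H^{s+1}}$. For the cubic contributions $N^{(3)}_\vep(u)$ — three factors and no derivative — I use the Moser‑type estimate $\nrm{u^3}_{H^{s+1}}\lesssim\nrm{u}_{H^s}^2\nrm{u}_{H^{s+1}}\le A^2\nrm{u}_{H^{s+1}}$ (valid since $s>\tfrac12$) and $\nrm{u^3}_{H^{s-[s]}}\lesssim A^3$ in \eqref{est of exp f}, so that the Duhamel integral (with $U_\vep$ bounded on $H^{s+1}$ and $T<1$) contributes at most $T\,\tilde C(s,A)\,\nrm{u}\thx{\infty}{s+1}$. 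For the term $\vep N^{(2)}(u)=i\vep(2\lambda u+\mu\bar u)\px u$ one derivative is lost, but the prefactor $\vep$ is there to compensate: using the smoothing bound $\nrm{U_\vep(\tau)f}_{H^{s+1}}\lesssim\big(1+(\vep\tau)^{-1/2}\big)\nrm{f}_{H^s}$ from Lemma \ref{schauder}, together with product estimates such as $\nrm{\bar u\px u}_{H^s}\lesssim A\nrm{u}_{H^{s+1}}+A^2$, $\nrm{\bar u\px u}_{H^{s-[s]}}\lesssim A^2$ and \eqref{est of exp f}, the contribution is $\lesssim\vep(T+\vep^{-1/2}T^{1/2})e^{CA}\big(A^{[s]+3}+A\nrm{u}\thx{\infty}{s+1}\big)\le T^{1/2}\tilde C(s,A)\big(1+\nrm{u}\thx{\infty}{s+1}\big)$. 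Summing, $\nrm{e^{-\Lambda}u}\thx{\infty}{s+1}\le\tilde C(s,A)\big(1+\nrm{\phi}_{H^{s+1}}+T^{1/2}\nrm{u}\thx{\infty}{s+1}\big)$.

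The third step is to undo the transform as in \eqref{decomp}, now at level $s+1$: writing $\px u=e^{\Lambda}\px(e^{-\Lambda}u)+(\px\Lambda)u$ gives $\nrm{u}_{H^{s+1}}\sim\nrm{u}_{L^2}+\nrm{\px u}_{H^s}\lesssim A+\nrm{(\px\Lambda)u}_{H^s}+\nrm{e^{\Lambda}\px(e^{-\Lambda}u)}_{H^s}$. Since $H^s$ is an algebra, $\nrm{(\px\Lambda)u}_{H^s}\lesssim\nrm{u}_{H^s}^2\le A^2$; and \eqref{est of exp f2} gives $\nrm{e^{\Lambda}\px(e^{-\Lambda}u)}_{H^s}\lesssim e^{CA}\big(1+\nrm{u}_{H^{[s]}}^{[s]+1}\big)\nrm{e^{-\Lambda}u}_{H^{s+1}}\le e^{CA}(1+A^{[s]+1})\nrm{e^{-\Lambda}u}_{H^{s+1}}$, the point being that the exponent $[s]$ keeps $\nrm{u}_{H^{[s]}}\le A$. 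Combining with the previous step yields $\nrm{u}\thx{\infty}{s+1}\le\tilde C(s,A)\big(1+\nrm{\phi}_{H^{s+1}}+T^{1/2}\nrm{u}\thx{\infty}{s+1}\big)$. Finally, the primitive part of the $X^{s+1}$‑norm is handled directly by Lemma \ref{smooth apriori step4}: $\nrm{\int_\minfty^xu(t,y)\,dy}\txx{\infty}\lesssim\nrm{\phi}_{X^0}+T^{1/2}\nrm{u}\tx{\infty}{2}^2\le A+T^{1/2}A^2\le\tilde C(s,A)$, which only adds a constant, giving the asserted inequality.

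The main obstacle is a purely structural one: we must produce an estimate in which the top‑order quantity $\nrm{u}_{L_T^\infty X^{s+1}}$ occurs exactly once and is multiplied by $T^{1/2}$, since it is Proposition \ref{smooth apriori2} that will later move it to the left‑hand side for small $T$. Concretely this means no factor of the $H^{s+1}$‑norm may ever multiply another, and all the delicate bookkeeping is in the two product estimates \eqref{est of exp f} and \eqref{est of exp f2}: one must check that the ``bad'' factor $\nrm{u}_{H^{[s+1]}}=\nrm{u}_{H^{[s]+1}}$ in \eqref{est of exp f} is always paired with the genuinely low‑order factor $\nrm{f}_{H^{s-[s]}}$ of the nonlinearity (a power of $A$, not of the top norm), and that the exponent $[s]<s$ in \eqref{est of exp f2} leaves $\nrm{u}_{H^{[s]}}\le A$ when undoing the gauge. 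The $\vep$‑regularized quadratic term requires only the routine observation that the $\vep$ prefactor dominates the $\vep^{-1/2}$ loss from parabolic smoothing, leaving a clean $T^{1/2}$; everything else is Moser‑type estimates and Lemmas \ref{strest}, \ref{schauder}, \ref{seki}, \ref{smooth apriori step4}.
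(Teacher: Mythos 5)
Your proposal is correct and follows essentially the same route as the paper: the paper likewise estimates $\nrm{e^{-\Lambda}u}\thx{\infty}{s+1}$ so that the top norm $\nrm{u}\thx{\infty}{s+1}$ appears linearly with a $T^{1/2}$ prefactor (it simply cites Lemma \ref{smooth apriori step1} with $s$ replaced by $s+1$ instead of re-deriving it as you do), and then undoes the gauge via the decomposition \eqref{decomp} at level $s+1$, absorbing all lower-order norms into $\tilde C(s,\nrm{u}_{L_T^\infty X^s})$. The only quibble is a harmless index slip: at level $s+1$ the low-order factors in \eqref{est of exp f} are $\nrm{\phi}_{L^2\cap H^{[s]}}$ and $\nrm{f}_{H^{s-[s]}}$ rather than $\nrm{\phi}_{L^2\cap H^{[s]-1}}$, but both are still controlled by $\nrm{\phi}_{H^s}$, so nothing changes.
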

\begin{proof}
It suffices to consider the $L_{T}^\infty H_x^{s+1}$-norm.
We use the decomposition \eqref{decomp}, where $s$ is replaced with $s+1$.
Terms $\I$ and $\II$ are easily estimated as follows:
\[
\I + \II
\lesssim \nrm{u}_{L_{T}^\infty L_x^2} + \nrm{u}_{L_{T}^\infty H_x^s}^2.
\]

We consider the term $\III$.
The inequality \eqref{est of exp f} yields
\begin{align*}
\III
&\lesssim \nrm{e^{\Lambda}}_{L_{T, x}^\infty} \left(\nrm{\px\left(e^{-\Lambda}u\right)}_{L_{T}^\infty H_x^s}+ \nrm{u}_{L_{T}^\infty H_x^{[s]}}\left(1+\nrm{u}_{L_{T}^\infty H_x^{[s]-1}}^{[s]}\right)\nrm{\px\left(e^{-\Lambda}u\right)}_{L_{T}^\infty H_x^{s-[s]}}\right).
\end{align*}
From \eqref{est of exp f} and Proposition \ref{smooth apriori}, we obtain
\[
\nrm{e^{\Lambda}}_{L_{T, x}^\infty}\nrm{u}_{L_{T}^\infty H_x^{[s]}}\left(1+\nrm{u}_{L_{T}^\infty H_x^{[s]-1}}^{[s]}\right)\nrm{\px\left(e^{-\Lambda}u\right)}_{L_{T}^\infty H_x^{s-[s]}}
\le \tilde C(s, \nrm{u}_{L_{T}^\infty X^s}).
\]
Also, Proposition \ref{smooth apriori} and Lemma \ref{smooth apriori step1} yield that
\begin{align*}
\nrm{e^{-\Lambda}u}_{L_{T}^\infty H_x^{s+1}}
&\lesssim \nrm{e^{-\Lambda(0)}}_{L^{\infty}}\left(\nrm{\phi}_{H^{s}}+ \nrm{\phi}_{H^{s}}^{[s]+2}\right)\nrm{\phi}_{H^{s+1}} \\
&\quad +{T}^{\frac{1}{2}}\nrm{e^{-\Lambda}}_{L_{T, x}^\infty}\left(\nrm{u}_{L_{T}^\infty H_x^{s}}+ \nrm{u}_{L_{T}^\infty H_x^{s}}^{[s]+4}\right)\nrm{u}_{L_{T}^\infty H_x^{s+1}} \\
&\le \tilde C(s, \nrm{u}_{L_{T}^\infty X^s})\left(\nrm{\phi}_{H^{s+1}} + {T}^{\frac{1}{2}}\nrm{u}_{L_{T}^\infty H_x^{s+1}}\right).
\end{align*}
Thus, we obtain the desired estimate. 
\end{proof}
\begin{proof}[Proof of Proposition \ref{smooth apriori2}]
From Proposition \ref{wp of ep-nls}, there exist $T=T(\vep, s, \|\phi\|_{X^{s+1}})\in(0, 1)$ and a solution $u\in C([0, T]; X^{s+1}(\br))$ to \eqref{ep-nls}.
Let $C_1, C_2, C_3$ be constants taken in Lemmas \ref{smooth apriori step1} 
--\ref{smooth apriori step4} and $C=\max(C_1, C_2, C_3)$.
We take $M$ and $T^*$ as \eqref{smooth T^*}.
Also, we define $T^{**}$ and $T_0^{**}$ by ${T^{**}}=\min(T^*, (2\tilde C(s, M))^{-2})$ and $T_0^{**} = \min(T, T^{**})$, where $\tilde C$ is taken in Lemma \ref{smooth apriori3}.
Then, Proposition \ref{smooth apriori} and Lemma \ref{smooth apriori3} yield
\[
\nrm{u}_{L_{T_0^{**}}^\infty X^{s+1}}
\le 2\tilde C(s, M)(1+\nrm{\phi}_{H^{s+1}}).
\]
Thus, Proposition \ref{wp of ep-nls} yields that there exist $\delta=\delta(\vep, s, \nrm{\phi}_{X^{s+1}})\in (0, 1)$ and a solution $u\in C([T, T + \delta]; X^{s+1}(\br))$ to \eqref{ep-nls}.
Then, Proposition \ref{smooth apriori} and Lemma \ref{smooth apriori3} yield
\[
\nrm{u}_{L^\infty ([T, T+\delta];X^{s+1}(\br))}
\le 2\tilde C(s, M)(1+\nrm{\phi}_{H^{s+1}}).
\]
In particular, we have the same estimate when we replace $L^\infty ([T, T+\delta];X^{s+1}(\br))$ with $L_{T_1^{**}}^\infty X^{s+1}$ where ${T_1^{**}} = \min(T + \delta, T^{**})$.
Repeating Propositions \ref{wp of ep-nls} and \ref{smooth apriori} and Lemma \ref{smooth apriori3}, we have
\[
\nrm{u}_{L^\infty ([T+(k-1)\delta, T+k\delta];X^{s+1}(\br))}
\le 2\tilde C(s, M)(1+\nrm{\phi}_{H^{s+1}}).
\]
In particular, we have the same estimate when we replace $L^\infty ([T+(k-1)\delta, T+k\delta];X^{s+1}(\br))$ with $L_{T_k^{**}}^\infty X^{s+1}$ where ${T_k^{**}} = \min(T + k\delta, T^{**})$.
Taking $k$ sufficiently large, we finish the proof.
\end{proof}
\subsection{Estimate for difference}
In this subsection, we consider the estimate for the difference between two solutions.
For $\vep_1, \vep_2\in(0, 1)$ and $\phi_1,\phi_2$,  we define $\tilde\phi$ as
\[
\tilde\phi
=\left\{
\begin{aligned}
&\phi_1 \ (\vep_1\le \vep_2) \\
&\phi_2 \ (\vep_1> \vep_2).
\end{aligned}
\right.
\]
\begin{prop}\label{smooth est of diff}
Let $s>\fr{3}{2}$ and $j=\{1, 2\}$.
\begin{enumerate}[(i)]
\item \it {Let $\phi_j\in X^{s+1}(\br)$ and $\vep_j\in (0, 1)$.
Also, let $u_{\vep_j, \phi_j}\in C([0, T_j^{**}]; X^{s+1}(\br))$ be a solution to \eqref{ep-nls} obtained in Proposition \ref{smooth apriori2}.
Then, there exist $C=C(s, \|\phi_1\|_{X^s}, \|\phi_2\|_{X^s})$ and $T^{**}= T^{**}(s, \nrm{\phi}_{X^s})\in (0, \min(T_1^{**}, T_2^{**})]$ which are independent of $\vep_1, \vep_2$ such that
\[
\nrm{u_{\vep_1, \phi_1} - u_{\vep_2, \phi_2}}_{L_{T^{**}}^\infty X^s} 
\le C\left(\nrm{\phi_1 - \phi_2}_{X^s} + \abs{\vep_1 - \vep_2}^{\fr{1}{2}}(1+\|\tilde\phi\|_{H^{s+1}})\right).
\]}

\item \it {Let $\phi_j\in X^{s}(\br)$ and $\vep_1=\vep_2\in (0, 1)$.
Also, let $u_{\vep_j, \phi_j}\in C([0, T_j^{*}; X^{s}(\br))$ be a solution to \eqref{ep-nls} obtained in Proposition \ref{smooth apriori}.
Then, there exist $C=C(s, \|\phi_1\|_{X^s}, \|\phi_2\|_{X^s})$ and $T^{*}= T^{*}(s, \nrm{\phi}_{X^s})\in (0, \min(T_1^{*}, T_2^{*})]$ which are independent of $\vep_1, \vep_2$ such that
\[
\nrm{u_{\vep_1, \phi_1} - u_{\vep_2, \phi_2}}_{L_{T^{*}}^\infty X^s} 
\le C\nrm{\phi_1 - \phi_2}_{X^s}.
\]}
\end{enumerate}
\end{prop}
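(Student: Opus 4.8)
The plan is to reduce the estimate for $v:=u_{\vep_1,\phi_1}-u_{\vep_2,\phi_2}$ to an estimate for the difference of the gauge-transformed unknowns. Write $u_1=u_{\vep_1,\phi_1}$, $u_2=u_{\vep_2,\phi_2}$, let $\Lambda_1,\Lambda_2$ be the gauge factors of \eqref{def of gauge} associated with $u_1,u_2$, and set $\tilde u_j=e^{-\Lambda_j}u_j$, $\tilde v=\tilde u_1-\tilde u_2$. By Proposition \ref{prop gauge} each $\tilde u_j$ solves $\cl_{\vep_j}\tilde u_j=e^{-\Lambda_j}\bigl(N^{(3)}_{\vep_j}(u_j)+\vep_j N^{(2)}(u_j)\bigr)$ (see \eqref{N3}, \eqref{N2}); rewriting $\cl_{\vep_1}\tilde u_1=\cl_{\vep_2}\tilde u_1+\frac{i}{2}(\vep_1-\vep_2)\px^2\tilde u_1$ one obtains a Duhamel formula for $\tilde v$ with reference propagator $U_{\vep_2}$ (say $\vep_1\le\vep_2$, so $\tilde\phi=\phi_1$), initial datum $\tilde v(0)=e^{-\Lambda_1(0)}\phi_1-e^{-\Lambda_2(0)}\phi_2$, and forcing term equal to the sum of (a) the mismatch term $\frac{i}{2}(\vep_1-\vep_2)\px^2\tilde u_1$, (b) the cubic difference $e^{-\Lambda_1}N^{(3)}_{\vep_1}(u_1)-e^{-\Lambda_2}N^{(3)}_{\vep_2}(u_2)$, which contains no derivative, and (c) the $\vep_j$-weighted difference $\vep_1e^{-\Lambda_1}N^{(2)}(u_1)-\vep_2e^{-\Lambda_2}N^{(2)}(u_2)$.

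First I would estimate the primitive part of the $X^s$-norm of $v$ itself. Since $v$ solves the difference of the two instances of \eqref{mild-sol} (with reference $U_{\vep_2}$, an extra forcing $\propto(\vep_1-\vep_2)\px^2u_1$, and nonlinearity $\lambda(u_1+u_2)v+\mu(\bar u_1v+u_2\bar v)$), Lemmas \ref{est of free sol}, \ref{est of Duhamel} (its obvious bilinear variant) and \eqref{primitive of derivative} give, exactly as in the proof of Lemma \ref{smooth apriori step4},
\[
\Bigl\|\textstyle\int_\minfty^xv(t,y)\,dy\Bigr\|_{L^\infty_{T,x}}\lesssim \|\phi_1-\phi_2\|_{X^s}+T^{1/2}(\|u_1\|_{L^\infty_TL^2_x}+\|u_2\|_{L^\infty_TL^2_x})\|v\|_{L^\infty_TL^2_x}+T(1+\|\tilde\phi\|_{H^{s+1}}),
\]
where the last term uses Proposition \ref{smooth apriori2}; this in turn bounds $\|\Lambda_1-\Lambda_2\|_{L^\infty_{T,x}}$. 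Next I would estimate $\|\tilde v\|_{L^\infty_TH^s_x}$ via Lemmas \ref{strest} and \ref{schauder}, using \eqref{est of exp f}, \eqref{est of exp f2} and Lemma \ref{seki} for the products with $e^{-\Lambda_j}$. Each of (a), (b), (c) is split as $e^{-\Lambda_1}(\cdots(u_1)-\cdots(u_2))+(e^{-\Lambda_1}-e^{-\Lambda_2})(\cdots(u_2))$: in the first summand the algebraic/derivative differences are controlled by $\|v\|_{L^\infty_TH^s_x}$ (for (c) this produces $\px v$, which still lands in $H^{s-1}$ since $s>\frac{3}{2}$), and in the second summand $e^{-\Lambda_1}-e^{-\Lambda_2}$ is controlled, by the mean value theorem and Lemma \ref{seki}, in terms of $\|\Lambda_1-\Lambda_2\|_{L^\infty_{T,x}}$ together with $\|\Lambda_1-\Lambda_2\|_{\dot H^{[s]+1}}\lesssim\|v\|_{L^\infty_TH^s_x}$ (recall $\px\Lambda_j=2\lambda u_j+\mu\bar u_j$ and $\|\Lambda_j\|_{L^\infty_{T,x}}\le C$ by Lemma \ref{smooth apriori step4}). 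The free-evolution term gives $\|\tilde v(0)\|_{H^s}\lesssim C\|\phi_1-\phi_2\|_{X^s}$ after writing $\tilde v(0)=e^{-\Lambda_1(0)}(\phi_1-\phi_2)+(e^{-\Lambda_1(0)}-e^{-\Lambda_2(0)})\phi_2$ and using $\|\Lambda_1(0)-\Lambda_2(0)\|_{L^\infty}\lesssim\|\phi_1-\phi_2\|_{X^0}$.

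The point that makes all of this uniform in $\vep_1,\vep_2$ is that the derivative-losing Duhamel contributions — (a) and (c) — are handled by Lemma \ref{schauder} at the cost of a factor $\vep_j^{-1/2}T^{1/2}$ after time integration, but (c) carries a compensating $\vep_j$ and is therefore $O(T^{1/2})$, while (a) carries $|\vep_1-\vep_2|$ and $\|\tilde u_1\|_{L^\infty_TH^{s+1}_x}\lesssim 1+\|\phi_1\|_{H^{s+1}}$ (by \eqref{est of exp f} and Proposition \ref{smooth apriori2}), so $|\vep_1-\vep_2|\vep_2^{-1/2}T^{1/2}\lesssim|\vep_1-\vep_2|^{1/2}$ since $|\vep_1-\vep_2|\le\vep_2$. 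Combining the $\tilde v$-estimate, the primitive estimate, and the reverse gauge identity $v=e^{\Lambda_1}\tilde v+(e^{\Lambda_1}-e^{\Lambda_2})\tilde u_2$ (whence $\|v\|_{L^\infty_TH^s_x}\lesssim C\|\tilde v\|_{L^\infty_TH^s_x}+C\|\Lambda_1-\Lambda_2\|_{L^\infty_{T,x}}$ and likewise in $L^2$, using \eqref{est of exp f2}, Lemma \ref{seki} and the a priori bounds of Propositions \ref{smooth apriori} and \ref{smooth apriori2}) yields a closed coupled system in $\|\tilde v\|_{L^\infty_TH^s_x}$, $\|v\|_{L^\infty_TL^2_x}$ and $\|\Lambda_1-\Lambda_2\|_{L^\infty_{T,x}}$ in which every self-coupling carries a factor $T^{1/2}$ times a constant depending only on $s$ and $\|\phi_1\|_{X^s},\|\phi_2\|_{X^s}$; choosing $T=T^{**}(s,\|\phi_1\|_{X^s},\|\phi_2\|_{X^s})$ small enough to absorb these closes the estimate and gives (i). For (ii) the equality $\vep_1=\vep_2$ removes the mismatch term (a) together with any need for $X^{s+1}$-bounds, leaving exactly the Lipschitz estimate in $\|\phi_1-\phi_2\|_{X^s}$.

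The main obstacle is precisely this uniformity in $\vep_1,\vep_2$: one cannot estimate $v$ directly, since its equation carries a genuine derivative in the nonlinearity and the parabolic smoothing of $U_\vep$ only closes such an estimate with an unbounded $\vep^{-1/2}$ loss; passing to $\tilde v$ removes the derivative from the dominant part of the nonlinearity, and the residual derivative terms survive only because each carries an explicit power of $\vep$ (or of $\vep_1-\vep_2$) that exactly balances the smoothing loss. The remaining delicacy is the bookkeeping of the coupled system — in particular always placing the full $H^s$-regularity on the factor that possesses it, since primitives such as $\Lambda_1-\Lambda_2$ lie only in $L^\infty\cap\dot H^k$, $k\ge1$, and not in $H^s$ — while everything else is a lengthy but routine repetition of the product and Strichartz estimates already used in Sections 2 and 3.
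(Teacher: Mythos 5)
Your overall route is the paper's: estimate the primitive difference $\Lambda_1-\Lambda_2$ in $L^\infty_{T,x}$, run Duhamel for the gauge-transformed difference $\tilde v=e^{-\Lambda_1}u_1-e^{-\Lambda_2}u_2$ in $H^s$, undo the gauge, and absorb; and you correctly identify the mechanism that makes everything uniform in $\vep_1,\vep_2$, namely that each derivative-carrying Duhamel term comes with a power of $\vep_j$ or of $\vep_1-\vep_2$ that compensates the $\vep^{-1/2}T^{1/2}$ loss from Lemma \ref{schauder} (your $|\vep_1-\vep_2|\vep_2^{-1/2}\le|\vep_1-\vep_2|^{1/2}$ is exactly the paper's Lemma \ref{est of diff free sol} with $a=1$). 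However, there is a genuine gap in how you close the estimate. The reverse gauge identity $v=e^{\Lambda_1}\tilde v+(e^{\Lambda_1}-e^{\Lambda_2})\tilde u_2$ is a pointwise-in-time algebraic identity, and by Lemma \ref{seki} the second summand costs
\[
\nrm{(e^{\Lambda_1}-e^{\Lambda_2})\tilde u_2}_{H^s}
\lesssim \nrm{\tilde u_2}_{H^s}\nrm{e^{\Lambda_1}-e^{\Lambda_2}}_{L^\infty}
+\nrm{\tilde u_2}_{H^{s-[s]}}\nrm{e^{\Lambda_1}-e^{\Lambda_2}}_{\dot H^{[s]+1}},
\]
and the $\dot H^{[s]+1}$-norm of the gauge difference is \emph{not} controlled by $\nrm{\Lambda_1-\Lambda_2}_{L^\infty}$: differentiating $[s]+1$ times produces $\px^{[s]}(2\lambda v+\mu\bar v)\,e^{\Lambda_1}$, so it genuinely requires $\nrm{v}_{H^{[s]}}$ (this is \eqref{pr diff2 ineq}). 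Your claimed inequality $\nrm{v}_{H^s}\lesssim\nrm{\tilde v}_{H^s}+\nrm{\Lambda_1-\Lambda_2}_{L^\infty_{T,x}}$ drops this contribution, and once it is restored your "closed coupled system in $\nrm{\tilde v}_{H^s}$, $\nrm{v}_{L^2}$, $\nrm{\Lambda_1-\Lambda_2}_{L^\infty}$" acquires the self-coupling $\nrm{v}_{H^s}\le C\,\nrm{v}_{H^{[s]}}+\cdots$ with an $O(1)$ constant and \emph{no} factor of $T^{1/2}$, since there is no time integral here. For integer $s$ this is outright circular ($[s]=s$), and even for non-integer $s$ the quantity $\nrm{v}_{H^{[s]}}$ is not among the unknowns of your system.

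The paper supplies the missing ingredient in two pieces: it iterates the reverse-gauge decomposition through descending regularities, bounding $\nrm{u_1(1-e^{-\Lambda_1+\Lambda_2})}_{H^s}$ by $\nrm{u_1-u_2}_{X^{[s]}}$, then $\nrm{u_1-u_2}_{H^{[s]}}$ by $\nrm{u_1-u_2}_{X^{[s]-1}}\subset X^{s-1}$; and it proves a \emph{separate, direct} Duhamel estimate for $\nrm{u_1-u_2}_{L^\infty_TH^{s-1}}$ from the original (non-gauged) equation (Lemma \ref{smooth est of diff step3}), which is harmless at regularity $s-1$ because $\px(\lambda(u_1^2-u_2^2)+\cdots)$ lands in $H^{s-1}$ without any $\vep^{-1/2}$ loss and carries a factor $T$ in front of $\nrm{u_1-u_2}_{H^s}$. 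Only after this descent does every self-coupling genuinely carry a small factor of $T$. You should either reproduce this descent plus the auxiliary $H^{s-1}$ difference estimate, or (for non-integer $s$ only) interpolate $\nrm{v}_{H^{[s]}}\le\nrm{v}_{L^2}^{1-[s]/s}\nrm{v}_{H^s}^{[s]/s}$ and absorb by Young's inequality — but the proposition is asserted for all $s>\fr{3}{2}$, so the interpolation shortcut does not cover the stated generality.
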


In the following, we denote $u_1 = u_{\vep_1, \phi_1}$, $u_2 = u_{\vep_2, \phi_2}$, and
\begin{equation}\label{primitives}
\begin{gathered}
\Lambda_1(t, x) = 2\lambda\int_\minfty^xu_1(t, y)\,dy + \mu\int_\minfty^x\bar u_1(t, y)\,dy, \\
\Lambda_2 (t, x)= 2\lambda\int_\minfty^xu_2(t, y)\,dy + \mu\int_\minfty^x\bar u_2(t, y)\,dy. 
\end{gathered}
\end{equation}
First, we consider the estimate for $\Lambda_1-\Lambda_2$.
\begin{lem}\label{smooth est of diff step1}
We assume the same condition as Proposition \ref{smooth est of diff} (i).
Then, there exist $C=C(s, \|\phi_1\|_{X^s}, \|\phi_2\|_{X^s})>0$ which is independent of $\vep_1, \vep_2$ such that
\begin{align*}
&\nrm{\Lambda_1 - \Lambda_2}_{L_{T^{**}, x}^{ \infty }}  \\
&\le C\left(\nrm{\phi_1 - \phi_2}_{X^0} + \abs{\vep_1 - \vep_2}^{\frac{1}{2}}\|\tilde \phi\|_{H^1}
+{T^{**}}^{\frac{1}{2}}(\nrm{u_1 - u_2}_{L_{T^{**}}^{\infty }L_x^{2 }} + \abs{\vep_1 - \vep_2}^{\fr{1}{2}})\right).
\end{align*}
for all $T^{**}\in (0, \min(T_1^{**}, T_2^{**})]$.
If we assume the same condition as Proposition \ref{smooth est of diff} (ii), then the same estimate holds for all $T^{*}\in (0, \min(T_1^{*}, T_2^{*})]$ by replacing $T^{**}$ with $T^{*}$.
\end{lem}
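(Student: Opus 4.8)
The argument is the same for parts (i) and (ii); I describe (i), part (ii) being the special case $\vep_1=\vep_2$ in which the $\vep$-difference terms below vanish identically and the solutions come from Proposition~\ref{smooth apriori} (hence the a priori bounds hold on $[0,T^*]$) rather than Proposition~\ref{smooth apriori2}. \emph{Reduction.} By \eqref{primitives}, linearity of the primitive, and the conjugation identity $\int_\minfty^x\bar f(y)\,dy=\overline{\int_\minfty^x f(y)\,dy}$ (immediate from \eqref{def of primitive} after $\eta\mapsto-\eta$, as in the proof of Lemma~\ref{Xs is Banach}), we get $\abs{\Lambda_1-\Lambda_2}\le(2\abs{\lambda}+\abs{\mu})\sup_{x}\abs{\int_\minfty^x(u_1-u_2)(t,y)\,dy}$, so it suffices to estimate the latter. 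Each $u_j$ satisfies \eqref{mild-sol}, and since $s>\fr12$ we may proceed as in the proofs of Lemmas~\ref{smooth apriori step4} and \ref{est of Duhamel} (using \eqref{primitive of derivative}) to write
\[
\int_\minfty^x u_j(t,y)\,dy = \int_\minfty^x U_{\vep_j}(t)\phi_j(y)\,dy - i\int_0^t\Bigl(U_{\vep_j}(t-t')F_j(t')\Bigr)(x)\,dt',\qquad F_j:=\lambda u_j^2+\mu\abs{u_j}^2 .
\]
Subtracting the $j=1,2$ identities reduces the problem to the free part $A(t,x):=\int_\minfty^x\bigl(U_{\vep_1}(t)\phi_1-U_{\vep_2}(t)\phi_2\bigr)(y)\,dy$ and the Duhamel part $B(t,x):=\int_0^t\bigl(U_{\vep_1}(t-t')F_1(t')-U_{\vep_2}(t-t')F_2(t')\bigr)(x)\,dt'$.

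\emph{The free part.} I would use the algebraic identity
\[
U_{\vep_1}(t)\phi_1-U_{\vep_2}(t)\phi_2 = U_{\max(\vep_1,\vep_2)}(t)(\phi_1-\phi_2) + \bigl(U_{\vep_1}(t)-U_{\vep_2}(t)\bigr)\tilde\phi ,
\]
valid in both cases $\vep_1\le\vep_2$ and $\vep_1>\vep_2$ by the definition of $\tilde\phi$. Since $\phi_1-\phi_2\in X^{s+1}\subset X^0$, Lemma~\ref{est of free sol} with regularity exponent $0$ (whose constant is $\vep$-independent) and $t\le T^{**}<1$ bound the primitive of the first term by $\lesssim\nrm{\phi_1-\phi_2}_{X^0}$. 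For the second term I would argue on the Fourier side: writing $e^{-\fr12(i+\vep_1)t\eta^2}-e^{-\fr12(i+\vep_2)t\eta^2}=e^{-\fr i2 t\eta^2}\bigl(e^{-\fr12\vep_1 t\eta^2}-e^{-\fr12\vep_2 t\eta^2}\bigr)$, this symbol vanishes at $\eta=0$, and by the elementary bound $\abs{e^{-a}-e^{-b}}\le\abs{a-b}^{1/2}$ ($a,b\ge0$) one has $\abs{\eta}^{-1}\abs{e^{-\fr12\vep_1 t\eta^2}-e^{-\fr12\vep_2 t\eta^2}}\le\bigl(\tfrac12\abs{\vep_1-\vep_2}t\bigr)^{1/2}$; hence $\eta\mapsto(i\eta)^{-1}\bigl(e^{-\fr12(i+\vep_1)t\eta^2}-e^{-\fr12(i+\vep_2)t\eta^2}\bigr)\widehat{\tilde\phi}(\eta)\in L^1(\br)$ (as $\tilde\phi\in H^s$, $s>\fr12$), the Riemann--Lebesgue lemma removes the $e^{ia\eta}$ term in \eqref{def of primitive}, and
\[
\sup_{x}\abs{\int_\minfty^x\bigl(U_{\vep_1}(t)-U_{\vep_2}(t)\bigr)\tilde\phi(y)\,dy}\lesssim\abs{\vep_1-\vep_2}^{1/2}t^{1/2}\nrm{\widehat{\tilde\phi}}_{L^1}\lesssim\abs{\vep_1-\vep_2}^{1/2}t^{1/2}\nrm{\tilde\phi}_{H^1}.
\]

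\emph{The Duhamel part.} For each $t'$ write, with $\tau:=t-t'$, $U_{\vep_1}(\tau)F_1(t')-U_{\vep_2}(\tau)F_2(t') = U_{\vep_1}(\tau)\bigl(F_1(t')-F_2(t')\bigr) + \bigl(U_{\vep_1}(\tau)-U_{\vep_2}(\tau)\bigr)F_2(t')$. For the first piece the dispersive estimate (as in the proof of Lemma~\ref{est of Duhamel}) gives $\nrm{U_{\vep_1}(\tau)(F_1-F_2)(t')}_{L_x^\infty}\lesssim\tau^{-1/2}\nrm{(F_1-F_2)(t')}_{L^1}$, and expanding $F_1-F_2$ bilinearly gives $\nrm{(F_1-F_2)(t')}_{L^1}\lesssim\bigl(\nrm{u_1(t')}_{L^2}+\nrm{u_2(t')}_{L^2}\bigr)\nrm{(u_1-u_2)(t')}_{L^2}$; since $\int_0^t\tau^{-1/2}\,dt'=2t^{1/2}$ this contributes $\lesssim t^{1/2}\bigl(\nrm{u_1}_{L^\infty_{T^{**}}L_x^2}+\nrm{u_2}_{L^\infty_{T^{**}}L_x^2}\bigr)\nrm{u_1-u_2}_{L^\infty_{T^{**}}L_x^2}$. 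For the second piece, bounding the $L_x^\infty$-norm by the $L^1$-norm of the Fourier transform and again using $\abs{e^{-a}-e^{-b}}\le\abs{a-b}^{1/2}$,
\[
\nrm{\bigl(U_{\vep_1}(\tau)-U_{\vep_2}(\tau)\bigr)F_2(t')}_{L_x^\infty}\lesssim\abs{\vep_1-\vep_2}^{1/2}\tau^{1/2}\int_\br\abs{\eta}\,\abs{\widehat{F_2}(t',\eta)}\,d\eta\lesssim\abs{\vep_1-\vep_2}^{1/2}\tau^{1/2}\nrm{F_2(t')}_{H^s},
\]
the last step by splitting $\abs{\eta}\le1$ and $\abs{\eta}\ge1$ (here $s>\fr32$ is used); with $\nrm{F_2(t')}_{H^s}\lesssim\nrm{u_2(t')}_{H^s}^2$ (algebra property of $H^s(\br)$, $s>\fr12$) and $\int_0^t\tau^{1/2}\,dt'\lesssim t^{3/2}$, this piece is $\lesssim\abs{\vep_1-\vep_2}^{1/2}t^{3/2}\nrm{u_2}_{L^\infty_{T^{**}}H_x^s}^2$. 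On $[0,T^{**}]$, with $T^{**}\le\min(T_1^{**},T_2^{**})\le\min(T_1^*,T_2^*)$, Propositions~\ref{smooth apriori} and \ref{smooth apriori2} give $\nrm{u_j}_{L^\infty_{T^{**}}H_x^s}\le\nrm{u_j}_{L^\infty_{T^{**}}X^s}\le C(s,\nrm{\phi_j}_{X^s})$; inserting these, using $T^{**}<1$, and collecting the bounds for $A$ and $B$ yields the asserted estimate. Part (ii) is identical with $\vep_1=\vep_2$ on $[0,T^*]$, the $\vep$-difference terms being absent.

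\emph{Main obstacle.} The delicate point is the free-evolution difference: to get the half-power $\abs{\vep_1-\vep_2}^{1/2}$ together with only the $H^1$-norm of $\tilde\phi$, one must (a) split so that the \emph{initial-data} difference $\phi_1-\phi_2$ is transported by a single propagator $U_{\max(\vep_1,\vep_2)}(t)$, which Lemma~\ref{est of free sol} controls at regularity $0$ with a $\vep$-uniform constant, and (b) exploit that both heat-damped Schr\"odinger symbols equal $1$ at $\eta=0$, so that the $\eta^{-1}$ singularity inherent in the primitive is cancelled, leaving exactly the room for $\abs{e^{-a}-e^{-b}}\le\abs{a-b}^{1/2}$ to convert the gain $\eta^2$ into one power of $\abs{\eta}$, hence one half-power of $\abs{\vep_1-\vep_2}$. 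Everything else is the routine combination of the dispersive estimate, the Sobolev algebra estimate, and the a priori bounds of the preceding subsections.
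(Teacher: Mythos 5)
Your proof is correct and follows the same overall route as the paper: write the primitive of $u_1-u_2$ via the mild formulation, split into the free evolution of the data difference, the propagator difference on the data, the Duhamel term for the nonlinearity difference, and the propagator difference on the Duhamel term; then use Lemma \ref{est of free sol} at regularity $0$, a Fourier-side $\abs{\vep_1-\vep_2}^{1/2}$ bound for $(U_{\vep_1}-U_{\vep_2})\tilde\phi$ (this is exactly the paper's \eqref{ineq for appendix}), and the dispersive estimate for the bilinear difference. The one place where you genuinely diverge is the last term, $\int_0^t(U_{\vep_1}-U_{\vep_2})(t-t')F_2(t')\,dt'$: the paper passes through the Sobolev embedding $H^{1/2+\delta}\hookrightarrow L^\infty$ and Lemma \ref{est of diff free sol} with $a=\tfrac12-\delta$, which only produces $\abs{\vep_1-\vep_2}^{1/4-\delta/2}$ (with the paper's choice $\delta=\tfrac14$ this is $\abs{\vep_1-\vep_2}^{1/8}$; the exponent $\tfrac98$ printed there is a slip, and since $\abs{\vep_1-\vep_2}^{1/8}\ge\abs{\vep_1-\vep_2}^{1/2}$ the paper's final domination by $\abs{\vep_1-\vep_2}^{1/2}$ does not follow from that computation as written). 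You instead bound $L_x^\infty$ by the $L^1$-norm of the Fourier transform and use $\abs{e^{-a}-e^{-b}}\le\abs{a-b}^{1/2}$ directly on the symbol, converting the gain $\tau\eta^2$ into $\abs{\vep_1-\vep_2}^{1/2}\tau^{1/2}\abs{\eta}$ and paying with $\nrm{F_2(t')}_{H^s}$, $s>\tfrac32$ — which the a priori bounds of Propositions \ref{smooth apriori} and \ref{smooth apriori2} supply. This cleanly yields the full half power of $\abs{\vep_1-\vep_2}$ asserted in the lemma, so your variant is, if anything, tighter than the paper's at this step. Your bookkeeping of $\tilde\phi$ via the identity $U_{\vep_1}\phi_1-U_{\vep_2}\phi_2=U_{\max(\vep_1,\vep_2)}(\phi_1-\phi_2)+(U_{\vep_1}-U_{\vep_2})\tilde\phi$ is also a small refinement that matches the statement exactly (the paper's proof always places $\phi_2$ in the propagator-difference term). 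No gaps.
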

To prove Lemma \ref{smooth est of diff step1}, we use the following lemma.
\begin{lem}\label{est of diff free sol}
Let $s\in\br$, $\vep_1, \vep_2\in[0, 1)$, $t>0$, and $a\in(0, 2)$.
Then the following estimate holds:
\begin{align*}
\nrm{U_{\vep_1}(t)\phi_1 - U_{\vep_2}(t)\phi_2}_{H^s}
\lesssim \nrm{\phi_1 - \phi_2}_{H^s} +\abs{\vep_1 - \vep_2}^{\fr{a}{2}}t^{\fr{a}{2}}\|\tilde \phi\|_{H^{s+a}},
\end{align*}
where the implicit constant depends only on $a$.
\end{lem}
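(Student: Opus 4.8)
The plan is to reduce everything to a pointwise bound on a Fourier multiplier. Recall that $\widehat{U_\vep(t)f}(\xi) = e^{-\fr{i}{2}t\xi^2}e^{-\fr{1}{2}\vep t\xi^2}\hat f(\xi)$, so $U_\vep(t)$ is the free Schr\"odinger propagator composed with the Gaussian smoothing operator with symbol $e^{-\fr{1}{2}\vep t\xi^2}$. By the symmetry of the statement we may assume $\vep_1\le\vep_2$, so that $\tilde\phi=\phi_1$, and split
\[
U_{\vep_1}(t)\phi_1 - U_{\vep_2}(t)\phi_2 = U_{\vep_2}(t)(\phi_1 - \phi_2) + \left(U_{\vep_1}(t) - U_{\vep_2}(t)\right)\phi_1.
\]

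For the first term, since $\bigl|e^{-\fr{i}{2}t\xi^2}e^{-\fr{1}{2}\vep_2 t\xi^2}\bigr| = e^{-\fr{1}{2}\vep_2 t\xi^2}\le 1$, Plancherel's theorem immediately gives $\nrm{U_{\vep_2}(t)(\phi_1-\phi_2)}_{H^s}\le\nrm{\phi_1-\phi_2}_{H^s}$. For the second term, I would first establish the elementary pointwise inequality
\[
\left|e^{-\fr{1}{2}\vep_1 t\xi^2} - e^{-\fr{1}{2}\vep_2 t\xi^2}\right|
\lesssim \left(|\vep_1 - \vep_2|\, t\, \xi^2\right)^{a/2}
\]
for all $\xi\in\br$, with implicit constant depending only on $a$. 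This follows by interpolating the two trivial bounds $|e^{-x}-e^{-y}|\le|x-y|$ (valid for $x,y\ge0$, since $e^{-\theta}\le1$ there) and $|e^{-x}-e^{-y}|\le2$: writing $|e^{-x}-e^{-y}| = |e^{-x}-e^{-y}|^{a/2}|e^{-x}-e^{-y}|^{1-a/2}$, bounding the first factor by $|x-y|^{a/2}$ and the second by $2^{1-a/2}$ — here the exponent $1-a/2\ge0$ is exactly what the hypothesis $a<2$ guarantees — and finally taking $x=\fr12\vep_1 t\xi^2$, $y=\fr12\vep_2 t\xi^2$.

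Given this, and using $|\xi|^a\le\jb{\xi}^a$, Plancherel's theorem yields
\[
\nrm{\left(U_{\vep_1}(t)-U_{\vep_2}(t)\right)\phi_1}_{H^s}^2
\lesssim |\vep_1-\vep_2|^a\, t^a \int_\br \jb{\xi}^{2(s+a)}|\hat\phi_1(\xi)|^2\,d\xi
= |\vep_1-\vep_2|^a\, t^a\, \nrm{\phi_1}_{H^{s+a}}^2,
\]
and combining this with the bound on the first term, together with $\tilde\phi=\phi_1$, completes the proof. I do not anticipate any genuine obstacle here; the only two points requiring a little care are the reduction by symmetry, so that the higher-regularity norm is borne by the correct function $\tilde\phi$, and the interpolation step above, where the restriction $a\in(0,2)$ is used.
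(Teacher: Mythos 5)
Your proof is correct and follows essentially the same route as the paper: a triangle-inequality split that places the difference of propagators on the datum with the smaller $\vep$ (so that the $H^{s+a}$-norm lands on $\tilde\phi$), followed by Plancherel and a pointwise multiplier bound; your derivation of $\abs{e^{-x}-e^{-y}}\lesssim\abs{x-y}^{a/2}$ by interpolating the Lipschitz and boundedness estimates is a clean way to get the same inequality the paper obtains directly. No gaps.
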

\begin{proof}
We may assume $\vep_1\ge \vep_2$.
From the triangle inequality, we have
\begin{equation}\label{pr smooth diff free sol}
\nrm{U_{\vep_1}(t)\phi_1 - U_{\vep_2}(t)\phi_2}_{H^s}
\le \nrm{U_{\vep_1}(t)(\phi_1 - \phi_2)}_{H^s} + \nrm{\left(U_{\vep_1}(t) - U_{\vep_2}(t)\right)\phi_2}_{H^s}.
\end{equation}
The first term on the right-hand side of \eqref{pr smooth diff free sol} is bounded by $\nrm{\phi_1-\phi_2}_{H^s}$.
Since we have
\[
\abs{e^{-\frac{i}{2}(1-i\vep_1)t\xi^2} - e^{-\frac{i}{2}(1-i\vep_2)t\xi^2}}
\lesssim \frac{1}{a}\left(\vep_1^{\frac{a}{2}}-\vep_2^{\frac{a}{2}}\right)\left(t\xi^2\right)^{\frac{a}{2}}
\le \frac{1}{a}(\vep_1-\vep_2)^{\frac{a}{2}}\left(t\xi^2\right)^{\frac{a}{2}},
\]
the second term on the right-hand side of \eqref{pr smooth diff free sol} is bounded by $\abs{\vep_1 - \vep_2}^{\fr{a}{2}}t^{\fr{a}{2}}\|\phi_2\|_{H^{s+a}}$.
Thus, we obtain the desired estimate.
\end{proof}

\begin{proof}[Proof of Lemma \ref{smooth est of diff step1}]
In the following, we note that $T^{**}\le 1$.
Since $u_1, u_2$ are solutions to \eqref{ep-nls}, we have
\begin{equation*}
\begin{aligned}
&\int_\minfty^x   \left(u_1 - u_2\right)(t, y)\,dy \\
&= \int_\minfty^x   U_{\vep_1}(t)\phi_1(y)\,dy - i\int_\minfty^x  \left(\int_0^t   U_{\vep_1}(t-t')\partial_y(\lambda u_1^2 + \mu \abs{u_1}^2)(t')\,dt'\right)(y)dy \\
&\quad- \int_\minfty^x   U_{\vep_2}(t)\phi_2(y)\,dy + i\int_\minfty^x  \left(\int_0^t  U_{\vep_2}(t-t')\partial_y(\lambda u_2^2 + \mu \abs{u_2}^2)(t')\,dt'\right)(y)dy \\
&= \int_\minfty^x  U_{\vep_1}(t)\left(\phi_1- \phi_2\right)(y)\,dy 
 + \int_\minfty^x   \left(U_{\vep_1}(t) -U_{\vep_2}(t)\right)\phi_2(y)\,dy \\
&\quad -i\left(\int_0^t  U_{\vep_1}(t-t')\left(\left(\lambda u_1^2 + \mu \abs{u_1}^2\right) - \left(\lambda u_2^2 + \mu \abs{u_2}^2\right)\right)(t')\,dt'\right)(x) \\
&\quad - i  \left(\int_0^t  \left(U_{\vep_1}(t-t') - U_{\vep_2}(t-t')\right)(\lambda u_2^2 + \mu \abs{u_2}^2)(t')\,dt'\right)(x). 
\end{aligned}
\end{equation*}
From Lemma \ref{est of free sol}, we obtain
\begin{align*}
\nrm{\int_\minfty^x  U_{\vep_1}(t)\left(\phi_1- \phi_2\right)(y)\,dy }_{L_{T^{**}, x}^\infty}
\lesssim  \nrm{\phi_1- \phi_2}_{X^0}.
\end{align*}
Also, a similar argument as Lemma \ref{est of Duhamel} yields
\begin{equation*}
\begin{aligned}
&\nrm{\int_0^t  U_{\vep_1}(t-t')\left(\left(\lambda u_1^2 + \mu \abs{u_1}^2\right) - \left(\lambda u_2^2 + \mu \abs{u_2}^2\right)\right)(t')\,dt'}_{L_{T^{**}, x}^\infty} \\
&\lesssim {T^{**}}^{\fr{1}{2}}\nrm{\left(\lambda u_1^2 + \mu \abs{u_1}^2\right) - \left(\lambda u_2^2 + \mu \abs{u_2}^2\right)}_{L_{T^{**}}^{ \infty }L_x^{ 1 }} \\
&\lesssim {T^{**}}^{\fr{1}{2}}\left(\nrm{u_1}_{L_{T^{**}}^{ \infty }L_x^{ 2 }} + \nrm{u_2}_{L_{T^{**}}^{ \infty }L_x^{ 2 }}\right)\nrm{u_1- u_2}_{L_{T^{**}}^{ \infty }L_x^{ 2 }}.
\end{aligned}
\end{equation*}
A direct calculation as Lemma \ref{est of diff free sol} yields
\begin{equation}\label{ineq for appendix}
\begin{aligned}
&\abs{\int_\minfty^x \left(U_{\vep_1}(t) -U_{\vep_2}(t)\right)\phi_2(y)\,dy} \\
&= \lim_{a\ra\minfty}\abs{\int_\br\fr{e^{ix\eta}-e^{ia\eta}}{\eta}\left(e^{-\fr{1}{2}(i+\vep_1)t\eta^2} - e^{-\fr{1}{2}(i+\vep_2)t\eta^2}\right)\hat\phi_2(\eta) \,d\eta} \\
&\lesssim \abs{\vep_1 - \vep_2}^{\frac{1}{2}}{T^{**}}^{\frac{1}{2}}\|\hat\phi_2\|_{L^1} \\
&\lesssim \abs{\vep_1 - \vep_2}^{\frac{1}{2}}{T^{**}}^{\frac{1}{2}}\nrm{\phi_2}_{H^1}
\end{aligned}
\end{equation}
for any $t \in [0, T^{**}]$.
Moreover, for $\delta\in(0, \fr{1}{2})$, we have from Lemma \ref{est of diff free sol} that
\begin{align*}
&\nrm{\int_0^t  \left(U_{\vep_1}(t-t') - U_{\vep_2}(t-t')\right)u_2^2 (t')\,dt'}_{L_x^\infty} \\
&\lesssim \int_0^t \nrm{\left(U_{\vep_1}(t-t') - U_{\vep_2}(t-t')\right)u_2^2(t')}_{H^{\fr{1}{2}+\delta}}  \,dt' \\
&\lesssim \abs{\vep_1 - \vep_2}^{\fr{1}{4}-\fr{\delta}{2}}\int_0^t (t-t')^{\fr{1}{4}-\fr{\delta}{2}}\nrm{u_2^2(t')}_{H^1}  \,dt' \\
&\lesssim \abs{\vep_1 - \vep_2}^{\fr{1}{4}-\fr{\delta}{2}}{T^{**}}^{\fr{5}{4}-\fr{\delta}{2}}\nrm{u_2}_{L_{T^{**}}^{ \infty }H_x^{ 1 }}^2
\end{align*}
for $t\in[0, {T^{**}}]$.
A similar estimate follows when we replace $u_2^2$ with $|u_2|^2$.
Thus, Proposition \ref{smooth apriori} yields
\begin{align*}
&\nrm{\Lambda_1 - \Lambda_2}_{L_{T^{**}, x}^{ \infty }} \\
&\lesssim \nrm{\phi_1-\phi_2}_{X^0} 
+\abs{\vep_1-\vep_2}^{\frac{1}{2}}{T^{**}}^{\frac{1}{2}}\nrm{\phi_2}_{H^1} \\
&\quad +{T^{**}}^{\frac{1}{2}}\left(\nrm{u_1}_{L_{T^{**}}^\infty L_x^2}+\nrm{u_2}_{L_{T^{**}}^\infty L_x^2}\right)\nrm{u_1-u_2}_{L_{T^{**}}^\infty L_x^2}
+ \abs{\vep_1-\vep_2}^{\frac{9}{8}}{T^{**}}^{\frac{9}{8}}\nrm{u_2}_{L_{T^{**}}^\infty H_x^1}^2 \\
&\le \nrm{\phi_1-\phi_2}_{X^0} 
+\abs{\vep_1-\vep_2}^{\frac{1}{2}}{T^{**}}^{\frac{1}{2}}\nrm{\phi_2}_{H^1} \\
&\quad +C(s, \|\phi_1\|_{X^s}, \|\phi_2\|_{X^s}){T^{**}}^{\frac{1}{2}}\left(\nrm{u_1-u_2}_{L_{T^{**}}^\infty L_x^2}
+ \abs{\vep_1-\vep_2}^{\frac{1}{2}}\right),
\end{align*}
where we took $\delta=\frac{1}{4}$.
This completes the proof.
\end{proof}

To obtain the estimate for the $L_T^\infty H_x^s$-norm of the difference between two solutions, we first consider the estimate for the difference between gauge-transformed solutions.
\begin{lem}\label{smooth est of diff step2}
We assume the same condition as Proposition \ref{smooth est of diff} (i). 
Then, there exists $C=C(s, \|\phi_1\|_{X^s}, \|\phi_2\|_{X^s})>0$ which is independent of $\vep_1, \vep_2$ such that
\begin{equation*}
\begin{aligned}
&\nrm{e^{-\Lambda_1}u_1 - e^{-\Lambda_2}u_2}_{L_{T^{**}}^{ \infty }H_x^{s}} \\
&\le C\left(\nrm{\phi_1 - \phi_2}_{X^s} + \abs{\vep_1 - \vep_2}^{\fr{1}{2}}\left(1+\|\tilde\phi\|_{H^{s+1}}\right) 
+ {T^{**}}^{\fr{1}{2}}\nrm{u_1 - u_2}_{L_{T^{**}}^{ \infty }X^{s}}\right)
\end{aligned}
\end{equation*}
for all $T^{**}\in (0, \min(T_1^{**}, T_2^{**})]$.
If we assume the same condition as Proposition \ref{smooth est of diff} (ii), then the same estimate holds for all $T^{*}\in (0, \min(T_1^{*}, T_2^{*})]$ by replacing $T^{**}$ with $T^{*}$.
\end{lem}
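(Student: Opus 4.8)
The plan is to propagate the difference through the Duhamel formula for the gauge-transformed equations that was already used in the proof of Lemma~\ref{smooth apriori step1}, and then estimate term by term; the only delicate point will be to keep the factor $\|\tilde\phi\|_{H^{s+1}}$ linear. By Proposition~\ref{prop gauge} applied to $u_1$ and $u_2$ separately,
\[
e^{-\Lambda_j(t)}u_j(t) = U_{\vep_j}(t)\bigl(e^{-\Lambda_j(0)}\phi_j\bigr) - i\int_0^t U_{\vep_j}(t-t')\Bigl(e^{-\Lambda_j}\bigl(N^{(3)}_{\vep_j}(u_j)+\vep_j N^{(2)}(u_j)\bigr)\Bigr)(t')\,dt'
\]
in $H^s(\br)$, so $e^{-\Lambda_1}u_1 - e^{-\Lambda_2}u_2$ is the sum of a free part and a Duhamel part. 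We may assume $\vep_1\ge\vep_2$, so $\tilde\phi=\phi_2$; the case $\vep_1<\vep_2$ is symmetric.

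For the free part $U_{\vep_1}(t)(e^{-\Lambda_1(0)}\phi_1)-U_{\vep_2}(t)(e^{-\Lambda_2(0)}\phi_2)$, Lemma~\ref{est of diff free sol} with $a=1$ gives the $H^s$-bound $\|e^{-\Lambda_1(0)}\phi_1-e^{-\Lambda_2(0)}\phi_2\|_{H^s}+|\vep_1-\vep_2|^{\frac12}T^{\frac12}\|e^{-\Lambda_2(0)}\phi_2\|_{H^{s+1}}$. For the first summand I would write $e^{-\Lambda_1(0)}\phi_1-e^{-\Lambda_2(0)}\phi_2=e^{-\Lambda_1(0)}(\phi_1-\phi_2)+(e^{-\Lambda_1(0)}-e^{-\Lambda_2(0)})\phi_2$, bound the first piece by \eqref{est of exp f2}, and for the second piece use $e^{-\Lambda_1(0)}-e^{-\Lambda_2(0)}=(\Lambda_2(0)-\Lambda_1(0))\int_0^1 e^{-\theta\Lambda_1(0)-(1-\theta)\Lambda_2(0)}\,d\theta$ together with $\|\Lambda_1(0)-\Lambda_2(0)\|_{L^\infty}\lesssim\|\phi_1-\phi_2\|_{X^0}$, the identity $\px(\Lambda_1(0)-\Lambda_2(0))=2\lambda(\phi_1-\phi_2)+\mu\overline{(\phi_1-\phi_2)}$, Lemma~\ref{seki}, and Proposition~\ref{smooth apriori}; this yields $\lesssim C(s,\|\phi_1\|_{X^s},\|\phi_2\|_{X^s})\|\phi_1-\phi_2\|_{X^s}$. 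For $\|e^{-\Lambda_2(0)}\phi_2\|_{H^{s+1}}$ I would use \eqref{est of exp f} (not \eqref{est of exp f2}) at regularity $s+1$, so that $\|\phi_2\|_{H^{s+1}}$ appears only to the first power, giving the term $|\vep_1-\vep_2|^{\frac12}(1+\|\tilde\phi\|_{H^{s+1}})$.

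For the Duhamel part I would split each of the two integrands into its cubic piece $e^{-\Lambda_j}N^{(3)}_{\vep_j}(u_j)$ and its quadratic piece $\vep_j e^{-\Lambda_j}N^{(2)}(u_j)$, and for each write the difference as (difference of integrands against a common propagator) plus (difference of propagators against a common integrand). The cubic integrands carry no derivative, so there one uses only $\|U_\vep(t)f\|_{H^s}\lesssim\|f\|_{H^s}$ from Lemma~\ref{schauder}, Moser-type product bounds of the form $\|w^3\|_{H^\sigma}\lesssim\|w\|_{H^{s-1}}^2\|w\|_{H^\sigma}$ (legitimate since $s-1>\frac12$), the estimates \eqref{est of exp f}--\eqref{est of exp f2}, and Lemma~\ref{smooth est of diff step1} wherever $e^{-\Lambda_1}-e^{-\Lambda_2}$ occurs (the explicit $\vep$-dependence of the coefficients of $N^{(3)}$ contributes only a harmless $|\vep_1-\vep_2|$ times a cubic term); the propagator-difference subterm is handled by Lemma~\ref{est of diff free sol} with $a=1$ applied to $G=e^{-\Lambda_2}N^{(3)}_{\vep_2}(u_2)\in H^{s+1}$, whose $H^{s+1}$-norm is $\lesssim C(s,\|\phi_2\|_{X^s})(1+\|\phi_2\|_{H^{s+1}})$ by the linear-in-top-order Moser estimate and Proposition~\ref{smooth apriori2}, the time integration then furnishing a positive power of $T$. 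The quadratic integrands carry one derivative in $u_j$, so there one uses the smoothing bound $\|U_\vep(t)f\|_{H^s}\lesssim(1+(\vep t)^{-1/2})\|f\|_{H^{s-1}}$ from Lemma~\ref{schauder}, the $\vep^{-1/2}$ loss being absorbed by the $\vep_j$ prefactor and leaving a $T^{\frac12}$ gain after integration, while the propagator-difference subterm is controlled using the parabolic smoothing factor $e^{-\vep_2 t\xi^2/2}$ carried by $U_{\vep_1}-U_{\vep_2}$ together with the $\vep$ prefactor, so that no $H^{s+1}$-norm of $u_j$ is needed there. Collecting everything, each term is $O(\|\phi_1-\phi_2\|_{X^s})$, or carries $T^{\frac12}$ and reduces (after using Lemma~\ref{smooth est of diff step1} to absorb $\|\Lambda_1-\Lambda_2\|_{L^\infty}$ into $\|\phi_1-\phi_2\|_{X^0}$, $|\vep_1-\vep_2|^{\frac12}$, and $T^{\frac12}(\|u_1-u_2\|_{L_{T^{**}}^\infty L^2}+|\vep_1-\vep_2|^{\frac12})$) to $\|u_1-u_2\|_{L_{T^{**}}^\infty X^s}$, or carries $|\vep_1-\vep_2|^{\frac12}$ times at most the first power of $\|\tilde\phi\|_{H^{s+1}}$, with constants depending only on $s$ and the $X^s$-norms; this is exactly the asserted bound. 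Part (ii) is the special case $\vep_1=\vep_2$, in which all $\vep$-difference terms drop out and no $X^{s+1}$-regularity is used anywhere.

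The step I expect to be the main obstacle is precisely this bookkeeping needed to keep $\|\tilde\phi\|_{H^{s+1}}$ to the first power. It forces one to use the Kato--Ponce/Moser form of the product estimates, with only a single factor at top regularity, rather than the crude Banach-algebra bound; to use \eqref{est of exp f} rather than \eqref{est of exp f2} exactly when the output regularity is $s+1$; and, in the quadratic propagator-difference subterm, to exploit the $\vep$ prefactor together with the parabolic smoothing of $U_{\vep_1}-U_{\vep_2}$ so that an $H^{s+1}$-norm of $u_j$ is never produced there at all. Once this is arranged everything else is a routine, if lengthy, assembly of product estimates already appearing in Section~3.
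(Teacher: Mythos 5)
Your proposal is correct and follows essentially the same route as the paper: the same Duhamel representation of $e^{-\Lambda_1}u_1-e^{-\Lambda_2}u_2$ via Proposition \ref{prop gauge}, the same splitting into integrand-difference and propagator-difference pieces, estimated with Lemma \ref{est of diff free sol}, the product/exponential bounds \eqref{est of exp f}--\eqref{est of exp f2} and \eqref{pr diff2 ineq}, Lemma \ref{smooth est of diff step1}, and Propositions \ref{smooth apriori} and \ref{smooth apriori2}. The only substantive deviation is in the quadratic propagator-difference term (the paper's $\I_{10}$), where you exploit the parabolic smoothing of $U_{\vep_2}$ to avoid producing $\nrm{u_2}_{H^{s+1}}$ there, whereas the paper accepts that norm and controls it afterwards by Proposition \ref{smooth apriori2}; both variants yield the stated bound.
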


In the following, we use the inequalities
\begin{equation*}
\nrm{e^{\pm(\Lambda_1(t)-\Lambda_2(t))}-1}_{L^\infty}
\le C(\|\Lambda_1(t)\|_{L^\infty}, \snrm{\Lambda_2(t)}_{L^\infty})\nrm{\Lambda_1(t)-\Lambda_2(t)}_{L^\infty} \\
\end{equation*}
and
\begin{equation*}
\begin{aligned}
&\nrm{e^{\pm(\Lambda_1(t)-\Lambda_2(t))}-1}_{\dot H^k} \\
&\lesssim \nrm{e^{\pm(\Lambda_1(t)-\Lambda_2(t))}}_{L^\infty}\nrm{u_1(t)-u_2(t)}_{H^{k-1}}\left(1+\nrm{u_1(t)-u_2(t)}_{L^2\cap H^{k-2}}^{k-1}\right) \\
&\le C(k, \snrm{u_1(t)}_{X^{k-1}}, \nrm{u_2(t)}_{X^{k-1}})\snrm{u_1(t)-u_2(t)}_{H^{k-1}}
\end{aligned}
\end{equation*}
for $k\in \bn$.
In particular, we have
\begin{equation}\label{pr diff2 ineq}
\nrm{e^{\pm(\Lambda_1(t)-\Lambda_2(t))}-1}_{L^\infty\cap \dot H^k}
\le C(k, \snrm{u_1(t)}_{X^{k-1}}, \snrm{u_2(t)}_{X^{k-1}})\nrm{u_1(t)-u_2(t)}_{X^{k-1}}.
\end{equation}

\begin{proof}[Proof of Lemma \ref{smooth est of diff step2}]
We may assume $\vep_1\ge \vep_2$. 
From \eqref{ep-gauge trans}, we have
\begin{align*}
e^{-\Lambda_1}u_1 - e^{-\Lambda_2}u_2
&=U_{\vep_1}(t)\left(e^{-\Lambda_1(0)}\phi_1\right) - U_{\vep_2}(t)\left(e^{-\Lambda_2(0)}\phi_2\right) \\
&\quad -i \int_0^t U_{\vep_1}(t-t')\left(e^{-\Lambda_1}\left(N_{\vep_1}^{(3)}(u_1)+\vep_1N^{(2)}(u_1)\right)\right)(t')  \,dt' \\
&\quad +i \int_0^t U_{\vep_2}(t-t')\left(e^{-\Lambda_2}\left(N_{\vep_2}^{(3)}(u_2)+\vep_2N^{(2)}(u_2)\right)\right)(t')  \,dt' \\
&=U_{\vep_1}(t)\left(e^{-\Lambda_1(0)}\phi_1 - e^{-\Lambda_2(0)}\phi_2\right) + \left(U_{\vep_1}(t)-U_{\vep_2}(t)\right) \left(e^{-\Lambda_2(0)}\phi_2\right) \\
&\quad -i \int_0^t U_{\vep_1}(t-t')\left(e^{-\Lambda_1}\left(N_{\vep_1}^{(3)}(u_1)-N_{\vep_2}^{(3)}(u_1)\right)\right)(t')  \,dt'\\
&\quad -i \int_0^t U_{\vep_1}(t-t')\left(e^{-\Lambda_1}\left(N_{\vep_2}^{(3)}(u_1)-N_{\vep_2}^{(3)}(u_2)\right)\right)(t')  \,dt'\\
&\quad -i \int_0^t U_{\vep_1}(t-t')\left(\left(e^{-\Lambda_1}- e^{-\Lambda_2}\right)N_{\vep_2}^{(3)}(u_2)\right)(t')  \,dt'\\
&\quad - i \int_0^t \left(U_{\vep_1}(t-t') - U_{\vep_2}(t-t')\right)\left(e^{-\Lambda_2}N_{\vep_2}^{(3)}(u_2)\right)(t')  \,dt' \\
&\quad -i \vep_1\int_0^t U_{\vep_1}(t-t')\left(e^{-\Lambda_1}\left(N^{(2)}(u_1)-N^{(2)}(u_2)\right)\right)(t')  \,dt'\\
&\quad -i \left(\vep_1-\vep_2\right)\int_0^t U_{\vep_1}(t-t')\left(e^{-\Lambda_1}N^{(2)}(u_2)\right)(t')  \,dt'\\
&\quad -i \vep_2\int_0^t U_{\vep_1}(t-t')\left(\left(e^{-\Lambda_1} - e^{-\Lambda_2}\right)N^{(2)}(u_2)\right)(t')  \,dt'\\
&\quad - i\vep_2 \int_0^t \left(U_{\vep_1}(t-t') - U_{\vep_2}(t-t')\right)\left(e^{-\Lambda_2}N^{(2)}(u_2)\right)(t')  \,dt' \\
&=: \I_1 + \cdots + \I_{10},
\end{align*}
where $N_{\vep}^{(3)}$ and $N^{(2)}$ are defined in \eqref{N3} and \eqref{N2}, respectively.
From Lemma \ref{seki}, \eqref{est of exp f2}, and \eqref{pr diff2 ineq}, we obtain
\begin{align*}
\nrm{\I_1}_{L_{T^{**}}^\infty H_x^s}
&\le \nrm{e^{-\Lambda_1(0)}\left(\phi_1 - \phi_2\right)}_{H^s} + \nrm{\left(1- e^{\Lambda_1(0) - \Lambda_2(0)}\right)e^{-\Lambda_1(0)}\phi_2}_{H^s} \\
&\le C(s, \|\phi_1\|_{X^s}, \|\phi_2\|_{X^s})\left(\nrm{\phi_1 - \phi_2}_{H^s} + \nrm{1- e^{\Lambda_1(0) - \Lambda_2(0)}}_{L^\infty \cap \dot H^{[s]+1}}\right) \\
&\le  C(s, \|\phi_1\|_{X^s}, \|\phi_2\|_{X^s})\nrm{\phi_1 - \phi_2}_{X^s}.
\end{align*}
The term $\I_2$ is estimated by \eqref{est of exp f2} and Lemma \ref{est of diff free sol} with $a=1$ follows:
\begin{align*}
\nrm{\I_2}_{L_{T^{**}}^\infty H_x^s}
&\lesssim {T^{**}}^{\fr{1}{2}}\abs{\vep_1 - \vep_2}^{\fr{1}{2}}\nrm{e^{-\Lambda_2(0)}\phi_2}_{H^{s+1}} \\
&\le C(s, \|\phi_1\|_{X^s}, \|\phi_2\|_{X^s}){T^{**}}^{\fr{1}{2}}\abs{\vep_1 - \vep_2}^{\fr{1}{2}}\nrm{\phi_2}_{H^{s+1}}.
\end{align*}
From \eqref{est of exp f2}, we obtain
\begin{align*}
\nrm{\I_3}_{L_{T^{**}}^\infty H_x^s}
&= {T^{**}}\nrm{e^{-\Lambda_1}\left(N_{\vep_1}^{(3)}(u_1)-N_{\vep_2}^{(3)}(u_1)\right)}_{L_{T^*}^\infty H_x^s}\\
&\le C\left(s, \nrm{u_1}_{L_{T^{**}}^\infty X^s}\right){T^{**}}\abs{\vep_1-\vep_2}
\end{align*}
and
\begin{align*}
\nrm{\I_4}_{L_{T^{**}}^\infty H_x^s}
&= {T^{**}}\nrm{e^{-\Lambda_1}\left(N_{\vep_2}^{(3)}(u_1)-N_{\vep_2}^{(3)}(u_2)\right)}_{L_{T^{**}}^\infty H_x^s}\\
&\le C\left(s, \snrm{u_1}_{L_{T^{**}}^\infty X^s}, \snrm{u_2}_{L_{T^{**}}^\infty X^s}\right){T^{**}}\nrm{u_1-u_2}_{L_{T^{**}}^\infty H_x^s}.
\end{align*}
Lemma \ref{seki}, \eqref{est of exp f2}, and \eqref{pr diff2 ineq} yield
\begin{align*}
\nrm{\I_5}_{L_{T^{**}}^\infty H_x^s}
&= {T^{**}}\nrm{\left(e^{-(\Lambda_1-\Lambda_2)}-1\right)e^{-\Lambda_2}N_{\vep_2}^{(3)}(u_2)}_{L_{T^{**}}^\infty H_x^s}\\
&\le C\left(s, \snrm{u_1}_{L_{T^{**}}^\infty X^s}, \snrm{u_2}_{L_{T^{**}}^\infty X^s}\right){T^{**}}\nrm{u_1-u_2}_{L_{T^{**}}^\infty X^s}.
\end{align*}
For $\I_7$ and $\I_8$, Lemma \ref{schauder} and \eqref{est of exp f2} yield
\begin{align*}
\nrm{\I_7}_{L_{T^{**}}^\infty H_x^s}
&=\vep_1\sup_{t\in[0, {T^{**}}]}\int_0^t\left(1+\left(\vep_1(t-t')\right)^{-\fr{1}{2}}\right)\nrm{e^{-\Lambda_1}\left(N^{(2)}(u_1)-N^{(2)}(u_2)\right)}_{L_{T^{**}}^\infty H_x^{s-1}}\, dt' \\
&\le C\left(s, \snrm{u_1}_{L_{T^{**}}^\infty X^s}, \snrm{u_2}_{L_{T^{**}}^\infty X^s}\right)\left(\vep_1{T^{**}}\right)^{\fr{1}{2}}\nrm{u_1-u_2}_{L_{T^{**}}^\infty X^s}
\end{align*}
and
\begin{align*}
\nrm{\I_8}_{L_{T^{**}}^\infty H_x^s}
&=\abs{\vep_1-\vep_2}\sup_{t\in[0, {T^{**}}]}\int_0^t\left(1+\left(\vep_1(t-t')\right)^{-\fr{1}{2}}\right)\nrm{e^{-\Lambda_1}N^{(2)}(u_2)}_{L_{T^{**}}^\infty H_x^{s-1}}\, dt' \\
&\le C\left(s, \snrm{u_1}_{L_{T^{**}}^\infty X^s}, \snrm{u_2}_{L_{T^{**}}^\infty X^s}\right)\left(\abs{\vep_1-\vep_2}{T^{**}}\right)^{\fr{1}{2}}.
\end{align*}
The term $\I_9$ is estimated by Lemmas \ref{schauder} and \ref{seki}, \eqref{est of exp f2}, and \eqref{pr diff2 ineq}:
\begin{align*}
\nrm{I_9}_{L_{T^{**}}^\infty H_x^s}
&\lesssim \vep_2\sup_{t\in[0, {T^{**}}]}\int_0^t\left(1+\left(\vep_1(t-t')\right)^{-\fr{1}{2}}\right)\nrm{\left(e^{-(\Lambda_1-\Lambda_2)}-1\right)e^{-\Lambda_2}N^{(2)}(u_2)}_{L_{T^{**}}^\infty H_x^{s-1}}\, dt' \\
&\le C\left(s, \snrm{u_1}_{L_{T^{**}}^\infty X^s}, \snrm{u_2}_{L_{T^{**}}^\infty X^s}\right)\left(\vep_2{T^{**}}\right)^{\fr{1}{2}}\nrm{u_1-u_2}_{L_{T^{**}}^\infty X^s},
\end{align*}
where we used $\vep_1\ge \vep_2$.
For $\I_6$, \eqref{est of exp f} and Lemma \ref{est of diff free sol} yield
\begin{align*}
\nrm{I_6}_{L_{T^{**}}^\infty H_x^{s}}
&\lesssim \sup_{t\in[0, {T^{**}}]}\int_0^t\abs{\vep_1-\vep_2}^{\fr{1}{2}}(t-t')^{\fr{1}{2}}\nrm{e^{-\Lambda_2}N_{\vep_2}^{(3)}(u_2)}_{L_{T^{**}}^\infty H_x^{s+1}}\, dt' \\
&\lesssim \abs{\vep_1-\vep_2}^{\fr{1}{2}}{T^{**}}^{\frac{3}{2}}\nrm{e^{-\Lambda_2}}_{L_{T^*, x}^\infty} \\
&\quad\times\left(\nrm{N_{\vep_2}^{(3)}(u_2)}_{L_{T^{**}}^\infty H_x^{s+1}} + \nrm{u_2}_{L_{T^{**}}^\infty H_x^{s+1}}\left(1+\nrm{u_2}_{L_{T^{**}}^\infty H_x^{s}}^{[s]+1}\right)\nrm{N_{\vep_2}^{(3)}(u_2)}_{L_{T^{**}}^\infty H_x^{s-[s]}}\right) \\
&\le C\left(s, \snrm{u_2}_{L_{T^{**}}^\infty X^s}\right)\abs{\vep_1-\vep_2}^{\fr{1}{2}}{T^{**}}^{\frac{3}{2}}\left(\nrm{N_{\vep_2}^{(3)}(u_2)}_{L_{T^{**}}^\infty H_x^{s+1}} + \nrm{u_2}_{L_{T^{**}}^\infty H_x^{[s]+1}}\right) \\
&\le C\left(s, \snrm{u_2}_{L_{T^{**}}^\infty X^s}\right)\abs{\vep_1-\vep_2}^{\fr{1}{2}}{T^{**}}^{\frac{3}{2}}\nrm{u_2}_{L_{T^{**}}^\infty H_x^{s+1}},
\end{align*}
where we used $\|N_{\vep_2}^{(3)}(u_2)\|_{L_{T^{**}}^\infty H_x^{s+1}}\lesssim \|u_2\|_{L_{T^{**}}^\infty H_x^{s}}^2\|u_2\|_{L_{T^{**}}^\infty H_x^{s+1}}$.
Finally, we estimate $\I_{10}$.
Considering $U_{\vep_1}(t-t')-U_{\vep_2}(t-t')=U_{\vep_2}(t-t')(U_{\vep_1-\vep_2}(t-t')-1)$,
Lemmas \ref{schauder} and \ref{est of diff free sol} and \eqref{est of exp f2} yield
\begin{align*}
\nrm{I_{10}}_{L_{T^{**}}^\infty H_x^{s}}
&\lesssim \vep_2\sup_{t\in[0, {T^{**}}]}\int_0^t\left(1+(\vep_2(t-t'))^{-\frac{1}{2}}\right)\abs{\vep_1-\vep_2}^{\fr{1}{2}}(t-t')^{\fr{1}{2}}\nrm{e^{-\Lambda_2}N^{(2)}(u_2)}_{L_{T^{**}}^\infty H_x^{s}}\, dt' \\
&\le C\left(s, \snrm{u_2}_{L_{T^{**}}^\infty X^s}\right)\abs{\vep_1-\vep_2}^{\fr{1}{2}}{T^{**}}\nrm{u_2}_{L_{T^{**}}^\infty H_x^{s+1}}.
\end{align*}
Therefore, we obtain
\begin{align*}
&\nrm{e^{-\Lambda_1}u_1 - e^{-\Lambda_2}u_2}_{L_{T^{**}}^{ \infty }H_x^{s}} \\
&\le C\left(s, \snrm{u_1}_{L_{T^{**}}^\infty X^s}, \snrm{u_2}_{L_{T^{**}}^\infty X^s}\right) \\
&\quad\times\left(\nrm{\phi_1 - \phi_2}_{X^s} + \abs{\vep_1 - \vep_2}^{\fr{1}{2}}\left(1+\nrm{\phi_2}_{H^{s+1}}\right) 
+ {T^{**}}^{\fr{1}{2}}\nrm{u_1 - u_2}_{L_{T^{**}}^{\infty}X^{s}}\right) \\
&\quad+C\left(s, \snrm{u_2}_{L_{T^{**}}^\infty X^s}\right){T^{**}}\nrm{u_2}_{L_{T^{**}}^{\infty}H_x^{s+1}}.
\end{align*}
Thus, Proposition \ref{smooth apriori} and \ref{smooth apriori2} yield the desired estimate.

If $\vep_1=\vep_2$, then we have $\I_2=\I_3=\I_6=\I_8=\I_{10}=0$. 
Thus, we obtain the same estimate.
\end{proof}
By a similar argument as the proof of Proposition \ref{smooth est of diff step2}, we obtain the following lemma.
\begin{lem}\label{smooth est of diff step3}
We assume the same condition as Proposition \ref{smooth est of diff} (i). 
Then, there exists $C=C(s, \|\phi_1\|_{X^s}, \|\phi_2\|_{X^s})>0$ which is independent of $\vep_1, \vep_2$ such that
\[
\nrm{u_1 - u_2}_{L_{T^{**}}^\infty H_x^{s-1}} 
\le C\left(\nrm{\phi_1 - \phi_2}_{H^{s-1}} + \abs{\vep_1 - \vep_2}^{\fr{1}{2}}\left(1+\|\tilde\phi\|_{H^{s+1}}\right) +{T^{**}} \nrm{u_1-u_2}_{L_{T^{**}}^\infty H_x^{s}}\right)
\]
for all $T^{**}\in (0, \min(T_1^{**}, T_2^{**})]$.
If we assume the same condition as Proposition \ref{smooth est of diff} (ii), then the same estimate holds for all $T^{**}\in (0, \min(T_1^{*}, T_2^{*})]$.
\end{lem}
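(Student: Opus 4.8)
The plan is to repeat the scheme of the proof of Lemma \ref{smooth est of diff step2}, but applied directly to the Duhamel formula \eqref{mild-sol} for $u_1$ and $u_2$, with \emph{no} gauge transformation; the cost is that the difference of the nonlinearities must be measured in $H^s(\br)$ rather than in $H^{s-1}(\br)$, and this is exactly the origin of the term $T^{**}\nrm{u_1-u_2}_{L_{T^{**}}^\infty H_x^{s}}$ on the right-hand side. As in the proof of Lemma \ref{smooth est of diff step2}, I would assume $\vep_1\ge\vep_2$ (so that $\tilde\phi=\phi_2$) and subtract the two mild formulations to obtain
\begin{align*}
u_1-u_2
&= U_{\vep_1}(t)(\phi_1-\phi_2) + \bigl(U_{\vep_1}(t)-U_{\vep_2}(t)\bigr)\phi_2 \\
&\quad -i\int_0^t U_{\vep_1}(t-t')\px\Bigl(\bigl(\lambda u_1^2+\mu\abs{u_1}^2\bigr)-\bigl(\lambda u_2^2+\mu\abs{u_2}^2\bigr)\Bigr)(t')\,dt' \\
&\quad -i\int_0^t \bigl(U_{\vep_1}(t-t')-U_{\vep_2}(t-t')\bigr)\px\bigl(\lambda u_2^2+\mu\abs{u_2}^2\bigr)(t')\,dt',
\end{align*}
and estimate the four summands in $H^{s-1}_x$.

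The first summand is bounded by $\nrm{\phi_1-\phi_2}_{H^{s-1}}$ because $\abs{e^{-\fr{1}{2}(i+\vep_1)t\eta^2}}\le 1$. The second summand is bounded, by Lemma \ref{est of diff free sol} with $a=1$ and with the regularity index there taken to be $s-1$, by $\lesssim\abs{\vep_1-\vep_2}^{\fr{1}{2}}{T^{**}}^{\fr{1}{2}}\nrm{\tilde\phi}_{H^{s}}\le\abs{\vep_1-\vep_2}^{\fr{1}{2}}(1+\nrm{\tilde\phi}_{H^{s+1}})$, since ${T^{**}}\le 1$. The fourth summand is treated exactly as $\I_6$ in the proof of Lemma \ref{smooth est of diff step2}: one factors $U_{\vep_1}(t-t')-U_{\vep_2}(t-t')=U_{\vep_2}(t-t')\bigl(U_{\vep_1-\vep_2}(t-t')-1\bigr)$, applies Lemma \ref{est of diff free sol} with $a=1$, uses the fractional Leibniz bound $\nrm{u_2^2}_{H^{s+1}}\lesssim\nrm{u_2}_{H^s}\nrm{u_2}_{H^{s+1}}$, and invokes Proposition \ref{smooth apriori} (to convert $\nrm{u_2}_{L_{T^{**}}^\infty X^s}$ into a constant depending only on $s$ and $\nrm{\phi_2}_{X^s}$) together with Proposition \ref{smooth apriori2} (to bound $\nrm{u_2}_{L_{T^{**}}^\infty X^{s+1}}$ by $C(s,\nrm{\phi_2}_{X^s})(1+\nrm{\phi_2}_{H^{s+1}})$); the result is $\lesssim C(s,\nrm{\phi_2}_{X^s})\abs{\vep_1-\vep_2}^{\fr{1}{2}}(1+\nrm{\tilde\phi}_{H^{s+1}})$, using ${T^{**}}\le 1$ once more.

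For the third summand I would use that, since $s>\fr{1}{2}$, $H^s(\br)$ is an algebra: writing the difference of the nonlinearities as $(u_1-u_2)(u_1+u_2)$ and the obvious conjugate analogue, and using $\nrm{U_{\vep_1}(t-t')g}_{H^{s-1}}\le\nrm{g}_{H^{s-1}}$ followed by $\nrm{\px h}_{H^{s-1}}\le\nrm{h}_{H^s}$,
\[
\nrm{U_{\vep_1}(t-t')\px\bigl((\lambda u_1^2+\mu\abs{u_1}^2)-(\lambda u_2^2+\mu\abs{u_2}^2)\bigr)(t')}_{H^{s-1}}
\lesssim \bigl(\nrm{u_1(t')}_{H^s}+\nrm{u_2(t')}_{H^s}\bigr)\nrm{(u_1-u_2)(t')}_{H^s};
\]
integrating in $t'\in[0,t]\subset[0,T^{**}]$ and invoking Proposition \ref{smooth apriori} gives the bound $C(s,\nrm{\phi_1}_{X^s},\nrm{\phi_2}_{X^s})\,T^{**}\nrm{u_1-u_2}_{L_{T^{**}}^\infty H_x^s}$ for this summand. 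Collecting the four estimates yields the asserted inequality; and when $\vep_1=\vep_2$ the second and fourth summands vanish, so only $u_j\in C([0,T_j^*];X^s(\br))$ is used, which is exactly the regularity available in the setting of Proposition \ref{smooth est of diff} (ii).

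The main point to watch — more a pitfall than a genuinely hard estimate — is that in the third summand one must \emph{not} apply the smoothing bound of Lemma \ref{schauder} to recover the derivative lost in $\px(u^2)$: doing so would produce a factor $\vep_1^{-\fr{1}{2}}{T^{**}}^{\fr{1}{2}}$, which is not allowed because the constant has to be independent of $\vep_1,\vep_2$. Accepting instead the loss of one derivative — that is, estimating $\nrm{u_1-u_2}_{L_{T^{**}}^\infty H_x^{s-1}}$ by a small multiple of $\nrm{u_1-u_2}_{L_{T^{**}}^\infty H_x^{s}}$ — is precisely what forces the $H^s$-norm onto the right-hand side; this is harmless downstream since the lemma enters only through the frequency-envelope/Bona--Smith argument, which tolerates the derivative loss.
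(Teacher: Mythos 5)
Your proposal is correct and follows essentially the same route as the paper: the paper's proof consists of exactly the same four-term decomposition of $u_1-u_2$ obtained by subtracting the two mild formulations (no gauge transformation), followed by the remark that the terms are estimated as in Lemma \ref{smooth est of diff step2}; your term-by-term estimates (including accepting the one-derivative loss in the third summand rather than invoking the $\vep^{-1/2}$ smoothing of Lemma \ref{schauder}, and using Propositions \ref{smooth apriori} and \ref{smooth apriori2} for the fourth) fill in those details correctly.
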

\begin{proof}
We may assume $\vep_1\ge \vep_2$.
Since $u_1$ and $u_2$ are solutions to \eqref{ep-nls}, we have
\begin{align*}
&u_1(t)-u_2(t) \\
&= U_{\vep_1}(t)\left(\phi_1-\phi_2\right) + \left(U_{\vep_1}(t)-U_{\vep_2}(t)\right)\phi_2 \\
&\quad - i\int_0^tU_{\vep_1}(t-t')\px\left(\lambda (u_1^2-u_2^2) + \mu (\abs{u_1}^2-\abs{u_2}^2)\right)(t')\,dt' \\
&\quad - i\int_0^t\left(U_{\vep_1}(t-t')-U_{\vep_2}(t-t')\right)\px\left(\lambda u_2^2 + \mu \abs{u_2}^2\right)(t')\,dt'.
\end{align*}
Thus, we obtain the desired estimate by a similar argument as in the proof of Proposition \ref{smooth est of diff step2}.
\end{proof}
\begin{proof}[Proof of Proposition \ref{smooth est of diff}]

From Lemma \ref{seki} and \eqref{pr diff2 ineq}, we have
\[
\nrm{u_1\left(1-e^{-\Lambda_1+\Lambda_2}\right)}_{L_{T^{**}}^{\infty}H_x^{s}}
\le C(s, \|u_1\|_{L_{T^{**}}^{\infty}X^{s}}, \|u_2\|_{L_{T^{**}}^{\infty}X^{s}})\nrm{u_1-u_2}_{L_{T^{**}}^{\infty}X^{[s]}}.
\]
Here, we obtain
\[
\nrm{u_1-u_2}_{L_{T^{**}}^{ \infty }H_x^{[s]}}
\le \nrm{u_1\left(1-e^{-\Lambda_1+\Lambda_2}\right)}_{L_{T^{**}}^{\infty}H_x^{[s]}}
+ \nrm{e^{-\Lambda_1}u_1 - e^{-\Lambda_2}u_2}_{L_{T^{**}}^{ \infty }H_x^{[s]}},
\]
and a direct calculation yields
\[
\nrm{u_1\left(1-e^{-\Lambda_1+\Lambda_2}\right)}_{L_{T^{**}}^{\infty}H_x^{[s]}}
\le C(s, \|u_1\|_{L_{T^{**}}^{\infty}X^{s}}, \|u_2\|_{L_{T^{**}}^{\infty}X^{s}})\nrm{u_1-u_2}_{L_{T^{**}}^{\infty}X^{[s]-1}}.
\]
Therefore, \eqref{est of exp f2} and Lemmas \ref{smooth est of diff step1}, \ref{smooth est of diff step2}, and \ref{smooth est of diff step3} yield
\begin{align*}
&\nrm{u_1 - u_2}_{L_{T^{**}}^{ \infty }H_x^{ s }} \\
&\le \nrm{u_1\left(1 - e^{-\Lambda_1 +\Lambda_2}\right)}_{L_{T^{**}}^{ \infty }H_x^{ s }} + \nrm{e^{\Lambda_2}\left(e^{-\Lambda_1}u_1 - e^{-\Lambda_2}u_2\right)}_{L_{T^{**}}^{ \infty }H_x^{ s }} \\
&\le C\left(s, \snrm{\phi_1}_{X^s}, \snrm{\phi_2}_{X^s}\right)\left(\nrm{u_1-u_2}_{L_{T^{**}}^{\infty}X^{[s]-1}} + \nrm{e^{-\Lambda_1}u_1 - e^{-\Lambda_2}u_2}_{L_{T^{**}}^{\infty}H^{s}}\right) \\
&\le C\left(s, \snrm{\phi_1}_{X^s}, \snrm{\phi_2}_{X^s}\right)\left(\nrm{\phi_1 - \phi_2}_{X^s} + \abs{\vep_1 - \vep_2}^{\fr{1}{2}}\left(1+\|\tilde \phi\|_{H^{s+1}}\right) 
+ {T^{**}}^{\fr{1}{2}}\nrm{u_1 - u_2}_{L_{T^{**}}^{ \infty }X^{s}}\right).
\end{align*}
Thus, Lemma \ref{smooth est of diff step1} yields
\begin{align*}
&\nrm{u_1 - u_2}_{L_{T^{**}}^{ \infty }X^{ s }} \\
&\le C\left(s, \snrm{\phi_1}_{X^s}, \snrm{\phi_2}_{X^s}\right)\left(\nrm{\phi_1 - \phi_2}_{X^s} + \abs{\vep_1 - \vep_2}^{\fr{1}{2}}\left(1+\|\tilde \phi\|_{H^{s+1}}\right) 
+ {T^{**}}^{\fr{1}{2}}\nrm{u_1 - u_2}_{L_{T^{**}}^{ \infty }X^{s}}\right).
\end{align*}
Taking ${T^{**}}^{\frac{1}{2}}C(s, \|\phi_1\|_{X^s}, \|\phi_2\|_{X^s})<\frac{1}{2}$ if necessary, we obtain the desired estimate.
\end{proof}

\subsection{Proof of Proposition \ref{existence of a smooth solution}}
In this subsection, we prove Proposition \ref{existence of a smooth solution}.
We prove the uniqueness and the continuity of the flow map in Section 4, since we can prove these properties including the case $s\ge 0$.

We obtained the estimate for the $X^s$-norm of the difference between two solutions by partially using the $H^{s+1}$-norm of the initial data. 
We treat this problem by using a Bona-Smith type approximation.
For $s\ge 1$ and $\eta\in(0,1]$, we define $J_{\eta, s}f := e^{-\eta\abs{\px}^s}f$.
\begin{lem}\label{BS}
Let $s\ge 1$, $\eta\in(0,1]$, $j\ge 0$, and $f\in X^s(\br)$.
Then, the following properties hold:
\begin{align*}
&(\mathrm{i})\quad J_{\eta, s}f\in X^{\infty}, \\
&(\mathrm{ii})\quad \nrm{J_{\eta, s}f}_{X^s}\lesssim \nrm{f}_{X^s}, \\
&(\mathrm{iii})\quad \lim_{\eta\to+0}\nrm{J_{\eta, s}f  - f}_{X^s}=0, \\
&(\mathrm{iv})\quad \nrm{J_{\eta, s}f}_{H^{s+j}}\lesssim \eta^{-\fr{j}{s}}\nrm{f}_{H^s}.
\end{align*}
\end{lem}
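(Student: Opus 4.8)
The plan is to regard $J_{\eta,s}$ as the Fourier multiplier with symbol $m_\eta(\xi):=e^{-\eta|\xi|^{s}}$, equivalently as convolution with the kernel $K_\eta:=\mathcal F^{-1}(m_\eta)$, and to reduce everything to three elementary facts: \emph{(a)} $0<m_\eta(\xi)\le1$ and $m_\eta(\xi)\to1$ as $\eta\to+0$ for each fixed $\xi$; \emph{(b)} $\sup_{r\ge0}r^{j}e^{-\eta r^{s}}\lesssim\eta^{-j/s}$, with constant depending on $j$, obtained by optimizing in $r$; \emph{(c)} since $s\ge1$, $K_\eta\in L^{1}(\br)$ with $\int_\br K_\eta=m_\eta(0)=1$, and by the scaling $K_\eta(x)=\eta^{-1/s}K_1(\eta^{-1/s}x)$ one has $\nrm{K_\eta}_{L^1}=\nrm{K_1}_{L^1}$ independently of $\eta$ (here one uses that $m_\eta$ is smooth away from the origin with rapidly decaying derivatives and $m_\eta(\xi)=1-\eta|\xi|^{s}+O(|\xi|^{2s})$ near $\xi=0$, so $K_\eta$ is bounded with $|K_\eta(x)|\lesssim|x|^{-1-s}$ for $|x|\ge1$). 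One does \emph{not} claim $K_\eta\ge0$, which fails for $s>2$.

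Given these, the Sobolev-space assertions are immediate. For (iv): $\nrm{J_{\eta,s}f}_{\dot H^{s+j}}=\nrm{|\xi|^{s+j}m_\eta(\xi)\hat f}_{L^2}\le\bigl(\sup_{\xi}|\xi|^{j}m_\eta(\xi)\bigr)\nrm{f}_{\dot H^{s}}\lesssim\eta^{-j/s}\nrm{f}_{\dot H^{s}}$ by \emph{(b)}; combined with $\nrm{J_{\eta,s}f}_{L^2}\le\nrm{f}_{L^2}$ and $\eta\le1$ this gives $\nrm{J_{\eta,s}f}_{H^{s+j}}\lesssim\eta^{-j/s}\nrm{f}_{H^{s}}$. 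The $H^{s}$-parts of (ii) and (iii) are $\nrm{J_{\eta,s}f}_{H^s}\le\nrm{f}_{H^s}$ by \emph{(a)} and $\nrm{J_{\eta,s}f-f}_{H^s}^{2}=\int_\br(1-m_\eta(\xi))^{2}\jb{\xi}^{2s}|\hat f(\xi)|^{2}\,d\xi\to0$ as $\eta\to+0$ by dominated convergence (the integrand tends to $0$ pointwise and is dominated by $\jb{\xi}^{2s}|\hat f|^{2}\in L^{1}$). For the $H^{\infty}$-part of (i): for each $N$, $\jb{\xi}^{N}m_\eta(\xi)$ is bounded (it is continuous and, since $|\xi|^{s}\to\infty$, tends to $0$ at infinity), so $\nrm{J_{\eta,s}f}_{H^{N}}\le\nrm{\jb{\cdot}^{N}m_\eta}_{L^{\infty}}\nrm{f}_{L^{2}}<\infty$ and $J_{\eta,s}f\in H^{\infty}(\br)\subset C^{\infty}(\br)$.

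It remains to control the primitive, the only point requiring care. Put $G(x):=\int_\minfty^{x}f(y)\,dy$; since $s\ge1$ we have $f\in H^{1}(\br)\hookrightarrow L^{\infty}(\br)$, so $G$ is bounded, Lipschitz, $G'=f$, and $\lim_{x\to\minfty}G(x)=0$ (cf. the discussion after \eqref{def of primitive} and the proof of Lemma \ref{Xs is Banach}). The claim is that
\begin{equation*}
\int_\minfty^{x}(J_{\eta,s}f)(y)\,dy=(K_\eta*G)(x)\qquad(x\in\br).
\end{equation*}
Indeed $K_\eta*G$ is a bounded $C^{1}$ function with $(K_\eta*G)'=K_\eta*f=J_{\eta,s}f$ and $\lim_{x\to\minfty}(K_\eta*G)(x)=0$ (dominated convergence, using $K_\eta\in L^{1}$, $G\in L^{\infty}$, $G(x-y)\to0$), hence $\int_a^{x}J_{\eta,s}f(y)\,dy\to(K_\eta*G)(x)$ as $a\to\minfty$; on the other hand $J_{\eta,s}f$ is smooth, so the identity recalled just before Definition \ref{def of $X^s$} gives $\int_\br\frac{e^{ix\xi}-e^{ia\xi}}{i\xi}\widehat{J_{\eta,s}f}(\xi)\,d\xi=\int_a^{x}J_{\eta,s}f(y)\,dy$ for each $a$, and letting $a\to\minfty$ the left side, which converges by what was just shown, is by definition $\int_\minfty^{x}J_{\eta,s}f$. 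Granting the claim, $\nrm{K_\eta*G}_{L^{\infty}}\le\nrm{K_\eta}_{L^1}\nrm{G}_{L^{\infty}}=\nrm{K_1}_{L^1}\nrm{G}_{L^{\infty}}\lesssim\nrm{f}_{X^{s}}$ yields the $X^{m}$-membership and boundedness for every $m$, hence (with the Sobolev bounds above) both (i) and $\nrm{J_{\eta,s}f}_{X^{s}}\lesssim\nrm{f}_{X^{s}}$, i.e.\ (ii); and for the primitive part of (iii), the substitution $y=\eta^{1/s}z$ together with $\int_\br K_\eta=1$ gives
\begin{equation*}
\bigl|(K_\eta*G)(x)-G(x)\bigr|\le\int_\br|K_1(z)|\min\!\bigl(\nrm{f}_{L^{\infty}}\eta^{1/s}|z|,\,2\nrm{G}_{L^{\infty}}\bigr)\,dz,
\end{equation*}
whose right-hand side is independent of $x$ and tends to $0$ as $\eta\to+0$ by dominated convergence.

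The main obstacle is exactly this primitive identity. One must know that $e^{-\eta|\xi|^{s}}$ produces an honest $L^{1}$ convolution kernel with $\eta$-uniform $L^{1}$-norm --- which is where the hypothesis $s\ge1$ enters, which needs a short argument at the low-frequency non-smoothness of $|\xi|^{s}$, and which does \emph{not} require the (false for $s>2$) positivity of $K_\eta$ --- and then reconcile the resulting convolution formula for $\int_\minfty^{x}J_{\eta,s}f$ with the Fourier-integral definition of the primitive; this reconciliation is clean only because $J_{\eta,s}f\in H^{\infty}$, so its antiderivative normalized to vanish at $\minfty$ is unambiguous. Everything else is dominated convergence together with the scalar estimate \emph{(b)}.
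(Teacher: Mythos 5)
Your proof is correct, and part (iv) coincides with the paper's argument (the same optimization $\sup_{r}(\eta r^{s})^{j/s}e^{-\eta r^{s}}\lesssim 1$ applied on the Fourier side, combined with the trivial $L^{2}$ bound). For (i)--(iii), however, you take a genuinely different route. The paper disposes of these by saying they follow "as in Lemma \ref{est of free sol}," i.e.\ entirely on the Fourier side: one writes $e^{-\eta|\xi|^{s}}=(e^{-\eta|\xi|^{s}}-1)+1$ inside the defining integral $\int_\br\frac{e^{ix\xi}-e^{ia\xi}}{i\xi}e^{-\eta|\xi|^{s}}\hat f(\xi)\,d\xi$, observes that $\frac{e^{-\eta|\xi|^{s}}-1}{i\xi}\hat f(\xi)$ is in $L^{1}$ (bounded by $\eta|\xi|^{s-1}|\hat f|$ near the origin, which uses $s\ge1$, and by $|\xi|^{-1}|\hat f|$ at infinity via Cauchy--Schwarz), and then invokes Riemann--Lebesgue, H\"older, and dominated convergence for the boundedness, the $X^{s}$ estimate, and the convergence as $\eta\to+0$. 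You instead pass to physical space, proving $K_{\eta}=\cf^{-1}(e^{-\eta|\cdot|^{s}})\in L^{1}$ with $\eta$-independent norm by scaling, identifying $\int_{\minfty}^{x}J_{\eta,s}f$ with $K_{\eta}*G$, and concluding by Young's inequality and an approximate-identity argument. Both are valid; your version has the advantage of making the approximate-identity structure of (iii) transparent and giving the quantitative rate $\eta^{1/s}$, but it pays for this with the kernel estimate $K_{1}\in L^{1}$, which you only sketch and which genuinely requires handling the non-smoothness of $|\xi|^{s}$ at the origin (the cruder bound $|K_{1}(x)|\lesssim\min(1,|x|^{-2})$, obtained from $\partial_\xi^{2}e^{-|\xi|^{s}}\in L^{1}$ for $s>1$ together with the explicit Poisson kernel for $s=1$, already suffices and is easier than the $|x|^{-1-s}$ decay you claim). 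The paper's Fourier-side decomposition avoids any kernel estimate, which is why it can be dismissed as "similar to Lemma \ref{est of free sol}"; your reconciliation of the convolution formula with the paper's Fourier-integral definition of the primitive is correct and is indeed needed, since the two definitions agree a priori only for smooth integrands.
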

\begin{proof}
We only prove (iv) since the proofs of (i), (ii), and (iii) are similar arguments as Lemma \ref{est of free sol}.
We have
\begin{gather*}
\eta^{\fr{j}{s}}\nrm{J_{\eta, s}f}_{\dot H^{{s+j}}}
= \nrm{\abs{\xi}^s\left(\eta\abs{\xi}^s\right)^{\fr{j}{s}}e^{-\eta\abs{\xi}^s}\hat f}_{L^2}
\lesssim \nrm{f}_{\dot H^s}, \\
\eta^{\fr{j}{s}}\nrm{J_{\eta, s}f}_{L^2} \le \nrm{f}_{L^2}.
\end{gather*}
Thus, we obtain the desired estimate.
\end{proof}
\begin{proof}[Proof of Proposition \ref{existence of a smooth solution}]
Let $\phi\in X^s(\br)$ and $\nrm{\phi}_{X^s}\le R$. For $\eta\in(0, 1)$, we denote $\phi_\eta:=J_{\eta, s}\phi$.
From Lemma \ref{BS}, we have $\nrm{\phi_\eta}_{X^s}\le 2R$ if $\eta$ is sufficiently small.
Therefore, Proposition \ref{smooth apriori2} yields that there exists $T^{**}=T^{**}(s, R)\in (0, 1)$ such that for each $\vep$ and sufficiently small $\eta$, there exists a solution $u_{\vep, \eta}\in C([0, T^{**}]; X^{s+1}(\br))$ to \eqref{ep-nls} with regularization parameter $\vep$ and initial data $\phi_\eta$.

We denote $u_{\eta}:=u_{\eta^3, \eta}$.
Then, Proposition \ref{smooth est of diff} yields that there exist $\tilde T^{**}={\tilde T^{**}}(s, R)\in (0, T^{**}]$ and $C=C(s, R)>0$ such that 
\begin{align*}
\nrm{u_{\eta_1} - u_{\eta_2}}_{L_{\tilde T^{**}}^{ \infty }X^{ s }} 
&\le C \left(\nrm{\phi_{\eta_1} - \phi_{\eta_2}}_{X^s} + \abs{\eta_1^3 - \eta_2^3}^{\fr{1}{2}}\left(\nrm{\phi_{\eta_2}}_{H^{s+1}}+1\right)\right)
\end{align*}
for sufficiently small $\eta_1\ge \eta_2> 0$.
Here, Lemma \ref{BS} (iv) yields that
\[
\abs{\eta_1^3 - \eta_2^3}^{\fr{1}{2}}\nrm{\phi_{\eta_2}}_{H^{s+1}}
\lesssim \eta_2^{\fr{3}{2}}\eta_2^{-\fr{1}{s}}\nrm{\phi_{\eta_2}}_{H^{s}}.
\]
Hence, $\{u_{\eta}\}_{\eta\in(0, 1)}$ is a Cauchy net in $C([0, {\tilde T^{**}}]; X^s(\br))$. Thus, there exists a limit of $u_{\eta}$ in $C([0, {\tilde T^{**}}]; X^s(\br))$.
In particular, $\px^2 u_{\eta}\ra \px^2 u$, $\px \left(u_{\eta}^2\right)\ra \px (u^2)$, and $\px (\abs{u_\eta}^2)\to \px (\abs{u}^2)$ hold in $C([0, {\tilde T^{**}}]; H^{s-2}(\br))$.
Taking $\eta\ra +0$ in \eqref{ep-sol}, we obtain that $u$ is a solution to \eqref{nls} with initial data $\phi$.
\end{proof}

\begin{rem}\label{remark} \rm
In the proof of the existence, we only consider the approximation with $\vep = \eta^3$. 
However, we can show that $\lim_{(\vep, \eta)\to (0, 0)}u_{\vep, \eta}=u$ in $C([0, {\tilde T^{**}}]; X^s(\br))$. 
We prove this property in Appendix \ref{appendixA}.
\end{rem}

\section{Well-posedness in $X^s(\br)$ with $s\ge0$}
In this section, we prove the local well-posedness of \eqref{nls} in $X^s(\br)$ with $s\ge0$. 
By using the gauge transformation, we obtained the equation \eqref{gauge trans} which has no derivatives in the nonlinearities.
In the following, we use this equation in order to prove the well-posedness.

\subsection{The existence time of smooth solutions}
We first prove that the existence time of the solution depends only on the $X^0$-norm of the initial data.

\begin{prop}\label{existence time}
Let $s'>\frac{3}{2}$ and $\phi\in X^{s'}(\br)$. 
Then, there exist $T=T(\nrm{\phi}_{X^0})\in (0, 1)$ and a solution $u\in C([0, T]; X^{s'}(\br))$ to \eqref{nls}.
\end{prop}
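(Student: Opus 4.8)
The plan is to combine an a priori bound at the low regularity $X^0$ with a persistence-of-regularity argument, and then to restart and glue. Since $s'>\fr{1}{2}$, Proposition \ref{prop gauge} applies, so if $u\in\ctx{s'}$ solves \eqref{nls} then the gauge-transformed function $\tilde u:=e^{-\Lambda}u$ solves \eqref{gauge trans}. Writing $u=e^{\Lambda}\tilde u$ one has $e^{-\Lambda}|u|^2u=e^{2\re\Lambda}|\tilde u|^2\tilde u$ and $e^{-\Lambda}|u|^2\bar u=e^{2\bar\Lambda}|\tilde u|^2\bar{\tilde u}$, so $\cl\tilde u$ equals a cubic combination of $\tilde u$ and $\bar{\tilde u}$ whose coefficients $e^{\pm2\re\Lambda},e^{\pm2\bar\Lambda}$ are bounded by $e^{2\nrm{\Lambda}\txx{\infty}}$ in $L^\infty(\br)$, and (the nonlinearity lying in $C([0,T];H^{s'}(\br))$) $\tilde u$ satisfies the Duhamel formula $\tilde u(t)=U(t)(e^{-\Lambda(0)}\phi)-i\int_0^tU(t-t')(\cdots)(t')\,dt'$. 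First I would derive the $X^0$ bound: by Lemma \ref{strest}, $\nrm{\tilde u}\tx{\infty}{2}+\nrm{\tilde u}_{L^4_TL^\infty_x}\lesssim\nrm{e^{-\Lambda(0)}\phi}_{L^2}+e^{2\nrm{\Lambda}\txx{\infty}}T^{\fr{1}{2}}\nrm{\tilde u}_{L^4_TL^\infty_x}^2\nrm{\tilde u}\tx{\infty}{2}$, while Lemma \ref{smooth apriori step4} gives $\nrm{\Lambda}\txx{\infty}\lesssim\nrm{\phi}_{X^0}+T^{\fr{1}{2}}\nrm{u}\tx{\infty}{2}^2$ and $\nrm{u}\tx{\infty}{2}\le e^{\nrm{\Lambda}\txx{\infty}}\nrm{\tilde u}\tx{\infty}{2}$. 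A standard continuity/bootstrap argument then produces $T_0=T_0(\nrm{\phi}_{X^0})\in(0,1)$ and $K_0=K_0(\nrm{\phi}_{X^0})$ so that every solution $u\in C([0,\tau];X^{s'}(\br))$ of \eqref{nls} with $\tau\le T_0$ satisfies $\nrm{u}_{L^\infty_\tau L^2_x}+\nrm{u}_{L^4_\tau L^\infty_x}+\nrm{e^{\pm\Lambda}}_{L^\infty_{\tau,x}}\le K_0$.

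The main step is to upgrade this to $\nrm{u}_{L^\infty_\tau X^{s'}}\le M'=M'(\nrm{\phi}_{X^{s'}},\nrm{\phi}_{X^0})$ for the same $\tau\le T_0$. I would estimate $\nrm{\tilde u}_{L^\infty_\tau H^{s'}_x}$ from the Duhamel formula and Lemma \ref{strest}. For each cubic term a fractional Leibniz estimate (in the spirit of Lemma \ref{seki}) gives $\nrm{(\mathrm{coeff})\,|\tilde u|^2\tilde u}_{H^{s'}}\lesssim\nrm{\mathrm{coeff}}_{L^\infty}\nrm{\tilde u}_{L^\infty}^2\nrm{\tilde u}_{H^{s'}}+\nrm{\mathrm{coeff}}_{\dot H^{s'}}\nrm{\tilde u}_{L^\infty}^3$; integrating in $t$, the first term contributes $\lesssim K_0\nrm{\tilde u}_{L^\infty_\tau H^{s'}_x}\nrm{\tilde u}_{L^2_\tau L^\infty_x}^2\lesssim\tau^{1/2}C(\nrm{\phi}_{X^0})\nrm{\tilde u}_{L^\infty_\tau H^{s'}_x}$, i.e.\ linear in the top norm with a small, $X^0$-controlled coefficient. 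For the second term, the chain rule together with \eqref{est of exp f} gives $\nrm{e^{\pm2\re\Lambda}}_{\dot H^{s'}},\nrm{e^{\pm2\bar\Lambda}}_{\dot H^{s'}}\lesssim C(\nrm{\Lambda}_{L^\infty})\nrm{u}_{H^{s'-1}}(1+\nrm{u}_{L^2\cap H^{s'-2}}^{O(s')})$; when $\fr{3}{2}<s'\le2$ the norm $\nrm{u}_{L^2\cap H^{s'-2}}$ reduces to $\nrm{u}_{L^2}$, so, interpolating $\nrm{u}_{H^{s'-1}}\lesssim\nrm{u}_{L^2}^{1/s'}\nrm{u}_{H^{s'}}^{1-1/s'}$, the second term contributes $\lesssim\tau^{1/4}C(\nrm{\phi}_{X^0})\nrm{u}_{L^\infty_\tau H^{s'}_x}^{1-1/s'}$, strictly sublinear. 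Passing back through $\nrm{u}_{H^{s'}}=\nrm{e^{\Lambda}\tilde u}_{H^{s'}}\lesssim K_0\nrm{\tilde u}_{H^{s'}}+\nrm{e^{\Lambda}}_{\dot H^{s'}}\nrm{\tilde u}_{L^\infty}$ (again sublinear in $\nrm{u}_{H^{s'}}$), and shrinking $T_0$ if necessary, one obtains an inequality $Y\le A+\fr{1}{2}Y+BY^{1-1/s'}$ with $Y=\nrm{u}_{L^\infty_\tau H^{s'}_x}$, $A=A(\nrm{\phi}_{X^{s'}},\nrm{\phi}_{X^0})$, $B=B(\nrm{\phi}_{X^0})$, which bounds $Y$ uniformly in $\tau\le T_0$; Lemma \ref{smooth apriori step4} handles the primitive part of the $X^{s'}$-norm, giving $\nrm{u}_{L^\infty_\tau X^{s'}}\le M'$. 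For $s'>2$ one iterates this over $s'\in(\fr{3}{2},2],(2,3],\dots$, using at each level the bound already obtained at the previous one to control $\nrm{u}_{H^{s'-1}}$ and $\nrm{u}_{H^{s'-2}}$.

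To finish, Proposition \ref{existence of a smooth solution} gives a solution on $[0,T_1]$ with $T_1=T_1(s',\nrm{\phi}_{X^{s'}})$; by the a priori bound its $X^{s'}$-norm stays $\le M'$ on $[0,\min(T_1,T_0)]$, so if $T_1<T_0$ we restart from $u(T_1)$ (again with $X^{s'}$-norm $\le M'$) and gain at least $\delta:=T_1(s',M')>0$ more time, gluing the pieces through \eqref{sol} (which requires no uniqueness). After at most $\lceil T_0/\delta\rceil$ restarts we obtain $u\in C([0,T_0];X^{s'}(\br))$, and $T:=T_0$ works. I expect the persistence step of the second paragraph to be the main obstacle: although $e^{\pm\Lambda}$ is bounded in $L^\infty$ purely in terms of $\nrm{\phi}_{X^0}$, it is bounded in $\dot H^{s'}$ only at the cost of $\nrm{u}_{H^{s'-1}}$ and lower Sobolev norms of $u$, so one cannot simply conjugate the derivative away in the $H^{s'}$ estimate; the Duhamel estimate must therefore be arranged so that the genuinely top-order contribution always carries a small power of $\tau$ with an $X^0$-controlled constant, while every remaining contribution is either strictly sublinear in $\nrm{u}_{H^{s'}}$ (the base case, via interpolation) or controlled at a strictly lower regularity level (the inductive step), and making the fractional-Leibniz bookkeeping precise for non-integer $s'$ is the technical heart.
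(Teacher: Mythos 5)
Your proposal is correct in outline and reaches the same conclusion, but it organizes the key persistence-of-regularity step differently from the paper. The paper's route (Lemmas \ref{apriori-1}--\ref{apriori1}) estimates $\nrm{e^{-\Lambda}u}_{H^s}$ via Lemma \ref{seki}, which places a factor $(1+\nrm{u}_{L_T^\infty H_x^{[s]}}^{[s]+1})$ multiplicatively in front of the top-order term; consequently the $X^{s'}$-estimate only closes on intervals of length $T_1(\nrm{\phi}_{X^1})$ (and shorter at higher integer levels), and the proof of Proposition \ref{existence time} compensates with a nested restart scheme: the $X^0$ and $X^1$ bounds hold on all of $[0,T_0(\nrm{\phi}_{X^0})]$, and the $X^{s'}$-estimate is re-initialized finitely many times inside $[0,T_0]$. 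Your splitting instead keeps the coefficient of the genuinely top-order contribution equal to $\nrm{\mathrm{coeff}}_{L^\infty}\nrm{\tilde u}_{L^\infty}^2$, which after integration is $\tau^{1/2}C(\nrm{\phi}_{X^0})$, and pushes all dependence on $\nrm{u}_{H^{s'-1}}$ into terms that are either strictly sublinear in $Y=\nrm{u}_{L^\infty_\tau H^{s'}}$ (via interpolation, in the base case $\fr32<s'\le 2$) or constants supplied by the inductive hypothesis (for $s'>2$); this lets the top-level estimate close on the full interval $[0,T_0]$ in one pass, which is arguably cleaner. Two points deserve care in a full write-up. First, the pure-interpolation device does not survive to large $s'$ on its own: for $s'\ge 3$ the bound $\nrm{e^{\pm\Lambda}}_{\dot H^{s'}}\lesssim\nrm{u}_{H^{s'-1}}(1+\nrm{u}_{L^2\cap H^{s'-2}}^{O(s')})$ becomes superlinear in $Y$ after interpolating both factors against $H^{s'}$, so the induction over regularity levels (with the previous level's bound already established on all of $[0,T_0]$) is genuinely needed, not optional; your final paragraph correctly identifies this. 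Second, you need the homogeneous fractional Leibniz rule $\nrm{fg}_{\dot H^{s'}}\lesssim\nrm{f}_{\dot H^{s'}}\nrm{g}_{L^\infty}+\nrm{f}_{L^\infty}\nrm{g}_{\dot H^{s'}}$ (the paper only records the inhomogeneous Lemma \ref{fracLeib} and the asymmetric Lemma \ref{seki}), together with the observation that $\nrm{e^{\pm\Lambda}}_{\dot H^{\sigma}}$ for fractional $\sigma$ reduces, by writing $\dot H^{\sigma}=\dot H^{\sigma-1}\circ\px$ and using $\px e^{\pm\Lambda}=\pm(2\lambda u+\mu\bar u)e^{\pm\Lambda}$, to products handled by that rule; both are standard but should be stated. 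The restart-and-glue step at the end is the same as the paper's.
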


To prove Proposition \ref{existence time}, we prepare some lemmas.
The first lemma is the fractional Leibniz rule.
See \cite[Lemma X4]{KP}.
\begin{lem}\label{fracLeib}
If $s>0$ and $1<p<\infty$, then we have
\[
\nrm{(1-\px^2)^{\frac{s}{2}}(fg)}_{L^p}
\lesssim \nrm{(1-\px^2)^{\frac{s}{2}}f}_{L^p}\nrm{g}_{L^\infty} + \nrm{f}_{\infty}\nrm{(1-\px^2)^{\frac{s}{2}} g}_{L^p} .
\]
\end{lem}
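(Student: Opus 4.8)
The plan is to prove this classical Kato--Ponce estimate by Bony's paraproduct decomposition combined with the Littlewood--Paley square function characterization of $L^p$, which is precisely where the restriction $1<p<\infty$ enters. Write $J^s:=(1-\px^2)^{s/2}$, and let $\{\Delta_j\}_{j\ge 0}$ be a standard inhomogeneous Littlewood--Paley decomposition with $\Delta_j$ localizing to $\abs{\xi}\sim 2^j$ for $j\ge 1$ and $\Delta_0$ to $\abs{\xi}\lesssim 1$, and set $\Delta_{\le j}:=\sum_{i\le j}\Delta_i$ (for clarity I use the standard convention $2^j$ for frequency, rather than the reversed convention of Section 2). By the Mikhlin multiplier theorem and the vector-valued Littlewood--Paley inequality, for $1<p<\infty$ one has the equivalence
\[
\nrm{J^s h}_{L^p}\sim \nrm{\Big(\sum_{j\ge 0}2^{2js}\abs{\Delta_j h}^2\Big)^{1/2}}_{L^p},
\]
which I would import as the basic tool. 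The first step is then to decompose $fg=T_fg+T_gf+R(f,g)$ into low-high, high-low, and high-high paraproducts, where $T_fg=\sum_j(\Delta_{\le j-2}f)\,\Delta_j g$ and $R(f,g)=\sum_{\abs{j-k}\le 1}\Delta_jf\,\Delta_k g$.

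For the low-high paraproduct $T_fg$, the summand $(\Delta_{\le j-2}f)\Delta_j g$ is frequency-localized to $\abs{\xi}\sim 2^j$, so applying the square function and pulling the low-frequency factor out in $L^\infty$ gives
\[
\nrm{J^s(T_fg)}_{L^p}\lesssim \nrm{\Big(\sum_j 2^{2js}\abs{\Delta_{\le j-2}f}^2\abs{\Delta_j g}^2\Big)^{1/2}}_{L^p}\le \nrm{\sup_j\abs{\Delta_{\le j-2}f}}_{L^\infty}\,\nrm{J^s g}_{L^p}.
\]
Since $\Delta_{\le j-2}$ is convolution against an $L^1$-normalized kernel, $\abs{\Delta_{\le j-2}f}\lesssim \mathcal{M}f\le \nrm{f}_{L^\infty}$ uniformly in $j$, where $\mathcal{M}$ is the Hardy--Littlewood maximal operator, so this term is controlled by $\nrm{f}_{L^\infty}\nrm{J^s g}_{L^p}$. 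By symmetry $T_gf$ contributes $\nrm{g}_{L^\infty}\nrm{J^s f}_{L^p}$, and both are of the asserted form.

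The essential point is the high-high term $R(f,g)=\sum_k \Delta_k f\,\widetilde\Delta_k g$ (with $\widetilde\Delta_k:=\Delta_{k-1}+\Delta_k+\Delta_{k+1}$), where the two inputs sit at frequency $2^k$ but the output may be localized at any $2^\ell$ with $\ell\le k+C$; this is exactly where $s>0$ is needed. Writing $b_\ell:=2^{\ell s}\abs{\Delta_\ell R(f,g)}$ and $a_k:=2^{ks}\abs{\Delta_k f}\,\abs{\widetilde\Delta_k g}$, the frequency bookkeeping gives the pointwise bound $b_\ell\lesssim \sum_{m\ge -C}2^{-ms}a_{\ell+m}$, a discrete convolution of $(a_k)$ with the sequence $(2^{-ms}\mathbf{1}_{m\ge -C})$, which is summable precisely because $s>0$. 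Young's inequality for sequences then yields $\nrm{(b_\ell)}_{\ell^2}\lesssim\nrm{(a_k)}_{\ell^2}\le \nrm{g}_{L^\infty}\big(\sum_k 2^{2ks}\abs{\Delta_kf}^2\big)^{1/2}$ pointwise in $x$, using $\abs{\widetilde\Delta_k g}\lesssim \mathcal{M}g\le\nrm{g}_{L^\infty}$; taking $L^p$ norms and invoking the square function characterization once more gives $\nrm{J^sR(f,g)}_{L^p}\lesssim \nrm{g}_{L^\infty}\nrm{J^s f}_{L^p}$. Finally, only $R$ can produce output frequency $\lesssim 1$, and that lowest block is handled separately by Bernstein together with $\nrm{fg}_{L^p}\le\nrm{f}_{L^\infty}\nrm{g}_{L^p}\lesssim\nrm{f}_{L^\infty}\nrm{J^sg}_{L^p}$, the last step using boundedness of $J^{-s}$ on $L^p$ for $1<p<\infty$. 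Summing the three contributions proves the claim. I expect the main obstacle to be purely technical: carefully justifying the output-frequency localization of each paraproduct and the square-function equivalence, both of which rely on the hypothesis $1<p<\infty$; the one genuinely delicate inequality is the resonant term, but the $s>0$ gain reduces it to the summable geometric series above.
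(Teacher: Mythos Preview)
Your proof is correct and is the standard modern route to the Kato--Ponce inequality via Bony's paraproduct decomposition and the Littlewood--Paley square function characterization of $L^p$; each of the three pieces is handled appropriately, and in particular your treatment of the resonant term $R(f,g)$ correctly identifies $s>0$ as exactly the condition that makes the geometric sum $\sum_{m\ge -C}2^{-ms}$ converge.

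The paper, however, does not prove this lemma at all: it simply states the estimate and refers to \cite[Lemma X4]{KP} (Kato--Ponce). So there is no ``paper's proof'' to compare against; you have supplied a self-contained argument where the paper opted to cite the literature. Your approach is essentially the one that underlies the original Kato--Ponce paper (though they phrase it via commutator estimates rather than paraproducts), so there is no substantive divergence---you have written out what the citation is meant to invoke.
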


The second lemma is the estimate for the $L_T^\infty H_x^k$-norm which uses $L_T^\infty H_x^{k-1}$-norm, where $k\in \bn$. 
\begin{lem}\label{apriori-1}
Let $s'\ge 1$.
Then, for any $k\in\bn$ with $k\le [s']$, there exists $C_4=C_4(k)>0$ such that
\begin{align*}
\nrm{u}_{L_{T}^\infty X^k}
&\le C_4e^{C_4(\nrm{\phi}_{X^0}+T^{\frac{1}{2}}\nrm{u}_{L_T^\infty L_x^2}^2)}\left(1+\nrm{u}_{L_T^\infty H_x^{k-1}}^k\right) \\
&\quad \times\left(\left(1 + \nrm{\phi}_{H^{k-1}}^{k}\right)\nrm{\phi}_{X^{k}} + T^\frac{1}{2}\left(1 + \nrm{u}_{L_T^\infty H_x^{k-1}}^{k}\right)\left(1+\nrm{u}_{L_T^4 L_x^\infty}\right)^2\nrm{u}_{L_T^\infty H_x^{k}}\right)
\end{align*}
for any solution $u\in C([0, T]; X^{s'}(\br))$ with $T\in (0, 1)$ to \eqref{nls}.
\end{lem}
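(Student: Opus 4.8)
The plan is to rerun the gauge--transform a priori scheme of Lemmas~\ref{smooth apriori step1}--\ref{smooth apriori step3} at the integer level $k$, keeping careful track of the polynomial and exponential weights. Since $\nrm{u}_{L_T^\infty X^k}=\nrm{u}_{L_T^\infty H_x^k}+\nrm{\int_\minfty^x u\,dy}_{L_{T,x}^\infty}$, the primitive part is controlled by Lemma~\ref{smooth apriori step4} (case $\vep=0$) by $C(\nrm{\phi}_{X^0}+T^{1/2}\nrm{u}_{L_T^\infty L_x^2}^2)$, so it suffices to bound $\nrm{u}_{L_T^\infty H_x^k}$. From $|\Lambda(t,x)|\le(2|\lambda|+|\mu|)\,\sup_{x}|\int_\minfty^x u(t)\,dy|$ and Lemma~\ref{smooth apriori step4} again we get $\nrm{e^{\pm\Lambda}}_{L_{T,x}^\infty}\le e^{C(\nrm{\phi}_{X^0}+T^{1/2}\nrm{u}_{L_T^\infty L_x^2}^2)}$; this is the source of the exponential prefactor. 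Finally set $v:=e^{-\Lambda}u$; by Proposition~\ref{prop gauge} with $\vep=0$, $v$ solves $\cl v=e^{-\Lambda}N(u)$, where $N(u)$ is the derivative--free cubic on the right of \eqref{gauge trans}, so $v(t)=U(t)\bigl(e^{-\Lambda(0)}\phi\bigr)-i\int_0^tU(t-t')\bigl(e^{-\Lambda}N(u)\bigr)(t')\,dt'$ in $H^{s'}(\br)$.

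Applying the Strichartz estimate (Lemma~\ref{strest}) to $\px^j v$ for $0\le j\le k$ (every derivative falls on a term without derivatives) yields
\[
\nrm{v}_{L_T^\infty H_x^k}\lesssim\nrm{e^{-\Lambda(0)}\phi}_{H^k}+\nrm{e^{-\Lambda}N(u)}_{L_T^1 H_x^k}.
\]
For the data term, \eqref{est of exp f} (for which $[k]=k$, so the auxiliary factor only sees $\nrm{\phi}_{H^{k-1}}$), together with $\nrm{\phi}_{L^2}\le\nrm{\phi}_{X^0}$ absorbed into the exponential, gives $\nrm{e^{-\Lambda(0)}\phi}_{H^k}\lesssim e^{C\nrm{\phi}_{X^0}}(1+\nrm{\phi}_{H^{k-1}}^k)\nrm{\phi}_{X^k}$. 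For the Duhamel term, the fractional Leibniz rule (Lemma~\ref{fracLeib}, applicable since $k\ge1$) gives $\nrm{N(u)}_{H^k}\lesssim\nrm{u}_{L_x^\infty}^2\nrm{u}_{H^k}$ and $\nrm{N(u)}_{L^2}\lesssim\nrm{u}_{L_x^\infty}^2\nrm{u}_{L^2}$; inserting these and the $L_{T,x}^\infty$--bound for $e^{-\Lambda}$ into \eqref{est of exp f}, integrating in $t$, and using $\nrm{u}_{L_T^2 L_x^\infty}\le T^{1/4}\nrm{u}_{L_T^4 L_x^\infty}$, one obtains a contribution of the form (exponential prefactor) $\times\,T^{1/2}\times(\text{polynomial in }\nrm{u}_{L_T^\infty H_x^{k-1}})\times\nrm{u}_{L_T^4 L_x^\infty}^2\times\nrm{u}_{L_T^\infty H_x^k}$.

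To return from $v$ to $u$, use $\px u=(\px\Lambda)u+e^\Lambda\px v=(2\lambda u+\mu\bar u)u+e^\Lambda\px v$ as in \eqref{decomp}: $\nrm{u}_{H^k}\lesssim\nrm{u}_{L^2}+\nrm{(2\lambda u+\mu\bar u)u}_{H^{k-1}}+\nrm{e^\Lambda\px v}_{H^{k-1}}$. The last term is handled by \eqref{est of exp f} applied to $\px v$ (so only $H^{k-1}$ weights of $u$ appear), giving $\lesssim\nrm{e^\Lambda}_{L_{T,x}^\infty}(1+\nrm{u}_{L_T^\infty H_x^{k-1}}^k)\nrm{v}_{L_T^\infty H_x^k}$. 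For the quadratic remainder: when $k\ge2$ we have $k-1\ge1$, so Lemmas~\ref{seki},~\ref{fracLeib} and the embedding $H^{k-1}(\br)\hookrightarrow L^\infty(\br)$ give $\nrm{(2\lambda u+\mu\bar u)u}_{H^{k-1}}\lesssim\nrm{u}_{H^{k-1}}^2$; when $k=1$ it is $\nrm{(2\lambda u+\mu\bar u)u}_{L^2}\lesssim\nrm{u}_{L^4}^2\lesssim\nrm{u}_{L^2}^{3/2}\nrm{u}_{H^1}^{1/2}$, and Young's inequality absorbs the $\nrm{u}_{H^1}$ factor into the left side (after which $\nrm{u}_{L^2}$ is handled through the integral equation \eqref{sol} and Lemma~\ref{strest}). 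Combining the three pieces, substituting the bound for $\nrm{v}_{L_T^\infty H_x^k}$, and using $T\le1$ to consolidate powers of $T$ then gives the stated inequality.

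The main obstacle is the bookkeeping, not any individual estimate: one has to organise the iterated product estimates (Lemmas~\ref{seki},~\ref{fracLeib} and \eqref{est of exp f}) so that \emph{only} $\nrm{\phi}_{X^0}$ and the a priori quantity $T^{1/2}\nrm{u}_{L_T^\infty L_x^2}^2$ enter exponentially, while the surviving weights stay inside the prefactors $(1+\nrm{\phi}_{H^{k-1}}^k)$, $(1+\nrm{u}_{L_T^\infty H_x^{k-1}}^k)$ and $(1+\nrm{u}_{L_T^4 L_x^\infty})^2$. The low--regularity endpoint $k=1$ is the delicate case: there the quadratic remainder $u\,\px\Lambda$ receives no derivative of smoothing and must be reabsorbed by interpolation, whereas for $k\ge2$ it is harmlessly of lower order.
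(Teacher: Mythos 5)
Your overall strategy is the paper's: gauge--transform to $v=e^{-\Lambda}u$, apply Strichartz to the derivative-free cubic equation for $v$, control $\nrm{e^{\pm\Lambda}}_{L_{T,x}^\infty}$ through Lemma~\ref{smooth apriori step4} (which is where the exponential prefactor comes from), and convert between $u$ and $v$ by weighted product estimates. Two points, however, do not survive scrutiny. First, a citation issue: \eqref{est of exp f} with $s=k$ carries the auxiliary factor $\nrm{u(t)}_{H^{[s]}}=\nrm{u(t)}_{H^{k}}$, not $\nrm{u(t)}_{H^{k-1}}$, so it does not by itself give the weight $(1+\nrm{\phi}_{H^{k-1}}^{k})$ on the data term. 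What is actually needed (and what the paper uses) is the integer-level estimate \eqref{apriori-1 pr1}, $\nrm{e^{\pm\Lambda(t)}f}_{H^{k}}\lesssim\nrm{e^{\pm\Lambda(t)}}_{L^\infty}(1+\nrm{u(t)}_{H^{k-1}}^{k})\nrm{f}_{H^{k}}$, which holds because every derivative of $e^{\pm\Lambda}$ up to order $k$ involves $u$ only up to order $k-1$; you should state and use this estimate rather than \eqref{est of exp f}.

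Second, and more substantively, your return step from $v$ to $u$ via the decomposition \eqref{decomp} does not deliver the stated inequality at $k=1$, which is precisely the case consumed in the proof of Proposition~\ref{existence time}. The remainder $\nrm{(2\lambda u+\mu\bar u)u}_{L^2}$ carries no factor of $T^{1/2}$ and no $\phi$; your Gagliardo--Nirenberg plus Young absorption leaves $\nrm{u}_{L_T^\infty L_x^2}^{3}$, and substituting $\nrm{u}_{L_T^\infty L_x^2}\lesssim\nrm{\phi}_{L^2}+T^{3/4}\nrm{u}_{L_T^4L_x^\infty}\nrm{u}_{L_T^\infty H_x^1}$ produces terms such as $T^{9/4}\nrm{u}_{L_T^4L_x^\infty}^{3}\nrm{u}_{L_T^\infty H_x^1}^{3}$, in which $\nrm{u}_{L_T^\infty H_x^1}$ appears to a power strictly greater than one inside the $T^{1/2}$-weighted bracket. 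The lemma's precise polynomial structure (exactly one power of $\nrm{u}_{L_T^\infty H_x^k}$ there, with the remaining weights confined to $(1+\nrm{u}_{L_T^\infty H_x^{k-1}}^{k})$ and $(1+\nrm{u}_{L_T^4L_x^\infty})^{2}$) is what allows the continuity argument downstream to close, so this mismatch is not cosmetic. The paper sidesteps the issue entirely: since $u=e^{\Lambda}v$, one applies \eqref{apriori-1 pr1} once more to get $\nrm{u}_{L_T^\infty H_x^{k}}\lesssim\nrm{e^{\Lambda}}_{L_{T,x}^\infty}(1+\nrm{u}_{L_T^\infty H_x^{k-1}}^{k})\nrm{v}_{L_T^\infty H_x^{k}}$ with no differentiation, no quadratic remainder, and no interpolation. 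Replacing your \eqref{decomp}-based return step by this one-line product estimate repairs the argument and yields the inequality exactly as stated for all $1\le k\le[s']$.
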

\begin{proof}
A direct calculation yields
\begin{equation}\label{apriori-1 pr1}
\nrm{e^{\pm \Lambda(t)}f}_{H^{k}}
\lesssim \nrm{e^{\pm \Lambda(t)}}_{L^\infty}\left(1 + \nrm{u(t)}_{H^{k-1}}^k\right)\nrm{f}_{H^k}.
\end{equation}
Thus, we obtain
\[
\nrm{e^{- \Lambda(0)}\phi}_{H^{k}}
\lesssim \nrm{e^{- \Lambda}}_{L_{T, x}^\infty}\left(1 + \nrm{\phi}_{H^{k-1}}^k\right)\nrm{\phi}_{H^k}.
\]
Similarly, we have
\begin{align*}
&\nrm{e^{- \Lambda}|u|^2u}_{L_T^1 H_x^{k}} + \nrm{e^{- \Lambda}|u|^2\bar u}_{L_T^1 H_x^{k}} \\
&\lesssim \nrm{e^{- \Lambda}}_{L_{T, x}^\infty}\left(1 + \nrm{u}_{L_T^\infty H_x^{k-1}}^k\right)\left(\nrm{|u|^2 u}_{L_T^1 H_x^{k}} + \nrm{|u|^2 \bar u}_{L_T^1 H_x^{k}}\right) \\
&\lesssim T^{\frac{1}{2}}\nrm{e^{- \Lambda}}_{L_{T, x}^\infty}\left(1 + \nrm{u}_{L_T^\infty H_x^{k-1}}^k\right)\nrm{u}_{L_T^4 L_x^\infty}^2\nrm{u}_{L_T^\infty H_x^{k}},
\end{align*}
where the last inequality is obtained by using Lemma \ref{fracLeib}.
Hence, Proposition \ref{prop gauge} and \eqref{apriori-1 pr1} yield
\begin{align*}
\nrm{u}_{L_{T}^\infty H_x^k} 
&=\nrm{e^{\Lambda}e^{-\Lambda}u}_{L_{T}^\infty H_x^k} \\
&\lesssim \nrm{e^{\Lambda}}_{L_{T, x}^\infty}\left(1+\nrm{u}_{L_T^\infty H_x^{k-1}}^k\right)\nrm{e^{-\Lambda}u}_{L_T^\infty H_x^k} \\
&\lesssim \nrm{e^{\Lambda}}_{L_{T, x}^\infty}\nrm{e^{-\Lambda}}_{L_{T, x}^\infty}\left(1+\nrm{u}_{L_T^\infty H_x^{k-1}}^k\right) \\
&\quad \times\left(\left(1 + \nrm{\phi}_{H^{k-1}}^{k}\right)\nrm{\phi}_{H^{k}} + T^{\frac{1}{2}}\left(1 + \nrm{u}_{L_T^\infty H_x^{k-1}}^k\right)\nrm{u}_{L_T^4 L_x^\infty}^2\nrm{u}_{L_T^\infty H_x^{k}}\right).
\end{align*}
Combining Lemma \ref{smooth apriori step4}, we obtain the desired estimate.
\end{proof}

The third lemma is the estimate for the $L_T^\infty H_x^s$-norm which uses $L_T^\infty H_x^{[s]}$-norm. We note that $s$ is not necessarily in $\bn$. 
\begin{lem}\label{apriori0}
Let $s'>\frac{1}{2}$ and $0\le s\le s'$.
Then, there exist $C_5=C_5(s)>0$ such that
\begin{align*}
&\nrm{u}_{L_{T}^\infty X^s\cap L_{T}^4 L_x^\infty} \\
&\le C_5e^{C_5(\nrm{\phi}_{X^0}+T^{\frac{1}{2}}\nrm{u}_{L_T^\infty L_x^2}^2)}\left(1+\nrm{u}_{L_T^\infty H_x^{[s]}}^{[s]+1}\right) \\
&\quad \times\left(\left(1+\nrm{\phi}_{H^{[s]}}^{[s]+1}\right)\nrm{\phi}_{X^s} + T^{\frac{1}{2}}\left(1+\nrm{u}_{L_T^\infty H_x^{[s]}}^{[s]+1}\right)\left(1 + \nrm{u}_{L_T^4 L_x^\infty}\right)^2\nrm{u}_{L_T^\infty H_x^s}\right)
\end{align*}
for any solution $u\in C([0, T]; X^{s'}(\br))$ with $T\in (0, 1)$ to \eqref{nls} .
\end{lem}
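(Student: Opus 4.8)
The plan is to combine the gauge transformation with the Strichartz estimate applied to $w:=e^{-\Lambda}u$, running essentially the same argument as in the proof of Lemma~\ref{apriori-1}, but using the fractional Leibniz rule (Lemma~\ref{fracLeib}) in place of the integer‑order product rule so that $s$ need not be an integer.

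First I would invoke Proposition~\ref{prop gauge} with $\vep=0$, which applies since $s'>\tfrac12$. As in the proof of Lemma~\ref{apriori-1} this yields, for $w:=e^{-\Lambda}u$, the Duhamel identity
\[
w(t)=e^{\frac i2 t\px^2}\bigl(e^{-\Lambda(0)}\phi\bigr)-i\int_0^t e^{\frac i2(t-t')\px^2}\bigl(e^{-\Lambda}N^{(3)}_0(u)\bigr)(t')\,dt',
\]
where $N^{(3)}_0(u)$ is the $\vep=0$ case of \eqref{N3} (a linear combination of $\abs{u}^2u$ and $\abs{u}^2\bar u$). Applying Lemma~\ref{strest} to $\jb{\px}^s w$ (to control $\nrm{w}_{L_T^\infty H_x^s}$) and to $w$ itself (to control $\nrm{w}_{L_T^4 L_x^\infty}$, which I then transfer to $u=e^{\Lambda}w$ using the $L_x^\infty$-boundedness of $e^{\Lambda}$), I obtain
\[
\nrm{w}_{L_T^\infty H_x^s}+\nrm{u}_{L_T^4 L_x^\infty}
\lesssim \nrm{e^{\Lambda}}_{L_{T,x}^\infty}\Bigl(\nrm{e^{-\Lambda(0)}\phi}_{H^s}+\nrm{e^{-\Lambda(0)}\phi}_{L^2}+\nrm{e^{-\Lambda}N^{(3)}_0(u)}_{L_T^1 H_x^s}\Bigr).
\]

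Then I would feed in three routine ingredients. (1) The weighted bounds \eqref{est of exp f}, \eqref{est of exp f2} give $\nrm{e^{-\Lambda(0)}\phi}_{H^s}\lesssim\nrm{e^{-\Lambda(0)}}_{L_x^\infty}\bigl(1+\nrm{\phi}_{H^{[s]}}^{[s]+1}\bigr)\nrm{\phi}_{H^s}$ and, since $u=e^{\Lambda}w$, also $\nrm{u}_{L_T^\infty H_x^s}\lesssim\nrm{e^{\Lambda}}_{L_{T,x}^\infty}\bigl(1+\nrm{u}_{L_T^\infty H_x^{[s]}}^{[s]+1}\bigr)\nrm{w}_{L_T^\infty H_x^s}$. (2) Lemma~\ref{fracLeib} applied twice gives the tame cubic estimate $\nrm{N^{(3)}_0(u)(t)}_{H^\sigma}\lesssim\nrm{u(t)}_{L_x^\infty}^2\nrm{u(t)}_{H^\sigma}$ for $\sigma\in\{s,\,s-[s]\}$; inserting this into \eqref{est of exp f} and using H\"older in time ($\int_0^T\nrm{u(t')}_{L_x^\infty}^2\,dt'\le T^{1/2}\nrm{u}_{L_T^4 L_x^\infty}^2$) gives
\[
\nrm{e^{-\Lambda}N^{(3)}_0(u)}_{L_T^1 H_x^s}
\lesssim\nrm{e^{-\Lambda}}_{L_{T,x}^\infty}\bigl(1+\nrm{u}_{L_T^\infty H_x^{[s]}}^{[s]+1}\bigr)T^{\frac12}\nrm{u}_{L_T^4 L_x^\infty}^2\nrm{u}_{L_T^\infty H_x^s}.
\]
(3) Lemma~\ref{smooth apriori step4} with $\vep=0$ bounds $\nrm{\Lambda}_{L_{T,x}^\infty}\lesssim\nrm{\phi}_{X^0}+T^{1/2}\nrm{u}_{L_T^\infty L_x^2}^2$, hence $\nrm{e^{\pm\Lambda}}_{L_{T,x}^\infty}\le e^{C(\nrm{\phi}_{X^0}+T^{1/2}\nrm{u}_{L_T^\infty L_x^2}^2)}$, and also bounds the primitive part of $\nrm{u}_{L_T^\infty X^s}$ by the same quantity.

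Finally I would assemble everything: insert (2) and the $\phi$-bound of (1) into the Duhamel estimate for $w$, pass back to $u$ with the second bound of (1), collect the exponential weights via (3), and use $\nrm{\phi}_{X^0}\le\nrm{\phi}_{X^s}$ (valid for $s\ge0$ since $\nrm{\phi}_{H^s}\ge\nrm{\phi}_{L^2}$), $\nrm{u}_{L_T^\infty L_x^2}\le\nrm{u}_{L_T^\infty H_x^{[s]}}$, and $\nrm{u}_{L_T^4 L_x^\infty}^2\le(1+\nrm{u}_{L_T^4 L_x^\infty})^2$; adding the primitive contribution from (3) then produces precisely the claimed inequality, with all dimensional constants and the constant in the exponent absorbed into a single $C_5=C_5(s)$. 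The main work, and the only genuinely delicate point, is the bookkeeping: every nonlinear product must be split so that exactly one factor carries the top-order norm $\nrm{u}_{L_T^\infty H_x^s}$ (or $\nrm{\phi}_{X^s}$) while all remaining factors sit in the lower norm $\nrm{u}_{L_T^\infty H_x^{[s]}}$ — which is exactly what \eqref{est of exp f}/\eqref{est of exp f2} and the tame form of Lemma~\ref{fracLeib} are designed to furnish — and this must be carried out consistently in both ranges $[s]=0$ and $[s]\ge1$. A minor additional nuisance is that Lemma~\ref{strest} is stated only at $L^2$ regularity, so the $H^s$ bound is obtained by commuting $\jb{\px}^s$ through the constant-coefficient propagator and the Duhamel integral, whereas the $L_T^4 L_x^\infty$ bound is proved at the $L^2$ level for $w$ and then transported to $u=e^{\Lambda}w$.
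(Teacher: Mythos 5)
Your proposal is correct and follows essentially the same route as the paper: gauge-transform to $w=e^{-\Lambda}u$ via Proposition \ref{prop gauge}, apply the Strichartz estimate of Lemma \ref{strest} to the resulting Duhamel formula, estimate the cubic nonlinearity with the tame bounds \eqref{est of exp f2} and Lemma \ref{fracLeib} together with H\"older in time, pass back to $u=e^{\Lambda}w$, and control $\nrm{e^{\pm\Lambda}}_{L_{T,x}^\infty}$ and the primitive part of the $X^s$-norm by Lemma \ref{smooth apriori step4}. All the ingredients and the bookkeeping match the paper's argument.
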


\begin{proof}
For the $L_T^4L_x^\infty$-norm, Lemma \ref{strest} and Proposition \ref{prop gauge} yield
\begin{align*}
\nrm{e^{-\Lambda}u}_{L_T^4L_x^\infty}
&\lesssim \nrm{e^{-\Lambda(0)}\phi}_{L_2} + \nrm{e^{-\Lambda}\abs{u}^2u}_{L_T^1L_x^2} + \nrm{e^{-\Lambda}\abs{u}^2\bar u}_{L_T^1L_x^2} \\
&\lesssim \nrm{e^{-\Lambda}}_{L_{T, x}^\infty}\left(\nrm{\phi}_{L^2} + T^\frac{1}{2}\nrm{u}_{L_T^4L_x^\infty}^2\nrm{u}_{L_T^\infty L_x^2}\right).
\end{align*}
Thus, we have
\begin{equation}\label{pr apriori01}
\nrm{u}_{L_T^4L_x^\infty}
\le \nrm{e^{\Lambda}}_{L_{T, x}^\infty}\nrm{e^{-\Lambda}u}_{L_T^4L_x^\infty}
\lesssim   \nrm{e^{\Lambda}}_{L_{T, x}^\infty}\nrm{e^{-\Lambda}}_{L_{T, x}^\infty}\left(\nrm{\phi}_{L^2} + T^\frac{1}{2}\nrm{u}_{L_T^4L_x^\infty}^2\nrm{u}_{L_T^\infty L_x^2}\right).
\end{equation}
Hence, Lemma \ref{smooth apriori step4} yields the desired estimate for the $L_T^4L_x^\infty$-norm.

Next, we consider the $L_T^\infty H_x^s$- norm.
From \eqref{est of exp f2}, we have
\[
\nrm{e^{-\Lambda(0)}\phi}_{H^{s}}
\lesssim \nrm{e^{-\Lambda(0)}}_{L^\infty}\left(1+\nrm{\phi}_{H^{[s]}}^{[s]+1}\right)\nrm{\phi}_{H^s}.
\]
Also, Lemma \ref{fracLeib} and \eqref{est of exp f2} yield
\begin{align*}
\nrm{e^{-\Lambda}\abs{u}^2u}_{L_T^1 H_x^s}
&\lesssim \nrm{e^{-\Lambda}}_{L_{T, x}^\infty}\left(1+\nrm{u}_{L_T^\infty H_x^{[s]}}^{[s]+1}\right)\nrm{\abs{u}^2u}_{L_T^1 H_x^s} \\
&\lesssim T^{\frac{1}{2}}\nrm{e^{-\Lambda}}_{L_{T, x}^\infty}\left(1+\nrm{u}_{L_T^\infty H_x^{[s]}}^{[s]+1}\right)\nrm{u}_{L_T^4 L_x^\infty}^2\nrm{u}_{L_T^\infty H_x^s},
\end{align*}
where we used Lemma \ref{fracLeib} to obtain the last inequality.
We can obtain the estimate for $\|e^{-\Lambda}\abs{u}^2\bar u\|_{L_T^1 H_x^s}$ similarly.
Thus, Proposition \ref{prop gauge} yields
\begin{equation}\label{gauge step1}
\begin{aligned}
\nrm{e^{-\Lambda}}_{L_{T, x}^\infty}^{-1}\nrm{e^{-\Lambda}u}_{L_T^\infty H_x^s}
\lesssim \left(1+\nrm{\phi}_{H^{[s]}}^{[s]+1}\right)\nrm{\phi}_{H^s} + T^{\frac{1}{2}}\left(1+\nrm{u}_{L_T^\infty H_x^{[s]}}^{[s]+1}\right)\nrm{u}_{L_T^4 L_x^\infty}^2\nrm{u}_{L_T^\infty H_x^s}.
\end{aligned}
\end{equation}
Hence, we obtain from \eqref{est of exp f2} that
\begin{align*}
\nrm{u}_{L_T^\infty H_x^s}
&=\nrm{e^{\Lambda}e^{-\Lambda}u}_{L_T^\infty H_x^s} \\
&\lesssim \nrm{e^{\Lambda}}_{L_{T, x}^\infty}\left(1+\nrm{u}_{L_T^\infty H_x^{[s]}}^{[s]+1}\right)\nrm{e^{-\Lambda}u}_{L_T^\infty H_x^s} \\
&\lesssim \nrm{e^{\Lambda}}_{L_{T, x}^\infty}\nrm{e^{-\Lambda}}_{L_{T, x}^\infty}\left(1+\nrm{u}_{L_T^\infty H_x^{[s]}}^{[s]+1}\right) \\
&\quad\times\left(\left(1+\nrm{\phi}_{H^{[s]}}^{[s]+1}\right)\nrm{\phi}_{H^s} + T^{\frac{1}{2}}\left(1+\nrm{u}_{L_T^\infty H_x^{[s]}}^{[s]+1}\right)\nrm{u}_{L_T^4 L_x^\infty}^2\nrm{u}_{L_T^\infty H_x^s}\right).
\end{align*}
From Lemma \ref{smooth apriori step4}, we obtain the desired estimate.
\end{proof}

The fourth lemma is as follows.
To prove Proposition \ref{existence time}, only the case $s=0$ is used.
However, we also use this lemma when we get an a priori estimate for the solution.
\begin{lem}\label{apriori1}
Let $s'>\fr{1}{2}$ and $0\le s\le s'$.
Then, there exists $C_6=C_6(s)>0$ such that
\begin{align*}
\nrm{u}_{L_{T}^\infty X^s\cap L_{T}^4 L_x^\infty}
&\le C_6e^{C_6(\nrm{\phi}_{X^0}+T^{\frac{1}{2}}\nrm{u}_{L_T^\infty L_x^2}^2)} \\
&\quad\times\left(\left(1+\nrm{\phi}_{H^s}^{2([s]+1)^2}\right)\nrm{\phi}_{X^s} + T^{\frac{1}{2}}\left(\nrm{u}_{L_T^\infty H_x^s\cap L_T^4 L_x^\infty} + \nrm{u}_{L_T^\infty H_x^s\cap L_T^4 L_x^\infty}^{4([s]+2)^4}\right)\right)
\end{align*}
for any solution $u\in C([0, T]; X^{s'}(\br))$ with $T\in (0, 1)$ to \eqref{nls}.
\end{lem}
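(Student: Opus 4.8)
The plan is to bootstrap the estimate from the $L^2\cap L_T^4L_x^\infty$ level up to $H^s$, seeding the induction with a clean Strichartz estimate for the gauge-transformed equation and then climbing via Lemmas \ref{apriori-1} and \ref{apriori0}. Throughout write $\Theta:=\nrm{u}_{L_T^\infty H_x^s\cap L_T^4L_x^\infty}$ and $\mathcal{E}:=e^{C(\nrm{\phi}_{X^0}+T^{1/2}\nrm{u}_{L_T^\infty L_x^2}^2)}$. Three elementary facts will be used over and over: (a) since $k\le[s]\le s$ we have $\nrm{u}_{L_T^\infty H_x^k}\le\Theta$ and $\nrm{u}_{L_T^4L_x^\infty}\le\Theta$; (b) since $u(0)=\phi$ we have $\nrm{\phi}_{H^s}\le\nrm{u}_{L_T^\infty H_x^s}\le\Theta$, so a power of $\nrm{\phi}_{H^s}$ sitting next to a factor of $\Theta$ may always be swallowed by $\Theta$; (c) $\nrm{\phi}_{X^0}\le\nrm{\phi}_{X^s}$, $\nrm{\phi}_{X^s}\le\nrm{\phi}_{H^s}+\nrm{\phi}_{X^0}$, $T<1$, and any factor $\nrm{\phi}_{X^0}^m$ or $e^{c\nrm{\phi}_{X^0}}$ may be absorbed into $\mathcal{E}$ at the cost of a larger constant $C$.

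\emph{Step 1 (the $L^2\cap L_T^4L_x^\infty$ seed).} First I would apply Lemma \ref{strest} to the gauge-transformed equation \eqref{gauge trans} (legitimate by Proposition \ref{prop gauge}), using H\"older in $x$ to pull $e^{\pm\Lambda}$ through the $L^2$ and $L^\infty$ norms with no Sobolev product correction and H\"older in $t$ on the cubic term; this gives
\[
\nrm{e^{-\Lambda}u}_{L_T^\infty L_x^2\cap L_T^4L_x^\infty}\lesssim \nrm{e^{-\Lambda}}_{L_{T,x}^\infty}\left(\nrm{\phi}_{L^2}+T^{1/2}\nrm{u}_{L_T^4L_x^\infty}^2\nrm{u}_{L_T^\infty L_x^2}\right).
\]
Multiplying back by $e^{\Lambda}$ and bounding $\nrm{e^{\pm\Lambda}}_{L_{T,x}^\infty}\le\mathcal{E}$ by Lemma \ref{smooth apriori step4} (with $\vep=0$) produces
\[
\nrm{u}_{L_T^\infty L_x^2\cap L_T^4L_x^\infty}\le C\,\mathcal{E}\left(\nrm{\phi}_{X^0}+T^{1/2}\Theta^3\right),
\]
which is already the assertion for $s=0$. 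From here on assume $s\ge1$.

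\emph{Step 2 (climbing the integer levels and the final half-step).} For $k=1,\dots,[s]$ I would apply Lemma \ref{apriori-1} at level $k$ to produce, inductively, a bound for $\nrm{u}_{L_T^\infty H_x^k}$ of the form $\mathcal{E}\big((1+\nrm{\phi}_{H^s}^{p_k})\nrm{\phi}_{X^s}+T^{1/2}(\Theta+\Theta^{q_k})\big)$: in the $T^{1/2}$-part of Lemma \ref{apriori-1} (and in the prefactor $1+\nrm{u}_{L_T^\infty H_x^{k-1}}^k$ multiplying it) bound every solution norm by $\Theta$ via (a), which keeps the $T^{1/2}$ and yields only $T^{1/2}\Theta^{\mathrm{power}}$ contributions; in the data part $(1+\nrm{u}_{L_T^\infty H_x^{k-1}}^k)(1+\nrm{\phi}_{H^{k-1}}^k)\nrm{\phi}_{X^k}$ one must instead substitute the level-$(k-1)$ bound for $\nrm{u}_{L_T^\infty H_x^{k-1}}$, so that any resulting $\Theta$ inherits a $T^{1/2}$, then multiply out and redistribute: the purely data cross-terms go into $(1+\nrm{\phi}_{H^s}^{p_k})\nrm{\phi}_{X^s}$ after using (c) to collapse all but one power of $\nrm{\phi}_{X^s}$, while the mixed terms (all carrying a $T^{1/2}$) go into $T^{1/2}(\Theta+\Theta^{q_k})$ after using (b) and (c) to turn their $\nrm{\phi}_{H^s}$- and $\nrm{\phi}_{X^s}$-factors into $\Theta$-factors modulo $\mathcal{E}$-absorption. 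The exponents $p_k,q_k$ grow with $k$ but controllably; tracking them one checks that after the $[s]$ steps and the concluding application of Lemma \ref{apriori0} at the given $s$ — where the level-$[s]$ bound is substituted for $\nrm{u}_{L_T^\infty H_x^{[s]}}$ in both its occurrences, where $\nrm{u}_{L_T^\infty H_x^s}$ and $\nrm{u}_{L_T^4L_x^\infty}$ are bounded by $\Theta$, and where Lemma \ref{smooth apriori step4} again disposes of the $\nrm{e^{\pm\Lambda}}_{L_{T,x}^\infty}$-factors — they stay within the generous budget $2([s]+1)^2$ and $4([s]+2)^4$ of the statement.

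\emph{Where the difficulty lies.} The only genuinely new analytic ingredient is the seed estimate of Step 1, which is clean precisely because at the $L^2\cap L_T^4L_x^\infty$ level $e^{\pm\Lambda}$ is stripped off by H\"older alone, so that no spurious factor of $\nrm{u}_{L_T^\infty H_x^{[s]}}$ is ever created; such a factor, not accompanied by a $T^{1/2}$, could not be absorbed into the claimed right-hand side. Everything else is bookkeeping, and the real work is Step 2: making sure that after $[s]$ iterations of Lemma \ref{apriori-1} followed by one application of Lemma \ref{apriori0} every term has been sorted into either ``$(\text{power of }\nrm{\phi}_{H^s})\cdot\nrm{\phi}_{X^s}$'' or ``$T^{1/2}\cdot(\text{power of }\Theta)$'' with exponents no larger than permitted. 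The facts $\nrm{\phi}_{H^s}\le\Theta$ and the absorption of $\nrm{\phi}_{X^0}$-powers into $\mathcal{E}$ are exactly what keeps the degrees from blowing up during this sorting.
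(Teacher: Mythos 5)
Your Step 1 is exactly the paper's starting point (it is \eqref{pr apriori01} combined with Lemma \ref{smooth apriori step4}), and your facts (a)--(c) are the same conversions the paper uses. The gap is in Step 2, and it is quantitative but real: the paper's statement asserts the specific exponents $2([s]+1)^2$ and $4([s]+2)^4$, and the level-by-level iteration of Lemma \ref{apriori-1} cannot stay inside that budget. At each level $k$ the prefactor $\bigl(1+\nrm{u}_{L_T^\infty H_x^{k-1}}^{k}\bigr)$ multiplies the \emph{data} part, so when you substitute the level-$(k-1)$ bound there, the accumulated data exponent gets multiplied by $k$: schematically $p_k\approx k\,p_{k-1}+O(k)$, i.e.\ factorial-type growth in $[s]$. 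Even with the most favorable bookkeeping ($p_1=0$ because $\nrm{\phi}_{L^2}\le\nrm{\phi}_{X^0}$ is absorbed into the exponential) one gets $p_2\approx 4$, $p_3\approx 18$, and after the concluding application of Lemma \ref{apriori0} (which raises the level-$[s]$ data part to the power $[s]+1$ once more) the final exponent is already about $80$ for $[s]=3$, against the allowed $2([s]+1)^2=32$; for larger $[s]$ the discrepancy explodes. So the sentence ``tracking them one checks that \dots they stay within the generous budget'' is not justified, and for your scheme it is false. (Separately, your argument as written skips $0<s<1$: after ``the assertion for $s=0$'' you assume $s\ge1$, whereas for $0<s<1$ one must still run Lemma \ref{apriori0} with $[s]=0$, substituting the Step 1 seed into its prefactor --- this is the paper's case (i) and is easy, but it is not in your text.)

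The missing idea is the paper's estimate \eqref{pr apriori12}: applying Lemma \ref{strest} and the fractional Leibniz rule (Lemma \ref{fracLeib}) directly to the \emph{ungauged} Duhamel formula \eqref{mild-sol} with $\vep=0$ gives
\begin{equation*}
\nrm{u}_{L_T^\infty H_x^{s-1}}\lesssim \nrm{\phi}_{H^{s-1}}+T^{\frac{3}{4}}\nrm{u}_{L_T^4L_x^\infty}\nrm{u}_{L_T^\infty H_x^{s}},
\end{equation*}
which controls \emph{every} intermediate regularity $k-1\le s-1$ with a data part of degree one and a solution part carrying a positive power of $T$. With this in hand the paper never iterates: it uses the decomposition $\nrm{u}_{H^s}\sim\nrm{u}_{L^2}+\nrm{u\,\px\Lambda}_{H^{s-1}}+\nrm{e^{\Lambda}\px(e^{-\Lambda}u)}_{H^{s-1}}$, applies Lemma \ref{apriori-1} only once (at $k=[s]$, with \eqref{pr apriori12} substituted into its $H^{[s]-1}$-prefactor) together with \eqref{gauge step1}, and the exponents then compound only twice, which is exactly why $2([s]+1)^2$ and $4([s]+2)^4$ suffice. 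If you only wanted the lemma with some finite, $s$-dependent exponents (which is all the later continuity arguments actually use), your iteration would deliver that; but to prove the statement as written you need an input like \eqref{pr apriori12} to prevent the multiplicative buildup in the data prefactors.
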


\begin{proof}[Proof of Lemma \ref{apriori1}]
(\textit{(i) $0\le s< 1$})
In the same way as \eqref{pr apriori01}, we obtain
\[
\nrm{u}_{L_T^\infty L_x^2}
\lesssim   \nrm{e^{\Lambda}}_{L_{T, x}^\infty}\nrm{e^{-\Lambda}}_{L_{T, x}^\infty}\left(\nrm{\phi}_{L^2} + T^\frac{1}{2}\nrm{u}_{L_T^4L_x^\infty}^2\nrm{u}_{L_T^\infty L_x^2}\right).
\]
Thus, Lemmas \ref{smooth apriori step4} and \ref{apriori0} yield
\begin{equation}\label{pr apriori11}
\begin{aligned}
&\nrm{u}_{L_{T}^\infty X^s\cap L_{T}^4 L_x^\infty} \\
&\le Ce^{C(\nrm{\phi}_{X^0}+T^{\frac{1}{2}}\nrm{u}_{L_T^\infty L_x^2}^2)}\left(1+\nrm{u}_{L_T^\infty L_x^2}\right)^2 \\
&\quad \times\left(\left(1+\nrm{\phi}_{L^2}\right)\nrm{\phi}_{X^s} + T^{\frac{1}{2}}\left(1 + \nrm{u}_{L_T^4 L_x^\infty}^2\right)\nrm{u}_{L_T^\infty H_x^s}\right) \\
&\le Ce^{C(\nrm{\phi}_{X^0}+T^{\frac{1}{2}}\nrm{u}_{L_T^\infty L_x^2}^2)}\left(1+\nrm{\phi}_{L^2} + T^\frac{1}{2}\nrm{u}_{L_T^4L_x^\infty}^2\nrm{u}_{L_T^\infty L_x^2}\right)^2 \\
&\quad \times\left(\left(1+\nrm{\phi}_{L^2}\right)\nrm{\phi}_{X^s} + T^{\frac{1}{2}}\left(1 + \nrm{u}_{L_T^4 L_x^\infty}^2\right)\nrm{u}_{L_T^\infty H_x^s}\right) \\
&\le Ce^{C(\nrm{\phi}_{X^0}+T^{\frac{1}{2}}\nrm{u}_{L_T^\infty L_x^2}^2)}\left(\left(1+\nrm{\phi}_{L^2}\right)^2\nrm{\phi}_{X^s} + T^{\frac{1}{2}}\left(1 + \nrm{u}_{L_T^\infty L_x^2\cap L_T^4 L_x^\infty}^8\right)\nrm{u}_{L_T^\infty H^s}\right).
\end{aligned}
\end{equation}
Thus, we obtain the desired bound.

(\textit{(ii) $s\ge 1$})
We note that
we can obtain from Lemmas \ref{strest} and \ref{fracLeib} that
\begin{equation}\label{pr apriori12}
\nrm{u}_{L_T^\infty H_x^{s-1}}
\lesssim \nrm{\phi}_{H^{s-1}} + T^{\frac{3}{4}}\nrm{u}_{L_T^4 L_x^\infty}\nrm{u}_{L_T^\infty H_x^s},
\end{equation}
since we have \eqref{mild-sol} with $\vep=0$ in $H^{s'-1}(\br)$.
We consider the $L_T^\infty H_x^s$-norm.
From $s\ge 1$, we have
\[
\nrm{u}_{L_T^\infty H_x^s}
= \nrm{e^{\Lambda}e^{-\Lambda}u}_{L_T^\infty H_x^s}
\sim \nrm{u}_{L_T^\infty L_x^2} + \nrm{u\px\Lambda}_{L_T^\infty H_x^{s-1}} + \nrm{e^{\Lambda}\px(e^{-\Lambda}u)}_{L_T^\infty H_x^{s-1}}.
\]
The estimate for $\nrm{u}_{L_T^\infty L_x^2}$ is obtained by \eqref{pr apriori11}.
Also, Lemmas \ref{smooth apriori step4} and \ref{fracLeib}, \eqref{gauge step1}, \eqref{pr apriori12}, and the Sobolev embedding $H^{s}(\br)\hookrightarrow L^\infty(\br)$ yield
\begin{align*}
&\nrm{u\px\Lambda}_{L_T^\infty H_x^{s-1}} \\
&\le C\nrm{u}_{L_{T, x}^\infty}\nrm{u}_{L_T^\infty H_x^{s-1}} \\
&\le C\nrm{e^{\Lambda}}_{L_{T, x}^\infty}\nrm{e^{-\Lambda}u}_{L_{T, x}^\infty}\nrm{u}_{L_T^\infty H_x^{s-1}} \\
&\le Ce^{C(\nrm{\phi}_{X^0}+T^{\frac{1}{2}}\nrm{u}_{L_T^\infty L_x^2}^2)}\left(\left(1+\nrm{\phi}_{H^{[s]}}^{[s]+1}\right)\nrm{\phi}_{H^s} + T^{\frac{1}{2}}\left(1+\nrm{u}_{L_T^\infty H_x^{[s]}}^{[s]+1}\right)\nrm{u}_{L_T^4 L_x^\infty}^2\nrm{u}_{L_T^\infty H_x^s}\right) \\
&\quad \times\left(\nrm{\phi}_{H^{s-1}} + T^{\frac{3}{4}}\nrm{u}_{L_T^4 L_x^\infty}\nrm{u}_{L_T^\infty H_x^s}\right).
\end{align*}
Thus, we have
\begin{equation}\label{pr apriori13}
\begin{aligned}
\nrm{u\px\Lambda}_{L_T^\infty H_x^{s-1}}
&\le Ce^{C(\nrm{\phi}_{X^0}+T^{\frac{1}{2}}\nrm{u}_{L_T^\infty L_x^2}^2)} \\
&\quad\times\left(\left(\nrm{\phi}_{H^s}+\nrm{\phi}_{H^s}^{[s]+2}\right)\nrm{\phi}_{H^s} + T^{\frac{1}{2}}\left(\nrm{u}_{L_T^\infty H_x^s}^2+\nrm{u}_{L_T^\infty H_x^s}^{[s]+5}\right)\nrm{u}_{L_T^\infty H_x^s}\right).
\end{aligned}
\end{equation}

Finaly, we consider $\nrm{e^{\Lambda}\px(e^{-\Lambda}u)}_{L_T^\infty H_x^{s-1}}$.
From \eqref{est of exp f2}, we obtain
\[
\nrm{e^{\Lambda}\px(e^{-\Lambda}u)}_{L_T^\infty H_x^{s-1}}
\lesssim \nrm{e^{\Lambda}}_{L_{T, x}^\infty}\left(1+\nrm{u}_{L_T^\infty H_x^{[s]}}^{[s]+1}\right)\nrm{e^{-\Lambda}u}_{L_T^\infty H_x^s}.
\]
Here, Lemma \ref{apriori-1} and \eqref{pr apriori12} yield
\begin{align*}
&\nrm{u}_{L_T^\infty H_x^{[s]}} \\
&\le Ce^{C(\nrm{\phi}_{X^0}+T^{\frac{1}{2}}\nrm{u}_{L_T^\infty L_x^2}^2)}\left(\left(1+\nrm{\phi}_{H^s}^{2[s]}\right)\nrm{\phi}_{H^s} + T^{\frac{1}{2}}\left(1+\nrm{u}_{L_T^\infty H_x^s}^{3[s]+2}\right)\nrm{u}_{L_T^\infty H_x^s}\right).
\end{align*}
In particular, we have
\begin{align*}
1+\nrm{u}_{L_T^\infty H_x^{[s]}}^{[s]+1}
&\le Ce^{C(\nrm{\phi}_{X^0}+T^{\frac{1}{2}}\nrm{u}_{L_T^\infty L_x^2}^2)} \\
&\quad\times\left(1+\nrm{\phi}_{H^s}^{(2[s]+1)([s]+1)} + T^{\frac{1}{2}}\left(\nrm{u}_{L_T^\infty H_x^s}^{[s]+1}+\nrm{u}_{L_T^\infty H_x^s}^{3([s]+1)^2}\right)\right).
\end{align*}
Thus, Lemma \ref{smooth apriori step4}, \eqref{gauge step1}, and the Sobolev embedding $H^{s}(\br)\hookrightarrow L^\infty(\br)$ yield
\begin{equation}\label{pr apriori14}
\begin{aligned}
&\nrm{e^{\Lambda}\px(e^{-\Lambda}u)}_{L_T^\infty H_x^{s-1}} \\
&\le Ce^{C(\nrm{\phi}_{X^0}+T^{\frac{1}{2}}\nrm{u}_{L_T^\infty L_x^2}^2)}\left(1+\nrm{\phi}_{H^s}^{(2[s]+1)([s]+1)} + T^{\frac{1}{2}}\left(\nrm{u}_{L_T^\infty H_x^s}^{[s]+1}+\nrm{u}_{L_T^\infty H_x^s}^{3([s]+1)^2}\right)\right) \\
&\quad\times\left(\left(1+\nrm{\phi}_{H^{[s]}}^{[s]+1}\right)\nrm{\phi}_{H^s} + T^{\frac{1}{2}}\left(1+\nrm{u}_{L_T^\infty H_x^{[s]}}^{[s]+1}\right)\nrm{u}_{L_T^4 L_x^\infty}^2\nrm{u}_{L_T^\infty H_x^s}\right) \\
&\le Ce^{C(\nrm{\phi}_{X^0}+T^{\frac{1}{2}}\nrm{u}_{L_T^\infty L_x^2}^2)}\left(\left(1+\nrm{\phi}_{H^s}^{2([s]+1)^2}\right)\nrm{\phi}_{H^s} + T^{\frac{1}{2}}\left(\nrm{u}_{L_T^\infty H_x^s}^2 + \nrm{u}_{L_T^\infty H_x^s}^{4([s]+2)^4}\right)\right),
\end{aligned}
\end{equation}
where we used $[s]+4\le ([s]+2)^2$.
Therefore, \eqref{pr apriori11} with $s=0$, \eqref{pr apriori13}, and \eqref{pr apriori14} yield
\begin{equation}\label{pr apriori15}
\begin{aligned}
\nrm{u}_{L_T^\infty X^s}
&\le Ce^{C(\nrm{\phi}_{X^0}+T^{\frac{1}{2}}\nrm{u}_{L_T^\infty L_x^2}^2)} \\
&\quad\times\left(\left(1+\nrm{\phi}_{H^s}^{2([s]+1)^2}\right)\nrm{\phi}_{X^s} + T^{\frac{1}{2}}\left(\nrm{u}_{L_T^\infty H_x^s\cap L_T^4 L_x^\infty} + \nrm{u}_{L_T^\infty H_x^s\cap L_T^4 L_x^\infty}^{4([s]+2)^4}\right)\right).
\end{aligned}
\end{equation}
We note that $\nrm{u}_{L_T^\infty X^s\cap L_T^4 L_x^\infty}$ is also bounded by the right-hand side of \eqref{pr apriori15} when $0\le s< 1$.
This completes the proof.
\end{proof}

\begin{proof}[Proof of Proposition \ref{existence time}]
For simplicity, we only prove the case $\frac{3}{2}< s'< 2$. 
The case $s'\ge 2$ is shown similarly.
From Proposition \ref{existence of a smooth solution}, there exist $T_{s'} \in (0, 1)$ and the solution $u\in C([0, T_{s'}]; X^{s'}(\br))$.
We define $C = 3\max(C_4(1), C_6(0))$, $\tilde C = 3\max(C_4(1), C_5(s'), C_6(0))$, where $C_4(1), C_5(s')$, and $C_6(0)$ are constants taken in Lemmas \ref{apriori-1}--\ref{apriori1}.
Also, we define
\[
\tilde T_0 = \min\left(\frac{\nrm{\phi}_{X^0}^2}{{C}^4(\nrm{\phi}_{X^0}+\nrm{\phi}_{X^0}^3)^4}, \frac{(\nrm{\phi}_{X^0}+\nrm{\phi}_{X^0}^3)^2}{(C(\nrm{\phi}_{X^0}+\nrm{\phi}_{X^0}^3) + C^{64}(\nrm{\phi}_{X^0}+\nrm{\phi}_{X^0}^3)^{64})^2}\right),
\]
\begin{equation}\label{T0}
N_0= Ce^{C\nrm{\phi}_{X^0}}(\nrm{\phi}_{X^0}+\nrm{\phi}_{X^0}^3), \quad
T_0 = \min\left(\tilde T_0, \frac{\nrm{\phi}_{X^0}^2}{N_0^4}, \frac{1}{4C^2e^{2C\nrm{\phi}_{X^0}}(1+N_0)^8}\right),
\end{equation}
\[ 
N_1= Ce^{C\nrm{\phi}_{X^0}}(1+\nrm{\phi}_{X^0}^2)\nrm{\phi}_{X^1}, \quad
T_1 = \min\left(T_0, \frac{1}{4{\tilde C}^2e^{2\tilde CN_0}(1+N_1^2)^4(1+N_0)^4}\right).
\]
We note that $N_0$, and $T_0$ depend only on $\nrm{\phi}_{X^0}$.

From Proposition \ref{existence of a smooth solution} again, there exists $\delta=\delta(\nrm{u(T_{s'})}_{X^{s'}})>0$ and the solution $u\in C([0, T_{s'}+ \delta]; X^{s'}(\br))$.
We take
\[
\tilde T_0^{(0, 1)}= \min(T_{s'} +\delta, \tilde T_0), \quad
T_0^{(0, 1)} = \min(T_{s'} +\delta,  T_0), \quad
T_1^{(1)} = \min(\delta,  T_1).
\]

In the following, we write $\tilde \tau_0=\tilde T_0^{(0, 1)}$, $\tau_0=T_0^{(0, 1)}$, and $\tau_1= T_1^{(1)}$ in order to simplify the notation.
Then, Lemmas \ref{apriori-1}--\ref{apriori1} yield
\begin{align*}
\nrm{u}_{L_{\tilde \tau_0}^\infty X^0\cap L_{\tilde \tau_0}^4 L_x^\infty} 
&\le C_6e^{C_6(\nrm{\phi}_{X^0}+{\tilde \tau_0}^{\frac{1}{2}}\nrm{u}_{L_{\tilde \tau_0}^\infty L_x^2}^2)} \\
&\quad\times\left(\left(1+\nrm{\phi}_{L^2}^{2}\right)\nrm{\phi}_{X^0} + {\tilde \tau_0}^{\frac{1}{2}}\left(\nrm{u}_{L_{\tilde \tau_0}^4 L_x^\infty\cap L_{\tilde \tau_0}^\infty L_x^2} + \nrm{u}_{L_{\tilde \tau_0}^4 L_x^\infty\cap L_{\tilde \tau_0}^\infty L_x^2}^{64}\right)\right),
\end{align*}
\begin{align*}
\nrm{u}_{L_{\tau_0}^\infty X^1}
&\le C_4e^{C_4(\nrm{\phi}_{X^0}+{\tau_0}^{\frac{1}{2}}\nrm{u}_{L_{\tau_0}^\infty L_x^2}^2)}\left(1+\nrm{u}_{L_{\tau_0}^\infty L_x^2}\right) \\
&\quad \times\left(\left(1 + \nrm{\phi}_{L^2}\right)\nrm{\phi}_{X^1} + {\tau_0}^\frac{1}{2}\left(1 + \nrm{u}_{L_{\tau_0}^\infty L_x^2}\right)\left(1+\nrm{u}_{L_{\tau_0}^4 L_x^\infty}\right)^2\nrm{u}_{L_{\tau_0}^\infty H_x^1}\right),
\end{align*}
\begin{align*}
&\nrm{u(\cdot + T_{s'})}_{L_{\tau_1}^\infty X^{s'}} \\
&\le C_5e^{C_5(\nrm{u(T_{s'})}_{X^0}+{\tau_1}^{\frac{1}{2}}\nrm{u(\cdot + T_{s'})}_{L_{\tau_1}^\infty L_x^2}^2)}\left(1+\nrm{u(\cdot + T_{s'})}_{L_{\tau_1}^\infty H_x^1}^2\right) \\
&\quad \times\left(\left(1+\nrm{u(T_{s'})}_{H^1}^2\right)\nrm{u(T_{s'})}_{X^{s'}} \right. \\
&\quad\quad \left.+ {\tau_1}^{\frac{1}{2}}\left(1+\nrm{u(\cdot + T_{s'})}_{L_{\tau_1}^\infty H_x^1}^2\right)\left(1 + \nrm{u(\cdot + T_{s'})}_{L_{\tau_1}^4 L_x^\infty}\right)^2\nrm{u(\cdot + T_{s'})}_{L_{\tau_1}^\infty H_x^{s'}}\right).
\end{align*}
A continuity argument yields
\[
\nrm{u}_{L_{\tilde T_0^{(0, 1)}}^\infty X^0\cap L_{\tilde t}^4 L_x^\infty} 
\le N_0.
\]
Thus, we obtain
\[
\nrm{u}_{L_{T_0^{(0, 1)}}^\infty X^1}
\le N_1.
\]
Moreover, it holds that
\[
\nrm{u(\cdot + T_{s'})}_{L_{T_1^{(1)}}^\infty X^{s'}} 
\le \tilde Ce^{\tilde CN_0}(1+N_1^2)^2(1+\nrm{u(T_{s'})}_{X^{s'}})
=: N_{{s'}, 0}.
\]
Since we have $\nrm{u(T_{s'})}_{X^{s'}}\le M$, we may assume $\delta$ depends on $M$.
By taking
\[
\tilde T_0^{(0, 2)}= \min(T_{s'} +2\delta, \tilde T_0), \quad
T_0^{(0, 2)} = \min(T_{s'} +2\delta,  T_0), \quad
T_1^{(2)} = \min(2\delta,  T_1)
\]
and repeating the continuity argument, we obtain
\[
\nrm{u}_{L_{\tilde T_0^{(0, 2)}}^\infty X^0\cap L_{\tilde t}^4 L_x^\infty} 
\le N_0, \quad
\nrm{u}_{L_{T_0^{(0, 2)}}^\infty X^1}
\le N_1, \quad
\nrm{u(\cdot + T_{s'})}_{L_{T_1^{(2)}}^\infty X^{s'}} 
\le N_{s, 0}.
\]
Similarly, by taking
\[
\tilde T_0^{(0, k)}= \min(T_{s'} +k\delta, \tilde T_0), \quad
T_0^{(0, k)} = \min(T_{s'} +k\delta,  T_0), \quad
T_1^{(k)} = \min(k\delta,  T_1),
\]
we have
\[
\nrm{u}_{L_{\tilde T_0^{(0, k)}}^\infty X^0\cap L_{\tilde t}^4 L_x^\infty} 
\le N_0, \quad
\nrm{u}_{L_{T_0^{(0, k)}}^\infty X^1}
\le N_1, \quad
\nrm{u(\cdot + T_{s'})}_{L_{T_1^{(k)}}^\infty X^{s'}} 
\le N_{{s'}, 0}
\]
for any $k\in\bn$.
Taking $k$ as $k\delta\ge T_1$, we obtain
\[
\nrm{u}_{L_{\tilde T_0 + T_1}^\infty X^0\cap L_{\tilde t}^4 L_x^\infty} 
\le N_0, \quad
\nrm{u}_{L_{T_0 + T_1}^\infty X^1}
\le N_1, \quad
\nrm{u(\cdot + T_{s'})}_{L_{T_1}^\infty X^{s'}} 
\le N_{{s'}, 0}.
\]
A similar argument yields
\[
\nrm{u}_{L_{\tilde T_0^{(\ell, k)}}^\infty X^0\cap L_{\tilde t}^4 L_x^\infty} 
\le M, \quad
\nrm{u}_{L_{T_0^{(\ell, k)}}^\infty X^1}
\le N, \quad
\nrm{u(\cdot + T_{s'} + \ell T_1)}_{L_{T_1^{(k)}}^\infty X^{s'}} 
\le N_{{s'}, \ell},
\]
where $\tilde T_0^{(\ell, k)}$, $T_0^{(\ell, k)}$, and $N_{{s'}, \ell}$ is defined by
\[
\tilde T_0^{(\ell, k)}= \min(T_{s'} + \ell T_1 + k\delta, \tilde T_0), \quad
T_0^{(\ell, k)} = \min(T_{s'} + \ell T_1 + k\delta, T_0),
\]
\[
N_{{s'}, \ell} = \tilde Ce^{\tilde CN_0}(1+N_1^2)^2(1+\nrm{u(T_{s'} + \ell T_1)}_{X^{s'}}).
\]
Therefore, repeating this argument until $T_{s'} + \ell T_1\ge T_0$, we obtain
$u\in C([0, T_0]; X^{s'}(\br))$ is a solution to \eqref{nls}.
\end{proof}

We can also obtain an a priori estimate.
\begin{prop}\label{apriori}
Let $s'>\frac{3}{2}$, $0\le s\le s'$, $\phi\in X^{s'}(\br)$, and $u\in C([0, T_0]; X^{s'}(\br))$ be a solution to \eqref{nls} obtained in Proposition \ref{existence time}.
Then, there exist $T^*=T^*(s, \nrm{\phi}_{X^s})\in (0, T_0]$ and $C=C(s)>0$ such that
\[
\nrm{u}_{L_{T^{*}}^\infty X^s\cap L_{T^{*}}^4 L_x^\infty}
\le C e^{C \nrm{\phi}_{X^s}}\left(1+\nrm{\phi}_{X^s}^{2([s]+1)^2}\right)\nrm{\phi}_{X^s}.
\]
\end{prop}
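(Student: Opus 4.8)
The estimate is essentially a restatement of Lemma \ref{apriori1}, obtained by replacing the $X^0$-dependence in its hypotheses with the quantitative zeroth-order bound already produced inside the proof of Proposition \ref{existence time}, followed by a continuity argument. First I would recall that the proof of Proposition \ref{existence time} yields, besides the solution $u\in C([0,T_0];X^{s'}(\br))$, constants $N_0,T_0\in(0,1)$ depending only on $\nrm{\phi}_{X^0}$, with $T_0\le \nrm{\phi}_{X^0}^2/N_0^4$ (see \eqref{T0}), such that $\nrm{u}_{L_{T_0}^\infty X^0\cap L_{T_0}^4L_x^\infty}\le N_0$. Consequently, for every $T\in(0,T_0]$ one has $T^{\fr{1}{2}}\nrm{u}_{L_T^\infty L_x^2}^2\le T_0^{\fr{1}{2}}N_0^2\le \nrm{\phi}_{X^0}\le\nrm{\phi}_{X^s}$, so the factor $e^{C_6(\nrm{\phi}_{X^0}+T^{\fr{1}{2}}\nrm{u}_{L_T^\infty L_x^2}^2)}$ occurring in Lemma \ref{apriori1} is at most $e^{2C_6\nrm{\phi}_{X^s}}$. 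Since also $\nrm{\phi}_{H^s}\le\nrm{\phi}_{X^s}$, Lemma \ref{apriori1} gives, for all $T\in(0,T_0]$,
\[
\nrm{u}_{L_T^\infty X^s\cap L_T^4L_x^\infty}
\le C_6e^{2C_6\nrm{\phi}_{X^s}}\left(\left(1+\nrm{\phi}_{X^s}^{2([s]+1)^2}\right)\nrm{\phi}_{X^s}
+T^{\fr{1}{2}}\Bigl(\nrm{u}_{L_T^\infty H_x^s\cap L_T^4L_x^\infty}+\nrm{u}_{L_T^\infty H_x^s\cap L_T^4L_x^\infty}^{4([s]+2)^4}\Bigr)\right).
\]

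Next I would run a standard continuity (bootstrap) argument in $T$. Put $M:=2C_6e^{2C_6\nrm{\phi}_{X^s}}\bigl(1+\nrm{\phi}_{X^s}^{2([s]+1)^2}\bigr)\nrm{\phi}_{X^s}$ (we may assume $C_6\ge1$, so that $\nrm{\phi}_{X^s}\le M/2$), and choose $T^*\in(0,T_0]$, depending only on $s$ and $\nrm{\phi}_{X^s}$, so small that $C_6e^{2C_6\nrm{\phi}_{X^s}}{T^*}^{\fr{1}{2}}\bigl(M+M^{4([s]+2)^4}\bigr)\le M/4$. Because $s'>\fr{1}{2}$ and $u\in C([0,T_0];X^{s'}(\br))\hookrightarrow C([0,T_0];H^{s'}(\br))\hookrightarrow C([0,T_0];L^\infty(\br))$, the function $g(T):=\nrm{u}_{L_T^\infty X^s\cap L_T^4L_x^\infty}$ is finite, nondecreasing and continuous on $[0,T^*]$ with $g(0)=\nrm{\phi}_{X^s}\le M/2$. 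If $g(T)\le M$ for some $T\le T^*$, the displayed estimate forces $g(T)\le M/2+M/4<M$; hence $g(T)\le M$ for all $T\in[0,T^*]$, which is the asserted bound with $C=C(s)$ (e.g.\ $C=2C_6$).

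The argument involves no new analysis beyond Lemma \ref{apriori1} and the already-established zeroth-order bound; the only mildly delicate points are the reduction of the exponential and polynomial weights to depend on $\nrm{\phi}_{X^s}$ alone (which is exactly the role of the bound $\nrm{u}_{L_{T_0}^\infty X^0\cap L_{T_0}^4L_x^\infty}\le N_0$ together with $T_0\le\nrm{\phi}_{X^0}^2/N_0^4$) and the verification that $g$ is finite and continuous up to $T_0$, which follows from $u\in C([0,T_0];X^{s'}(\br))$ and $H^{s'}(\br)\hookrightarrow L^\infty(\br)$. When $0\le s<1$ one may alternatively invoke \eqref{pr apriori11} in place of the general estimate of Lemma \ref{apriori1}.
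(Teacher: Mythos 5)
Your proof is correct and follows essentially the same route as the paper: apply Lemma \ref{apriori1} with $\nrm{\phi}_{H^s}\le\nrm{\phi}_{X^s}$ and close via a continuity argument after choosing $T^*=T^*(s,\nrm{\phi}_{X^s})$ small. The only (harmless) difference is that you control the exponential factor $e^{C_6 T^{1/2}\nrm{u}_{L_T^\infty L_x^2}^2}$ a priori using the bound $\nrm{u}_{L_{T_0}^\infty X^0}\le N_0$ together with $T_0\le\nrm{\phi}_{X^0}^2/N_0^4$ from Proposition \ref{existence time}, whereas the paper folds this into the bootstrap by also requiring $T^*\le\nrm{\phi}_{X^s}^2/N_s^4$.
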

\begin{proof}
We take
\[
N_s = C e^{C \nrm{\phi}_{X^s}}\left(1+\nrm{\phi}_{X^s}^{2([s]+1)^2}\right)\nrm{\phi}_{X^s}, \quad
T^* =\min\left(T_0, \frac{\nrm{\phi}_{X^s}^2}{N_s^4}, \frac{(1+\nrm{\phi}_{X^s}^{2([s]+1)^2})^2\nrm{\phi}_{X^s}^2}{(N_s + N_s^{4([s]+2)^4})^2}\right),
\]
where $C = 3C_6(s)$ and $C_6(s)$ is taken in Lemma \ref{apriori1}.
Then, we have from Lemma \ref{apriori1} that
\begin{align*}
&\nrm{u}_{L_{T^*}^\infty X^s\cap L_{T^*}^4 L_x^\infty} \\
&\le C_6e^{C_6(\nrm{\phi}_{X^0}+{T^*}^{\frac{1}{2}}\nrm{u}_{L_{T^*}^\infty L_x^2}^2)} \\
&\quad\times\left(\left(1+\nrm{\phi}_{H^s}^{2([s]+1)^2}\right)\nrm{\phi}_{X^s} + {T^*}^{\frac{1}{2}}\left(\nrm{u}_{L_{T^*}^4 L_x^\infty\cap L_{T^*}^\infty H_x^s} + \nrm{u}_{L_{T^*}^4 L_x^\infty\cap L_{T^*}^\infty H_x^s}^{4([s]+2)^4}\right)\right).
\end{align*}
Therefore, a continuity argument yields
\[
\nrm{u}_{L_{T^*}^\infty X^s\cap L_{T^*}^4 L_x^\infty}
\le N_s.
\]
This completes the proof.
\end{proof}

Next, we consider the estimate for the difference between two solutions.
\begin{lem}\label{diff2}
Let $s'>\frac{1}{2}$, $0\le s\le s'$, $T\in (0, 1)$, and $u_1, u_2\in C([0, T]; X^{s'}(\br))$ be solutions to \eqref{nls} with initial data $\phi_1$ and $\phi_2$, respectively.
Then, there exists a constant $C=C(s, \|u_1\|_{L_T^\infty X^s \cap L_T^4L_x^\infty}, \|u_2\|_{L_T^\infty X^s \cap L_T^4L_x^\infty})>0$ such that
\begin{align*}
\nrm{u_1-u_2}_{L_T^\infty X^s \cap L_T^4L_x^\infty}
\le C\left(\nrm{\phi_1-\phi_2}_{X^{s}} +T^{\frac{1}{2}}\nrm{u_1-u_2}_{L_T^\infty X^s \cap L_T^4L_x^\infty}\right) .
\end{align*} 
\end{lem}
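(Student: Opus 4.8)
The plan is to follow the difference estimates of Section~3 (in particular Lemmas~\ref{smooth est of diff step1}--\ref{smooth est of diff step3} and the proof of Proposition~\ref{smooth est of diff}), specialized to $\vep_1=\vep_2=0$ but now for solutions of \eqref{nls} itself rather than the regularized equation. Since $s'>\fr{1}{2}$, Proposition~\ref{prop gauge} with $\vep=0$ applies to both $u_1$ and $u_2$, so by the argument at the start of the proof of Lemma~\ref{smooth apriori step1} the gauged functions $e^{-\Lambda_1}u_1$ and $e^{-\Lambda_2}u_2$ satisfy the Duhamel formula associated with \eqref{gauge trans}; write $\tilde N(u):=\abs{u}^2(\abs{\mu}^2u+\fr{1}{2}\mu(\mu+\bar\lambda)\bar u)$ for its derivative-free cubic nonlinearity. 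Throughout, constants are allowed to depend on $\nrm{u_1}_{L_T^\infty X^s\cap L_T^4L_x^\infty}$ and $\nrm{u_2}_{L_T^\infty X^s\cap L_T^4L_x^\infty}$, which are finite (note $X^{s'}\hookrightarrow X^s\hookrightarrow L^\infty$, so $u_j\in L_T^4L_x^\infty$ automatically).

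First I would estimate $\Lambda_1-\Lambda_2$. Subtracting the Duhamel formulas for $u_1$ and $u_2$, using Lemma~\ref{est of free sol} for the free part and the dispersive estimate (as in Lemma~\ref{est of Duhamel}) for the difference of the Duhamel terms, one obtains, exactly as in Lemma~\ref{smooth est of diff step1} with $\vep_1=\vep_2=0$, the bound $\nrm{\Lambda_1-\Lambda_2}_{L_{T,x}^\infty}\le C(\nrm{\phi_1-\phi_2}_{X^0}+T^{\fr{1}{2}}\nrm{u_1-u_2}_{L_T^\infty L_x^2})$; in particular the primitive contribution to $\nrm{u_1-u_2}_{L_T^\infty X^s}$ is already of the desired form. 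Next I would estimate the gauged difference in $L_T^\infty H_x^s\cap L_T^4L_x^\infty$. Subtract the Duhamel formulas for $e^{-\Lambda_1}u_1$ and $e^{-\Lambda_2}u_2$, and split the result into the initial-data difference $e^{-\Lambda_1(0)}(\phi_1-\phi_2)+(1-e^{\Lambda_1(0)-\Lambda_2(0)})e^{-\Lambda_1(0)}\phi_2$, the nonlinearity difference $e^{-\Lambda_1}(\tilde N(u_1)-\tilde N(u_2))$, and the gauge-factor difference $(1-e^{\Lambda_1-\Lambda_2})e^{-\Lambda_1}\tilde N(u_2)$. Applying the Strichartz estimate Lemma~\ref{strest} (at the $L^2$ level for the $L_T^4L_x^\infty$ norm and after $\langle\px\rangle^s$ for the $L_T^\infty H_x^s$ norm), together with \eqref{est of exp f2}, Lemma~\ref{seki}, \eqref{pr diff2 ineq}, and the fractional Leibniz rule Lemma~\ref{fracLeib}, all three pieces come out of the required shape: the cubic, derivative-free nonlinearity gives $\nrm{\tilde N(u_1)-\tilde N(u_2)}_{L_T^1H_x^s}\lesssim T^{\fr{1}{2}}(\nrm{u_1}_{L_T^4L_x^\infty}^2+\nrm{u_2}_{L_T^4L_x^\infty}^2)\nrm{u_1-u_2}_{L_T^\infty H_x^s}$ (and likewise at the $L^2$ level); the $L^\infty$-part of the gauge factor is controlled by the previous step, while its $\dot H^{[s]+1}$-part equals $\nrm{e^{\Lambda_1-\Lambda_2}\px(\Lambda_1-\Lambda_2)}_{\dot H_x^{[s]}}$, hence $\lesssim C\nrm{u_1-u_2}_{L_T^\infty H_x^{[s]}}$ and is multiplied by the $O(T^{\fr{1}{2}})$ factor $\nrm{e^{-\Lambda_1}\tilde N(u_2)}_{L_T^1H_x^{s-[s]}}$. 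This yields $\nrm{e^{-\Lambda_1}u_1-e^{-\Lambda_2}u_2}_{L_T^\infty H_x^s\cap L_T^4L_x^\infty}\le C(\nrm{\phi_1-\phi_2}_{X^s}+T^{\fr{1}{2}}\nrm{u_1-u_2}_{L_T^\infty X^s\cap L_T^4L_x^\infty})$.

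Finally I would recover $u_1-u_2$ by writing $u_1-u_2=(1-e^{\Lambda_2-\Lambda_1})u_1+e^{\Lambda_2}(e^{-\Lambda_1}u_1-e^{-\Lambda_2}u_2)$. The second summand is estimated in $L_T^\infty H_x^s\cap L_T^4L_x^\infty$ by \eqref{est of exp f2} and the previous step. For the first summand, applying Lemma~\ref{seki} with $u_1$ in the Sobolev slot and $1-e^{\Lambda_2-\Lambda_1}$ in the $L^\infty\cap\dot H^{[s]+1}$ slot produces, besides a harmless term carrying $\nrm{\Lambda_1-\Lambda_2}_{L_{T,x}^\infty}$, a term $C\nrm{u_1}_{L_T^\infty H_x^s}\nrm{u_1-u_2}_{L_T^\infty H_x^{[s]}}$ with no $T^{\fr{1}{2}}$ in front; thus the $X^s$-estimate is reduced to the $X^{[s]}$-estimate at the cost of the smallness. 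To close this, I would run the same argument at each integer level $k\le[s]$: there, expanding $\px^k\bigl((1-e^{\Lambda_2-\Lambda_1})u_1\bigr)$ and using $\px(\Lambda_2-\Lambda_1)=2\lambda(u_2-u_1)+\mu\overline{(u_2-u_1)}$, the worst term needs $1-e^{\Lambda_2-\Lambda_1}$ only in $\dot H^{k}$, i.e. $\nrm{u_1-u_2}_{H^{k-1}}$, so the $X^k$-estimate is reduced to the $X^{k-1}$-estimate. Iterating $k=[s],\dots,1$ brings everything down to the $X^0\cap L_T^4L_x^\infty$-estimate, which is clean: there $\nrm{(1-e^{\Lambda_2-\Lambda_1})u_1}_{L_T^\infty L_x^2\cap L_T^4L_x^\infty}\le C\nrm{\Lambda_1-\Lambda_2}_{L_{T,x}^\infty}\nrm{u_1}_{L_T^\infty L_x^2\cap L_T^4L_x^\infty}$, and combining with Step~1 and the $s=0$ case of Step~2 gives $\nrm{u_1-u_2}_{L_T^\infty X^0\cap L_T^4L_x^\infty}\le C(\nrm{\phi_1-\phi_2}_{X^0}+T^{\fr{1}{2}}\nrm{u_1-u_2}_{L_T^\infty X^0\cap L_T^4L_x^\infty})$. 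Chaining the finitely many reductions and using $X^s\hookrightarrow X^k$ for $k\le s$ (together with $L_T^4L_x^\infty$ on both sides, coming from the $L^2$-Strichartz throughout) yields the claimed estimate.

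The main obstacle, as already in Section~3, is that the gauge transformation cannot be applied directly to the difference because $\Lambda_1\neq\Lambda_2$: the differences $e^{-\Lambda_1}-e^{-\Lambda_2}$ and $1-e^{\Lambda_2-\Lambda_1}$ contribute a term of the form $\nrm{u_1-u_2}_{H^{[s]}}$ (or $H^{k-1}$) with no $T^{\fr{1}{2}}$ smallness, so the estimate does not self-close at a fixed regularity. What makes it work is the one-derivative gain in $\px(\Lambda_2-\Lambda_1)=2\lambda(u_2-u_1)+\mu\overline{(u_2-u_1)}$, which allows a finite descent in integer regularity down to the $L^2$ level, where the gauged equation \eqref{gauge trans} is derivative-free and its cubic nonlinearity supplies the needed $T^{\fr{1}{2}}$ via Lemma~\ref{fracLeib} and the $L_T^4L_x^\infty$ Strichartz norm. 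The remaining work is the bookkeeping of carrying $L_T^4L_x^\infty$ throughout, needed because at low regularity $\nrm{\abs{u}^3}_{L_T^1H_x^s}$ cannot be bounded using $L_T^\infty H_x^s$ alone.
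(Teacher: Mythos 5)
Your proposal is correct and follows essentially the same route as the paper: gauge both solutions, use the derivative-free equation \eqref{gauge trans} with the Strichartz estimate, Lemma \ref{seki}, \eqref{est of exp f2}, \eqref{pr diff2 ineq} and the $L^\infty_{T,x}$-bound on $\Lambda_1-\Lambda_2$, then recover $u_1-u_2$ from the decomposition $e^{\Lambda}(e^{-\Lambda_1}u_1-e^{-\Lambda_2}u_2)$ plus a gauge-difference term and descend in regularity to the clean $X^0$ case (the paper writes out only $0\le s\le\fr12$ and declares $s>\fr12$ "similar"; your integer-level descent via the Leibniz expansion of $\px^k\bigl((1-e^{\Lambda_2-\Lambda_1})u_1\bigr)$ is exactly the "direct calculation" used in the proof of Proposition \ref{smooth est of diff}). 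The only blemish is the parenthetical "$X^s\hookrightarrow L^\infty$", which fails for $s\le\fr12$; the needed fact $u_j\in L_T^4L_x^\infty$ follows instead from $u_j\in C([0,T];X^{s'}(\br))$ with $s'>\fr12$.
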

\begin{proof}
For $u_1$ and $u_2$, we define $\Lambda_1$ and $\Lambda_2$ by \eqref{primitives}.
We only consider the case $0\le s\le \fr{1}{2}$, becasuse we can prove the case $s>\frac{1}{2}$ similarly.
From Proposition \ref{prop gauge}, we have
\begin{equation}\label{gauge diff}
\begin{aligned} 
&\cl \left(e^{-\Lambda_1}u_1 - e^{-\Lambda_2}u_2\right) \\
&= e^{-\Lambda_1}\left(\abs\mu^2 \left(\abs{u_1}^2u_1 - \abs{u_2}^2u_2\right) + \fr{1}{2}\mu\left(2\bar \lambda + \mu\right) \left(\abs{u_1}^2\bar u_1 - \abs{u_2}^2\bar u_2\right)   \right) \\
&\quad +\left(1-e^{\Lambda_1 - \Lambda_2}\right)e^{-\Lambda_1}\left(\abs\mu^2 \abs {u_2}^2 u_2 + \fr{1}{2}\mu\left(2\bar \lambda + \mu\right)\abs {u_2}^2 \bar u_2\right).
\end{aligned}
\end{equation}

First, we prove the case $s=0$. 
Lemma \ref{strest}, \eqref{pr diff2 ineq}, and \eqref{gauge diff} yield
\begin{align*} 
&\nrm{e^{-\Lambda_1}u_1-e^{-\Lambda_2}u_2}_{L_{T}^\infty L_x^2\cap L_{{T}}^4L_x^\infty}  \\
&\le C(\snrm{\phi_1}_{X^0}, \snrm{\phi_2}_{X^0})\nrm{\phi_1-\phi_2}_{X^0}\\
&\quad+ C(\snrm{u_1}_{L_{T}^\infty X^0\cap L_{T}^4 L_x^\infty}, \snrm{u_2}_{L_{T}^\infty X^0\cap L_{T}^4 L_x^\infty}){T}^{\fr{1}{2}}\nrm{u_1-u_2}_{L_{T}^\infty X^0} \\
&\le C(\snrm{u_1}_{L_{T}^\infty X^0\cap L_{T}^4 L_x^\infty}, \snrm{u_2}_{L_{T}^\infty X^0\cap L_{T}^4 L_x^\infty})\left(\nrm{\phi_1-\phi_2}_{X^0} + {T}^{\fr{1}{2}}\nrm{u_1-u_2}_{L_{T}^\infty X^0}\right).
\end{align*} 
Also, a similar argument as Lemma \ref{smooth apriori step4} yields
\begin{equation}\label{pr diff20}
\begin{aligned}
&\nrm{\int_\minfty^x(u_1(t, y)-u_2(t, y))\,dy}\txx{\infty} + \nrm{\Lambda_1-\Lambda_2}\txx{\infty} \\
&\lesssim \nrm{\phi_1-\phi_2}_{X^0} + {T}^{\fr{1}{2}}\left(\nrm{u_1}\tx{\infty}{2}^2 +\nrm{u_2}\tx{\infty}{2}^2\right)\nrm{u_1-u_2}\tx{\infty}{2},
\end{aligned}
\end{equation}
especially,
\begin{align*} 
&\nrm{e^{\pm(\Lambda_1(t)-\Lambda_2(t))}-1}\txx{\infty} \\
&\le C(\snrm{u_1}_{L_{T}^\infty X^0\cap L_{T}^4 L_x^\infty}, \snrm{u_2}_{L_{T}^\infty X^0\cap L_{T}^4 L_x^\infty})\left(\nrm{\phi_1-\phi_2}_{X^0} + {T}^{\fr{1}{2}}\nrm{u_1-u_2}\tx{\infty}{2}\right).
\end{align*} 
Thus, we obtain
\begin{align*} 
&\nrm{u_1-u_2}_{L_{T}^\infty L_x^2\cap L_{T}^4L_x^\infty} \\
&\le \nrm{e^{\Lambda_1}}\txx{\infty}\nrm{e^{-\Lambda_1}u_1-e^{-\Lambda_2}u_2}_{L_{T}^\infty L_x^2\cap L_{T}^4L_x^\infty}
+ \nrm{u_2}_{L_{T}^\infty L_x^2\cap L_{T}^4L_x^\infty}\nrm{e^{\Lambda_1-\Lambda_2}-1}\txx{\infty} \\
&\le C(\snrm{u_1}_{L_{T}^\infty X^0\cap L_{T}^4L_x^\infty}, \snrm{u_2}_{L_{T}^\infty X^0\cap L_{T}^4L_x^\infty} ) 
\left(\nrm{\phi_1-\phi_2}_{X^0}+{T}^{\fr{1}{2}}\nrm{u_1-u_2}_{L_{T}^\infty X^0\cap L_{T}^4L_x^\infty}\right).
\end{align*}
Hence, \eqref{pr diff20} yields
\begin{equation}\label{pr diff21}
\begin{aligned}
&\nrm{u_1-u_2}_{L_{T}^\infty X^0\cap L_{T}^4L_x^\infty} \\
&\le C(\snrm{u_1}_{L_{T}^\infty X^0\cap L_{T}^4L_x^\infty}, \snrm{u_2}_{L_{T}^\infty X^0\cap L_{T}^4L_x^\infty})\left(\nrm{\phi_1-\phi_2}_{X^0}+{T}^{\fr{1}{2}}\nrm{u_1-u_2}_{L_{T}^\infty X^0\cap L_{T}^4L_x^\infty}\right).
\end{aligned}
\end{equation}
This completes the proof of the case $s=0$.

Next, we consider the case $0<s\le \fr{1}{2}$.
We consider the estimate for the $L_{T}^\infty H_x^s$-norm.
We have
\begin{equation}\label{pr diff22}
\nrm{u_1-u_2}_{L_{T}^\infty H_x^s}
\le \nrm{e^{\Lambda_1}\left(e^{-\Lambda_1}u_1-e^{-\Lambda_2}u_2\right)}_{L_{T}^\infty H_x^s} + \nrm{\left(e^{\Lambda_1-\Lambda_2}-1\right)u_2}_{L_{T}^\infty H_x^s}.
\end{equation}
Here, \eqref{est of exp f2} yields
\begin{equation}\label{pr diff23}
\nrm{e^{\Lambda_1}\left(e^{-\Lambda_1}u_1-e^{-\Lambda_2}u_2\right)}_{L_{T}^\infty H_x^s}
\le C(s, \|u_1\|_{L_{T}^\infty X^s})\nrm{e^{-\Lambda_1}u_1-e^{-\Lambda_2}u_2}_{L_{T}^\infty H_x^s}.
\end{equation}
Also, we obtain from Lemma \ref{seki} and \eqref{pr diff2 ineq} that
\begin{equation}\label{pr diff24}
\nrm{\left(e^{\Lambda_1-\Lambda_2}-1\right)u_2}_{L_{T}^\infty H_x^s}
\le C(s, \|u_1\|_{L_{T}^\infty X^s}, \|u_2\|_{L_{T}^\infty X^s})\nrm{u_1-u_2}_{L_{T}^\infty X^0}.
\end{equation}
We consider the ${L_{T}^\infty H_x^s}$-norm of
$e^{-\Lambda_1}u_1-e^{-\Lambda_2}u_2$.
From Lemma \ref{seki}, \eqref{est of exp f2}, and \eqref{pr diff2 ineq}, we obtain
\begin{align*}
\nrm{e^{-\Lambda_1(0)}\phi_1-e^{-\Lambda_2(0)}\phi_2}_{H^s}
&\le \nrm{\left(1-e^{\Lambda_1(0)-\Lambda_2(0)}\right)e^{-\Lambda_1(0)}\phi_1}_{H^s} 
+ \nrm{e^{-\Lambda_2(0)}(\phi_1-\phi_2)}_{H^s} \\
&\le C(s, \|\phi_1\|_{X^s}, \|\phi_2\|_{X^s})\nrm{\phi_1-\phi_2}_{X^s}
\end{align*}
and
\begin{align*} 
&\nrm{\left(1-e^{\Lambda_1-\Lambda_2}\right)e^{-\Lambda_1}\abs{u_2}^2u_2}_{L_{T}^2 H_x^s} \\
&\lesssim \nrm{\left(1-e^{\Lambda_1-\Lambda_2}\right)e^{-\Lambda_1}}_{L_{T, x}^\infty\cap L_{T}^\infty \dot H_x^{[s]+1}}\nrm{\abs{u_2}^2u_2}_{L_{T}^2 H_x^s} \\
&\le C(s, \|u_1\|_{L_{T}^\infty X^s}, \|u_2\|_{L_{T}^\infty X^s})\nrm{\abs{u_2}^2u_2}_{L_{T}^2 H_x^s}\nrm{u_1-u_2}_{L_{T}^\infty X^0}.
\end{align*} 
Also, \eqref{est of exp f2} yields
\[
\nrm{e^{-\Lambda_1}\left(\abs{u_1}^2u_1-\abs{u_2}^2u_2\right)}_{L_T^2 H_x^s}
\le C(s, \|u_1\|_{L_{T}^\infty X^s})\nrm{\abs{u_1}^2u_1-\abs{u_2}^2u_2}_{L_{T}^2 H_x^s}.
\]
From Lemma \ref{fracLeib}, we obtain
\[
\nrm{\abs{u_1}^2u_1-\abs{u_2}^2u_2}_{L_{T}^2 H_x^s}
\le C(s, \|u_1\|_{L_{T}^\infty H_x^s\cap L_{T}^4L_x^\infty}, \|u_2\|_{L_{T}^\infty H_x^s\cap L_{T}^4L_x^\infty})\nrm{u_1-u_2}_{L_{T}^\infty H_x^s\cap L_{T}^4L_x^\infty}
\]
and
\[
\nrm{\abs{u_2}^2u_2}_{L_{T}^2 H_x^s}
\le C(s, \|u_2\|_{L_{T}^\infty H^s\cap L_{T}^4L_x^\infty}).
\]
Similar estimates hold when we replace $u_1$ and $u_2$ with $\bar u_1$ and $\bar u_2$, respectively.
Thus, Lemma \ref{strest} and \eqref{gauge diff} yield
\begin{equation}\label{pr diff25}
\begin{aligned}
&\nrm{e^{-\Lambda_1}u_1-e^{-\Lambda_2}u_2}_{L_{T}^\infty H_x^s} \\
&\lesssim \nrm{e^{-\Lambda_1(0)}\phi_1-e^{-\Lambda_2(0)}\phi_2}_{H^s} \\
&\quad + \nrm{e^{-\Lambda_1}\left(\abs{u_1}^2u_1 - \abs{u_2}^2u_2\right)}_{L_{T}^1 H_x^s}
+ \nrm{e^{-\Lambda_1}\left(\abs{u_1}^2\bar u_1 - \abs{u_2}^2\bar u_2\right)}_{L_{T}^1 H_x^s} \\
&\quad +\nrm{\left(1-e^{\Lambda_1-\Lambda_2}\right)e^{-\Lambda_1}\abs{u_2}^2u_2}_{L_{T}^1 H_x^s}
+\nrm{\left(1-e^{\Lambda_1-\Lambda_2}\right)e^{-\Lambda_1}\abs{u_2}^2\bar u_2}_{L_{T}^1 H_x^s} \\
&\le C(s, \|u_1\|_{L_{T}^\infty X^s\cap L_{T}^4L_x^\infty}, \|u_2\|_{L_{T}^\infty X^s\cap L_{T}^4L_x^\infty}) \left(\nrm{\phi_1-\phi_2}_{X^{s}} + {T}^{\fr{1}{2}}\nrm{u_1-u_2}_{L_{T}^\infty X^s\cap L_{T}^4L_x^\infty}\right).
\end{aligned}
\end{equation} 
Hence, \eqref{pr diff21}--\eqref{pr diff25} yield
\begin{align*}
&\nrm{u_1-u_2}_{L_{T}^\infty X^s\cap L_{T}^4 L_x^\infty} \\
&\le C(s, \|u_1\|_{L_{T}^\infty X^s\cap L_{T}^4L_x^\infty}, \|u_2\|_{L_{T}^\infty X^s\cap L_{T}^4L_x^\infty})\left(\nrm{\phi_1-\phi_2}_{X^s} + {T}^{\frac{1}{2}}\nrm{u_1-u_2}_{L_{T}^\infty X^s\cap L_{T}^4 L_x^\infty}\right).
\end{align*}
This completes the proof.
\end{proof}
From Lemma \ref{diff2}, we immediately obtain the uniqueness of the solution when $s>\frac{1}{2}$.
In particular, for solutions obtained in Proposition \ref{existence time}, we have the following estimate.
\begin{prop}\label{diff3}
Let $s'>\frac{3}{2}$, $0\le s\le s'$, $\phi\in X^{s'}(\br)$, and $u_1\in C([0, T_{0,1}]; X^{s'}(\br))$ and $u_2\in C([0, T_{0,2}]; X^{s'}(\br))$ be solutions to \eqref{nls} with initial data $\phi_1$ and $\phi_2$ which are obtained in Proposition \ref{existence time}.
Then, there exist $C=C(s, \|\phi_1\|_{X^s}, \|\phi_2\|_{X^s})>0$ and $T^*=T^*(s, \nrm{\phi_1}_{X^s}, \nrm{\phi_2}_{X^s})\in (0, \min(T_{0,1}, T_{0,2})]$ such that
\begin{align*}
\nrm{u_1-u_2}_{L_{T^*}^\infty X^s \cap L_{T^*}^4L_x^\infty}
\le C\nrm{\phi_1-\phi_2}_{X^{s}}.
\end{align*} 
\end{prop}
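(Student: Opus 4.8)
The plan is to combine the a priori bound of Proposition~\ref{apriori} with the difference estimate of Lemma~\ref{diff2} and then absorb the difference term. First I would apply Proposition~\ref{apriori} to $u_1$ and to $u_2$ separately. For each $j\in\{1,2\}$ this yields a time $T^*_j=T^*(s,\nrm{\phi_j}_{X^s})\in(0,T_{0,j}]$ and a bound
\[
\nrm{u_j}_{L_{T^*_j}^\infty X^s\cap L_{T^*_j}^4 L_x^\infty}\le N_j:=Ce^{C\nrm{\phi_j}_{X^s}}\bigl(1+\nrm{\phi_j}_{X^s}^{2([s]+1)^2}\bigr)\nrm{\phi_j}_{X^s},
\]
so that $N_1$ and $N_2$ depend only on $s$ and on $\nrm{\phi_1}_{X^s}$, $\nrm{\phi_2}_{X^s}$ respectively. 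Since the same bound holds on every subinterval, on the common interval $[0,T_0]$ with $T_0:=\min(T^*_1,T^*_2)$ both solutions stay bounded by $N_1$ and $N_2$ in the $L_T^\infty X^s\cap L_T^4L_x^\infty$-norm.

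Next I would invoke Lemma~\ref{diff2} on $[0,T]$ for an arbitrary $T\le T_0$. The constant there depends on $\nrm{u_1}_{L_T^\infty X^s\cap L_T^4L_x^\infty}$ and $\nrm{u_2}_{L_T^\infty X^s\cap L_T^4L_x^\infty}$; bounding it by its value at the larger arguments $N_1$, $N_2$ gives a constant $\widetilde C=\widetilde C(s,\nrm{\phi_1}_{X^s},\nrm{\phi_2}_{X^s})$ with
\[
\nrm{u_1-u_2}_{L_T^\infty X^s\cap L_T^4L_x^\infty}\le\widetilde C\Bigl(\nrm{\phi_1-\phi_2}_{X^s}+T^{\frac12}\nrm{u_1-u_2}_{L_T^\infty X^s\cap L_T^4L_x^\infty}\Bigr).
\]
Before absorbing I should note that the left-hand side is a priori finite: $u_1,u_2\in C([0,T_0];X^{s'}(\br))\subset C([0,T_0];X^s(\br))$ and each has finite $L_{T_0}^4L_x^\infty$-norm by the a priori bound, so the manipulation is legitimate.

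Finally I would set $T^*:=\min\bigl(T_0,(2\widetilde C)^{-2}\bigr)$, which depends only on $s$, $\nrm{\phi_1}_{X^s}$, $\nrm{\phi_2}_{X^s}$ and satisfies $T^*\le\min(T_{0,1},T_{0,2})$. For this choice $\widetilde C\,(T^*)^{1/2}\le\frac12$, so moving the term $T^{1/2}\nrm{u_1-u_2}$ to the left yields
\[
\nrm{u_1-u_2}_{L_{T^*}^\infty X^s\cap L_{T^*}^4L_x^\infty}\le 2\widetilde C\,\nrm{\phi_1-\phi_2}_{X^s},
\]
which is the claim with $C=2\widetilde C$. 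The argument is little more than bookkeeping; the only point requiring care is to check that $T^*$ and $C$ can be made to depend solely on the $X^s$-norms of the initial data — which works precisely because the a priori constants of Proposition~\ref{apriori} already have that property — together with the finiteness of the difference norm that makes the absorption step valid. No genuine analytic difficulty arises beyond what is contained in Proposition~\ref{apriori} and Lemma~\ref{diff2}.
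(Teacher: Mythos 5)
Your proposal is correct and follows essentially the same route as the paper: apply Proposition \ref{apriori} to each solution to get $X^s$-norm-dependent a priori bounds, feed these into the constant of Lemma \ref{diff2}, and then shrink $T^*$ so that $C{T^*}^{1/2}\le\frac12$ allows absorption of the difference term. The only cosmetic difference is that you explicitly record the finiteness of the difference norm before absorbing, which the paper leaves implicit.
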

\begin{proof}
From Proposition \ref{apriori}, there exist $C=C(s, \|\phi_1\|_{X^s})$ and $T_1^{*}=T_1^{*}(s, \nrm{\phi_1}_{X^s})\in (0, T_{0, 1})$ such that
\[
\nrm{u_1}_{L_{T_1^*}^\infty X^s\cap L_{T_1^*}^4 L_x^\infty}
\le C.
\]
We obtain a similar estimate for $u_2$.
From Lemma \ref{diff2}, there exists $C=C(s, \|\phi_1\|_{X^s}, \|\phi_2\|_{X^s})>0$ such that
\[
\nrm{u_1-u_2}_{L_T^\infty X^s \cap L_T^4L_x^\infty}
\le C\left(\nrm{\phi_1-\phi_2}_{X^{s}} +T^{\frac{1}{2}}\nrm{u_1-u_2}_{L_T^\infty X^s \cap L_T^4L_x^\infty}\right) .
\]
for any $T\in (0, \min({T_1^*}, {T_2^*})]$.
Taking $T^*=T^*(s, \snrm{\phi_1}_{X^s}, \snrm{\phi_2}_{X^s})\in (0, \min({T_1^*}, {T_2^*})]$ as
$C{T^*}^{\frac{1}{2}}\le \frac{1}{2}$, we obtain the desired estimate.
\end{proof}

\subsection{Proof of well-posedness in $X^{s}(\br)$ with $s\ge 0$}
In this subsection, we prove the local well-posedness of \eqref{nls} in $X^s(\br)$ with $s\ge 0$. 
First, we prove the case $s>\frac{1}{2}$.
We omit the proof of the uniqueness because we can prove it by using Lemma \ref{diff2} and $\snrm{u}_{L_T^4L_x^\infty} \lesssim T^{\frac{1}{4}}\snrm{u}_{L_T^\infty H_x^s}$ for $s>\frac{1}{2}$.
\begin{proof}[Proof of Theorem \ref{WP in $X^s$} of the case $s>\frac{1}{2}$]
(\textit{Existence of a solution})
Let $\phi\in X^s(\br)$. 
Then, we obtain $\phi_n:=P_{\le n}\phi\in X^{s+1}(\br)$ and $\phi_n\to \phi$ in $X^s(\br)$. 
Indeed, we have
\[
\nrm{P_{\ge n+1}\phi}_{X^s} = \nrm{P_{\ge n+1}\phi}_{H^s} + \nrm{\px^{-1}P_{\ge n+1}\phi}_{L^\infty}\to 0.
\]
From Proposition \ref{existence time}, there exist $T=T(\nrm{\phi}_{X^0})>0$ and solutions $u_n\in \ctx{s+1}$ to \eqref{nls} with initial data $\phi_n$ for any large $n$.
Also, Proposition \ref{diff3} yields that there exist $T^{*}=T^{*}(s, \nrm{\phi}_{X^s})\in (0, T]$ and $C=C(s, \|\phi\|_{X^s})$ such that for any large $m$ and $n$, it holds that
\[
\nrm{u_m-u_n}_{L_{T^{*}}^\infty X^s}
\le C\nrm{\phi_m-\phi_n}_{X^s}.
\]
Thus, there exists $u$ which is a limit of $u_n$ in $C([0, {T^{*}}]; X^s(\br))$, and an apprximation argument yields that $u$ is a solution to \eqref{nls}.

(\textit{Continuity of the flow map})
Let $\phi, \phi_n\in X^s(\br)$, $\nrm{\phi}_{X^s}, \nrm{\phi}_{X^s}\le R$, and $\phi_n\to \phi$ in $X^s(\br)$. 
Then, there exist $ T^*=  T^*(s, R)$ and solutions $u, u_n\in C([0,  T^*]; X^s(\br))$ to \eqref{nls} with initial data $\phi, \phi_n$, respectively. 
From Proposition \ref{diff3}, there exist $C = C (s, R)>0$ and $\tilde T^* = \tilde T^* (s, R)\in (0, T^*]$ such that
\[
\nrm{u_n-u}_{L_{\tilde T^*}^\infty X^s}
\le C\nrm{\phi_n-\phi}_{X^s}.
\]
Thus, we obtain the Lipschitz continuity of the flow map.
\end{proof}
Next, we prove the case $0\le s\le \frac{1}{2}$.
In the following, we denote
\[
B_{X^s}(R):=\{\phi\in X^s(\br)\mid\nrm{\phi}_{X^s}\le R\}.
\]
\begin{proof}[Proof of Theorem \ref{WP in $X^s$} of the case $0\le s\le \frac{1}{2}$]
(\textit{Existence of the extension})
Let $R>0$ and $\phi\in B_{X^s}(R)$. 
Then, we obtain $\phi_n:=\frac{\|\phi\|_{X^s}}{\|P_{\le n}\phi\|_{X^s}}P_{\le n}\phi\in X^1(\br)\cap B_{X^s}(R)$ and $\phi_n\to \phi$ in $X^s(\br)$. 
Since $\nrm{\phi_n}_{X^0}\le R$, Proposition \ref{existence time} that there exist $T=T(R)>0$ and solutions $u_n\in \ctx{1}$ to \eqref{nls} with initial data $\phi_n$.
Also, Proposition \ref{diff3} yields that there exist $T^{*}=T^{*}(s, R)\in (0, T]$ and $C=C(R)$ such that for any $m$ and $n$, it holds that
\[
\nrm{u_m-u_n}_{L_{T^{*}}^\infty X^s\cap L_{T^{*}}^4L_x^\infty}
\le C\nrm{\phi_m-\phi_n}_{X^s}.
\]
Thus, there exists $u$ which is a limit of $u_n$ in $C([0, {T^{*}}]; X^s(\br))\cap L_{T^{*}}^4L_x^\infty$, and an apprximation argument yields that $u$ satisfies \eqref{sol}.

(\textit{Well-definedness of the extension})
Let $R>0$ and $\phi\in B_{X^s}(R)$. 
We also assume $\phi_1^{(n)}, \phi_2^{(n)}\in X^1(\br)\cap B_{X^s}(R)$,  $\phi_1^{(n)}, \phi_2^{(n)}\to \phi$, and $u_1^{(n)}$ and $u_2^{(n)}$ are solutions of \eqref{nls} with initial data $\phi_1^{(n)}$ and $\phi_2^{(n)}$, respectively.
Moreover, we assume $u_1^{(n)}\to u_1$ and $u_2^{(n)}\to u_2$ in $C([0, {T^{*}}]; X^s(\br))\cap L_{T^{*}}^4L_x^\infty$, respectively.
From Proposition \ref{diff3}, by taking ${T^{*}}={T^{*}}(s, R)$ smaller if necessary, we have
\[
\nrm{u_1^{(n)}-u_2^{(n)}}_{L_{T^{*}}^\infty X^s\cap L_{T^{*}}^4L_x^\infty}
\le C(R)\nrm{\phi_1^{(n)}-\phi_2^{(n)}}_{X^s}
\]
for all $n$.
Taking $n\to\infty$, we obtain $u_1=u_2$ in $C([0, {T^{*}}]; X^s(\br))\cap L_{T^{*}}^4L_x^\infty$.

(\textit{Lipschitz continuity of the extended map})
Let $R>0$, $\phi, \phi_n\in B_{X^s}(R)$, and $\phi_n\to \phi$ in $X^s(\br)$. 
We also assume $\phi^{(k)}, \phi_n^{(k)}\in X^1(\br)\cap B_{X^s}(R)$,  $\phi^{(n)}\to \phi, \phi_n^{(k)}\to \phi_n$ in $X^s(\br)$.
Moreover, we assume that  $u^{(k)}$ and $u_n^{(k)}$ are solutions of \eqref{nls} with initial data $\phi^{(k)}$ and $\phi_n^{(k)}$, respectively, and $u^{(k)}\to u$ and $u_n^{(k)}\to u_n$ in $C([0, {T^{*}}]; X^s(\br))\cap L_{T^{*}}^4L_x^\infty$, respectively.
From Proposition \ref{diff3}, by taking ${T^{*}}={T^{*}}(s, R)$ smaller if necessary, we have
\[
\nrm{u^{(k)}-u_n^{(k)}}_{L_{T^{*}}^\infty X^s\cap L_{T^{*}}^4L_x^\infty}
\le C(s, R)\nrm{\phi^{(k)}-\phi_n^{(k)}}_{X^s}
\]
for all $k$.
Taking $k\to\infty$, we obtain 
\[
\nrm{u-u_n}_{L_{T^{*}}^\infty X^s\cap L_{T^{*}}^4L_x^\infty}
\le C(s, R)\nrm{\phi-\phi_n}_{X^s}.
\]
Thus, we obtain the Lipschitz continuity of the extended map.
\end{proof}

\section{The special coefficient case}
Unless otherwise noted, we assume 
\[
2\lambda+\bar \mu=0
\]
in this section.
 We remark that $\operatorname{Re}\left(2\lambda z+\mu \bar z\right)=0$ for all $z\in\bc$.
In particular, we have $\operatorname{Re}\Lambda=0$. 

From Proposition \ref{prop gauge}, we have
\begin{equation}\label{gauge special case}
\cl\left(e^{-\Lambda}u\right) =\abs{\mu}^2\abs{e^{-\Lambda}u}^2e^{-\Lambda}u.
\end{equation}
Therefore, $e^{-\Lambda}u$ is a solution to the cubic nonlinear Schr\"odinger equation. 
In particular, $e^{-\Lambda}u$ has conservation laws such as
\begin{equation}\label{conservation law1}
\nrm{u(t)}_{L^2} = \nrm{e^{-\Lambda(t)}u(t)}_{L^2} =  \nrm{e^{-\Lambda(0)}u(0)}_{L^2} = \nrm{u(0)}_{L^2},
\end{equation}
\begin{equation}\label{conservation law2}
\nrm{\px\left(e^{-\Lambda}u\right)(t)}_{L^2}^2 + \abs{\mu}^2\nrm{e^{-\Lambda(t)}u(t)}_{L^4}^4 
= \nrm{\px\left(e^{-\Lambda}u\right)(0)}_{L^2}^2 + \abs{\mu}^2\nrm{e^{-\Lambda(0)}u(0)}_{L^4}^4.
\end{equation}
First, we prove Theorem \ref{special case $X^s$} by using \eqref{conservation law1}.
\begin{prop}\label{existence time2}
We do not assume $2\lambda + \bar \mu=0$.
Let $s'>\frac{1}{2}$.
Then, there exist $T_0=T_0(\nrm{\phi}_{X^0})\in (0, 1)$ and a solution $u\in C([0, T_0]; X^{s'}(\br))$.
Also, for any $0\le s\le \frac{1}{2}$, there exist $T=T(s, \nrm{\phi}_{X^0})\in (0, T_0]$ and $C(s)>0$ such that
\[
\nrm{u}_{L_T^\infty X^s\cap L_T^4 L_x^\infty}
\le Ce^{C\nrm{\phi}_{X^0}}(1+\nrm{\phi}_{X^0}^2)\nrm{\phi}_{X^s}.
\] 
\end{prop}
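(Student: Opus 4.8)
The plan is to deduce both assertions from the local theory and the a priori estimates of Sections 3 and 4, the only genuinely new point being to track how the constants depend on $\nrm{\phi}_{X^0}$ rather than on $\nrm{\phi}_{X^{s'}}$. All of Lemmas \ref{apriori-1}, \ref{apriori0}, \ref{apriori1}, Proposition \ref{diff3}, and Theorem \ref{WP in $X^s$}(i) are available, and throughout $u$ denotes a solution of \eqref{nls} with data $\phi$.

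\emph{Existence.} When $s'>\frac{3}{2}$ this is exactly Proposition \ref{existence time}, whose lifetime $T_0$ already depends only on $\nrm{\phi}_{X^0}$; moreover its proof also produces $N_0=N_0(\nrm{\phi}_{X^0})$ with $\nrm{u}_{L_{T_0}^\infty X^0\cap L_{T_0}^4L_x^\infty}\le N_0$. When $\frac{1}{2}<s'\le\frac{3}{2}$, Proposition \ref{existence of a smooth solution} is not applicable to $\phi$ itself, so I would instead take the local solution provided by Theorem \ref{WP in $X^s$}(i) --- unique in $C([0,\cdot\,];X^{s'}(\br))$, and with $\sup_{t'\in[0,t]}\nrm{u(t')}_{X^{s'}}<\infty$ because its primitive is controlled by Lemma \ref{smooth apriori step4} --- and run the continuation argument of Proposition \ref{existence time} verbatim, with Theorem \ref{WP in $X^s$}(i) in the role of Proposition \ref{existence of a smooth solution}. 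That is: fix $T_0=T_0(\nrm{\phi}_{X^0})\in(0,1)$ from the bootstrap thresholds of Lemmas \ref{apriori-1}--\ref{apriori1}, and successively bootstrap $\nrm{u}_{L_t^\infty X^0\cap L_t^4L_x^\infty}$ via Lemma \ref{apriori1} with $s=0$, then $\nrm{u}_{L_t^\infty X^k}$ for $k=1,\dots,[s']$ via Lemma \ref{apriori-1}, then $\nrm{u}_{L_t^\infty X^{s'}}$ on successive subintervals of length $\tau=\tau(\nrm{\phi}_{X^0})$ via Lemma \ref{apriori0} with $s=s'$ (the coefficient of the top norm being made $\le\frac{1}{2}$ by shrinking $\tau$, which is legitimate since that coefficient depends only on $\nrm{\phi}_{X^0}$ and on the already-fixed lower-order norms). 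Then $\nrm{u(t)}_{X^{s'}}$ remains finite on $[0,T_0)$, so the local existence time at each continuation step stays bounded below and $T_0$ is attained; hence $u\in C([0,T_0];X^{s'}(\br))$, again with $\nrm{u}_{L_{T_0}^\infty X^0\cap L_{T_0}^4L_x^\infty}\le N_0(\nrm{\phi}_{X^0})$.

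\emph{A priori estimate for $0\le s\le\frac{1}{2}$.} Here $[s]=0$, so estimate \eqref{pr apriori11} in the proof of Lemma \ref{apriori1} gives, for the solution $u$ above and any $T\in(0,T_0]$,
\[
\nrm{u}_{L_T^\infty X^s\cap L_T^4L_x^\infty}
\le C e^{C(\nrm{\phi}_{X^0}+T^{1/2}\nrm{u}_{L_T^\infty L_x^2}^2)}\Bigl((1+\nrm{\phi}_{L^2})^2\nrm{\phi}_{X^s}+T^{1/2}\bigl(1+\nrm{u}_{L_T^\infty L_x^2\cap L_T^4L_x^\infty}^8\bigr)\nrm{u}_{L_T^\infty H^s}\Bigr).
\]
Since $\nrm{u}_{L_{T_0}^\infty L_x^2\cap L_{T_0}^4L_x^\infty}\le N_0$, shrinking $T$ so that $T^{1/2}N_0^2\le\nrm{\phi}_{X^0}$ makes the exponential prefactor at most $e^{2C\nrm{\phi}_{X^0}}$ and replaces $1+\nrm{u}_{L_T^\infty L_x^2\cap L_T^4L_x^\infty}^8$ by the constant $1+N_0^8$; using $\nrm{\phi}_{L^2}\le\nrm{\phi}_{X^0}$, the inequality becomes $\nrm{u}_{L_T^\infty X^s\cap L_T^4L_x^\infty}\le A\bigl(\nrm{\phi}_{X^s}+T^{1/2}(1+N_0^8)\nrm{u}_{L_T^\infty X^s\cap L_T^4L_x^\infty}\bigr)$, where $A=A(s,\nrm{\phi}_{X^0})$ has the form $Ce^{C\nrm{\phi}_{X^0}}(1+\nrm{\phi}_{X^0}^2)$. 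A standard continuity argument with target $2A\nrm{\phi}_{X^s}$, after one further shrinking of $T$ (now also depending on $s$ through $C=C(s)$) so that $A\,T^{1/2}(1+N_0^8)\le\frac{1}{4}$, absorbs the last term and yields $\nrm{u}_{L_T^\infty X^s\cap L_T^4L_x^\infty}\le 2A\nrm{\phi}_{X^s}$, which is the claimed bound.

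The step I expect to be the main obstacle is the existence statement in the range $\frac{1}{2}<s'\le\frac{3}{2}$: no smooth-data input is directly available there, so the lifetime coming from Theorem \ref{WP in $X^s$}(i) depends on $\nrm{\phi}_{X^{s'}}$ and has to be upgraded by the full multi-level continuation; the delicate point --- which is exactly the bookkeeping already carried out in the proof of Proposition \ref{existence time} --- is to keep every bootstrap threshold, including the subinterval length in the final $X^{s'}$ step, dependent only on $\nrm{\phi}_{X^0}$ and on the lower-order norms fixed at the earlier levels, and never on $\nrm{\phi}_{X^{s'}}$ itself.
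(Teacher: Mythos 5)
Your proposal is correct and follows essentially the same route as the paper: the paper likewise dispatches the existence claim as "similar to Proposition \ref{existence time}" (with Theorem \ref{WP in $X^s$}(i) implicitly replacing Proposition \ref{existence of a smooth solution} as the local input when $\frac{1}{2}<s'\le\frac{3}{2}$, exactly as you spell out), and proves the bound for $0\le s\le\frac{1}{2}$ by feeding the $X^0\cap L_T^4L_x^\infty$ bound $N_0(\nrm{\phi}_{X^0})$ into Lemma \ref{apriori0} (equivalently your \eqref{pr apriori11}) and running a continuity argument after shrinking $T$. Your constant-tracking matches the paper's choice $T=\min\bigl(T_0,\nrm{\phi}_{X^0}^2/N_0^4,\,C^{-2}e^{-2C\nrm{\phi}_{X^0}}(1+N_0)^{-8}\bigr)$.
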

\begin{proof}
The proof of the first half is similar to the proof of Proposition \ref{existence time}.
Hence, we provide the proof of the second half.
From the definition of $T_0$ in \eqref{T0}, the solution $u\in C([0, T_0]; X^{s}(\br))$ satisfies
\[
\nrm{u}_{L_{T_0}^\infty X^0\cap L_{T_0}^4 L_x^\infty} 
\le N_0,
\]
where $N_0$ is defined by \eqref{T0}.
Since Lemma \ref{apriori0} yields
\begin{align*}
&\nrm{u}_{L_{T}^\infty X^s\cap L_{T}^4 L_x^\infty} \\
&\le C_5e^{C_5(\nrm{\phi}_{X^0}+{T}^{\frac{1}{2}}\nrm{u}_{L_{T}^\infty L_x^2}^2)}\left(1+\nrm{u}_{L_{T}^\infty L_x^2}\right) \\
&\quad \times\left(\left(1+\nrm{\phi}_{L^2}\right)\nrm{\phi}_{X^s} + {T}^{\frac{1}{2}}\left(1+\nrm{u}_{L_{T}^\infty L_x^2}\right)\left(1 + \nrm{u}_{L_{T}^4 L_x^\infty}\right)^2\nrm{u}_{L_{T}^\infty H_x^s}\right)
\end{align*}
for any $T\in (0, T_0]$, a continuity argument yields
\[
\nrm{u}_{L_{T}^\infty X^s\cap L_{T}^4 L_x^\infty}
\le Ce^{C\nrm{\phi}_{X^0}}(1+\nrm{\phi}_{X^0}^2)\nrm{\phi}_{X^s},
\]
where 
\[
C = 3C_5(s), \quad T=\min\left(T_0, \frac{\nrm{\phi}_{X^0}^2}{N_0^4}, \frac{1}{C^2e^{2C\nrm{\phi}_{X^0}}(1+N_0)^8}\right).
\]
This completes the proof.
\end{proof}

\begin{proof}[Proof of Theorem \ref{special case $X^s$}]
Lemma \ref{smooth apriori step4} and \eqref{conservation law1} yield 
\[
\nrm{\int_\minfty^x u(t, y)\, dy}_{L_x^\infty}
\lesssim (1+\abs{t})\nrm{\phi}_{X^0} + \abs{t}^{\fr{1}{2}}\nrm{\phi}_{L^2}
\]
for all $t$.
Thus, we have
\begin{equation}\label{pr theorem2}
\nrm{u(t)}_{X^0} \lesssim (1+\abs{t})\nrm{\phi}_{X^0} + \abs{t}^{\fr{1}{2}}\nrm{\phi}_{L^2}.
\end{equation}
Hence, Theorem \ref{WP in $X^s$}, Proposition \ref{existence time2}, and \eqref{pr theorem2} yield Theorem \ref{special case $X^s$}.
\end{proof}
The conservation laws \eqref{conservation law1} and \eqref{conservation law2} and the Gagliardo-Nirenberg inequality yield that there exists $C=C(\nrm{\phi}_{H^1})>0$ such that
\begin{equation}\label{aprori $H^1$}
\nrm{u(t)}_{H^{1}}\le C
\end{equation}
for all $t$. 
In the following, we consider the well-posedness in $H^{s}(\br)$ for $s\ge 1$.
\subsection{Estimate for difference in $L^2(\br)$}
When we consider the well-posedness in the Sobolev space, we cannot use the decomposition as $u_1-u_2 = e^{\Lambda_1}(e^{-\Lambda_1}u_1-e^{-\Lambda_2}u_2) +(e^{\Lambda_1-\Lambda_2}-1)u_2$ since we have to use the $L^\infty_{T, x}$-norm of the primitives of solutions.
However, the condition $2\lambda+\bar \mu=0$ enjoys good structures as we saw at the beginning of this section. 
Also, we can obtain the estimate for the $L^2$-norm of the difference between two solutions.
\begin{prop}\label{diff energy}
Let $s\ge 1$ and $u_1, u_2\in \cth{s}$ be solutions to \eqref{nls} with initial data $\phi_1, \phi_2$, respectively. 
Also, we assume $\px u_1, \px u_2 \in L^1([0, T]; L^\infty(\br))$. Then, there exists  $C>0$ such that
\[
\nrm{u_1(t)-u_2(t)}_{L^2}^2\le\nrm{\phi_1-\phi_2}_{L^2}^2\exp\left(C\int_0^t\left(\nrm{\px u_1(t')}_{L^\infty} + \nrm{\px u_2(t')}_{L^\infty}\right)\,dt'\right)
\]
for any $t\in[0, T]$.
\end{prop}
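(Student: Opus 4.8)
The plan is to close an $L^2$ energy estimate on the difference, using the algebraic identity $2\lambda+\bar\mu=0$ to arrange that every derivative produced along the way falls on $u_1$ or $u_2$ rather than on the difference. Set $w:=u_1-u_2$ and $v:=\fr{1}{2}(u_1+u_2)$. Since $u_1^2-u_2^2=2vw$ and $|u_1|^2-|u_2|^2=\bar v\,w+v\bar w$, subtracting the two copies of \eqref{nls} shows that $w$ satisfies
\[
i\pt w+\fr{1}{2}\px^2 w=\px\left(g\,w+\mu v\bar w\right),\qquad g:=2\lambda v+\mu\bar v,
\]
in $H^{s-2}(\br)$ (differentiate the corresponding subtracted form of \eqref{sol} in $t$, which is legitimate since $u_j\in C([0,T];H^s(\br))$ with $s>\fr12$). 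The point is that, as remarked at the beginning of this section, $\re(2\lambda z+\mu\bar z)=0$ for all $z\in\bc$; hence $g$ is purely imaginary, and writing $g=ih$ with $h:=\im g$ real one gets, applying the same fact to $\px v$, the pointwise bound $\nrm{\px h(t)}_{L^\infty}\lesssim\nrm{\px u_1(t)}_{L^\infty}+\nrm{\px u_2(t)}_{L^\infty}$.

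Next I would record the energy identity. For $s\ge 1$ we have $w\in C([0,T];H^s(\br))$, and since $H^s(\br)$ is a Banach algebra for $s>\fr{1}{2}$ the function $gw+\mu v\bar w$ lies in $C([0,T];H^s(\br))$, so $\pt w\in C([0,T];H^{s-2}(\br))$ with $s-2\ge -1$. By the standard result on the time regularity of the $L^2$ norm, $t\mapsto\nrm{w(t)}_{L^2}^2$ is of class $C^1$ and $\fr{d}{dt}\nrm{w(t)}_{L^2}^2=2\re\langle\pt w(t),w(t)\rangle$. The contribution of $\fr{1}{2}\px^2 w$ equals $\re\!\left(-\fr{i}{2}\nrm{\px w}_{L^2}^2\right)=0$, so that
\[
\fr{d}{dt}\nrm{w(t)}_{L^2}^2=2\,\im\int_\br\px\left(g w+\mu v\bar w\right)\bar w\,dx=-2\,\im\int_\br\left(g w+\mu v\bar w\right)\px\bar w\,dx,
\]
the integration by parts being legitimate since $gw+\mu v\bar w$ and $w$ both belong to $H^1(\br)$.

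Then I would estimate the two resulting terms separately. Using $g=ih$ and the identity $2\re(w\px\bar w)=\px|w|^2$,
\[
-2\,\im\int_\br g w\px\bar w\,dx=-2\,\re\int_\br h\,w\px\bar w\,dx=-\int_\br h\,\px|w|^2\,dx=\int_\br(\px h)\,|w|^2\,dx,
\]
which is bounded by $\nrm{\px h(t)}_{L^\infty}\nrm{w(t)}_{L^2}^2$. Using $\bar w\px\bar w=\fr{1}{2}\px(\bar w^2)$ and integrating by parts once more,
\[
-2\,\im\int_\br\mu v\bar w\px\bar w\,dx=\im\int_\br\mu\,(\px v)\,\bar w^2\,dx,
\]
which is bounded by $|\mu|\,\nrm{\px v(t)}_{L^\infty}\nrm{w(t)}_{L^2}^2$. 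Combining these with the bound on $\px h$ yields
\[
\fr{d}{dt}\nrm{w(t)}_{L^2}^2\le C\left(\nrm{\px u_1(t)}_{L^\infty}+\nrm{\px u_2(t)}_{L^\infty}\right)\nrm{w(t)}_{L^2}^2,
\]
and Gronwall's inequality, together with $\px u_1,\px u_2\in L^1([0,T];L^\infty(\br))$, gives the claimed estimate.

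I expect the only genuinely delicate point to be the algebraic reduction underlying the first and third paragraphs: if one simply integrated $\px N$ against $\bar w$ without isolating the combination $g=2\lambda v+\mu\bar v$ (purely imaginary precisely because $2\lambda+\bar\mu=0$) and without observing that $\bar w\,\px\bar w$ is an exact derivative, one would be left with a term of size $\nrm{w}_{L^2}\nrm{\px w}_{L^2}$, which cannot be absorbed and destroys the estimate. The remaining analytic issues — the algebra property of $H^s(\br)$, the $C^1$ regularity of $t\mapsto\nrm{w(t)}_{L^2}^2$, and the passage to the pairing $\langle\pt w,w\rangle$ at the $H^1$ level — are routine; if preferred one may first mollify in $x$, carry out the computation, and then pass to the limit.
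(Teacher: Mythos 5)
Your proof is correct and is essentially the paper's argument: an $L^2$ energy estimate for $w=u_1-u_2$ in which the coefficient $2\lambda z+\mu\bar z$ is purely imaginary (so its contribution reduces to a commutator term involving only $\px u_j$) and the $\bar w\,\px\bar w$ term is an exact derivative. The only difference is cosmetic: the paper expands the nonlinearity around $u_2$, producing an extra $\lambda w^2$ term that it must handle by writing $\px w=\px u_1-\px u_2$, whereas your symmetric decomposition around $v=\fr12(u_1+u_2)$ eliminates that term from the start.
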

\begin{proof}
Let $w(t):=u_1(t)-u_2(t)$.
Then, we have
\[
\pt w
=-\frac{i}{2}\px^2 w-i\px \left(\lambda w^2+ (2\lambda u_2+\mu\bar u_2)w+\mu u_1\bar w\right).
\]
It holds that
\[
\abs{\int_\br\px (w^2)\bar w\, dx} +\abs{\int_\br\px (u_1\bar w)\bar w\, dx}
\lesssim \left(\nrm{\px u_1(t)}_{L^\infty} +\nrm{\px u_2(t)}_{L^\infty}\right)\nrm{w(t)}_{L^2}^2.
\]
Noting that $\overline{2\lambda u_2+\mu\bar u_2}=-(2\lambda u_2+\mu\bar u_2)$, we have
\begin{align*}
&\operatorname{Im}\left(\int_\br\px ((2\lambda u_2+\mu\bar u_2)w)\bar w\, dx\right) \\
&=\operatorname{Im}\left(\int_\br\px (2\lambda u_2+\mu\bar u_2)\abs{w}^2\, dx\right)
+\operatorname{Im}\left(\int_\br(2\lambda u_2+\mu\bar u_2)\px w\cdot\bar w\, dx\right) \\
&=\operatorname{Im}\left(\int_\br\px (2\lambda u_2+\mu\bar u_2)\abs{w}^2\, dx\right)
-\operatorname{Im}\left(\int_\br\px ((2\lambda u_2+\mu\bar u_2)w)\bar w\, dx\right).
\end{align*}
Thus, we obtain
\[
\abs{\operatorname{Im}\left(\int_\br\px ((2\lambda u_2+\mu\bar u_2)w)\bar w\, dx\right)}
\lesssim \nrm{\px u_2(t)}_{L^\infty}\nrm{w(t)}_{L^2}^2.
\]
Therefore, Gronwall's inequality yields the desired estimate.
\end{proof}
\subsection{A priori estimate using frequency envelope}
We obtained the bound of the difference in $L^{2}(\br)$. To prove the well-posedness in the Sobolev space, we use the frequency envelope.
See, for example, \cite[Definition 1]{Tao2001}, \cite[Definition 5.1]{Tao2004}.
In this paper, we add the character $\delta$-, since we have to take $\delta>0$ smaller than $s$ if $s>0$ is small.
\begin{defi}[frequency envelope]
Let $s\in\br$ and $\delta>0$. We say that a nonnegative sequence $\{c_k\}\in \ell^2$ is a $\delta$-frequency envelope in $H^s(\br)$ if it has the following properties:
\begin{align*}
&(i)\quad \sum_{k=0}^\infty c_k^2 \lesssim \nrm{\phi}_{H^s}^2, \\
&(ii)\quad c_j\le 2^{\delta\abs{j-k}}c_k, \\
&(iii)\quad \nrm{P_k\phi}_{H^s}\le c_k.
\end{align*}
\end{defi}
The a priori estimate which uses the frequency envelope is the following.
\begin{prop}\label{apriori gauge fr}
Let $s\ge \tilde s> 0$, $0<\delta<\min(\fr{1}{100}, \fr{\tilde s}{2})$, $r, R>0$, $\phi\in X^{s+1}(\br)$, $\nrm{\phi}_{H^{\tilde s}}\le r$, and $\nrm{\phi}_{H^s}\le R$. 
Also, we assume that $\{Rc_k\}$ is a $\delta$-frequency envelope of $\phi$ in $H^s(\br)$ and $u\in C([0, \infty); X^{s+1}(\br))$ is a solution to \eqref{nls}.  
Then, there exist $T^*=T^*(s, \tilde s, r)>0$ and $C=C(s)>0$ such that
\begin{equation*}
\sup_{k\in\bz_{\ge 0}}2^{sk}c_k^{-1}\nrm{P_k u}_{S_{T^*}} + \sup_{k\in\bz_{\ge 0}}2^{sk}c_k^{-1}\nrm{P_k \left(e^{\Lambda}u\right)}_{S_{T^*}}
\le C\left(R+R^{4(s+1)(s+2)}\right).
\end{equation*}
\end{prop}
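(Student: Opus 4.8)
The plan is to run a frequency-envelope bootstrap argument based on the gauged equation \eqref{gauge special case}. Set $v := e^{-\Lambda}u$; by Proposition \ref{prop gauge} (applicable since $u\in C([0,\infty);X^{s+1}(\br))$ is smooth enough to justify the gauge transformation) we have $\cl v = |\mu|^2|v|^2v$. Since $2\lambda+\bar\mu=0$ gives $\re\Lambda=0$, both $e^{\pm\Lambda}$ have modulus $1$, so $\|u(t)\|_{L^2}=\|v(t)\|_{L^2}=\|\phi\|_{L^2}$ by \eqref{conservation law1}, and more generally $e^{\pm\Lambda}$ is bounded on $L^2$ uniformly in time. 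The strategy is: (1) control $\|P_k u\|\st$ in terms of frequency-localized pieces of $v$ and of $\Lambda$ via the product structure of $e^{\Lambda}$, using Lemma \ref{seki}, Lemma \ref{fracLeib} and Lemma \ref{strest}; (2) control $\|P_k v\|\st$ from the cubic equation for $v$ using the Strichartz estimate applied to each Littlewood-Paley piece; and (3) close the estimates on a time interval $[0,T^*]$ whose length depends only on $s,\tilde s$ and the low-regularity norm $r=\|\phi\|_{H^{\tilde s}}$, exploiting that the $L^\infty_{T,x}$-norm of $\Lambda$ and the $L^2$-norm of $u$ are a priori bounded by $\|\phi\|_{X^0}\le\|\phi\|_{H^{\tilde s}}$-type quantities via Lemma \ref{smooth apriori step4}.

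First I would fix the notation $N_k := 2^{sk}c_k^{-1}\|P_k u\|\st$ and $M_k := 2^{sk}c_k^{-1}\|P_k v\|\st$ and set $A := \sup_k(N_k + M_k)$, which is finite because $u\in C([0,T];X^{s+1})\subset C([0,T];H^{s+1})$. Applying $P_k$ to the Duhamel form of $\cl v=|\mu|^2|v|^2v$ and using Lemma \ref{strest} gives
\[
\|P_k v\|\st \lesssim \|P_k v(0)\|_{L^2} + \|P_k(|v|^2v)\|_{L^1_{T}L^2_x}.
\]
Here $\|P_k v(0)\|_{L^2} = \|P_k(e^{-\Lambda(0)}\phi)\|_{L^2}$, which must be bounded by $c_k 2^{-sk}$ up to a shift: one uses that $e^{-\Lambda(0)}$ is a multiplication by a bounded function whose derivatives are controlled by $\|\phi\|_{H^s}$ via the estimate preceding \eqref{est of exp f2}, combined with the frequency-envelope property $(ii)$ to absorb the resulting off-diagonal sum $\sum_j 2^{-\delta|j-k|}c_j \lesssim c_k$ (here $\delta<1/100$ is the slack in the envelope; the bound costs a factor polynomial in $R$, matching the exponent $4(s+1)(s+2)$ on the right-hand side). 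For the cubic term I would decompose $|v|^2v$ by Littlewood-Paley, use the fractional Leibniz rule (Lemma \ref{fracLeib}) and Bernstein/Hölder to write $\|P_k(|v|^2v)\|_{L^1_TL^2_x}\lesssim T^{1/2}\|v\|_{L^4_TL^\infty_x}^2\, c_k 2^{-sk} A$ (again summing the envelope off-diagonal factors), where $\|v\|_{L^4_TL^\infty_x}$ is itself controlled by $A$ and, on a short interval, by $r$. Similarly for $P_k u = P_k(e^{\Lambda}v)$ one writes the product $e^{\Lambda}v$, peels off the high-frequency part of $e^{\Lambda}$ (bounded in $\dot H^{[s]+1}$ by a power of $\|u\|_{H^{[s]}}$, which on $[0,T^*]$ is controlled because the argument can be run by induction on the integer scale of regularity exactly as in Lemma \ref{apriori-1}), and obtains $N_k \lesssim C(\|\phi\|_{H^{\tilde s}})(1+A^{\text{power}})(R+\text{small}\cdot A)$.

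The bootstrap then reads, schematically,
\[
A \le C\bigl(R + R^{4(s+1)(s+2)}\bigr) + C(r)\,{T^*}^{1/2}\,\bigl(A + A^{\text{power}}\bigr),
\]
with all implicit constants depending only on $s$ and on the low-regularity radius $r$ (never on $R$), because the only places a large Sobolev norm enters are the linear-in-$A$ terms and the initial-data term, while every genuinely nonlinear factor carries a $T^*$ power and is estimated using norms that \eqref{aprori $H^1$}, \eqref{conservation law1} and Lemma \ref{smooth apriori step4} bound in terms of $r$ alone. Choosing $T^*=T^*(s,\tilde s,r)$ small enough to absorb the last term by continuity (the standard continuity/bootstrap argument, using $A$ as a continuous function of the time endpoint starting from $A|_{T=0}\lesssim R+R^{\text{power}}$) yields $A\le C(R+R^{4(s+1)(s+2)})$, which is exactly the claimed bound once we note $\sup_k 2^{sk}c_k^{-1}\|P_k(e^{\Lambda}u)\|\st$ is handled identically to $\sup_k 2^{sk}c_k^{-1}\|P_k u\|\st$ with the roles of $e^{\Lambda}$ and $e^{-\Lambda}$ swapped.

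The main obstacle I expect is step (1): transferring frequency-localized control from $v=e^{-\Lambda}u$ back to $u=e^{\Lambda}v$ (and to $e^{\Lambda}u$) while keeping all constants independent of $R=\|\phi\|_{H^s}$. The product $e^{\Lambda}v$ at frequency $2^k$ couples $v$ at frequency $2^k$ with $e^{\Lambda}$ at all frequencies, and the high-frequency part of $e^{\Lambda}$ is only controlled in $\dot H^{[s]+1}$ by a \emph{power} of $\|u\|_{H^{[s]}}$; the resolution is exactly the one used in Lemmas \ref{apriori-1}--\ref{apriori1}: run the estimate by induction on integer regularity scales, so that at scale $s$ one is allowed to use the already-established (polynomial-in-$R$) bound at scale $[s]$, and use the frequency-envelope off-diagonal decay $2^{-\delta|j-k|}$ to make the paraproduct sums converge. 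Keeping careful track of which exponents of $R$ accumulate — and checking they do not exceed $4(s+1)(s+2)$ — is the bookkeeping heart of the proof.
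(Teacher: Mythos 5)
Your overall architecture matches the paper's: gauge to $v=e^{-\Lambda}u$, which solves the cubic NLS \eqref{gauge special case}; run a frequency-envelope bootstrap in time using Lemma \ref{strest} and the vanishing of $P_k\bigl((P_{\le k-4}v)^3\bigr)$ so that the cubic term produces the factor $2^{-sk}c_k$; and use a two-tier argument (first at regularity $\tilde s$, then at $s$) so that $T^*$ depends only on $r$. That part is sound and is essentially Propositions \ref{apriori gauge fr2} and \ref{apriori gauge fr2.5} (modulo the equicontinuity argument needed to make $M(T)=\sup_k 2^{sk}c_k^{-1}\nrm{P_kv}_{S_T}$ continuous for the bootstrap, which you gloss over but which is a genuine, if minor, point).

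The gap is in the step you yourself flag as the main obstacle: transferring the envelope bound from $v$ back to $u=e^{\Lambda}v$ and to $e^{\Lambda}u$. Your proposed mechanism --- bound the high-frequency part of $e^{\Lambda}$ in $\dot H^{[s]+1}$ by powers of $\nrm{u}_{H^{[s]}}$ and induct on integer regularity as in Lemma \ref{apriori-1} --- does not close. In the high-low interaction $P_k\bigl(P_{\sim k}(e^{\Lambda})\cdot P_{\ll k}v\bigr)$ the envelope factor $c_k$ must be supplied by $P_{\sim k}(e^{\Lambda})$, and a Sobolev bound only yields $\nrm{P_k e^{\Lambda}}_{L^2}\lesssim 2^{-([s]+1)k}C(R)$ with no $c_k$; since $c_k$ can be as small as $2^{-\delta k}c_0$ with $c_0$ itself small (data concentrated at high frequency), and since $[s]+1-s$ can even be smaller than $\delta$, the required inequality $2^{-([s]+1)k}C(R)\lesssim 2^{-sk}c_k$ genuinely fails. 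The same issue already arises for the datum $P_k(e^{-\Lambda(0)}\phi)$, where your appeal to the estimate preceding \eqref{est of exp f2} is likewise insufficient. The paper's resolution is different: since $\px e^{\Lambda}=(2\lambda u+\mu\bar u)e^{\Lambda}$ and $\bar\Lambda=-\Lambda$, the high frequencies of $e^{\Lambda}$ are controlled by $P_l(e^{\Lambda}u)$ and by $P_l(e^{\Lambda}\bar u)$, whose norm equals that of $P_l(e^{-\Lambda}u)=P_l v$ --- that is, by the very quantities being estimated. One therefore proves a \emph{simultaneous} bound on $\nrm{P_k u}_{S_{T^*}}$ and $\nrm{P_k(e^{\Lambda}u)}_{S_{T^*}}$ via Tao's commutator estimate (Lemma \ref{comm est}) and iterates it, gaining half a unit of frequency decay per pass, $[s]+1$ times, until the decay saturates at $2^{-sk}c_k$ (and $[2s]$ times for the datum in Lemma \ref{est of gauge ini}). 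This self-improving loop is the reason the statement bounds $P_k(e^{\Lambda}u)$ alongside $P_k u$: it is not, as you suggest, handled identically with the roles of $e^{\Lambda}$ and $e^{-\Lambda}$ swapped, but is an essential coupled component without which neither estimate closes.
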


To prove this proposition, we use commutator estimates for the Littlewood-Paley decomposition by Tao \cite[Lemma 2]{Tao2001}.
The following lemma is a special case of it.
In the following, $[\cdot, \cdot]$ denotes the commutator.
For $f:\br\to\bc$ and $y\in\br$, we define $T_yf(x):=f(x+y)$.
\begin{lem}\label{comm est}
Let $1\le p\le \infty$. Then, we have 
\begin{equation} \label{comm est ineq1}
\nrm{[P_k, f]g}_{L^{p}} \lesssim 2^{-k}\sup_{y\in\br}\nrm{T_y\left(\px f\right)g}_{L^{p}},
\end{equation}
\begin{equation} \label{comm est ineq2}
\nrm{[P_k, P_{\le k-3}f]g}_{L^{p}} \lesssim 2^{-k}\sup_{y\in\br}\nrm{T_y\left(\px P_{\le k-3} f\right)P_{[k-2, k+2]}g}_{L^{p}}
\end{equation}
for $k\in\bz$
\end{lem}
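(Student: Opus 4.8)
The plan is to prove \eqref{comm est ineq1} directly from the convolution-kernel representation of $P_k$, and then to deduce \eqref{comm est ineq2} from \eqref{comm est ineq1} by a Littlewood--Paley frequency localization. First I would write $P_k$ as convolution with a kernel $\phi_k$ whose Fourier transform is $p_k$. The scaling relation $p_k(\xi)=q(\xi/2^k)$ for $k\ge 1$, with $q$ a fixed bump independent of $k$ (and $p_0=p$ for $k=0$), gives $\phi_k(x)=2^k\Phi(2^kx)$ for a fixed Schwartz function $\Phi$, so that $\nrm{x\phi_k}_{L^1}\lesssim 2^{-k}$ uniformly in $k\ge 0$; for $k\le -1$ one has $P_k=0$ and there is nothing to prove. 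Then the commutator has the representation
\[
[P_k,f]g(x)=\int_\br \phi_k(x-y)\left(f(y)-f(x)\right)g(y)\,dy,
\]
because $f(x)\int_\br\phi_k(x-y)g(y)\,dy=f(x)P_kg(x)$ is exactly the subtracted term.

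Next I would insert the fundamental theorem of calculus in the form $f(y)-f(x)=-(x-y)\int_0^1(\px f)\left(x-\sigma(x-y)\right)\,d\sigma$, substitute $z=x-y$, and arrive at
\[
[P_k,f]g(x)=-\int_\br z\phi_k(z)\int_0^1(\px f)(x-\sigma z)\,g(x-z)\,d\sigma\,dz.
\]
Applying Minkowski's integral inequality in $x$ and then translating by $z$ inside each $L^p_x$-norm (by translation invariance of $\nrm{\cdot}_{L^p}$) turns the inner norm into $\nrm{T_{(1-\sigma)z}(\px f)\cdot g}_{L^p}\le \sup_{y\in\br}\nrm{T_y(\px f)\cdot g}_{L^p}$. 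Pulling this supremum out leaves the factor $\int_\br\abs{z\phi_k(z)}\,dz=\nrm{x\phi_k}_{L^1}\lesssim 2^{-k}$, which is precisely \eqref{comm est ineq1}. This works uniformly for $1\le p\le\infty$, and the only regularity needed on $f$ is that $\px f$ makes sense, which holds in the applications since $\px\Lambda=2\lambda u+\mu\bar u\in H^s$.

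For \eqref{comm est ineq2} the key point is a spectral localization: since $P_{\le k-3}f$ has Fourier support in $\{\abs{\xi}<2^{k-2}\}$ while $p_k$ is supported in $\{2^{k-1}\le\abs{\xi}\le 2^{k+1}\}$, both $P_k(P_{\le k-3}f\cdot g)$ and $P_{\le k-3}f\cdot P_kg$ are unchanged when $g$ is replaced by $P_{[k-2,k+2]}g$. Indeed $P_kg=P_k(P_{[k-2,k+2]}g)$ because $p_k$ is supported where $P_{[k-2,k+2]}$ has symbol $1$; and $P_k\left(P_{\le k-3}f\cdot(I-P_{[k-2,k+2]})g\right)=0$ because convolving the spectrum $\{\abs{\eta}<2^{k-2}\}$ of $P_{\le k-3}f$ with the low part $\{\abs{\zeta}<2^{k-2}\}$ of $(I-P_{[k-2,k+2]})g$ lands in $\{\abs{\eta+\zeta}<2^{k-1}\}$, and with the high part $\{\abs{\zeta}>2^{k+2}\}$ it lands in $\{\abs{\eta+\zeta}>2^{k+1}\}$, both disjoint from the support of $p_k$. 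Hence $[P_k,P_{\le k-3}f]g=[P_k,P_{\le k-3}f]P_{[k-2,k+2]}g$, and applying \eqref{comm est ineq1} with $f$ replaced by $P_{\le k-3}f$ and $g$ replaced by $P_{[k-2,k+2]}g$ gives \eqref{comm est ineq2}.

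The change of variables and the Minkowski/translation-invariance bookkeeping in the first part are routine; the one step that needs care is the frequency-support computation in the second part, where I would track the cutoff endpoints precisely (using $\abs{\eta}<2^{k-2}$ for the spectrum of $P_{\le k-3}f$) to confirm that $P_k$ annihilates the off-band contributions. Since this lemma is the one-dimensional special case of \cite[Lemma 2]{Tao2001}, the estimate and these support considerations are entirely standard, and no structure of the equation \eqref{nls} enters.
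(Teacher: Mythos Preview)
Your proof is correct and follows essentially the same approach as the paper. The paper cites \cite[Lemma 2]{Tao2001} for \eqref{comm est ineq1} and then deduces \eqref{comm est ineq2} from \eqref{comm est ineq1} via the two identities $P_k(P_{\le k-3}f\cdot g)=P_k(P_{\le k-3}f\cdot P_{[k-2,k+2]}g)$ and $P_k=P_kP_{[k-2,k+2]}$; you have simply spelled out the standard kernel argument for the first part and verified these frequency-support identities explicitly for the second.
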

\begin{proof}
For \eqref{comm est ineq1}, see \cite[Lemma 2]{Tao2001}.
We have \eqref{comm est ineq2} from \eqref{comm est ineq1} and eqalities $P_k(P_{\le k-3}f\cdot g)=P_k(P_{\le k-3}f\cdot P_{[k-2, k+2]}g)$ and $P_k=P_kP_{[k-2, k+2]}$.
\end{proof}
We first consider the estimate for the gauge-transformed initial data.

\begin{lem}\label{est of gauge ini}
Let $s>0$ and $0<\delta<\min(\fr{1}{100}, \fr{s}{2})$.
Then, for any $R>0$, $\phi\in X^s(\br)$ with $\nrm{\phi}_{H^s}\le R$, and $\{Rc_k\}$ which is a $\delta$-frequency envelope of $\phi$ in $H^s(\br)$, we have
\[
\sup_{k\in\bz_{\ge 0}}2^{sk}c_k^{-1}\nrm{P_k\left(e^{-\Lambda(0)}\phi\right)}_{L^2} + \sup_{k\in\bz_{\ge 0}}2^{sk}c_k^{-1}\nrm{P_k\left(e^{-\Lambda(0)}\bar\phi\right)}_{L^2} \lesssim R+R^{2s+2},
\]
where $\Lambda(0):= \lambda\int_\minfty^x\phi(y)\, dy + \mu\int_\minfty^x\bar \phi(y)\, dy$.
\end{lem}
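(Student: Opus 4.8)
The plan is to exploit the special structure of the exponent. Under the standing assumption $2\lambda+\bar\mu=0$ of this section, $\re\Lambda(0)=\re\bigl((2\lambda+\bar\mu)\int_\minfty^x\phi(y)\,dy\bigr)=0$, so $\lvert e^{\pm\Lambda(0)}\rvert\equiv1$ and in particular $\nrm{e^{\pm\Lambda(0)}}_{L^\infty}=1$; this is exactly what makes the bound possible under the hypothesis $\nrm{\phi}_{H^s}\le R$ alone, with no control on the primitive $\int_\minfty^x\phi$. As preliminary estimates I would record $\px(e^{-\Lambda(0)})=-(2\lambda\phi+\mu\bar\phi)e^{-\Lambda(0)}$, hence $\nrm{e^{-\Lambda(0)}}_{\dot H^1}\lesssim\nrm{\phi}_{L^2}\le R$, and, iterating this identity (the Fa\`a di Bruno formula), that $\px^{m}(e^{-\Lambda(0)})$ is a finite sum of terms $\bigl(\prod_i\px^{a_i}(2\lambda\phi+\mu\bar\phi)\bigr)e^{-\Lambda(0)}$ with $\sum_i(a_i+1)=m$. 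Using $\nrm{e^{-\Lambda(0)}}_{L^\infty}=1$, H\"older and the Gagliardo--Nirenberg inequality, each such term is bounded in $L^2$ by a polynomial in $R$ of degree at most $m$, provided $m-1\le s$, and with one application of Lemma \ref{seki} for the fractional gain this yields $\nrm{e^{-\Lambda(0)}}_{\dot H^\sigma}\lesssim 1+R^{2s+1}$ for a suitable $\sigma$, which we may take to be the integer $[s]+1$, or $s+1$ when the fractional part of $s$ exceeds $1-\delta$; in either case $\sigma\ge\max(s+\delta,\tfrac12)$. Since $P_k(e^{-\Lambda(0)})$ for $k\ge1$ is frequency-localized away from the origin, these give the decay bound $\nrm{P_k(e^{-\Lambda(0)})}_{L^2}\lesssim 2^{-\sigma k}(1+R^{2s+1})$ together with the trivial $\nrm{P_k(e^{-\Lambda(0)})}_{L^\infty}\lesssim1$.

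Then I would expand $e^{-\Lambda(0)}\phi$ into the three Bony paraproducts $\Pi_{lh}$, $\Pi_{hh}$, $\Pi_{hl}$ (where in $\Pi_{hl}$ the exponential carries the higher frequency) and estimate $2^{sk}c_k^{-1}\nrm{P_k(\cdot)}_{L^2}$ for each. For $\Pi_{lh}=\sum_j P_{\le j-3}(e^{-\Lambda(0)})P_j\phi$ only the blocks with $\lvert j-k\rvert\lesssim1$ contribute to $P_k$, so $\nrm{P_k\Pi_{lh}}_{L^2}\lesssim\nrm{e^{-\Lambda(0)}}_{L^\infty}\nrm{P_{[k-2, k+2]}\phi}_{L^2}$, and multiplying by $2^{sk}$ and using $\nrm{P_j\phi}_{H^s}\le Rc_j$ together with $c_j\le 2^{\delta\lvert j-k\rvert}c_k$ gives $\lesssim Rc_k$. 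For $\Pi_{hh}=\sum_{\lvert j-j'\rvert\le2}P_j(e^{-\Lambda(0)})P_{j'}\phi$ only $j\ge k-5$ contributes, so $\nrm{P_k\Pi_{hh}}_{L^2}\lesssim\sum_{j\ge k-5}\nrm{P_j(e^{-\Lambda(0)})}_{L^\infty}\nrm{P_{[j-2, j+2]}\phi}_{L^2}\lesssim\sum_{j\ge k-5}\nrm{P_{[j-2, j+2]}\phi}_{L^2}$, and $2^{sk}$ times this is $\lesssim\sum_{j\ge k-5}2^{s(k-j)}Rc_j\lesssim\sum_{j\ge k-5}2^{(s-\delta)(k-j)}Rc_k\lesssim Rc_k$, where it is essential that $\delta<s$ (which holds since $\delta<s/2$).

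The main obstacle is the high--low term $\Pi_{hl}=\sum_j P_j(e^{-\Lambda(0)})P_{\le j-3}\phi$, for which only $j\sim k$ contributes and I would use $\nrm{P_k\Pi_{hl}}_{L^2}\lesssim\nrm{P_{[k-2, k+2]}(e^{-\Lambda(0)})}_{L^2}\,\nrm{P_{\le k-1}\phi}_{L^\infty}$. The first factor is $\lesssim 2^{-\sigma k}(1+R^{2s+1})$ by the decay bound above; for the second, Bernstein and the envelope properties give $\nrm{P_{\le k-1}\phi}_{L^\infty}\lesssim\sum_{m\le k-1}2^{m/2}\nrm{P_m\phi}_{L^2}\lesssim Rc_k\,2^{\delta k}\sum_{m\le k-1}2^{(1/2-s-\delta)m}\lesssim Rc_k\,2^{\beta k}$ with $\beta=\max(\delta,\tfrac12-s)<1$, so that $s+\beta=\max(s+\delta,\tfrac12)$. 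Hence $2^{sk}c_k^{-1}\nrm{P_k\Pi_{hl}}_{L^2}\lesssim 2^{(s+\beta-\sigma)k}(R+R^{2s+2})\lesssim R+R^{2s+2}$, the two constraints $\sigma\ge\max(s+\delta,\tfrac12)$ and $\nrm{e^{-\Lambda(0)}}_{\dot H^\sigma}\lesssim1+R^{2s+1}$ being precisely what forces the choice of $\sigma$ above. Summing the three contributions, and noting that for the finitely many small $k$ the high--low term vanishes while the others are controlled as above, yields $2^{sk}c_k^{-1}\nrm{P_k(e^{-\Lambda(0)}\phi)}_{L^2}\lesssim R+R^{2s+2}$. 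The term with $\bar\phi$ in place of $\phi$ is treated identically, since $\nrm{P_k\bar\phi}_{L^2}=\nrm{P_k\phi}_{L^2}$ (the symbols $p_k$ being even), $\nrm{\bar\phi}_{H^s}=\nrm{\phi}_{H^s}$, and $\{Rc_k\}$ is equally a $\delta$-frequency envelope of $\bar\phi$. The delicate point throughout $\Pi_{hl}$ is to spend enough derivatives on $e^{-\Lambda(0)}$ to overcome the $2^{\beta k}$ loss coming from the frequency envelope below frequency $2^k$, while not spending so many that the resulting polynomial in $R$ exceeds degree $2s+2$.
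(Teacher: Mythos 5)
Your proposal is correct in its essentials, but it takes a genuinely different route from the paper. You exploit the same two facts the paper does — that $2\lambda+\bar\mu=0$ makes $\abs{e^{\pm\Lambda(0)}}=1$, so only $\nrm{\phi}_{H^s}\le R$ is needed, and that the loss must be absorbed by spending derivatives on the gauge factor — but you organize the proof as a Bony trichotomy (low-high, high-high, high-low) and handle the critical high-low piece by a direct chain-rule/Gagliardo--Nirenberg bound $\nrm{e^{-\Lambda(0)}}_{\dot H^\sigma}\lesssim 1+R^{2s+1}$ with $\sigma=[s]+1$ (or $s+1$ in the borderline fractional case), which gives the decay $\nrm{P_j(e^{-\Lambda(0)})}_{L^2}\lesssim 2^{-\sigma j}(1+R^{2s+1})$ in one shot. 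The paper instead never estimates higher derivatives of $e^{-\Lambda(0)}$ directly: it writes $P_k(e^{-\Lambda(0)}\phi)$ via the commutator $[P_k,e^{-\Lambda(0)}]\phi$, applies Tao's commutator estimate (Lemma \ref{comm est}), splits the commutant into $P_{\le k-3}e^{-\Lambda(0)}$ and its complement, and then bootstraps: the crude bound with decay $2^{-k/2}$ is fed back into the high-frequency commutator term to gain an extra factor $2^{-k/2}$ per step, iterating $[2s]$ times and tracking $P_k(e^{-\Lambda(0)}\phi)$ and $P_k(e^{-\Lambda(0)}\bar\phi)$ simultaneously (since $\px\Lambda(0)\cdot e^{-\Lambda(0)}$ mixes both). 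Your approach is more direct and avoids the iteration, at the price of the Fa\`a di Bruno/interpolation step you leave schematic; that step is routine, and the resulting degree in $R$ stays below $2s+2$ as you check. The paper's iterative scheme has the advantage that it transfers verbatim to the space-time estimates of Proposition \ref{apriori gauge fr}, where no analogue of ``Sobolev smoothness of the gauge'' is available and only frequency-envelope bounds on $P_l(e^{-\Lambda}u)$ can be recycled. Two small inaccuracies in your write-up, both harmless: the high-low term does not literally vanish for small $k$ (terms with $j$ near $3$ can hit $P_k$ for $k\le 6$; since there $j\ge 3$, $P_j(e^{-\Lambda(0)})\in L^2$ with an $O(R)$ bound and the same envelope estimate for $\nrm{P_{\le j-3}\phi}_{L^\infty}$ closes the case — note also that for $k\le 2$ you should restrict the first factor to $P_{[\max(k-2,3),k+2]}$, since $P_0(e^{-\Lambda(0)})$ is not square-integrable), and at the borderline $\delta=\fr12-s$ your geometric sum degrades to a factor $k$, which is still absorbed because $\sigma\ge 1>s+\delta=\fr12$ there.
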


\begin{proof}
First, we consider the case $0< s\le \fr{1}{2}$. Then, we have
\begin{equation}\label{fr decomp}
\nrm{P_k\left(e^{-\Lambda(0)}\phi\right)}_{L^2}
\le \nrm{P_k\left(e^{-\Lambda(0)}P_{\le k-2}\phi\right)}_{L^2} +  \nrm{P_k\left(e^{-\Lambda(0)}P_{\ge k-1}\phi\right)}_{L^2}.
\end{equation}
The second term on the right-hand side of \eqref{fr decomp} is estimated from the properties of the $\delta$-frequency envelope:
\[
\nrm{P_k\left(e^{-\Lambda(0)}P_{\ge k-1}\phi\right)}_{L^2}
\le \sum_{l\ge k-1}\nrm{P_{l}\phi}_{L^2}
\le \sum_{l\ge k-1}2^{-sl +\delta(l-k)}Rc_k
\lesssim  2^{-sk}Rc_k.
\]
Also, Lemma \ref{comm est} and Bernstein's inequality yield
\begin{align*}
\nrm{P_k\left(e^{-\Lambda(0)}P_{\le k-2}\phi\right)}_{L^2}
&=\nrm{[P_k,e^{-\Lambda(0)}]P_{\le k-2}\phi}_{L^2} \\
&\lesssim 2^{-k} \sup_{y\in\br}\nrm{T_y\left(\left(\px \Lambda(0)\right)e^{-\Lambda(0)}\right)P_{\le k-2}\phi}_{L^2} \\
&\lesssim 2^{-k}\nrm{\phi}_{L^2}\sum_{0\le l\le k-2}\nrm{P_{l}\phi}_{L^\infty} \\
&\lesssim 2^{-k}R\sum_{0\le l\le k-2}2^{\fr{l}{2}-sl+\delta(k-l)}Rc_k \\
&\lesssim 2^{-\fr{k}{2}}R^2c_k.
\end{align*}
We obtain the same estimate when we replace $\phi$ with $\bar\phi$. 
Thus, we have the desired estimate. 

Next, we prove the case $s>\fr{1}{2}$. 
We have
\begin{equation}\label{fr decomp2}
\nrm{P_k\left(e^{-\Lambda(0)}\phi\right)}_{L^2}
\le \nrm{[P_k, e^{-\Lambda(0)}]\phi}_{L^2} +  \nrm{e^{-\Lambda(0)}P_k\phi}_{L^2}.
\end{equation}
From $|e^{-\Lambda}|=1$, we have $\nrm{e^{-\Lambda(0)}P_k\phi}_{L^2}\le 2^{-sk}Rc_k$.
Thus, it suffices to consider the estimate for the first term on the right-hand side of \eqref{fr decomp2}. 
From the triangle inequality, we have
\begin{equation}\label{pr est of gauge ini-1}
\nrm{[P_k, e^{-\Lambda(0)}]\phi}_{L^2}
\le \nrm{[P_k, P_{\le k-3}e^{-\Lambda(0)}]\phi}_{L^2} + \nrm{[P_k, \left(1-P_{\le k-3}\right)e^{-\Lambda(0)}]\phi}_{L^2}
=:\I +\II.
\end{equation}
It follows from Lemma \ref{comm est} that
\begin{equation}\label{pr est of gauge ini0}
\begin{aligned}
\I
&\lesssim 2^{-k}\sup_{y\in\br}\nrm{T_y\left(P_{\le k-3}\left(\px \Lambda(0)\cdot e^{-\Lambda(0)}\right)\right)P_{[k-2, k+2]}\phi}_{L^2} \\
&\lesssim 2^{-k}\nrm{\phi}_{L^\infty}\nrm{P_{[k-2, k+2]}\phi}_{L^2} \\
&\lesssim 2^{-(s+1)k}R^2c_k.
\end{aligned}
\end{equation}
Next, we consider the estimate for $\II$.
We note that the same argument as the case $0<s\le \frac{1}{2}$ yields
\begin{equation}\label{pr est of gauge ini0.5}
\nrm{P_k\left(e^{-\Lambda(0)}\phi\right)}_{L^2} 
+ \nrm{P_k\left(e^{-\Lambda(0)}\bar\phi\right)}_{L^2} 
\lesssim 2^{-\frac{1}{2}k}\left(R+R^2\right)c_k.
\end{equation}
Thus, Lemma \ref{comm est}  yields
\begin{equation}\label{pr est of gauge ini1}
\begin{aligned}
\II
&\lesssim 2^{-k}\sup_{y\in\br}\nrm{T_y\left(P_{\ge k-2}\left(\px \Lambda(0)\cdot e^{-\Lambda(0)}\right)\right)\phi}_{L^2} \\
&\lesssim 2^{-k}\sum_{l\ge k-2} \left(\nrm{P_l\left(e^{-\Lambda(0)}\phi\right)}_{L^2} + \nrm{P_l\left(e^{-\Lambda(0)}\bar\phi\right)}_{L^2}\right)\nrm{\phi}_{L^\infty}\\
&\lesssim 2^{-k} \sum_{l\ge k-2} 2^{-\fr{1}{2}l}\left(R + R^2\right)c_l \cdot R \\
&\lesssim 2^{-\fr{1+1}{2}k}\left(R + R^{1+2}\right)c_k.
\end{aligned}
\end{equation}
We also obtain a similar estimate when we replace $\phi$ with $\bar \phi$. 
Therefore, \eqref{fr decomp2}--\eqref{pr est of gauge ini0}, and \eqref{pr est of gauge ini1} yield 
\begin{equation}\label{pr est of gauge ini2}
\nrm{P_k\left(e^{-\Lambda(0)}\phi\right)}_{L^2} 
+ \nrm{P_k\left(e^{-\Lambda(0)}\bar\phi\right)}_{L^2} 
\lesssim 2^{-\frac{1+1}{2}k}\left(R+R^{1+2}\right)c_k.
\end{equation}
Hence, \eqref{pr est of gauge ini2} and the same argument as \eqref{pr est of gauge ini1} yield
\[
\II
\lesssim 2^{-\fr{2+1}{2}k}\left(R + R^{2+2}\right)c_k.
\]
Repeating this argument $[2s]$ times, we have
\begin{equation}\label{pr est of gauge ini3}
\II
\lesssim 2^{-\frac{[2s]+1}{2}k}\left(R+R^{2[s]+2}\right)c_k.
\end{equation}
Since $2s\le [2s] +1$, we obtain from \eqref{fr decomp2}--\eqref{pr est of gauge ini0}, and \eqref{pr est of gauge ini3} that
\begin{equation*}
\nrm{P_k\left(e^{-\Lambda(0)}\phi\right)}_{L^2} 
+ \nrm{P_k\left(e^{-\Lambda(0)}\bar\phi\right)}_{L^2} 
\lesssim 2^{-sk}\left(R+R^{2s+2}\right)c_k.
\end{equation*}
This completes the proof.
\end{proof}

To prove Proposition \ref{apriori gauge fr}, we consider an a priori estimate for $e^{-\Lambda}u$. 
In the following, we define 
\[
S_T:=L_T^\infty L_x^2\cap L_T^4 L_x^\infty
\]
for  $T>0$.
\begin{prop}\label{apriori gauge fr2}
Let $s>0$, $0<\delta<\min(\fr{1}{100}, \fr{s}{2})$, $R>0$, $\phi\in X^{s+1}(\br)$, and $\nrm{\phi}_{H^s}\le R$. 
Also, we assume that $\{Rc_k\}$ is a $\delta$-frequency envelope of $\phi$ in $H^s(\br)$ and $u\in C([0, \infty); X^{s+1}(\br))$ is a solution to \eqref{nls}.  
Then, there exist $T^*=T^*(s, R)>0$ and $C=C(s)>0$ such that
\begin{equation*}
\sup_{k\in\bz_{\ge 0}}2^{sk}c_k^{-1}\nrm{P_k \left(e^{-\Lambda}u\right)}_{S_{T^*}}
\le C\left(R+R^{2s+2}\right).
\end{equation*}
\end{prop}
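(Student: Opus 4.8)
The plan is to exploit that $v:=e^{-\Lambda}u$ solves the cubic nonlinear Schr\"odinger equation $\cl v=\abs{\mu}^2\abs{v}^2v$ (this is \eqref{gauge special case}), so that by Duhamel's formula and the commutation of $P_k$ with $e^{\frac{i}{2}t\px^2}$ and with time integration,
\begin{equation*}
P_kv(t)=e^{\frac{i}{2}t\px^2}P_kv(0)-i\abs{\mu}^2\int_0^te^{\frac{i}{2}(t-t')\px^2}P_k\bigl(v\bar vv\bigr)(t')\,dt'.
\end{equation*}
Applying Lemma \ref{strest} on $[0,T]$ then gives, for every $k\ge0$,
\begin{equation*}
\nrm{P_kv}_{S_T}\lesssim\nrm{P_kv(0)}_{L^2}+\nrm{P_k(v\bar vv)}_{L_T^1L_x^2}.
\end{equation*}
Since $P_kv(0)=P_k(e^{-\Lambda(0)}\phi)$, Lemma \ref{est of gauge ini} bounds the first term by $C(s)(R+R^{2s+2})2^{-sk}c_k$, so the point is to estimate the trilinear term with the right frequency-envelope weight.

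For the nonlinear term I would use the Littlewood--Paley decomposition $v\bar vv=\sum_{a,b,c\ge0}P_av\,\overline{P_bv}\,P_cv$ together with the fact that $P_k$ annihilates any product all of whose input frequencies are $\lesssim2^{k-2}$; thus in each surviving piece the largest input frequency is $\gtrsim2^k$. Placing that factor in $L_x^2$ and the other two (now frequency-truncated) factors in $L_x^\infty$ gives, pointwise in $t$,
\begin{equation*}
\nrm{P_k(v\bar vv)}_{L_x^2}\lesssim\Bigl(\sum_{k_1\ge k-2}\nrm{P_{k_1}v}_{L_x^2}\Bigr)\nrm{v}_{L_x^\infty}^2 .
\end{equation*}
Integrating in time with H\"older (putting $L_T^\infty$ on the $L^2$-factor and $L_T^2$ on each $L^\infty$-factor) and then $\nrm{v}_{L_T^2L_x^\infty}\le T^{1/4}\nrm{v}_{L_T^4L_x^\infty}$ yields
\begin{equation*}
\nrm{P_k(v\bar vv)}_{L_T^1L_x^2}\lesssim T^{1/2}\Bigl(\sum_{k_1\ge k-2}\nrm{P_{k_1}v}_{L_T^\infty L_x^2}\Bigr)\nrm{v}_{L_T^4L_x^\infty}^2 .
\end{equation*}

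To close the estimate I would run a continuity argument on $M(T):=\sup_{k\ge0}2^{sk}c_k^{-1}\nrm{P_kv}_{S_T}$. Property (ii) of the $\delta$-frequency envelope together with $\delta<s$ gives $\sum_{k_1\ge k-2}\nrm{P_{k_1}v}_{L_T^\infty L_x^2}\le M(T)\,c_k\sum_{k_1\ge k-2}2^{-sk_1+\delta(k_1-k)}\lesssim M(T)2^{-sk}c_k$, while
\begin{equation*}
\nrm{v}_{L_T^4L_x^\infty}\le\sum_{k\ge0}\nrm{P_kv}_{L_T^4L_x^\infty}\le M(T)\sum_{k\ge0}2^{-sk}c_k\lesssim M(T)R,
\end{equation*}
the last step being Cauchy--Schwarz together with $\sum_kc_k^2\lesssim1$ (property (i), using $\nrm{\phi}_{H^s}\le R$), which is exactly where $s>0$ enters. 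Combining the three displays and taking the supremum over $k$ gives $M(T)\le C(s)(R+R^{2s+2})+C(s)T^{1/2}R^2M(T)^3$. Since $v\in C([0,\infty);H^{s+1}(\br))$ and $\delta<\tfrac12$, one checks that $2^{sk}c_k^{-1}\nrm{P_kv}_{S_T}\to0$ as $k\to\infty$ uniformly for bounded $T$, so $M$ is continuous on $[0,\infty)$ with $M(0)=\sup_k2^{sk}c_k^{-1}\nrm{P_kv(0)}_{L^2}\le C(s)(R+R^{2s+2})$; a standard continuity (bootstrap) argument then produces $T^*=T^*(s,R)>0$ with $M(T^*)\le2C(s)(R+R^{2s+2})$, which is the assertion.

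The main obstacle is setting up the bootstrap so that the pieces interlock: one has to recover the undecomposed Strichartz norm $\nrm{v}_{L_T^4L_x^\infty}$ from its dyadic pieces, an $\ell^1$-in-$k$ summation that converges only because $s>0$ (morally the $H^s$-regularity absorbs the half-derivative loss in the $L_T^4L_x^\infty$ reconstruction), and the high-frequency $L^2$-sum must reappear with exactly the weight $2^{-sk}c_k$, which forces $\delta<s$. Each individual estimate is elementary; the care lies in tracking the powers of $R$ and the smallness in $T$ so that they match the stated bound.
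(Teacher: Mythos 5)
Your argument is correct and follows essentially the same route as the paper: Duhamel for the gauge-transformed cubic equation, the Strichartz estimate, Lemma \ref{est of gauge ini} for the data, the vanishing of $P_k$ applied to an all-low-frequency product so that one factor carries the weight $2^{-sk}c_k$, and a continuity/bootstrap argument on $M(T)$. The only (harmless) quibble is that $\nrm{v}_{L_T^4L_x^\infty}\lesssim M(T)$ rather than $M(T)R$, since $\sum_k 2^{-sk}c_k\lesssim 1$ by Cauchy--Schwarz and property (i); this does not affect the closure of the bootstrap with $T^*=T^*(s,R)$.
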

In the following, we denote $v:=e^{-\Lambda}u$. 
We define
\begin{equation*}
M(T)=\sup_{k\in\bz_{\ge 0}}2^{sk}c_k^{-1}\nrm{P_kv}_{S_{T}}
\end{equation*}
for $T>0$.
We note that $M$ is well-defined since Bernstein's inequality yield
\[
2^{sk}c_k^{-1}\nrm{P_kv}_{S_{T}}
\lesssim 2^{(s+\fr{1}{2} +\delta)k}c_0^{-1}(1+{T}^\fr{1}{4})\nrm{P_k v}_{L_{T}^\infty L_x^2} 
\lesssim \nrm{v}\thx{\infty}{s+1}.
\]
Also, we obtain that $M$ is continuous. 
\begin{lem}\label{M}
Under the assumption of Proposition \ref{apriori gauge fr2}, $M$ is continuous
on $[0, T]$ for all $T>0$. 
\end{lem}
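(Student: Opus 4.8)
The plan is to regard $M(T)$ as the supremum over $k\in\bz_{\ge0}$ of the functions
$f_k(T):=2^{sk}c_k^{-1}\nrm{P_kv}_{S_T}$, where $v:=e^{-\Lambda}u$, to observe that each $f_k$ is continuous and non-decreasing in $T$, and then to show that the tail $\sup_{k>K}f_k(T)$ is as small as we like, uniformly for $T$ in bounded intervals; the supremum then inherits continuity. (If $\phi=0$ then $u\equiv0$ and $M\equiv0$ is trivially continuous, so throughout we may assume $\phi\neq0$, in which case property (ii) of the $\delta$-frequency envelope forces every $c_k>0$.)

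First I would record the regularity of $v$ that is needed. Since $2\lambda+\bar\mu=0$ we have $\re\Lambda\equiv0$, hence $\abs{e^{-\Lambda(t)}}\equiv1$ and
\[
\nrm{e^{-\Lambda(t+h)}-e^{-\Lambda(t)}}_{L^\infty}\le\nrm{\Lambda(t+h)-\Lambda(t)}_{L^\infty}\le(2\abs\lambda+\abs\mu)\,\nrm{u(t+h)-u(t)}_{X^{s+1}},
\]
which tends to $0$ as $h\to0$ because the $X^{s+1}$-norm dominates the sup-norm of the primitive and $u\in C([0,\infty);X^{s+1}(\br))$ by hypothesis. Combined with $\nrm{u(t+h)-u(t)}_{L^2}\to0$ this gives $v\in C([0,\infty);L^2(\br))$, and \eqref{est of exp f2} (applied with $s$ replaced by $s+1$ and using $\nrm{e^{-\Lambda}}_{L^\infty}=1$) gives $\nrm{v}_{L_T^\infty H_x^{s+1}}<\infty$ for every $T>0$. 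For fixed $k$ it then follows that $P_kv\in C([0,\infty);L^2(\br))$, and by Bernstein's inequality $P_kv\in C([0,\infty);L^\infty(\br))$; hence $T\mapsto\nrm{P_kv}_{L_T^\infty L_x^2}$ (a running maximum of a continuous function) and $T\mapsto\nrm{P_kv}_{L_T^4L_x^\infty}=(\int_0^T\nrm{P_kv(t)}_{L^\infty}^4\,dt)^{1/4}$ are continuous and non-decreasing, so each $f_k$ is continuous and non-decreasing.

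Next I would prove the uniform tail bound. Fix $T_0>0$. For $k\ge0$ and $T\in[0,T_0+1]$, Bernstein's inequality gives $\nrm{P_kv}_{S_T}\lesssim(1+T^{1/4})\,2^{-(s+1/2)k}\nrm{v}_{L_T^\infty H_x^{s+1}}$, while $c_0\le2^{\delta k}c_k$ gives $c_k^{-1}\le2^{\delta k}c_0^{-1}$, so
\[
f_k(T)\lesssim c_0^{-1}\bigl(1+(T_0+1)^{1/4}\bigr)\nrm{v}_{L_{T_0+1}^\infty H_x^{s+1}}\;2^{(\delta-\frac12)k}.
\]
Since $\delta<\frac12$, the right-hand side tends to $0$ as $k\to\infty$, uniformly for $T\in[0,T_0+1]$. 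Thus, given $\vep>0$, choose $K$ with $f_k(T)\le\vep$ for all $k>K$ and all $T\in[0,T_0+1]$; then $M_K(T):=\max_{0\le k\le K}f_k(T)$ is continuous (a finite maximum of continuous functions) and $0\le M(T)-M_K(T)\le\vep$ on $[0,T_0+1]$. Hence for $T,T'\in[0,T_0+1]$,
\[
\abs{M(T)-M(T')}\le\abs{M(T)-M_K(T)}+\abs{M_K(T)-M_K(T')}+\abs{M_K(T')-M(T')}\le2\vep+\abs{M_K(T)-M_K(T')},
\]
and the last term is $<\vep$ once $\abs{T-T'}$ is small enough, by continuity of $M_K$. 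Since $T_0>0$ and $\vep>0$ were arbitrary, $M$ is continuous on $[0,\infty)$, in particular on $[0,T]$ for each $T>0$.

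The only step requiring any care is the uniform tail estimate: it works precisely because the Bernstein loss $2^{k/2}$ from $L^2\to L^\infty$ is beaten by the combined gain $2^{-(s+1)k}$ from frequency localization, the factor $2^{sk}$ in the definition of $M$, and the envelope slack $2^{\delta k}$, leaving the summable factor $2^{(\delta-1/2)k}$ — which is exactly why the hypothesis $\delta<\tfrac1{100}<\tfrac12$ is used. Everything else (continuity of the individual $f_k$, the soft maximum argument) is routine.
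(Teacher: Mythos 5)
Your proof is correct, but it follows a genuinely different route from the paper's. The paper proves that the frequency-localized pieces $F_k(t)=2^{sk}c_k^{-1}\nrm{P_kv(t)}_{L^2}$ are equicontinuous in $t$ uniformly in $k$ (via $\sup_k 2^{sk}c_k^{-1}\nrm{P_k(v(t)-v(t_0))}_{L^2}\lesssim\nrm{v(t)-v(t_0)}_{H^{s+1}}$), transfers this equicontinuity to the running suprema $\tilde F_k(t)=\sup_{[0,t]}F_k$ by a contradiction argument, and treats the $L_T^4L_x^\infty$ component separately with the $(t-t_0)^{1/4}$ bound; continuity of the supremum over $k$ then follows from $\abs{\sup_k g_k(t)-\sup_k g_k(t_0)}\le\sup_k\abs{g_k(t)-g_k(t_0)}$. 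You instead establish a tail bound $f_k(T)\lesssim c_0^{-1}(1+T^{1/4})2^{(\delta-1/2)k}\nrm{v}_{L_T^\infty H^{s+1}}$ that is uniform in $T$ on bounded intervals, so that $M$ is a uniform limit of the finite maxima $M_K$, each of which is continuous because each individual $f_k$ is a running maximum (respectively a fourth-root of an integral) of a continuous function of $t$. Both arguments exploit exactly the same one-derivative room between the assumed $H^{s+1}$ regularity of $v$ and the $H^{s+1/2+\delta}$ needed after Bernstein and the envelope slack $c_k^{-1}\le 2^{\delta k}c_0^{-1}$ — the paper uses it only to get boundedness of $M$, while you extract the geometric decay $2^{(\delta-1/2)k}$ that is already sitting in that computation. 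Your version buys a cleaner structure (uniformly small tail plus finite max) and avoids the paper's Step~2 contradiction argument for the running suprema; your preliminary observations ($v\in C([0,\infty);L^2)$ via $\abs{e^{-\Lambda}}\equiv 1$ and the $X^{s+1}$-continuity of $u$, positivity of every $c_k$ from envelope property (ii)) are exactly the points that need checking and are handled correctly.
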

\begin{proof}
(\textit{Step 1})
We prove that
$F_k(t):= 2^{sk}c_k^{-1}\nrm{P_kv(t)}_{L^2_x}$ is equicontinuous on $[0, T]$.
This follows from
\begin{align*}
&\sup_k2^{sk}c_k^{-1}\abs{\nrm{P_kv(t)}_{L^2} - \nrm{P_kv(t_0)}_{L^2}} \\
&\le \sup_k2^{sk}c_k^{-1}\nrm{P_k\left(v(t)-v(t_0)\right)}_{L^2} \\
&\lesssim \nrm{v(t) - v(t_0)}_{H^{s+1}}.
\end{align*}

(\textit{Step 2})
We show that $\tilde F_k(t):= \sup_{\tilde t \in [0, t]}F_{k}(\tilde t)$ is also equicontinuous.
We prove it by a contradiction.
We assume that
there exist $t_0\in [0, T]$ and $\vep>0$ such that for any $b'>0$, there exist $k'\in\bn_0$ and $t'\in [0, T]$ such that $\abs{t' - t_0}<b'$ and $|\tilde F_{k'}(t') - \tilde F_{k'}(t_0)|\ge \vep$.
From Step 1, we can take $b>0$ such that
\begin{equation}\label{pr step21}
\sup_{k}\abs{F_{k}(t_1)- F_{k}(t_0)}\leq \frac{\vep}{2}
\end{equation}
for all $\abs{t_1 - t_0} < b$.
By the assumption there exist $k=k(b)\in \bn_0$ and $t=t(b)\in [0, T]$ which satisfy
$\abs{t-t_0}<b$ and $|\tilde F_{k}(t) - \tilde F_{k}(t_0)|\ge \vep$.

In the following, we only consider the case $t>t_0$, however the case $t<t_0$ is similarly handled.
From the definition of $\tilde F_{k}$, there exists $t_2\in [0, t]$ such that
$F_{k}(t_2) >\tilde F _{k}(t) - \frac{\vep}{2}$.
Thus, we have
$F_{k}(t_2)
>\tilde F _{k}(t_0) + \frac{\vep}{2} 
>\tilde F _{k}(t_0)$, which yields
$t_2\in (t_0, t]\subset (t_0, t_0+b)$.
However, we also have
$
F_{k}(t_2) 
>\tilde F_{k}(t_0)+\frac{\vep}{2} 
\ge F_{k}(t_0)+\frac{\vep}{2}
$.
This contradicts \eqref{pr step21}.

(\textit{Step 3})
We prove that
$\lim_{t\to t_0}\sup_{k} |2^{sk}c_{k}^{-1}(\|P_kv\|_{L_t^4([0, t]L_x^\infty)} - \|P_kv\|_{L_t^4([0, t_0]L_x^\infty)})|= 0$ for all $t_0\in[0, T]$. 
Here, we consider $\|P_kv\|_{L_t^4([0, t_0]L_x^\infty)}=0$ when $t_0=0$.
We may assume $t\ge t_0$.
Then, the triangle inequality yields
\begin{align*}
\sup_{k}2^{sk}c_{k}^{-1}\abs{\nrm{P_kv}_{L_t^4([0, t]L_x^\infty)} - \nrm{P_kv}_{L_t^4([0, t_0]L_x^\infty)}} 
&\le \sup_{k}2^{sk}c_{k}^{-1}\nrm{P_kv}_{L^4([t_0, t]; L_x^\infty)} \\ 
&\lesssim (t - t_0)^{\frac{1}{4}}\nrm{v}_{L_T^\infty H_x^{s+1}}.
\end{align*}
Hence, we have the desired equality.

(\textit{Step 4})
Step 2 and Step 3 mean the continuity of the $L_T^\infty L_x^2$ and $L_T^4 L_x^\infty$ parts of $M$, respectively.
This completes the proof.
\end{proof}
\begin{proof}[Proof of Proposition \ref{apriori gauge fr2}]
For $T, {\tilde C}>0$, We define a subset of $(0, T]$ as follows:
\[
N_{\tilde C}(T):=\{T'\in(0, T]\mid M(T')\le {\tilde C}\left(R+R^{2s+2}\right)\}.
\]
We note that $N_{\tilde C}(T)$ is not empty if ${\tilde C}$ is sufficiently large, since Lemmas \ref{est of gauge ini} and \ref{M} yield $\limsup_{T\searrow 0}M(T)\lesssim R + R^{2s+2}$.
Thus, it suffices to show that there exists $C = C(s)>0$ such that if $T'\in N_{\tilde C}(T)$, then we have
\begin{equation}\label{desired est of fr}
M(T')\le  C\left(R + R^{2s+2}\right) +  C{T'}^{\fr{1}{2}}\left({\tilde C}\left(R + R^{2s+2}\right)\right)^3.
\end{equation}
Indeed, taking ${\tilde C}=3 C$ and ${T}=\tilde C^{-6}\left(R + R^{2s+2}\right)^{-4}$, we obtain the desired estimate by a continuity argument.

Lemma \ref{strest} and \eqref{gauge special case} yield
\[
\nrm{P_k v}_{S_{T'}}
\le  C \left(\nrm{P_k v(0)}_{L^2} + \nrm{P_k\left(v^2\bar v\right)}_{L_{T'}^1L_x^2}\right).
\]
We may consider the estmate of $\|P_k(v^3)\|_{L_{T'}^1L_x^2}$ instead of $\|P_k(v^2\bar v)\|_{L_{T'}^1L_x^2}$.
It holds that $P_k((P_{\le k-4}v)^3)=0$.
Thus, we obtain the estimate as follows:
\begin{equation}\label{pr apriori gauge fr2}
\begin{aligned}
\nrm{P_k\left(v^3\right)}_{L_{T'}^1L_x^2}
&\le C{T'}^{\fr{1}{2}}\nrm{P_{\ge k-3}v}_{L_{T'}^\infty L_x^2}\nrm{v}_{L_{T'}^4 L_x^\infty}^2 \\
&\le C{T'}^{\fr{1}{2}}\Bigl(\sum_{l\ge k-3}\nrm{P_l v}_{S_{T'}}\Bigr)\Bigl(\sum_{l\ge0}\nrm{P_l v}_{S_{T'}}\Bigr)^2 \\
&\le C{T'}^{\fr{1}{2}}\Bigl(\sum_{l\ge k-3}2^{-sl+\delta\abs{k-l}}{\tilde C}(R+R^{2s+2})c_k\Bigr)\Bigl(\sum_{l\ge 0}2^{-sl}{\tilde C}(R+R^{2s+2})c_l\Bigr)^2 \\
&\le  C 2^{-sk}{T'}^{\fr{1}{2}}\left({\tilde C}\left(R + R^{2s+2}\right)\right)^3c_k.
\end{aligned}
\end{equation}
Therefore, \eqref{pr apriori gauge fr2} and Lemma \ref{est of gauge ini} yield the desired estimate \eqref{desired est of fr}.
\end{proof}

Since we proved Proposition \ref{apriori gauge fr2},
the same argument as this proposition can weaken the condition for $T^*$.

\begin{prop}\label{apriori gauge fr2.5}
Let $s\ge \tilde s> 0$, $0<\delta<\min(\fr{1}{100}, \fr{\tilde s}{2})$, $r, R>0$, $\phi\in X^{s+1}(\br)$, $\nrm{\phi}_{H^{\tilde s}}\le r$, and $\nrm{\phi}_{H^s}\le R$. 
Also, we assume that $\{Rc_k\}$ is a $\delta$-frequency envelope of $\phi$ in $H^s(\br)$ and $u\in C([0, \infty); X^{s+1}(\br))$ is a solution to \eqref{nls}.  
Then, there exist $T^*=T^*(s, r)>0$ and $C=C(s)>0$ such that
\begin{equation*}
\sup_{k\in\bz_{\ge 0}}2^{sk}c_k^{-1}\nrm{P_k \left(e^{-\Lambda}u\right)}_{S_{T^*}}
\le C\left(R+R^{2s+2}\right).
\end{equation*}
\end{prop}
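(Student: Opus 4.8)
The plan is to run the frequency-envelope continuity argument from the proof of Proposition~\ref{apriori gauge fr2}, modified so that the $L_{T}^4 L_x^\infty$-norm of $e^{-\Lambda}u$ — which in that proof was estimated self-referentially and therefore forced $T^*$ to shrink with $R$ — is instead supplied by Proposition~\ref{apriori gauge fr2} itself applied at the lower regularity $\tilde s$. First I would apply Proposition~\ref{apriori gauge fr2} with $(\tilde s,r)$ in place of $(s,R)$; this is legitimate since $\phi\in X^{s+1}(\br)\subset X^{\tilde s+1}(\br)$, $\|\phi\|_{H^{\tilde s}}\le r$, and $\delta<\tfrac{\tilde s}{2}$. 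Choosing a $\delta$-frequency envelope $\{r\tilde c_k\}$ of $\phi$ in $H^{\tilde s}(\br)$, it produces $T^{**}=T^{**}(\tilde s,r)>0$ with $\|P_k(e^{-\Lambda}u)\|_{S_{T^{**}}}\lesssim 2^{-\tilde s k}(r+r^{2\tilde s+2})\tilde c_k$; summing over $k$ and using $\tilde c_k\in\ell^2$ together with $\sum_k 2^{-\tilde s k}<\infty$ gives $\|e^{-\Lambda}u\|_{L_{T^{**}}^4 L_x^\infty}\le A(\tilde s,r)$ with a bound depending only on $\tilde s$ and $r$.

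Next, write $v:=e^{-\Lambda}u$, which solves the cubic Schr\"odinger equation \eqref{gauge special case} because $2\lambda+\bar\mu=0$, and, with $\{Rc_k\}$ the given $\delta$-frequency envelope of $\phi$ in $H^s(\br)$, set $M(T):=\sup_{k\ge 0}2^{sk}c_k^{-1}\|P_kv\|_{S_T}$. One then repeats the nonlinear estimate \eqref{pr apriori gauge fr2} on any $T'\in(0,T^{**}]$, but keeps just a single copy of $v$ in the $S_{T'}$-summable form — producing, via $P_k((P_{\le k-4}v)^3)=0$ and almost orthogonality, $\|P_{\ge k-3}v\|_{L_{T'}^\infty L_x^2}\lesssim 2^{-sk}M(T')c_k$ — and bounds the other two factors in $L_{T'}^4 L_x^\infty$ by $A(\tilde s,r)$ instead of by $M(T')$. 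With Lemma~\ref{strest} this yields, for $k\ge 0$,
\[
2^{sk}c_k^{-1}\|P_kv\|_{S_{T'}}\le C\,2^{sk}c_k^{-1}\|P_k(e^{-\Lambda(0)}\phi)\|_{L^2}+C\,{T'}^{1/2}A(\tilde s,r)^2\,M(T').
\]
Lemma~\ref{est of gauge ini} bounds the first term on the right by $C(R+R^{2s+2})$, so passing to the supremum in $k$ gives $M(T')\le C(R+R^{2s+2})+C\,{T'}^{1/2}A(\tilde s,r)^2 M(T')$.

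It then remains to set $T^*:=\min\!\big(T^{**},\,(2C\,A(\tilde s,r)^2)^{-2}\big)$, which depends only on $s$, $\tilde s$, $r$, and to run the continuity argument on $N_{\tilde C}(T^*):=\{T'\in(0,T^*]\mid M(T')\le\tilde C(R+R^{2s+2})\}$ exactly as in the proof of Proposition~\ref{apriori gauge fr2}: $M$ is continuous (Lemma~\ref{M}), $\limsup_{T\searrow 0}M(T)\lesssim R+R^{2s+2}$ (Lemmas~\ref{est of gauge ini} and~\ref{M}) so $N_{\tilde C}(T^*)\ne\emptyset$ for $\tilde C$ large, and on $N_{\tilde C}(T^*)$ the inequality above yields $M(T')\le C(R+R^{2s+2})$, so $N_{\tilde C}(T^*)$ is closed and open in $(0,T^*]$ and hence equals it. This gives $M(T^*)\le C(R+R^{2s+2})$, which is the claim.

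The only genuine obstacle is bookkeeping: one must check that no constant in the bootstrap depends on $R$, which hinges on using $A(\tilde s,r)$ (valid on all of $[0,T^{**}]$) for \emph{both} spare copies of $v$ in the cubic term, leaving exactly one copy to carry the $M(T')$ factor, so that the coefficient of $M(T')$ has the form ${T'}^{1/2}\cdot(\text{function of }\tilde s,r)$ and can be made $<\tfrac12$ by a choice of $T^*$ independent of $R$. The compatibility of $\delta<\min(\tfrac1{100},\tfrac{\tilde s}{2})$ with Lemmas~\ref{strest}, \ref{est of gauge ini} and~\ref{M} is immediate since $\tilde s\le s$, and no new analytic input beyond Proposition~\ref{apriori gauge fr2} is required.
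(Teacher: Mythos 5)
Your proposal is correct and follows essentially the same route as the paper: apply Proposition \ref{apriori gauge fr2} at the lower regularity $\tilde s$ to obtain an $R$-independent bound $\sum_{l\ge 0}\|P_l v\|_{S_{T'}}\lesssim r+r^{2\tilde s+2}$, use it for two of the three factors in the cubic term while one factor carries $M(T')$, and close with the same continuity argument so that $T^*$ depends only on $s$, $\tilde s$, and $r$.
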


\begin{proof}
It suffices to show that there exist $T^*=T^*(s, \tilde s, r)$, $C=C(s)>0$, and $ C' = C'(s, \tilde s)>0$ such that if $T'\in N_{\tilde C}(T^*)$, then we have
\[
M(T')
\le C(R + R^{2s+2})+C'{T'}^{\frac{1}{2}}(r+r^{2\tilde s+2})^2\tilde C(R+R^{2s+2}).
\]
Indeed, by taking $\tilde C= 3C$ and ${T'}=\min(T^*, C^2{(C'\tilde C)^{-2}(r+r^{2\tilde s+2})^{-4}})$, a continuity argument yields the desired estimate. 
From Proposition \ref{apriori gauge fr2}, there exists $T^*=T^*(\tilde s, r)>0$ such that
\[
\sum_{l\ge0}\nrm{P_l v}_{S_{T'}}
\lesssim r+r^{2\tilde s+2}
\]
for $T'\in (0, T^*]$, where the implicit constant depends only on $\tilde s$.
Thus, a similar argument as \eqref{pr apriori gauge fr2} yields
\[
\nrm{P_k\left(|v|^2v\right)}_{L_{T'}^1L_x^2}
\lesssim 2^{-sk}{T'}^{\fr{1}{2}}(r+r^{2\tilde s+2})^2\tilde C(R + R^{2s+2})c_k.
\]
Hence, we obtain
\[
M({T'})
\le  C(R+R^{2s+2}) + C'{T'}^{\frac{1}{2}}(r+r^{2\tilde s+2})^2\tilde C(R+R^{2s+2}).
\]

Thus, we obtain the desired estimate.
\end{proof}

From Proposition \ref{apriori gauge fr2.5}, we can obtain an a priori estimate for the solution to \eqref{nls} by the similar argument as Lemma \ref{est of gauge ini}. 

\begin{proof}[Proof of Proposition \ref{apriori gauge fr}]
In the following, let $T^*=T^*(s, \tilde s, r)>0$ be such that Proposition \ref{apriori gauge fr2.5} holds. 
We denote $\tilde R :=R+R^{2s+2}$. 

First, we prove
\begin{equation}\label{fr pr0}
\nrm{P_k u}_{S_{T^*}} + \nrm{P_k \left(e^{\Lambda}u\right)}_{S_{T^*}}
\lesssim 2^{-\min(\frac{1}{2}, s)k}(\tilde R+{\tilde R}^2).
\end{equation}
From $u=e^{\Lambda}e^{-\Lambda}u$, we have 
\begin{equation}\label{fr pr 1}
\nrm{P_ku}_{S_{T^*}}
\le \nrm{P_k\left(e^{\Lambda}P_{\le k-3}\left(e^{-\Lambda}u\right)\right)}_{S_{T^*}} +  \nrm{P_k\left(e^{\Lambda}P_{\ge k-2}\left(e^{-\Lambda}u\right)\right)}_{S_{T^*}}.
\end{equation}
The second term on the right-hand side of \eqref{fr pr 1} is estimated from Proposition \ref{apriori gauge fr2}:
\[
\nrm{P_k\left(e^{\Lambda}P_{\ge k-2}\left(e^{-\Lambda}u\right)\right)}_{S_{T^*}}
\lesssim \sum_{l\ge k-2}\nrm{P_{l}\left(e^{-\Lambda}u\right)}_{S_{T^*}}
\lesssim 2^{-sk}\tilde Rc_k.
\]
Also, Lemma \ref{comm est}, Proposition \ref{apriori gauge fr2}, and Bernstein's inequality yield
\begin{align*}
\nrm{P_k\left(e^{\Lambda}P_{\le k-3}\left(e^{-\Lambda}u\right)\right)}_{S_{T^*}}
&=\nrm{[P_k,e^{\Lambda}]P_{\le k-3}\left(e^{-\Lambda}u\right)}_{S_{T^*}} \\
&\lesssim 2^{-k} \sup_{y\in\br}\nrm{T_y\left(\px \Lambda\cdot e^{\Lambda}\right)P_{\le k-3}\left(e^{-\Lambda}u\right)}_{S_{T^*}} \\
&\lesssim 2^{-k}\underbrace{\nrm{u}_{S_{T^*}}}_{=\nrm{e^{-\Lambda}u}_{S_{T^*}}}\sum_{0\le l\le k-3}\nrm{P_l\left(e^{-\Lambda}u\right)}\txx{\infty} \\
&\lesssim 2^{-k}\tilde R\sum_{0\le l\le k-3}2^{\fr{l}{2}-sl+\delta(k-l)}\tilde Rc_k \\
&\lesssim 2^{-\fr{k}{2}}\tilde R^2c_k.
\end{align*}
Thus, we obtain the desired estimate for $\nrm{P_k u}_{S_{T^*}}$. 
From $e^{\Lambda}u=e^{2\Lambda}e^{-\Lambda}u$, a similar argument as $\nrm{P_k u}_{S_{T^*}}$ yields the desired bound.
Thus, we obtain \eqref{fr pr0}.
If $0< s\le \frac{1}{2}$, then this estimate completes the proof.
In the following, we assume $s>\frac{1}{2}$.

We decompose as
\begin{equation}\label{fr decomp3}
\nrm{P_ku}_{S_{T^*}}
\le \nrm{[P_k, e^{\Lambda}]e^{-\Lambda}u}_{S_{T^*}} +  \nrm{e^{\Lambda}P_k\left(e^{-\Lambda}u\right)}_{S_{T^*}}
\end{equation}
The second term on the right-hand side of \eqref{fr decomp3} is bounded by $2^{-sk}C\tilde Rc_k$. 
Thus, we need to estimate the first term on the right-hand side of \eqref{fr decomp3}. 
We decompose this term as follows:
\begin{align*}
\nrm{[P_k, e^{\Lambda}]\left(e^{-\Lambda}u\right)}_{S_{T^*}}
&\le \nrm{[P_k, P_{\le k-3}e^{\Lambda}]\left(e^{-\Lambda}u\right)}_{S_{T^*}} + \nrm{[P_k, \left(1-P_{\le k-3}\right)e^{\Lambda}]\left(e^{-\Lambda}u\right)}_{S_{T^*}} \\
&=:\I +\II.
\end{align*}
From Lemma \ref{comm est} and Proposition \ref{apriori gauge fr2}, it follows that
\begin{align*}
\I
&\lesssim 2^{-k}\sup_{y\in\br}\nrm{T_y\left(P_{\le k-3}\left(\px \Lambda\cdot e^{\Lambda}\right)\right)P_{[k-2, k+2]}\left(e^{-\Lambda}u\right)}_{S_{T^*}} \\
&\lesssim 2^{-k}\underbrace{\nrm{u}\txx{\infty}}_{=\nrm{e^{-\Lambda}u}\txx{\infty}\lesssim \tilde R}\nrm{P_{[k-2, k+2]}\left(e^{-\Lambda}u\right)}_{S_{T^*}} \\
&\lesssim 2^{-(s+1)k}\tilde R^2c_k.
\end{align*}
Also, Lemma \ref{comm est} and \eqref{fr pr0} yield
\begin{equation}\label{apriori fr step1}
\begin{aligned}
\II
&\lesssim 2^{-k}\sup_{y\in\br}\nrm{T_y\left(P_{\ge k-2}\left(\px \Lambda\cdot e^{\Lambda}\right)\right)\left(e^{-\Lambda}u\right)}_{S_{T^*}} \\
&\lesssim 2^{-k}\sum_{l\ge k-2} \Bigl(\nrm{P_l\left(e^{\Lambda}u\right)}_{S_{T^*}}
+\underbrace{\nrm{P_l\left(e^{\Lambda}\bar u\right)}_{S_{T^*}}}_{=\nrm{P_l\left(e^{-\Lambda}u\right)}_{S_{T^*}}}\Bigr)\nrm{e^{-\Lambda}u}\txx{\infty}\\
&\lesssim 2^{-k} \sum_{l\ge k-2} 2^{-\fr{1}{2}l}\left(\tilde R + \tilde R^2\right)c_l \cdot \tilde R \\
&\le 2^{-(1+\fr{1}{2})k}\left(\tilde R + \tilde R^3\right)c_k.
\end{aligned}
\end{equation}
Thus, we obtain 
\begin{equation}\label{apriori fr step2}
\nrm{P_ku}_{S_{T^*}}
\lesssim 2^{-\min\{(1+\fr{1}{2}), s\}k}\left(\tilde R + \tilde R^3\right)c_k.
\end{equation}

Next, we consider $\nrm{P_{k}\left(e^{\Lambda}u\right)}_{S_{T^*}}$.
We obtain
\begin{equation}\label{fr decomp4}
\nrm{P_k\left(e^{\Lambda}u\right)}_{S_{T^*}}
\le \nrm{[P_k, e^{\Lambda}]u}_{S_{T^*}} +  \nrm{e^{\Lambda}P_ku}_{S_{T^*}}.
\end{equation}
The second term on the right-hand side of \eqref{fr decomp4} is bounded by using \eqref{apriori fr step2}.
We decompose
\begin{align*}
\nrm{[P_k, e^{\Lambda}]u}_{S_{T^*}}
&\le \nrm{[P_k, P_{\le k-3}e^{\Lambda}]u}_{S_{T^*}} + \nrm{[P_k, \left(1-P_{\le k-3}\right)e^{\Lambda}]u}_{S_{T^*}} \\
&=:\III +\IV.
\end{align*}
From Lemma \ref{comm est} and \eqref{fr pr0}, we have
\begin{align*}
\III
&\lesssim 2^{-k}\sup_{y\in\br}\nrm{T_y\left(P_{\le k-3}\left(\px \Lambda\cdot e^{\Lambda}\right)\right)P_{[k-2, k+2]}u}_{S_{T^*}} \\
&\lesssim 2^{-k}\underbrace{\nrm{u}\txx{\infty}}_{=\nrm{e^{-\Lambda}u}\txx{\infty}\lesssim  \tilde R}\nrm{P_{[k-2, k+2]}u}_{S_{T^*}} \\
&\lesssim 2^{-\min\{(2+\fr{1}{2}), s+1\}k}\left(\tilde R + \tilde R^{2+2}\right)c_k.
\end{align*}
Similarly, Lemma \ref{comm est}, Proposition \ref{apriori gauge fr2}, and \eqref{fr pr0} yield
\begin{align*}
\IV
&\lesssim 2^{-k}\sup_{y\in\br}\nrm{T_y\left(P_{\ge k-2}\left(\px \Lambda\cdot e^{\Lambda}\right)\right)u}_{S_{T^*}} \\
&\lesssim 2^{-k}\sum_{l\ge k-2} \left(\nrm{P_l\left(e^{\Lambda}u\right)}_{S_{T^*}}
+\nrm{P_l\left(e^{\Lambda}\bar u\right)}_{S_{T^*}}\right)\nrm{u}\txx{\infty}\\
&\lesssim 2^{-k} \sum_{l\ge k-2} 2^{-\fr{1}{2}l}\left(\tilde R + \tilde R^{2}\right)c_l \cdot \tilde R \\
&\le 2^{-(1+\fr{1}{2})k}\left(\tilde R + \tilde R^{3}\right)c_k.
\end{align*}
Therefore, we obtain
\begin{equation}\label{apriori fr step3}
\nrm{P_k(e^{\Lambda}u)}_{S_{T^*}}
\lesssim 2^{-\min\{(1+\fr{1}{2}), s\}k}\left(\tilde R + \tilde R^{2+2}\right)c_k.
\end{equation}
Thus, \eqref{apriori fr step2} and \eqref{apriori fr step3} yield
\begin{equation}\label{apriori fr step4}
\nrm{P_ku}_{S_{T^*}} + \nrm{P_k(e^{\Lambda}u)}_{S_{T^*}}
\lesssim 2^{-\min\{(1+\fr{1}{2}), s\}k}\left(\tilde R + \tilde R^{2+2}\right)c_k.
\end{equation}
The inequality \eqref{apriori fr step4} and a similar argument as \eqref{apriori fr step1} yields
\begin{equation*}
\nrm{P_ku}_{S_{T^*}}
\lesssim 2^{-\min\{(2+\fr{1}{2}), s\}k}\left(\tilde R + \tilde R^5\right)c_k.
\end{equation*}
Hence, we obtain from similar arguments as the estimate for $\III$ and $\IV$ that
\[
\nrm{P_k(e^{\Lambda}u)}_{S_{T^*}}
\lesssim 2^{-\min\{(2+\fr{1}{2}), s\}k}\left(\tilde R + \tilde R^{4+2}\right)c_k.
\]
In particular, we have
\[
\nrm{P_ku}_{S_{T^*}}  +\nrm{P_k(e^{\Lambda}u)}_{S_{T^*}}
\lesssim 2^{-\min\{(2+\fr{1}{2}), s\}k}\left(\tilde R + \tilde R^{4+2}\right)c_k.
\]
From $[s]+1 +\frac{1}{2}\ge s$, we obtain by repeating this argument $[s]+1$ times that
\[
\nrm{P_ku}_{S_{T^*}}  +\nrm{P_k(e^{\Lambda}u)}_{S_{T^*}}
\lesssim 2^{-sk}\left(\tilde R + \tilde R^{2s+4}\right)c_k.
\]
Thus, we have the desired estimate.
\end{proof}
When $s>1$, we can control the $L_T^4\dot W_x^{1, \infty}$-norm of the solution by the triangle inequality.
However, the case $s=1$ needs a small modification to control it.
\begin{cor}\label{apriori gauge fr3}
Let $s\ge \tilde s> 0$, $0<\delta<\min(\fr{1}{100}, \fr{\tilde s}{2})$, $r, R>0$, $\phi\in X^{s+1}(\br)$, $\nrm{\phi}_{H^{\tilde s}}\le r$, and $\nrm{\phi}_{H^s}\le R$. 
Also, we assume that $\{Rc_k\}$ is a $\delta$-frequency envelope of $\phi$ in $H^s(\br)$ and $u\in C([0, \infty); X^{s+1}(\br))$ is a solution to \eqref{nls}.  
Then, there exist $T^*=T^*(s, r)>0$ and $C=C(s)>0$ such that
\[
\nrm{P_{\ge k}\left(e^{-\Lambda}u\right)}_{L_{T^*}^\infty H_x^s \cap L_{T^*}^4 \dot W_x^{1, \infty}}
\le C\left(R+R^{6(s+1)}\right)\left(\sum_{l\ge k}c_l^2\right)^{\fr{1}{2}}
\]
for all $k\in\bn_0$.
In particular, we have
\[
\nrm{\px u}_{L_{T^*}^4 L_x^\infty}
\le C(s, R).
\]
\end{cor}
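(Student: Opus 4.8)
The plan is to work throughout with the gauge-transformed unknown $v:=e^{-\Lambda}u$, which by Proposition~\ref{prop gauge} and \eqref{gauge special case} solves the cubic Schr\"odinger equation $\cl v=|\mu|^2|v|^2v$ and which lies in $C([0,\infty);H^{s+1}(\br))$ by \eqref{est of exp f2} (this last point is what justifies the manipulations below). The two ingredients I would feed in are the frequency-envelope bound $\nrm{P_kv}_{S_{T^*}}\le C(s)2^{-sk}c_k(R+R^{2s+2})$ of Proposition~\ref{apriori gauge fr2.5}, and $\nrm{P_kv(0)}_{L^2}=\nrm{P_k(e^{-\Lambda(0)}\phi)}_{L^2}\lesssim 2^{-sk}c_k(R+R^{2s+2})$ from Lemma~\ref{est of gauge ini}. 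The $L_{T^*}^\infty H_x^s$-part then costs nothing: by almost-orthogonality $\nrm{P_{\ge k}v(t)}_{H^s}^2\lesssim\sum_{l\ge k-1}2^{2sl}\nrm{P_lv(t)}_{L^2}^2\le\sum_{l\ge k-1}2^{2sl}\nrm{P_lv}_{S_{T^*}}^2$, and inserting Proposition~\ref{apriori gauge fr2.5} together with the envelope property $c_{k-1}\lesssim c_k$ gives a bound by $(R+R^{2s+2})^2\sum_{l\ge k}c_l^2$, stronger than claimed.

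For the $L_{T^*}^4\dot W_x^{1,\infty}$-part with $s>1$ I would simply sum dyadic pieces and use Bernstein's inequality: $\nrm{\px P_{\ge k}v}_{L_{T^*}^4L_x^\infty}\le\sum_{l\ge k-1}\nrm{\px P_lv}_{L_{T^*}^4L_x^\infty}\lesssim\sum_{l\ge k-1}2^l\nrm{P_lv}_{S_{T^*}}\lesssim(R+R^{2s+2})\sum_{l\ge k-1}2^{(1-s)l}c_l$. Since $1-s<0$ the weight $2^{(1-s)l}$ is square-summable, so Cauchy--Schwarz converts this into $(R+R^{2s+2})\,2^{(1-s)(k-1)}(\sum_{l\ge k}c_l^2)^{1/2}\lesssim_s(R+R^{2s+2})(\sum_{l\ge k}c_l^2)^{1/2}$ for $k\ge 0$. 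This argument collapses precisely at $s=1$, where $\sum_{l\ge k-1}2^{(1-s)l}c_l=\sum_{l\ge k-1}c_l$ is only $\ell^1$-summable and is not dominated by the $\ell^2$-tail; I expect this endpoint to be the main obstacle.

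At $s=1$ I would instead go through the equation. The function $\px P_{\ge k}v$ solves $i\pt w+\frac{1}{2}\px^2w=|\mu|^2\px P_{\ge k}(|v|^2v)$, so Lemma~\ref{strest} yields
\[
\nrm{\px P_{\ge k}v}_{S_{T'}}\lesssim\nrm{\px P_{\ge k}v(0)}_{L^2}+\nrm{\px P_{\ge k}(|v|^2v)}_{L_{T'}^1L_x^2}.
\]
The data term is $\ell^2$-summable: $\nrm{\px P_{\ge k}v(0)}_{L^2}^2\sim\sum_{l\ge k-1}2^{2l}\nrm{P_l(e^{-\Lambda(0)}\phi)}_{L^2}^2\lesssim(R+R^4)^2\sum_{l\ge k}c_l^2$ by Lemma~\ref{est of gauge ini} with $s=1$. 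For the nonlinear term I would split $v=P_{\le k-6}v+P_{\ge k-5}v$: since $P_{\ge k}$ annihilates a product of three low pieces, at least one factor is a high piece, and after a Leibniz expansion one can place an $L_{T'}^\infty L_x^2$-norm on a high-frequency factor --- either on $\px P_{\ge k-5}v$ when the derivative sits there (bounded by $(R+R^4)(\sum_{l\ge k}c_l^2)^{1/2}$ as in the $H^s$-step), or on $P_{\ge k-5}v$ otherwise (bounded by the smaller $2^{-k}(R+R^4)(\sum_{l\ge k}c_l^2)^{1/2}$) --- while the remaining two factors go into $L_{T'}^4L_x^\infty$ and are bounded, via Proposition~\ref{apriori gauge fr2} used at the level $\tilde s$, by constants depending only on $s,\tilde s,r$; H\"older in time produces a gain $T'^{1/2}$. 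The single dangerous term is the one in which the derivative falls on an undifferentiated full factor, producing $\nrm{\px v}_{L_{T'}^4L_x^\infty}$; I would first control this scalar quantity by a continuity argument in $T'$ (again Strichartz on $\cl v=|\mu|^2|v|^2v$, with the spare factors bounded by $r$-quantities so that the coefficient of $T'^{1/2}\nrm{\px v}_{L_{T'}^4L_x^\infty}$ depends only on $s,\tilde s,r$), which allows $T^*$ to be shrunk --- still depending only on $s$ and $r$ --- until the term is absorbed; the frequency-localized estimate then closes with no further bootstrap and gives $\nrm{P_{\ge k}v}_{L_{T^*}^4\dot W_x^{1,\infty}}\lesssim(R+R^{6(s+1)})(\sum_{l\ge k}c_l^2)^{1/2}$.

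Finally, for the last assertion I would take $k=0$, so that $P_{\ge 0}=I$ and $\sum_{l\ge 0}c_l^2\lesssim 1$, whence $\nrm{\px v}_{L_{T^*}^4L_x^\infty}\le C(s,R)$; then from $u=e^{\Lambda}v$ one has $\px u=e^{\Lambda}(\px\Lambda\cdot v+\px v)$ with $|e^{\Lambda}|=1$ and $\px\Lambda=2\lambda u+\mu\bar u$, so $\nrm{\px u}_{L_{T^*}^4L_x^\infty}\le\nrm{\px\Lambda}_{L_{T^*,x}^\infty}\nrm{v}_{L_{T^*}^4L_x^\infty}+\nrm{\px v}_{L_{T^*}^4L_x^\infty}$; here $\nrm{\px\Lambda}_{L_{T^*,x}^\infty}\lesssim\nrm{u}_{L_{T^*,x}^\infty}\le\sum_k 2^{k/2}\nrm{P_ku}_{L_{T^*}^\infty L_x^2}\lesssim(R+R^{4(s+1)(s+2)})\sum_k 2^{(1/2-s)k}c_k<\infty$ by Proposition~\ref{apriori gauge fr} (the sum converging since $s\ge 1>\frac{1}{2}$), and $\nrm{v}_{L_{T^*}^4L_x^\infty}\le\sum_k\nrm{P_kv}_{S_{T^*}}\lesssim R+R^{2s+2}$, so $\nrm{\px u}_{L_{T^*}^4L_x^\infty}\le C(s,R)$.
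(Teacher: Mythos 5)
Your argument is correct on the range where the statement is actually used ($s\ge 1$), but it follows a genuinely different route from the paper. The paper's proof is a one-paragraph rerun of the proof of Proposition \ref{apriori gauge fr2.5} at the $H^s$ level applied to $P_{\ge k}(e^{-\Lambda}u)$: Strichartz plus the observation that $P_{\ge k}$ of a cube forces one high-frequency factor gives $\nrm{P_{\ge k}(e^{-\Lambda}u)}_{L_{T^*}^\infty H_x^s\cap L_{T^*}^4\dot W_x^{1,\infty}}\lesssim \nrm{P_{\ge k}(e^{-\Lambda(0)}\phi)}_{H^s}+T^*\nrm{e^{-\Lambda}u}_{L_{T^*}^\infty H_x^s}^2\nrm{P_{\ge k-4}(e^{-\Lambda}u)}_{L_{T^*}^\infty H_x^s}$, and then Lemma \ref{est of gauge ini} and Proposition \ref{apriori gauge fr2.5} bound both terms by the $\ell^2$-tail; this treats all $s\ge1$ uniformly (the domination of $\px$ by $\jb{\px}^s$ is where $s\ge1$ enters silently), with no separate endpoint case and no auxiliary bound on $\nrm{\px v}_{L_T^4L_x^\infty}$. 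You instead sum the dyadic envelope bounds of Proposition \ref{apriori gauge fr2.5} directly (almost-orthogonality for the $H^s$ part, Bernstein plus Cauchy--Schwarz for the derivative part), which makes $s>1$ immediate but genuinely breaks at $s=1$; your repair there — Strichartz for $\px P_{\ge k}v$, a low/high paraproduct splitting, and a preliminary absorption argument giving $\nrm{\px v}_{S_{T'}}\le C(R)$ for $T'$ small depending only on $s,\tilde s,r$ — is sound and keeps $T^*$ independent of $R$, as required. What each approach buys: yours isolates exactly where one full derivative is paid and why $s=1$ is the only delicate point, and it also supplies the deduction of the final claim $\nrm{\px u}_{L_{T^*}^4L_x^\infty}\le C(s,R)$ from $u=e^{\Lambda}v$, which the paper leaves implicit; the paper's version is much shorter and avoids the endpoint bifurcation altogether. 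Neither argument covers the $\dot W^{1,\infty}$ component for $0<s<1$ as literally stated in the corollary (the free evolution of data controlled only in $H^s$, $s<1$, cannot give an $\ell^2$-tail bound with one derivative in $L^\infty$), so your tacit restriction to $s\ge1$ matches what is actually provable and used.
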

\begin{proof}
We take $T^*=T^*(s, r)$ sufficiently small such that Proposition \ref{apriori gauge fr2.5} holds.
As in the proof of Proposition \ref{apriori gauge fr2.5}, we obtain
\begin{align*}
&\nrm{P_{\ge k}\left(e^{-\Lambda}u\right)}_{L_{T^*}^\infty H_x^s \cap L_{T^*}^4 \dot W_x^{1, \infty}}\\
&\lesssim \nrm{P_{\ge k}\left(e^{-\Lambda(0)}\phi\right)}_{H^s} + T^*\nrm{e^{-\Lambda}u}_{L_{T^*}^\infty H_x^s}^2\nrm{P_{\ge k-4}\left(e^{-\Lambda}u\right)}_{L_{T^*}^\infty H_x^s}.
\end{align*}
Thus, Lemma \ref{est of gauge ini}, Proposition \ref{apriori gauge fr2.5}, and the property of the frequency envelope yield
\begin{align*}
&\nrm{P_{\ge k}\left(e^{-\Lambda}u\right)}_{L_{T^*}^\infty H_x^s \cap L_{T^*}^4 \dot W_x^{1, \infty}} \\
&\lesssim \left(R+R^{2s+2}\right)\left(\sum_{l\ge k}c_l^2\right)^{\fr{1}{2}} + \left(R+R^{2s+2}\right)^3\left(\sum_{l\ge k}c_l^2\right)^{\fr{1}{2}}.
\end{align*}
Therefore, we obtain the desired estimate.
\end{proof}

\subsection{Proof of well-posedness in $H^{s}(\br)$}

In this subsection, we prove Theorem \ref{special case $H^s$}. 
From \eqref{aprori $H^1$}, it suffices to show the local well-posedness and that the existence time of solution depends only on $s$ and $H^1$-norm of the initial data.
We note that the uniqueness of the solution follows from Proposition \ref{diff energy}. 
Thus, we only prove the existence of the solution and the continuity of the flow map. 

\begin{proof}[Proof of Theorem \ref{special case $H^s$}]
(\textit{Existence of a solution})
In this proof, we also show that the existence time depends only on $s$ and $H^1$-norm of the initial data.
Let $\phi\in H^s(\br)$, $\nrm{\phi}_{H^1}\le r$, $\nrm{\phi}_{H^s}\le R$, and $\hat \phi_n := 1_{\fr{1}{n}\le \abs{\xi}\le n}\hat \phi$. 
Also, let $u_n\in C([0, \infty); X^{s+1}(\br))$ be a solution to \eqref{nls} with initial data $\phi_n \in X^{s+1}(\br)$. 
Moreover, we define $\Lambda_n$ in the same way as \eqref{def of gauge}.
It suffices to prove that $\{u_n\}$ is a Cauchy sequence in $C([0, T^*]; H^s(\br))\cap  L_{T^*}^1 \dot W_x^{1, \infty}$ for some small $T^*=T^*(s, r)$, since an approximation argument yields that its limit satisfies \eqref{sol}.

The triangle inequality and Bernstein's inequality yield
\begin{align*}
\nrm{u_n-u_m}_{L_{T^*}^\infty H_x^s\cap L_{T^*}^1 \dot W_x^{1, \infty}}
&\lesssim 2^{(s+\fr{1}{2})k}\nrm{u_n-u_m}_{L_{T^*}^\infty L_x^2} \\
&\quad + \nrm{P_{\ge k+1}u_n}_{L_{T^*}^\infty H_x^s\cap L_{T^*}^1 \dot W_x^{1, \infty}} + \nrm{P_{\ge k+1}u_m}_{L_{T^*}^\infty H_x^s\cap L_{T^*}^1 \dot W_x^{1, \infty}}
\end{align*}
for all $k, m, n\in\bn_0$.
From Proposition \ref{diff energy}, we obtain
$\nrm{u_n-u_m}_{L_{T^*}^\infty L_x^2}\to 0$ as $n, m\to\infty$.
Thus, we have
\begin{align*}
&\limsup_{n, m\to\infty}\nrm{u_n-u_m}_{L_{T^*}^\infty H_x^s\cap L_{T^*}^1 \dot W_x^{1, \infty}} \\
&\lesssim \limsup_{n\to\infty}\nrm{P_{\ge k+1}u_n}_{L_{T^*}^\infty H_x^s\cap L_{T^*}^1 \dot W_x^{1, \infty}} + \limsup_{m\to\infty}\nrm{P_{\ge k+1}u_m}_{L_{T^*}^\infty H_x^s\cap L_{T^*}^1 \dot W_x^{1, \infty}}
\end{align*}
for all $k\in\bn_0$.
We consider the estimate for $\limsup_{n\to\infty}\nrm{P_{\ge k+1}u_n}_{L_{T^*}^\infty H_x^s\cap L_{T^*}^1 \dot W_x^{1, \infty}}$.
First, we obtain from Proposition \ref{apriori gauge fr} that
\[
\nrm{P_{\ge k+1}u_n}_{L_{T^*}^\infty H_x^s} 
\lesssim \left(R+R^{4(s+1)(s+2)}\right)\Bigl(\sum_{l\ge k+1}c_l^2\Bigr)^{\fr{1}{2}}
\]
for some $T^*=T^*(s, r)>0$.
Here, $\{Rc_k\}$ is the $\delta$-freqency envelope of $\phi$ in $H^s(\br)$.
Next, from $u_n=e^{\Lambda_n}e^{-\Lambda_n}u_n$, we have
\[
\nrm{\px P_{\ge k+1}u_n}_{L_{T^*}^4 L_x^{\infty}}
\lesssim  \nrm{P_{\ge k+1}\left(u_n\px\Lambda_n\right)}_{L_{T^*}^4 L_x^{\infty}} 
+ \nrm{P_{\ge k+1}\left(e^{\Lambda_n}\px \left(e^{-\Lambda_n}u_n\right)\right)}_{L_{T^*}^4 L_x^{\infty}}.
\]
Proposition \ref{apriori gauge fr} and the Sobolev embedding yield
\[
\nrm{P_{\ge k+1}\left(u_n\px\Lambda_n\right)}_{L_{T^*}^4 L_x^{\infty}} 
\lesssim \nrm{u_n}_{L_{T^*, x}^\infty}\nrm{P_{\ge k-2}u_n}_{L_{T^*}^4 L_x^{\infty}}
\lesssim \left(R+R^{4(s+1)(s+2)}\right)^2\Bigl(\sum_{l\ge k+1}c_l^2\Bigr)^{\fr{1}{2}}.
\]
We consider the estimate for $\nrm{P_{\ge k+1}\left(e^{\Lambda_n}\px \left(e^{-\Lambda}u_n\right)\right)}_{L_{T^*}^4 L_x^{\infty}}$.
We have
\begin{align*}
&\nrm{P_{\ge k+1}\left(e^{\Lambda_n}\px \left(e^{-\Lambda_n}u_n\right)\right)}_{L_{T^*}^4 L_x^{\infty}} \\
&\le \nrm{[P_{\ge k+1}, e^{\Lambda_n}]\px \left(e^{-\Lambda_n}u_n\right)}_{L_{T^*}^4 L_x^{\infty}} + \nrm{\px P_{\ge k+1}\left(e^{-\Lambda_n}u_n\right)}_{L_{T^*}^4 L_x^{\infty}},
\end{align*}
and the second term on the right-hand side is estimated  by Corollary \ref{apriori gauge fr3}:
\[
\nrm{\px P_{\ge k+1}\left(e^{-\Lambda_n}u_n\right)}_{L_{T^*}^4 L_x^{\infty}}
\lesssim \left(R+R^{6(s+1)}\right)\Bigl(\sum_{l\ge k+1}c_l^2\Bigr)^{\fr{1}{2}}.
\]
Also, Proposition \ref{apriori gauge fr}, Lemma \ref{comm est}, and Corollary \ref{apriori gauge fr3} yield
\begin{align*}
\nrm{[P_{\ge k+1}, e^{\Lambda_n}]\px \left(e^{-\Lambda}u_n\right)}_{L_{T^*}^4 L_x^{\infty}}
&\lesssim \sum_{l\ge k+1}2^{-l}\sup_{y\in\br}\nrm{\px\Lambda_n\cdot e^{\Lambda_n}T_y\left(\px \left(e^{-\Lambda}u_n\right)\right)}_{L_{T^*}^4 L_x^{\infty}} \\
&\le\sum_{l\ge k+1}2^{-l}\sup_{y\in\br}\nrm{u_n}_{L_{T^*, x}^{\infty}}\nrm{\px \left(e^{-\Lambda}u_n\right)}_{L_{T^*}^4 L_x^{\infty}} \\
&\lesssim 2^{-k}\left(R+R^{4(s+1)(s+2)}\right)\left(R+R^{6(s+1)}\right).
\end{align*}
Hence, we obtain 
\begin{align*}
&\limsup_{n, m\to\infty}\nrm{u_n-u_m}_{L_{T^*}^\infty H_x^s\cap L_{T^*}^1 \dot W_x^{1, \infty}} \\
&\lesssim \left(R+R^{4(s+1)(s+2)}\right)\left(\sum_{l\ge k+1}c_l^2\right)^{\fr{1}{2}} + 2^{-k}\left(R+R^{4(s+1)(s+2)}\right)\left(R+R^{6(s+1)}\right).
\end{align*}
Taking $k\to\infty$, we obtain that $\{u_n\}$ is a Cauchy sequence in $C([0, T^*]; H^s(\br))\cap  L_{T^*}^1 \dot W_x^{1, \infty}$.
We note that the equality $P_{\ge k+1}u = (u-u_n) - P_{\le k}(u-u_n) + P_{\ge k+1}u_n$ 
yields
\begin{equation}\label{est of u}
\begin{aligned}
&\nrm{P_{\ge k+1}u}_{L_{T^*}^\infty H_x^s\cap L_{T^*}^1 \dot W_x^{1, \infty}} \\
&\lesssim \left(R+R^{4(s+1)(s+2)}\right)\left(\sum_{l\ge k+1}c_l^2\right)^{\fr{1}{2}} + 2^{-k}\left(R+R^{4(s+1)(s+2)}\right)\left(R+R^{6(s+1)}\right).
\end{aligned}
\end{equation}

(\textit{Continuity of the flow map})
Let $\phi, \phi_n\in H^{s}(\br)$, $\nrm{\phi}_{H^s}\le R$, and $\phi_n\to \phi$ in $H^s(\br)$. 
Also, let $\vep\in(0, R)$ and we take $N\in\bn_0$ such that $\nrm{\phi-\phi_n}_{H^s}\le \vep$ for all $n\ge N$.
Thus, taking $T^*=T^*(s, R)$ smaller if necessary, we may assume $u, u_n\in C\left([0, T^*]; H^s(\br)\right)\cap L_{T^*}^1 \dot W_x^{1, \infty}$ is a solution to \eqref{nls} with initial data $\phi, \phi_n$, respectively. 
Let $\{\vep d_k^{(n)}\}$ be a frequency envelope of $\phi-\phi_n$ in $H^s(\br)$.
Then, $R(c_k + \fr{\vep}{R}d_k^{(n)})$ is a $\delta$-frequency envelope of $\phi_n$.
Therefore, Proposition \ref{diff energy} and \eqref{est of u} yield
\begin{align*}
&\limsup_{n\to\infty}\nrm{u-u_n}_{L_{T^*}^\infty H_x^s\cap L_{T^*}^1 \dot W_x^{1, \infty}} \\
&\le \limsup_{n\to\infty}\nrm{P_{\le k}(u-u_n)}_{L_{T^*}^\infty H_x^s\cap L_{T^*}^1 \dot W_x^{1, \infty}} \\
&\quad + \nrm{P_{\ge k+1}u}_{L_{T^*}^\infty H_x^s\cap L_{T^*}^1 \dot W_x^{1, \infty}} 
+ \limsup_{n\to\infty}\nrm{P_{\ge k+1}u_n}_{L_{T^*}^\infty H_x^s\cap L_{T^*}^1 \dot W_x^{1, \infty}} \\
&\lesssim \left(R+{R}^{4(s+1)(s+2)}\right)\left(\sum_{l\ge k+1}c_l^2\right)^{\fr{1}{2}} + \vep\left(R+{R}^{4(s+1)(s+2)}\right) \\
&\quad + 2^{-k}\left(R+R^{4(s+1)(s+2)}\right)\left(R+{R}^{6(s+1)}\right)
\end{align*}
for all $k\in\bn_0$.
Thus, by taking $k\to\infty$, we obtain
\[
\limsup_{n\to\infty}\nrm{u-u_n}_{L_{T^*}^\infty H_x^s\cap L_{T^*}^1 \dot W_x^{1, \infty}}
\lesssim \vep\left(R+{R}^{(2s+3)^2}\right).
\]
Since $\vep\in(0, R)$ is taken arbitrarily, $u_n$ converges to $u$ in $C\left([0, {T^*}]; H^s(\br)\right)\cap L_{T^*}^1 \dot W_x^{1, \infty}$.
\end{proof}
\begin{appendix}
\section{On the limit of regularized solutions}\label{appendixA}
As we mentioned in Remark \ref{remark}, the limit of the regularized solution exists uniquely without depending on the way of the approximation.
\begin{prop}
Under the same condition as in the proof of Proposition \ref{existence of a smooth solution}, we have $\lim_{(\vep, \eta)\to (0, 0)}u_{\vep, \eta}=u$ in $C([0, \tilde T^{**}]; X^s(\br))$, where $ u:=\lim_{\eta\searrow 0}u_{\eta^3, \eta}$.
\end{prop}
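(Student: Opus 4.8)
The plan is to deduce the claim from the two difference estimates in Proposition \ref{smooth est of diff} and the approximation properties of $J_{\eta,s}$ in Lemma \ref{BS}, the point being to \emph{freeze} an auxiliary regularized datum before invoking the $\vep$-dependent estimate. Set $R:=\nrm{\phi}_{X^s}$ and $\phi_\eta:=J_{\eta,s}\phi$. As in the proof of Proposition \ref{existence of a smooth solution}, for all sufficiently small $\eta$ we have $\phi_\eta\in X^{s+1}(\br)$ with $\nrm{\phi_\eta}_{X^s}\le 2R$, so by Proposition \ref{smooth apriori2} (and uniqueness, Proposition \ref{wp of ep-nls}) the solution $u_{\vep,\eta}$ of \eqref{ep-nls} is defined on $[0,\tilde T^{**}]$ for every $\vep\in(0,1)$ and all such $\eta$; we may and do take $\tilde T^{**}=\tilde T^{**}(s,R)$ small enough that the estimates of Proposition \ref{smooth est of diff} (i) and (ii), together with the Cauchy estimate for $\{u_{\eta^3,\eta}\}_\eta$, all hold on $[0,\tilde T^{**}]$ for data of $X^s$-norm $\le 2R$. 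It then suffices to show that $\nrm{u_{\vep,\eta}-u}_{L^\infty_{\tilde T^{**}}X^s}$ can be made arbitrarily small by taking $\vep$ and $\eta$ small.

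First I would fix an auxiliary parameter $\eta_0\in(0,1)$ and split
\[
\nrm{u_{\vep,\eta}-u}_{L^\infty_{\tilde T^{**}}X^s}
\le \nrm{u_{\vep,\eta}-u_{\vep,\eta_0}}_{L^\infty_{\tilde T^{**}}X^s}
+\nrm{u_{\vep,\eta_0}-u_{\eta_0^3,\eta_0}}_{L^\infty_{\tilde T^{**}}X^s}
+\nrm{u_{\eta_0^3,\eta_0}-u}_{L^\infty_{\tilde T^{**}}X^s}.
\]
The third term tends to $0$ as $\eta_0\to+0$ by the proof of Proposition \ref{existence of a smooth solution}. For the first term I would apply Proposition \ref{smooth est of diff} (ii), since $u_{\vep,\eta}$ and $u_{\vep,\eta_0}$ solve \eqref{ep-nls} with the \emph{same} regularization $\vep$: this gives a bound $C(s,R)\nrm{\phi_\eta-\phi_{\eta_0}}_{X^s}\le C(s,R)\big(\nrm{\phi_\eta-\phi}_{X^s}+\nrm{\phi-\phi_{\eta_0}}_{X^s}\big)$ with $C(s,R)$ independent of $\vep$, and both summands go to $0$ by Lemma \ref{BS} (iii) (as $\eta\to+0$ and $\eta_0\to+0$ respectively). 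For the second term I would apply Proposition \ref{smooth est of diff} (i), since now $u_{\vep,\eta_0}$ and $u_{\eta_0^3,\eta_0}$ solve \eqref{ep-nls} with the \emph{same} datum $\phi_{\eta_0}\in X^{s+1}(\br)$: this gives $C(s,R)\,\abs{\vep-\eta_0^3}^{1/2}\big(1+\nrm{\phi_{\eta_0}}_{H^{s+1}}\big)$, where $\nrm{\phi_{\eta_0}}_{H^{s+1}}\lesssim \eta_0^{-1/s}\nrm{\phi}_{H^s}$ by Lemma \ref{BS} (iv) and $\abs{\vep-\eta_0^3}^{1/2}\le \vep^{1/2}+\eta_0^{3/2}$.

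The parameters are then chosen in order: first $\eta_0$ small enough that the third term is small, $C(s,R)\nrm{\phi-\phi_{\eta_0}}_{X^s}$ is small, and $C(s,R)\big(\eta_0^{3/2}+\eta_0^{3/2-1/s}\nrm{\phi}_{H^s}\big)$ is small --- this last step uses $s>\frac{3}{2}$, which makes the exponent $\frac{3}{2}-\frac{1}{s}$ positive; then, with $\eta_0$ frozen so that $\nrm{\phi_{\eta_0}}_{H^{s+1}}$ is a fixed finite constant, one picks $\delta>0$ so that for $0<\vep,\eta<\delta$ both $C(s,R)\nrm{\phi_\eta-\phi}_{X^s}$ and $C(s,R)\vep^{1/2}\big(1+\nrm{\phi_{\eta_0}}_{H^{s+1}}\big)$ are small. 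Collecting the bounds in the displayed decomposition finishes the proof.

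I expect the only genuine obstacle to be the one the frozen parameter is designed to overcome: one cannot compare $u_{\vep,\eta}$ with $u_{\eta^3,\eta}$ directly through Proposition \ref{smooth est of diff} (i), because as $\eta\to+0$ the factor $\nrm{\phi_\eta}_{H^{s+1}}\sim\eta^{-1/s}$ blows up and is not controlled by $\abs{\vep-\eta^3}^{1/2}$ when $\vep,\eta\to0$ along arbitrary paths. Routing the argument through a fixed $\eta_0$, and using the $X^s$-Lipschitz estimate (ii) --- which requires no $H^{s+1}$-bound on the data --- to pass from $\phi_\eta$ to $\phi_{\eta_0}$, is exactly what decouples the two limits; the remaining steps are routine applications of Lemma \ref{BS} and Proposition \ref{smooth est of diff}.
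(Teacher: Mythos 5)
Your argument is correct, but it takes a genuinely different --- and substantially shorter --- route than the paper's. The paper decomposes $u_{\vep,\eta}-u$ through the intermediate object $u_{\vep,0}$, the solution of the regularized equation with the \emph{un-mollified} datum $\phi$; the piece $\nrm{u_{\vep,\eta}-u_{\vep,0}}_{L^\infty_{\tilde T^{**}}X^s}$ is handled exactly as your first term via Proposition \ref{smooth est of diff} (ii) and Lemma \ref{BS} (iii), but the remaining piece $\nrm{u_{\vep,0}-u}_{L^\infty_{\tilde T^{**}}X^s}$ cannot be controlled by the quantitative rate of Proposition \ref{smooth est of diff} (i), because $\phi$ lies only in $X^s$ and the factor $\nrm{\phi}_{H^{s+1}}$ is unavailable. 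The bulk of the appendix is therefore a qualitative argument: a term-by-term Duhamel analysis in which the three contributions without a rate ($\II_1$, $\II_5$, $\III_3$ in the paper's notation) are shown to vanish through dominated convergence combined with equicontinuity in $t$ of the families $f_\vep$ and $g_\vep$. Your frozen-parameter triangulation through $u_{\vep,\eta_0}$ and $u_{\eta_0^3,\eta_0}$ sidesteps all of this: the $\vep$-perturbation is only ever applied to the mollified datum $\phi_{\eta_0}\in X^{s+1}(\br)$, where the rate $\abs{\vep_1-\vep_2}^{1/2}\bigl(1+\nrm{\phi_{\eta_0}}_{H^{s+1}}\bigr)$ is legitimately available, and the blow-up $\nrm{\phi_{\eta_0}}_{H^{s+1}}\lesssim\eta_0^{-1/s}\nrm{\phi}_{H^s}$ is harmless because $\eta_0$ is frozen before $\vep,\eta$ are sent to zero (and is beaten by $\eta_0^{3/2}$ in the diagonal comparison, using $s>\frac{3}{2}$). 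What the paper's longer argument buys is the slightly stronger intermediate fact that the parabolic regularization with the original datum itself converges, $u_{\vep,0}\to u$; for the stated proposition your purely quantitative argument suffices and is more economical. When writing it up, make explicit the two points you are implicitly using: the constants and existence times in Proposition \ref{smooth est of diff} must be taken uniform over data with $X^s$-norm at most $2R$ (they are, being increasing functions of the norms), and by uniqueness (Proposition \ref{wp of ep-nls}) the solutions appearing in parts (i) and (ii) of that proposition coincide, so both estimates apply to the same functions.
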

\begin{proof}
In the following, by taking $\tilde T^{**}=\tilde T^{**}(s, \nrm{\phi}_{X^s})$ sufficiently small if necessary, we may assume Propositions \ref{smooth apriori}, \ref{smooth apriori2}, \ref{smooth est of diff}, and \ref{apriori} hold. 
We have 
\[
\nrm{u_{\vep, \eta}-u}_{L_{\tilde T^{**}}^{\infty}X^s}
\le \nrm{u_{\vep, \eta}-u_{\vep, 0}}_{L_{\tilde T^{**}}^{\infty}X^s} + \nrm{u_{\vep, 0}-u}_{L_{\tilde T^{**}}^{\infty}X^s}
\]
From Proposition \ref{smooth est of diff} and Lemma \ref{BS} (iii), it suffices to show that
\[
\lim_{\vep\to +0}\nrm{u_{\vep, 0}-u}_{L_{\tilde T^{**}}^{\infty}X^s} =0.
\]
In the following, we note that $u$ satisfies \eqref{mild-sol} in $H^{s-1}(\br)$

First, we consider the estimate for the primitive.
We have
\begin{align*}
&\nrm{\int_\minfty^x\left(u_{\vep, 0}(t, y) -u(t, y)\right)\,dy}_{L_{\tilde T^{**}, x}^{\infty}} \\
&\le \nrm{\int_\minfty^x\left(U_{\vep}(t) -U(t)\right)\phi(y)\,dy}_{L_{\tilde T^{**}, x}^{\infty}}\\
&\quad + \nrm{\int_0^tU_\vep(t-t')\left(\lambda \left(u_{\vep, 0}^2-u^2\right) + \mu\left(\abs{ u_{\vep, 0}}^2-\abs{u}^2\right)\right)(t')\,dt'}_{L_{\tilde T^{**}, x}^{\infty}} \\
&\quad + \nrm{\int_0^t\left(U_\vep(t-t')-U(t-t')\right)\left(\lambda u^2 +\mu\abs{u}^2\right)(t')\,dt'}_{L_{\tilde T^{**}, x}^{\infty}} \\
&=: \I_1 + \I_2 + \I_3.
\end{align*}
For $\I_1$, we can apply \eqref{ineq for appendix} when considering $\vep_2=0$:
\[
\I_1
\lesssim \vep^{\frac{1}{2}} {\tilde T^{**\frac{1}{2}}}\nrm{\phi}_{H^1}.
\]
It follows from a similar argument as Lemma \ref{est of Duhamel} that
\[
\I_2 \lesssim {\tilde T^{**\frac{1}{2}}}\left(\nrm{u_{\vep, 0}}_{L_{\tilde T^{**}}^{\infty}L_x^2} + \nrm{u}_{L_{\tilde T^{**}}^{\infty}L_x^2}\right)\nrm{u_{\vep, 0}-u}_{L_{\tilde T^{**}}^{\infty}L_x^2}.
\]
As in the proof of Lemma \ref{smooth est of diff step1}, we have
\[
\I_3 
\lesssim {\tilde T^{**\frac{9}{8}}}\vep^{\fr{7}{8}}\nrm{u}_{L_{\tilde T^{**}}^{\infty}H_x^1}^2.
\]

Next, we consider the estimate for the $L_{\tilde T^{**}}^\infty H_x^s$-norm.
Lemma \ref{seki}, \eqref{est of exp f2}, and \eqref{pr diff2 ineq} yield
\begin{align*}
&\nrm{u_{\vep, 0} - u}_{L_{\tilde T^{**}}^{\infty}H_x^s} \\
&\le \nrm{u_{\vep, 0}(1-e^{-\Lambda_{\vep, 0}+\Lambda})}_{L_{\tilde T^{**}}^{\infty}H_x^s}
+ \nrm{e^{\Lambda}\left(e^{-\Lambda_{\vep, 0}}u_{\vep, 0} - e^{-\Lambda}u\right)}_{L_{\tilde T^{**}}^{\infty}H_x^s} \\
&\le C(s, \|u\|_{L_{\tilde T^{**}}^{\infty}H_x^{s}}, \|u_{\vep, 0}\|_{L_{\tilde T^{**}}^{\infty}H_x^{s}})\left(\nrm{u_{\vep, 0} -u}_{L_{\tilde T^{**}}^{\infty}X^{[s]}} + \nrm{e^{-\Lambda_{\vep, 0}}u_{\vep, 0} - e^{-\Lambda}u}_{L_{\tilde T^{**}}^{\infty}H_x^s}\right).
\end{align*}
Since $[s]$ is an integer, a direct calculation yields
\begin{align*}
&\nrm{u_{\vep, 0} - u}_{L_{\tilde T^{**}}^{\infty}H_x^{[s]}} \\
&\le \nrm{u_{\vep, 0}(1-e^{-\Lambda_{\vep, 0}+\Lambda})}_{L_{\tilde T^{**}}^{\infty}H_x^{[s]}}
+ \nrm{e^{\Lambda}\left(e^{-\Lambda_{\vep, 0}}u_{\vep, 0} - e^{-\Lambda}u\right)}_{L_{\tilde T^{**}}^{\infty}H_x^{[s]}} \\
&\le C(s, \|u\|_{L_{\tilde T^{**}}^{\infty}H_x^{s}}, \|u_{\vep, 0}\|_{L_{\tilde T^{**}}^{\infty}H_x^{s}} )\left(\nrm{u_{\vep, 0} -u}_{L_{\tilde T^{**}}^{\infty}X^{[s]-1}} + \nrm{e^{-\Lambda_{\vep, 0}}u_{\vep, 0} - e^{-\Lambda}u}_{L_{\tilde T^{**}}^{\infty}H_x^{[s]}}\right).
\end{align*}
Thus, we obtain 
\begin{equation}\label{append1}
\begin{aligned}
&\nrm{u_{\vep, 0} - u}_{L_{\tilde T^{**}}^{\infty}H_x^{[s]}} \\
&\le C(s, \|u\|_{L_{\tilde T^{**}}^{\infty}H_x^{s}}, \|u_{\vep, 0}\|_{L_{\tilde T^{**}}^{\infty}H_x^{s}} )\left(\nrm{u_{\vep, 0} -u}_{L_{\tilde T^{**}}^{\infty}X^{[s]-1}} + \nrm{e^{-\Lambda_{\vep, 0}}u_{\vep, 0} - e^{-\Lambda}u}_{L_{\tilde T^{**}}^{\infty}H_x^{s}}\right) \\
\end{aligned}
\end{equation}
The $L_{\tilde T^{**}}^{\infty}H_x^{s}$-norm of $e^{-\Lambda_{\vep, 0}}u_{\vep, 0} - e^{-\Lambda}u$ is estimated by using Lemma \ref{strest}, \eqref{ep-gauge trans}, and \eqref{gauge trans} as follows:
\begin{align*}
\nrm{e^{-\Lambda_{\vep, 0}}u_{\vep, 0} - e^{-\Lambda}u}_{L_{\tilde T^{**}}^{\infty}H_x^{s}} 
&\le\nrm{\left(U_\vep(t)-U(t)\right)\left(e^{-\Lambda(0)}\phi\right)}_{L_{\tilde T^{**}}^{\infty}H_x^{s}} \\
&\quad+ \nrm{\int_0^tU_{\vep}(t-t')\left(\left(e^{-\Lambda_{\vep, 0}} - e^{-\Lambda}\right)N_{\vep}^{(3)}(u_{\vep, 0})\right)(t')\,dt'}_{L_{\tilde T^{**}}^{\infty}H_x^{s}} \\
&\quad+\nrm{\int_0^tU_{\vep}(t-t')\left(e^{-\Lambda} \left(N_{\vep}^{(3)}(u_{\vep, 0}) - N_{\vep}^{(3)}(u)\right)\right)(t')\,dt'}_{L_{\tilde T^{**}}^{\infty}H_x^{s}} \\
&\quad+\nrm{\int_0^tU_{\vep}(t-t')\left(e^{-\Lambda} \left(N_{\vep}^{(3)}(u) - N_{0}^{(3)}(u)\right)\right)(t')\,dt'}_{L_{\tilde T^{**}}^{\infty}H_x^{s}} \\
&\quad+ \nrm{\int_0^t\left(U_{\vep}(t-t')- U(t-t')\right)\left(e^{-\Lambda}N_{0}^{(3)}(u)\right)(t')\,dt'}_{L_{\tilde T^{**}}^{\infty}H_x^{s}} \\
&\quad+ \vep\nrm{\int_0^tU_{\vep}(t-t')\left(e^{-\Lambda_{\vep, 0}}N^{(2)}(u_{\vep, 0})\right)(t')\,dt'}_{L_{\tilde T^{**}}^{\infty}H_x^{s}} \\
&=: \II_1 + \II_2 + \II_3 + \II_4 + \II_5 + \II_6.
\end{align*}
We consider the estimate for $\II_1$ and $\II_5$ later.
From Lemma \ref{seki} and \eqref{pr diff2 ineq}, we have
\[
\II_2\le C(s, \|u\|_{L_{\tilde T^{**}}^{\infty}X^{s}}, \|u_{\vep, 0}\|_{L_{\tilde T^{**}}^{\infty}X^{s}} ){\tilde T^{**}}\nrm{u_{\vep, 0}-u}_{L_{\tilde T^{**}}^{\infty}X^{s}}.
\]
For $\II_3$ and $\II_4$, \eqref{est of exp f2} yields
\[
\II_3\le C(s, \|u\|_{L_{\tilde T^{**}}^{\infty}X^{s}}, \|u_{\vep, 0}\|_{L_{\tilde T^{**}}^{\infty}X^{s}} ){\tilde T^{**}}\nrm{u_{\vep, 0}-u}_{L_{\tilde T^{**}}^{\infty}H_x^{s}},
\]
\[
\II_4\le C(s, \|u\|_{L_{\tilde T^{**}}^{\infty}X^{s}} )\vep {\tilde T^{**}}.
\]
Also, Lemma \ref{schauder} and \eqref{est of exp f2} yield
\[
\II_6\le C(s, \|u_{\vep, 0}\|_{L_{\tilde T^{**}}^{\infty}X^{s}})\left(\vep^{\fr{1}{2}} {\tilde T^{**\fr{1}{2}}}+\vep {\tilde T^{**}}\right).
\]

The term $\|u_{\vep, 0}-u\|_{L_{\tilde T^{**}}^\infty H_x^{[s]-1}}$ in \eqref{append1} is estimated as follows by using \eqref{mild-sol}:
\begin{align*}
\nrm{u_{\vep, 0}-u}_{L_{\tilde T^{**}}^{\infty}H_x^{[s]-1}} 
&\le \nrm{\left(U_\vep(t)-U(t)\right)\phi}_{L_{\tilde T^{**}}^{\infty}H_x^{[s]-1}} \\
&\quad +\nrm{\int_0^tU_\vep(t-t')\px\left(\lambda\left(u_{\vep, 0}^2-u^2\right)+\mu\left(\abs{u_{\vep, 0}}^2-\abs{u}^2\right)\right)(t')}_{L_{\tilde T^{**}}^{\infty}H_x^{[s]-1}}\\
&\quad + \nrm{\int_0^t\left(U_\vep(t-t') - U(t-t')\right)\px\left(\lambda u^2 +\mu\abs{u}^2\right)(t')}_{L_{\tilde T^{**}}^{\infty}H_x^{[s]-1}} \\
&=: \III_1 + \III_2 + \III_3.
\end{align*}
From Lemma \ref{est of diff free sol}, we have
\[
\III_1
\lesssim \vep^{\frac{1}{2}}{\tilde T^{**\frac{1}{2}}}\nrm{\phi}_{H^{[s]}}.
\]
A direct calculation yields
\[
\III_2\le C(s, \|u_{\vep, 0}\|_{L_{\tilde T^{**}}^{\infty}H_x^{[s]}}, \|u_{\vep, 0}\|_{L_{\tilde T^{**}}^{\infty}H_x^{[s]}}){\tilde T^{**}}\nrm{u_{\vep, 0}-u}_{L_{\tilde T^{**}}^{\infty}H_x^{[s]}}.
\]
For $\III_3$, we consider later.

By taking $\tilde T^{**} = \tilde T^{**}(s, \nrm{\phi}_{X^s})$ sufficiently small, we obtain
\begin{align*}
\limsup_{\vep\to+0}\nrm{u_{\vep, 0}-u}_{L_{\tilde T^{**}}^{\infty}X^s} 
\le C(s, \|\phi\|_{X^s})\limsup_{\vep\to+0}\left(\II_1 + \II_5 + \III_3\right),
\end{align*}
where we used $\lim_{\vep\to+0}(\I_3 + \II_4 + \II_6 + \III_1)=0$.
Hence, the remainder of the proof is to show $\limsup_{\vep\to+0}\left(\II_1 + \II_5 + \III_3\right)=0$. 

We consider the term $\II_1$. 
For $t\in [0, \tilde T^{**}]$, we define 
\[
f_\vep(t):= \nrm{\left(U_\vep(t)-U(t)\right)e^{-\Lambda(0)}\phi}_{H^{s}}.
\] Then, we have $\lim_{\vep\to 0}f_\vep(t)=0$ for all $t$. Therefore, it suffices to show that $\{f_\vep\}$ is equicontinuous on $[0, {\tilde T^{**}}]$. Let $\delta>0$. We can take some $n\in\bn$ such that $\nrm{P_{\ge n+1}\left(e^{-\Lambda(0)}\phi\right)}_{H^s} <\delta$. Thus, we have
\begin{align*}
&\limsup_{t\to t_0}\sup_{\vep\in(0, 1)}\abs{f_\vep(t)-f_\vep(t_0)} \\
&\le \limsup_{t\to t_0}\nrm{\left(U(t)-U(t_0)\right)\left(e^{-\Lambda(0)}\phi\right)}_{H^{s}} +\limsup_{t\to t_0}\sup_{\vep\in(0, 1)}\nrm{\left(U_\vep(t)-U_\vep(t_0)\right)\left(e^{-\Lambda(0)}\phi\right)}_{H^{s}} \\
&\le \limsup_{t\to t_0}\sup_{\vep\in(0, 1)}\nrm{\left(U_\vep(t)-U_\vep(t_0)\right)P_{\le n}\left(e^{-\Lambda(0)}\phi\right)}_{H^{s}} +2\delta \\
&\lesssim \limsup_{t\to t_0}2^{2(n+1)}\abs{t-t_0}\nrm{P_{\le n}\left(e^{-\Lambda(0)}\phi\right)}_{H^s} + \delta \\
&=\delta.
\end{align*}
Since $\delta>0$ is taken arbitrarily, we obtain $\lim_{t\to t_0}\sup_{\vep\in(0, 1)}|f_\vep(t)-f_\vep(t_0)| =0$. Therefore, we obtain $\lim_{\vep\to +0}\II_1=0$.

Finally, we consider the terms $\II_5$ and $\III_3$. 
Lebesgue's dominated convergence theorem yields
\[
g_{\vep}(t):=\nrm{\int_0^t\left(U_\vep(t-t')-U(t-t')\right)\left(e^{-\Lambda}N_{0}^{(3)}(u)\right)(t')\,dt'}_{H^s}\to0
\]
as $\vep\to+0$ for all $t\in[0, {\tilde T^{**}}]$.
Hence, it is sufficient to show that $\{g_{\vep}\}$ is equicontinuous on $[0, \tilde T^{**}]$.
In the following, we may assume $t\ge t_0$.
It holds that
\begin{align*}
&\int_0^t\left(U_\vep(t-t')-U(t-t')\right)\left(e^{-\Lambda}N_{0}^{(3)}(u)\right)(t')\,dt' \\
&\quad-\int_0^{t_0}\left(U_\vep(t_0-t')-U(t_0-t')\right)\left(e^{-\Lambda}N_{0}^{(3)}(u)\right)(t')\,dt' \\
&=\int_{t_0}^t\left(U_\vep(t-t')-U(t-t')\right)\left(e^{-\Lambda}N_{0}^{(3)}(u)\right)(t')\,dt' \\
&\quad+ \left(U_\vep(t-t_0)-1\right)\int_0^{t_0}U_\vep(t_0-t')\left(e^{-\Lambda}N_{0}^{(3)}(u)\right)(t')\,dt' \\
&\quad - \left(U(t-t_0)-1\right)\int_0^{t_0}U(t_0-t')\left(e^{-\Lambda}N_{0}^{(3)}(u)\right)(t')\,dt'.
\end{align*}
We have
\[
\lim_{t\to t_0}\sup_{\vep\in(0, 1)}\nrm{\int_{t_0}^t\left(U_\vep(t-t')-U(t-t')\right)\left(e^{-\Lambda}N_{0}^{(3)}(u)\right)(t')\,dt'}_{H^s} =0
\]
and
\[
\lim_{t\to t_0}\nrm{\left(U(t-t_0)-1\right)\int_0^{t_0}U(t_0-t')\left(e^{-\Lambda}N_{0}^{(3)}(u)\right)(t')\,dt'}_{H^s}=0.
\]
Also, for any $\delta>0$, we can take some $n\in\bn$ such that
\[
\sup_{\vep\in(0, 1)}\nrm{P_{\le n}\left(\int_0^{t_0}U_{\vep}(t_0-t')\left(e^{-\Lambda}N_0^{(3)}(u)\right)(t')\,dt'\right)}\le \delta.
\]
Therefore, a similar argument as in the estimate for $\II_1$ yields that
\[
\lim_{t\to t_0}\sup_{\vep\in(0, 1)}\nrm{\left(U_\vep(t-t_0)-1\right)\int_0^{t_0}U_\vep(t_0-t')\left(e^{-\Lambda}N_{0}^{(3)}(u)\right)(t')\,dt'}_{H^s} =0.
\]
Thus, $g_\vep$ is equicontinuous, and hence we obtain $\lim_{\vep\to 0}\II_5 =0$. 
We can treat the term $\III_3$ as $\II_5$.
This completes the proof.
\end{proof}
\end{appendix}
\section*{Acknowledgement}
The author would like to express his deep appreciation to Mamoru Okamoto for many discussions and very valuable comments. This work was supported by JST SPRING, Grant Number JPMJSP2138.

\providecommand{\bysame}{\leavevmode\hbox to3em{\hrulefill}\thinspace}
\providecommand{\MR}{\relax\ifhmode\unskip\space\fi MR }
\providecommand{\MRhref}[2]{%
  \href{http://www.ams.org/mathscinet-getitem?mr=#1}{#2}
}
\providecommand{\href}[2]{#2}

\end{document}